\documentclass[twoside,a4paper,reqno,11pt]{amsart} 
\usepackage[top=28mm,right=28mm,bottom=28mm,left=28mm]{geometry}

\usepackage{amsfonts, amsmath, amssymb, mathrsfs, bm, latexsym, stmaryrd, array, hyperref, mathtools, bold-extra, url, xcolor}

\usepackage{verbatim}
\newlength\myverbindent 
\setlength\myverbindent{2em} 
\makeatletter
\def\verbatim@processline{%
  \hspace{\myverbindent}\the\verbatim@line\par}
\makeatother

\usepackage{upquote}

\usepackage{placeins}

\renewcommand{\a}{\alpha}
\renewcommand{\b}{\beta}

\newcommand{\e}{\varepsilon}
\renewcommand{\l}{\lambda} 

\newcommand{\s}{\sigma}

\renewcommand{\O}{\Omega}

\newcommand{\Z}{\mathbb{Z}}

\newcommand{\Q}{\mathbb{Q}}

\newcommand{\la}{\langle}
\newcommand{\ra}{\rangle}

\renewcommand{\to}{\rightarrow}

\newcommand{\leqs}{\leqslant}
\newcommand{\geqs}{\geqslant}

\newcommand{\vs}{\vspace{2mm}}

\newcommand{\Aut}{\operatorname{Aut}}
\newcommand{\normeq}{\trianglelefteqslant}

\newcommand{\GL}{\operatorname{GL}}
\newcommand{\PGL}{\operatorname{PGL}}

\makeatletter
\newcommand{\imod}[1]{\allowbreak\mkern4mu({\operator@font mod}\,\,#1)}
\makeatother

\newtheorem{theorem}{Theorem} 
\newtheorem*{conj*}{Conjecture}

\newtheorem{corol}[theorem]{Corollary}

\newtheorem{thm}{Theorem}[section] 
\newtheorem{prop}[thm]{Proposition} 
\newtheorem{lem}[thm]{Lemma}

\theoremstyle{definition}
\newtheorem{rem}[thm]{Remark}
\newtheorem{remk}[theorem]{Remark}

\newtheorem{defff}[theorem]{Definition}
\newtheorem{defn}[thm]{Definition}
\newtheorem{ex}[thm]{Example}

\begin{document}

\title[Fixed-point-free involutions in finite exceptional groups]{On fixed-point-free involutions in actions of finite exceptional groups of Lie type}

\author{Timothy C. Burness}
\address{T.C. Burness, School of Mathematics, University of Bristol, Bristol BS8 1UG, UK}
\email{t.burness@bristol.ac.uk}

\author{Mikko Korhonen}
\address{M. Korhonen, Shenzhen International Center for Mathematics, Southern University of Science and Technology, Shenzhen 518055, Guangdong, P. R. China}
\email{korhonen\_mikko@hotmail.com}

\begin{abstract}
Let $G$ be a nontrivial transitive permutation group on a finite set $\Omega$. By a classical theorem of Jordan, $G$ contains a derangement, which is an element with no fixed points on $\Omega$. Given a prime divisor $r$ of $|\Omega|$, we say that $G$ is $r$-elusive if it does not contain a derangement of order $r$. In a paper from 2011, Burness, Giudici and Wilson essentially reduce the classification of the $r$-elusive primitive groups to the case where $G$ is an almost simple group of Lie type. The classical groups with an $r$-elusive socle have been determined by Burness and Giudici, and in this paper we consider the analogous problem for the exceptional groups of Lie type, focussing on the special case $r=2$. Our main theorem describes all the almost simple primitive exceptional groups with a $2$-elusive socle. In other words, we determine the pairs $(G,M)$, where $G$ is an almost simple exceptional group of Lie type with socle $T$ and $M$ is a core-free maximal subgroup that intersects every conjugacy class of involutions in $T$. Our results are conclusive, with the exception of a finite list of undetermined cases for $T = E_8(q)$, which depend on the existence (or otherwise) of certain almost simple maximal subgroups of $G$ that have not yet been completely classified.
\end{abstract}

\makeatletter
\@namedef{subjclassname@2020}{\textup{2020} Mathematics Subject Classification}
\makeatother

\subjclass[2020]{20E32, 20G41, 20B15, 20E45}

\date{\today}

\maketitle

\setcounter{tocdepth}{1}
\tableofcontents

\section{Introduction}\label{s:intro}

Let $G \leqs {\rm Sym}(\O)$ be a transitive permutation group on a finite set $\Omega$ with $|\Omega| \geqs 2$. By a classical result of Jordan from 1872, namely \cite[Th\'eor\`eme I]{Jordan1872}, there exists an element in $G$ with no fixed points on $\Omega$. Such an element is called a \emph{derangement} and there is an extensive literature on derangements in permutation groups, demonstrating a wide range of connections and applications to other areas of mathematics. For instance, we refer the reader to Serre's article \cite{Serre}, which highlights applications of Jordan's theorem in number theory and topology. Another striking example is given by a celebrated theorem of Fein, Kantor and Schacher \cite{FKS}, which depends on the Classification of Finite Simple Groups. This theorem states that $G$ always contains a derangement of prime power order, and it turns out to have important applications in algebraic number theory concerning the structure of Brauer groups of global field extensions (see \cite{FKS} for the details). 

However, examples show that $G$ does not always contain a derangement of prime order, and such groups are called \emph{elusive}. Elusivity turns out to be a rather restrictive property. For example, if $G$ is primitive then a theorem of Giudici \cite[Theorem 1.1]{Giu} implies that $G$ is elusive if and only if $G = {\rm M}_{11} \wr K$ acting with the product action on $\Omega = \Delta^k$, where $K \leqs {\rm Sym}_k$ is transitive, $k \geqs 1$ and ${\rm M}_{11} < {\rm Sym}(\Delta)$ is primitive with $|\Delta| = 12$. A complete classification of the transitive elusive groups remains out of reach.

A local version of elusivity was introduced in \cite{BGW}, which is defined as follows: for a prime divisor $r$ of $|\Omega|$, we say that $G$ is \emph{$r$-elusive} if every element in $G$ of order $r$ has fixed points on $\O$. In particular, $G$ is elusive if and only if it is $r$-elusive for every prime divisor $r$ of $|\O|$. By \cite[Theorem 2.1]{BGW}, the classification of the $r$-elusive primitive groups is to a large extent reduced to the case where $G$ is an almost simple group (this means that the socle of $G$ is a nonabelian simple group $T$ and we have $T \normeq G \leqs {\rm Aut}(T)$). Moreover, the $r$-elusive primitive groups with socle an alternating or sporadic group are classified in \cite[Theorem 1.1]{BGW}. 

So let us assume $G$ is an almost simple primitive group of Lie type with socle $T$. A detailed analysis of the case where $T$ is a classical group is presented in  \cite{BG2,BG} and the main results provide a complete classification of the groups with an $r$-elusive socle. In this paper, we take the first steps towards extending these results to the exceptional groups of Lie type, focussing here on the special case $r=2$. Indeed, our main result describes the primitive exceptional groups with a $2$-elusive socle. In more abstract terms, this is equivalent to determining the pairs $(G,M)$, where $G$ is an almost simple exceptional group of Lie type with socle $T$, and $M$ is a core-free maximal subgroup of $G$ that intersects every conjugacy class of involutions in $T$. Our results are complete, with the exception of a small (finite) number of undetermined cases when $T = E_8(q)$ and the point stabilizer is an almost simple group with socle ${\rm L}_3(3)$ or ${\rm L}_2(q')$ for some odd prime power $q'$. This seemingly unavoidable ambiguity arises here because it remains a difficult open problem to determine the existence (or otherwise) of maximal subgroups of $G$ of this form.

In order to state our main results, we need to introduce some notation. Suppose that $G$ is an almost simple group with socle $T$, which is a finite simple exceptional group of Lie type over $\mathbb{F}_q$, where $q = p^f$ for some prime $p$. Write $T = (\bar{G}_{\s})'$, where $\bar{G}$ is a simple algebraic group of adjoint type over the algebraic closure of $\mathbb{F}_q$ and $\s$ is an appropriate Steinberg endomorphism of $\bar{G}$.

Let $\mathcal{M}$ be the set of core-free maximal subgroups of $G$. We first define a subset $\mathcal{C}$ of $\mathcal{M}$ as follows (in (IV), we write ${\rm soc}(H)$ for the socle of $H$, which we recall is defined to be the product of the minimal normal subgroups of $H$).

\begin{defff}\label{d:c}
A core-free maximal subgroup $H \in \mathcal{M}$ is contained in $\mathcal{C}$ if and only if one of the following holds:

\vspace{1mm}

\begin{itemize}\addtolength{\itemsep}{0.2\baselineskip}
\item[{\rm (I)}] $H = N_G(\bar{H}_{\s})$, where $\bar{H}$ is a maximal $\sigma$-invariant positive-dimensional closed subgroup of $\bar{G}$;
\item[{\rm (II)}] $H$ is of the same type as $G$ (either a subfield subgroup or a twisted version of $G$);
\item[{\rm (III)}] $H$ is the normalizer of an exotic $r$-local subgroup for some prime $r \neq p$;
\item[{\rm (IV)}] $T = E_8(q)$, $p > 5$ and ${\rm soc}(H) = \operatorname{Alt}_5 \times \operatorname{Alt}_6$ (the \emph{Borovik subgroup}).
\end{itemize}
\end{defff}

We then define $\mathcal{S} = \mathcal{M} \setminus \mathcal{C}$, so that $\mathcal{M}$ is a disjoint union 
\[
\mathcal{M} = \mathcal{C} \cup \mathcal{S}.
\]
By combining earlier work due to Borovik \cite{Bor} and Liebeck--Seitz \cite{LS90}, we know that every subgroup in $\mathcal{S}$ is almost simple. At the time of writing, all the subgroups in $\mathcal{C}$ have been determined, up to conjugacy, and the same is true for the maximal subgroups in $\mathcal{S}$ so long as $T \ne E_7(q), E_8(q)$. We refer the reader to Remark \ref{r:CS} for a more detailed description of the subgroups comprising the collections $\mathcal{C}$ and $\mathcal{S}$.

We begin by stating a simplified version of our main result.

\begin{theorem}\label{t:main0}
Let $G \leqs {\rm Sym}(\O)$ be a finite almost simple primitive permutation group with point stabilizer $H$ and socle $T$, which is a simple exceptional group of Lie type in characteristic $p$. If $T$ is $2$-elusive, then either

\vspace{1mm}

\begin{itemize}\addtolength{\itemsep}{0.2\baselineskip}
\item[{\rm (i)}] $(G,H)$ is known; or
\item[{\rm (ii)}] $T = E_8(q)$, $H$ is almost simple with socle $S$ and either $(S,p) = ({\rm L}_3(3), 13)$ or $S = {\rm L}_2(r)$ for some odd prime power $r$. 
\end{itemize}
\end{theorem}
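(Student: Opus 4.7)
The plan is to use the dichotomy $\mathcal{M} = \mathcal{C} \cup \mathcal{S}$ recorded in Definition~\ref{d:c} and to work through each family of maximal subgroups, testing in each case whether $H$ intersects every $T$-conjugacy class of involutions. Two kinds of input data are needed: the list of $T$-conjugacy classes of involutions together with their centralizer structure (unipotent classes in characteristic~$2$ via Aschbacher-Seitz, semisimple classes in odd characteristic via the standard algebraic group literature), and a structural description of each candidate $H$. The $2$-elusivity condition then becomes the finite check that each of the (few) involution classes of $T$ meets $H$.

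For $H \in \mathcal{C}$ I would treat the four types of Definition~\ref{d:c} separately. In (I), parabolic subgroups always meet every involution class; for the remaining positive-dimensional closed subgroups $\bar{H}$, Lang-Steinberg combined with centralizer data in $\bar{G}$ allows a precise determination of which $\bar{G}_{\sigma}$-classes of involutions are represented in $\bar{H}_{\sigma}$. In (II) the subfield and twisted subgroups are essentially transparent because involutions lift directly. In (III) and (IV) there are only finitely many conjugacy classes of subgroups involved, and each case is settled by direct calculation of orders and involution centralizers.

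For $H \in \mathcal{S}$, write $S = {\rm soc}(H)$. The theorems of Borovik and Liebeck-Seitz bound $|S|$, and the list of possible $S$ is complete whenever $T \neq E_7(q), E_8(q)$. The key observation is that the exceptional groups typically have several involution classes with vastly different centralizer structure (for instance, $E_8(q)$ in odd characteristic has classes of type $D_8$ and $A_1 E_7$, whose centralizers have very different Lie rank), and an almost simple subgroup $H$ rarely contains representatives of every such class. In most cases this is ruled out by a short order-divisibility argument or by analysing the fusion of involutions from $H$ into $\bar{G}_{\sigma}$. For $T = E_7(q)$ the recent progress on almost simple maximal subgroups closes all remaining possibilities; for $T = E_8(q)$, the same analysis eliminates every candidate except those with socle ${\rm L}_3(3)$ or ${\rm L}_2(r)$ for some odd prime power $r$, giving case (ii).

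The principal obstacle is precisely this last step. The existence and maximality of almost simple subgroups of $E_8(q)$ with socle in $\{{\rm L}_3(3), {\rm L}_2(r)\}$ is a well-known open problem, and on the basis of $2$-elusivity alone these cases can be neither confirmed nor refuted. The main technical labour elsewhere lies in the careful bookkeeping of involution fusion from small almost simple candidates into the exceptional group, which demands a combination of Lang-Steinberg, explicit centralizer data in $\bar{G}$, and, where needed, character-theoretic lifting of involutions through the relevant isogeny.
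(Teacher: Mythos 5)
Your overall strategy coincides with the paper's: the decomposition $\mathcal{M} = \mathcal{C} \cup \mathcal{S}$, a case-by-case fusion analysis of involutions from $H$ into $T$ using the (small) list of involution classes in each exceptional group, and the observation that the only irreducible ambiguity comes from the undetermined almost simple maximal subgroups of $E_8(q)$. However, two of your intermediate claims are false or materially incomplete. First, it is not true that parabolic subgroups always meet every involution class: for $T = E_7(q)$ with $q \equiv 3 \imod{4}$ and $H_0 = P_2$, $P_5$ or $P_7$, every involution in $H_0$ is of type $A_1D_6$, so these actions are \emph{not} $2$-elusive. Establishing this is genuinely nontrivial — one must lift a putative involution of type $E_6T_1$ or $A_7$ to an element of order $4$ in the simply connected cover whose square is the central element acting as $-I_{56}$ on the $56$-dimensional minimal module, and then derive a contradiction from the existence of an odd-dimensional composition factor of $V_{\rm min}$ restricted to the Levi factor (since $-1$ is a nonsquare in $\mathbb{F}_q$ when $q \equiv 3 \imod{4}$). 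Your blanket assertion would misclassify these cases and skips an argument that cannot be replaced by a routine Lang--Steinberg or centralizer-order computation.

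Second, your treatment of $\mathcal{S}$ for $T = E_7(q)$ asserts that ``recent progress on almost simple maximal subgroups closes all remaining possibilities.'' It does not: the existence of maximal subgroups with socle ${\rm L}_2(q')$ for $q' \in \{7,8,9,13\}$ remains open for $E_7(q)$, exactly as for $E_8(q)$. The reason $E_7(q)$ nevertheless contributes nothing to conclusion (ii) is that one can compute all feasible characters of the candidate subgroups on $\mathcal{L}(\bar{G})$ and verify that in every case the involutions of $H_0$ fall into too few $T$-classes (e.g.\ for $H_0 \cong {\rm Sym}_6$ with $p=5$ every involution has trace $-8$ on $\mathcal{L}(\bar{G})$ and so lies in the class $A_1D_6$). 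Without this feasible-character step, $E_7(q)$ would have to appear alongside $E_8(q)$ in part (ii), and the theorem as stated would not follow. The same technique is what isolates the specific list in (ii) for $E_8(q)$ — it is not enough to invoke the Borovik and Liebeck--Seitz bounds and ``order-divisibility'' arguments, since several candidates (e.g.\ socle ${\rm U}_4(2)$, ${}^3D_4(2)$, ${}^2F_4(2)'$) survive those tests and are only eliminated by trace computations on the adjoint module.
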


A more detailed version of Theorem \ref{t:main0} is presented as Theorem \ref{t:main} below, where we refer to the subgroup collections $\mathcal{C}$ and $\mathcal{S}$ introduced above. Note that Tables \ref{tab:main}, \ref{tab:main2} and \ref{tab:main3} can be found at the end of the paper in Section \ref{s:tab}. In addition, the relevant groups with $|\O|$ odd are recorded in Theorem \ref{t:odd}, which is a special case of a theorem of Liebeck and Saxl \cite{LS} on primitive groups of odd degree.

\begin{theorem}\label{t:main}
Let $G \leqs {\rm Sym}(\O)$ be a finite almost simple primitive permutation group with point stabilizer $H$ and socle $T$, which is a simple exceptional group of Lie type in characteristic $p$. Set $H_0 = H \cap T$ and assume $|\O|$ is even. 

\vspace{1mm}

\begin{itemize}\addtolength{\itemsep}{0.2\baselineskip}
\item[{\rm (i)}] If $H \in \mathcal{C}$, then $T$ is not $2$-elusive if and only if $(T,H_0)$ is in Table \ref{tab:main}.
\item[{\rm (ii)}] If $T \ne E_8(q)$ and $H \in \mathcal{S}$, then $T$ is $2$-elusive if and only if $(T,H_0)$ is in Table \ref{tab:main2}.
\item[{\rm (iii)}] If $T = E_8(q)$ and $H \in \mathcal{S}$ has socle $S$, then $T$ is $2$-elusive only if $p$ is odd and either $S = {\rm L}_2(p^e)$ with $7 \leqs p^e \leqs 2621$, or if $(H_0,p)$ is in Table \ref{tab:main3}.
\end{itemize}
\end{theorem}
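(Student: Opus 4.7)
The plan is to treat the three parts of Theorem \ref{t:main} separately, exploiting the fact that the number of $G$-conjugacy classes of involutions in $T$ is small. For $p$ odd these are the semisimple involutions classified by their centralizers (reductive subgroups of maximal rank, coming from Borel--de Siebenthal); for $p = 2$ they are unipotent involutions classified in the work of Aschbacher--Seitz and Liebeck--Seitz. In each case there are at most three classes, so the condition that $H_0$ meet every class is a short, checkable incidence requirement that I would set up at the start of the argument together with explicit lists of representatives and centralizer structures.

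For part (i), I would go through the types of subgroups in $\mathcal{C}$ one by one. For $H = N_G(\bar{H}_\sigma)$ with $\bar{H}$ positive-dimensional and $\sigma$-stable, the involution classes of $T$ meeting $H_0$ are encoded by the embedding $\bar{H} \leqs \bar{G}$: for each involution $x$ of $\bar{H}_\sigma$, the centralizer $C_{\bar{G}}(x)$ can be determined from the Lie-algebra data in Liebeck--Seitz, identifying the $\bar{G}$-class of $x$. Parabolic subgroups reduce to computations in the Levi factor; subfield subgroups reduce to base-change arguments; the exotic local subgroups and the Borovik subgroup are finite and can be analyzed via character tables or by direct computation. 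Since most of these subgroups are large, generically $T$ will be $2$-elusive, and the exceptional failures are collected as Table \ref{tab:main}.

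For part (ii) with $T \ne E_8(q)$, the possible socles $S$ of subgroups in $\mathcal{S}$ form a short explicit list by the work of Liebeck--Seitz, refined by Craven, Litterick, and others. For each pair $(T,S)$ I would identify which involution classes of $T$ contain involutions of $H_0$, typically by computing traces of involutions on the adjoint module and matching them with the trace data of each class of $T$ (or, for small groups, by reading off the Brauer character table). Since $H_0$ is usually small, $T$ is generically not $2$-elusive for $H \in \mathcal{S}$, and the few exceptions appear in Table \ref{tab:main2}.

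Part (iii) is the main obstacle. For $T = E_8(q)$ the classification of the $\mathcal{S}$-subgroups is incomplete: the open cases have socle ${\rm L}_2(r)$ for some prime power $r$, or ${\rm L}_3(3)$. For all other candidate socles the classification is known, so the analysis proceeds as in part (ii), ruling out $2$-elusivity except for the cases listed in Table \ref{tab:main3}. For the remaining open socles I argue by necessary conditions: the requirement that $H_0$ meet both $G$-classes of involutions in $T$, combined with the order and embedding bounds of Liebeck--Seitz, forces $p$ to be odd (since in characteristic $2$ the unipotent involution classes of $T$ cannot all be hit by such a small subgroup, as one checks via adjoint-trace restrictions) and bounds the parameters, yielding $r = p^e$ with $7 \leqs p^e \leqs 2621$ in the ${\rm L}_2$ case and $p = 13$ in the ${\rm L}_3(3)$ case. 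The inherent obstacle to a conclusive answer is that the existence of maximal subgroups of $E_8(q)$ with these socles is itself an open problem, which is why the statement of (iii) is given only as a necessary condition.
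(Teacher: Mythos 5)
Your outline follows essentially the same route as the paper: a case-by-case analysis over the subgroup types in $\mathcal{C}$ and $\mathcal{S}$, identifying the $T$-class of each involution in $H_0$ via its centralizer or, equivalently, the dimension of its fixed point space on $\mathcal{L}(\bar{G})$ or $V_{\mathrm{min}}$, with restriction data for positive-dimensional $\bar{H}$, Brauer-character and feasible-character arguments for the subgroups in $\mathcal{S}$, and only necessary conditions in part (iii) because the relevant maximal subgroups of $E_8(q)$ are not classified.

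One concrete correction is needed in your setup: the claim that $T$ always has at most three classes of involutions is false in characteristic $2$, where $F_4(q)$ and $E_8(q)$ have four unipotent involution classes and $E_7(q)$ has five (Table \ref{table:evenpinvolutions}). This is not cosmetic, since several of the quick eliminations rest on comparing the number of involution classes in $H_0$ with the number in $T$. In particular, the reason $p$ must be odd in part (iii) for $S = {\rm L}_2(q_0)$ is precisely that $E_8(q)$ with $p = 2$ has four classes while any almost simple group with socle ${\rm L}_2(2^e)$ has at most three; it is not an ``adjoint-trace restriction'' (traces do not separate unipotent involution classes in characteristic $2$, and the paper works with Jordan forms on $\mathcal{L}(\bar{G})$ or $V_{\mathrm{min}}$ instead). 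With the class counts corrected, the remainder of your plan matches the structure of the paper's proof.
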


\begin{remk}\label{r:main}
We record a couple of comments on the statement of Theorem \ref{t:main}.

\vspace{1mm}

\begin{itemize}\addtolength{\itemsep}{0.2\baselineskip}
\item[{\rm (a)}] Typically, we find that $T$ is often $2$-elusive when $H \in \mathcal{C}$ and so it is more efficient to list the exceptions in Table \ref{tab:main}. On the other hand, there are fewer groups where $T$ is $2$-elusive and $H \in \mathcal{S}$. This explains the contrasting way we have chosen to state parts (ii) and (iii) in Theorem \ref{t:main}, in comparison with part (i).

\item[{\rm (b)}] In part (iii), we note that $T = E_8(q)$ has two conjugacy classes of involutions when $p$ is odd (see \cite[Table 4.5.1]{GLS} and Section \ref{ss:invols}). So if $S = {\rm L}_2(p^e)$ with $p$ odd, then $T$ is $2$-elusive only if $H_0$ contains an involutory diagonal or field automorphism of $S$. It is also worth noting that at the time of writing, there is not a single known example of a maximal subgroup $H \in \mathcal{S}$ with socle ${\rm L}_2(p^e)$. Furthermore, it is conjectured that no such example exists, see \cite[p.560]{SeitzTesterman} and \cite[Conjecture 3.3]{CravenPSL2}. Similarly, it remains an open problem to determine if there exists a maximal subgroup $H$ of $G$ corresponding to any of the cases $(H_0,p)$ listed in Table \ref{tab:main3}.
\end{itemize}
\end{remk}

As a corollary, we obtain the following result by combining Theorem \ref{t:main} with \cite[Theorem 1.5.1]{BG} and the main results in \cite{BG2,BGW} for $r=2$.

\begin{corol}\label{c:main2}
Let $G \leqs {\rm Sym}(\O)$ be a finite almost simple primitive permutation group with point stabilizer $H$ and socle $T$. Assume $|\O|$ is even. Then $T$ is $2$-elusive if and only if one of the following holds:

\vspace{1mm}

\begin{itemize}\addtolength{\itemsep}{0.2\baselineskip}
\item[{\rm (i)}] $T$ is an alternating or sporadic group and either $G$ is $2$-elusive and $(G,H)$ is recorded in \cite{BGW}, or $(G,H)$ is one of the cases in Table \ref{tab:not2}.

\item[{\rm (ii)}] $T$ is a classical group and the possibilities for $(G,H)$ are determined in \cite{BG2,BG}.

\item[{\rm (iii)}] $T$ is an exceptional group of Lie type and the possibilities for $(G,H)$ are determined in Theorem \ref{t:main}.
\end{itemize}
\end{corol}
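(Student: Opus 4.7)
The corollary is essentially an assembly of existing classifications together with the new Theorem~\ref{t:main}, so my plan is to set up the case division and then quote each ingredient. First I would record the reduction to almost simple socles: by \cite[Theorem~2.1]{BGW}, the classification of $r$-elusive primitive permutation groups reduces (up to well-understood wreath product constructions that preserve the $r$-elusive property in a transparent way) to the case where $G$ is almost simple. Since we have already assumed $G$ is almost simple in the statement, and since $T$ is a nonabelian finite simple group, I would then split into the three cases according to whether $T$ is alternating/sporadic, a classical group of Lie type, or an exceptional group of Lie type.

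Next I would dispatch each case by quoting the appropriate reference. For case (iii), $T$ exceptional, the statement is precisely Theorem~\ref{t:main} together with Theorem~\ref{t:odd} handling the $|\O|$ odd possibilities (though these are excluded here by assumption on $|\O|$). For case (ii), $T$ classical, I would invoke \cite[Theorem~1.5.1]{BG}, which was prepared by the companion paper \cite{BG2}; this result already works at the level of the socle $T$, which is exactly what is needed. For case (i), $T$ alternating or sporadic, the cases where the full group $G$ is $2$-elusive are enumerated in \cite[Theorem~1.1]{BGW}.

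The one subtle point, which I would treat with care, is the distinction between ``$G$ is $2$-elusive'' (every element of $G$ of order~$2$ has a fixed point on $\O$) and ``$T$ is $2$-elusive'' (every involution of $T$ has a fixed point, equivalently every $G$-class of involutions lying in $T$ meets $H_0 = H \cap T$). The former is what is classified in \cite{BGW}, while the latter is what the corollary requires; these differ precisely by the possibility that every inner involution fixes a point while some outer involution in $G \setminus T$ is a derangement. For socles of Lie type this discrepancy is already absorbed into the statements of \cite{BG2,BG} and of Theorem~\ref{t:main}, which are phrased in terms of involutions of $T$. For alternating and sporadic socles, I would bridge the gap by introducing Table~\ref{tab:not2}, which lists exactly those pairs $(G,H)$ with $T$ alternating or sporadic where $T$ is $2$-elusive but $G$ is not; this list can be produced by a finite inspection, using the \textsc{Atlas} and the data already compiled in \cite{BGW}, to check each $G$ with $T \leqslant G \leqslant \Aut(T)$ and each maximal $H$ appearing there whether every involutory outer automorphism class meets $H$.

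The main obstacle, such as it is, will be verifying completeness of Table~\ref{tab:not2}: one needs to go through all pairs $(G,H)$ with alternating or sporadic socle, reconcile the conjugacy-class information for outer involutions in $G$ with the fusion of $H$-classes, and confirm that no case is missed. The Lie-type cases are heavier mathematically but require no further work here beyond citing the assembled theorems.
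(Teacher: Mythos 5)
Your proposal is correct and matches the paper's argument: the corollary is proved by combining Theorem \ref{t:main} with \cite[Theorem 1.5.1]{BG} and the results of \cite{BG2,BGW} for $r=2$, and the accompanying remark handles exactly the subtlety you flag, namely that \cite{BGW} classifies the pairs where $G$ (rather than $T$) is $2$-elusive, with Table \ref{tab:not2} obtained by a finite inspection of the proofs in \cite{BGW} to recover the cases where $T$ is $2$-elusive but $G$ is not. Your observation that this discrepancy is already absorbed into the socle-level statements for groups of Lie type is also the reason the paper needs no analogue of Table \ref{tab:not2} in cases (ii) and (iii).
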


{\scriptsize
\begin{table}
\[
\begin{array}{lll} \hline
G & H & \mbox{Conditions} \\ \hline
{\rm Sym}_n & {\rm Sym}_k \times {\rm Sym}_{n-k} & \mbox{$H$ intransitive, $n \equiv 2 \imod{4}$, $1 \leqs k < n/2$ odd} \\
{\rm Sym}_6 & {\rm Sym}_5 & \mbox{$H$ primitive} \\
{\rm PGL}_2(9) & 3^2{:}8 & \\
{\rm Alt}_6.2^2 & 2 \times 5{:}4, \, 3^2{:}{\rm SD}_{16} & \\
{\rm He}.2 & {\rm Sp}_4(4).4, \, 5^2{:}4{\rm Sym}_4 & \\
{\rm J}_2.2 & ({\rm Alt}_5 \times {\rm D}_{10}).2, \, 5^2{:}(4 \times {\rm Sym}_3) & \\
{\rm J}_3.2 & {\rm L}_2(16).4 & \\
{\rm O'N}.2 & 3^3{:}2^{1+4}.{\rm D}_{10}.2 & \\ \hline
\end{array}
\]
\caption{The special cases arising in part (i) of Corollary \ref{c:main2}}
\label{tab:not2}
\end{table}
}

\begin{remk}
Let $G \leqs {\rm Sym}(\O)$ be an almost simple primitive group with socle $T$ and point stabilizer $H$, where $T$ is an alternating or sporadic group. The $2$-elusive groups of this form  are determined in \cite{BGW}, but some additional work is required to find all of the examples where $T$ is $2$-elusive. Indeed, if $G$ is $2$-elusive then so is $T$, but the converse is false, in general. For example, if we consider the natural action of $G = {\rm Sym}_6$ on $\O = \{1,\ldots, 6\}$, then $T = {\rm Alt}_6$ is $2$-elusive, but $G$ is not. By inspecting the relevant proofs in \cite{BGW}, it is a straightforward exercise to determine the complete list of pairs $(G,H)$ that arise in this way, and these are the cases recorded in Table \ref{tab:not2}.
\end{remk}

By combining Theorem \ref{t:main} with \cite[Theorem 4.1.7]{BG}, we obtain the following corollary on maximal parabolic subgroups of simple groups of Lie type, which may be of independent interest. For a classical group $G$, recall that each maximal parabolic subgroup is the stabilizer of a totally singular $m$-dimensional subspace of the natural module $V$, which we denote by $P_m$ (if $G = {\rm L}_n(q)$ is a linear group, then we view all subspaces of $V$ as totally singular). In addition, we adopt the standard $P_m$ notation for the maximal parabolic subgroups of exceptional groups (this agrees with the usual Bourbaki labelling of the simple roots for $G$, as given in \cite[11.4]{Humphreys}). Also note that in Table \ref{tab:parab}, we use the notation $(a)_2$ to denote the largest power of $2$ dividing $a$.

\begin{corol}\label{c:parmain}
Let $G$ be a simple group of Lie type and let $H = P_m$ be a maximal parabolic subgroup of $G$. Then either $H$ contains a representative of every conjugacy class of involutions in $G$, or $(G,m)$ is one of the cases recorded in Table \ref{tab:parab}.
\end{corol}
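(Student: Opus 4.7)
The plan is to deduce Corollary \ref{c:parmain} essentially as a direct consequence of the two cited results, once the corollary is reformulated as a $2$-elusivity statement. First I would observe that $H$ contains a representative of every conjugacy class of involutions in $G$ if and only if every involution in $G$ has a fixed point on $\O = G/H$, and this is exactly the condition that $G$ is $2$-elusive on $\O$. If $|\O|$ is odd then no element of even order acts fixed-point-freely on $\O$ for trivial order reasons, so the conclusion holds automatically in that case; thus the substantive content of the corollary is about the even-degree parabolic actions.

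Next I would split the argument according to whether the socle $T$ of $G$ is a classical group or an exceptional group of Lie type. In the classical case, a maximal parabolic $P_m$ is the stabilizer in $G$ of a totally singular $m$-subspace of the natural module, and the relevant $2$-elusivity analysis has already been carried out in \cite[Theorem 4.1.7]{BG}; I would simply record the resulting list of $(G,m)$ in Table \ref{tab:parab}, together with the parity conditions (such as those captured by the notation $(a)_2$) that dictate which involution classes fail to meet $P_m$.

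For the exceptional case, the key observation is that every maximal parabolic subgroup of $G$ belongs to the collection $\mathcal{C}$ of Definition \ref{d:c}: indeed, a $\sigma$-invariant parabolic subgroup of the ambient algebraic group $\bar{G}$ is a maximal $\sigma$-invariant positive-dimensional closed subgroup of $\bar{G}$, so its normalizer in $G$ has the form described in case~(I). Therefore Theorem \ref{t:main}(i) applies, and the non-$2$-elusive parabolic cases are precisely those rows of Table \ref{tab:main} in which $H_0$ is of parabolic type $P_m$. I would extract these rows and translate them into the Bourbaki labelling of the simple roots in the sense of \cite[11.4]{Humphreys}.

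Finally, I would merge the classical list from \cite[Theorem 4.1.7]{BG} with the exceptional list extracted from Table \ref{tab:main} to produce Table \ref{tab:parab}. The one nontrivial bit of bookkeeping, and the main source of potential error rather than a deep obstacle, is ensuring that the parabolic entries in Table \ref{tab:main} are correctly reindexed: the tables of \cite{BG} use the $P_m$ notation for totally singular $m$-spaces, while the exceptional tables here use the Bourbaki simple-root labelling, and one must check that these two conventions are merged consistently in Table \ref{tab:parab}. Once this identification is made, the corollary follows by direct inspection.
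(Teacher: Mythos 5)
Your proposal is correct and matches the paper's own derivation: the paper obtains Corollary \ref{c:parmain} precisely by combining Theorem \ref{t:main} (whose parabolic content is Proposition \ref{p:parab}) for the exceptional groups with \cite[Theorem 4.1.7]{BG} for the classical groups, after the same reformulation in terms of fixed points on $G/H$ and the same observation that odd-degree actions are covered automatically. Your closing worry about reconciling the two $P_m$ conventions is a non-issue in practice, since the subspace-stabilizer labelling and the Bourbaki labelling apply to disjoint sets of rows in Table \ref{tab:parab}.
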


{\scriptsize
\begin{table}
\[
\begin{array}{ll} \hline
G & \mbox{Conditions} \\ \hline
{\rm L}_n(q) & \mbox{$n$ even, $mq$ odd, and either $q \equiv 3 \imod{4}$ or $(n)_2 > (q-1)_2$} \\
{\rm U}_n(q), \, n \geqs 3 & \mbox{$n$ even, $m = n/2$ and $(n)_2 < (q+1)_2$} \\
{\rm PSp}_n(q), \, n \geqs 4 &  \mbox{$q \equiv 3 \imod{4}$ and $m$ odd} \\
{\rm P\O}_n^{+}(q), \, n \geqs 8 & \mbox{$q \equiv 3 \imod{4}$ and either $m \geqs n/2-1$, or $m$ is odd and $n \equiv 0 \imod{4}$} \\
{\rm P\O}_n^{-}(q), \, n \geqs 8 & \mbox{$q \equiv 7 \imod{8}$, $n \equiv 2 \imod{4}$ and $m$ odd} \\
\O_n(q), \, n \geqs 7 & \mbox{$q \equiv 3 \imod{4}$ and $m = (n-1)/2$} \\
E_7(q) & \mbox{$q \equiv 3 \imod{4}$ and $m \in \{2,5,7\}$} \\ \hline
\end{array}
\]
\caption{The special cases $(G,m)$ arising in Corollary \ref{c:parmain}}
\label{tab:parab}
\end{table}
}

Finally, by combining Corollary \ref{c:main2} with \cite[Theorem 2.1]{BGW}, we get the following result for arbitrary primitive permutation groups.

\begin{corol}\label{c:main3}
Let $L \leqs {\rm Sym}(\O)$ be a finite primitive permutation group with socle $S$ and assume $|\O|$ is even. Then $S$ is $2$-elusive if and only if the following hold:

\vspace{1mm}

\begin{itemize}\addtolength{\itemsep}{0.2\baselineskip}
\item[{\rm (i)}] $L \leqs G \wr {\rm Sym}_k$ acting with its product action on $\Omega = \Delta^k$ for some $k \geqs 1$, where $G \leqs {\rm Sym}(\Delta)$ is an almost simple primitive group with socle $T$ and point stabilizer $H$; and

\item[{\rm (ii)}] $T$ is $2$-elusive on $\Delta$, so $(G,H)$ is one of the cases recorded in Corollary \ref{c:main2}.
\end{itemize}
\end{corol}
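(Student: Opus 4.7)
The plan is to reduce the general primitive case to the almost simple case via \cite[Theorem 2.1]{BGW} and then appeal to Corollary \ref{c:main2}. First, I would invoke \cite[Theorem 2.1]{BGW}, which implies that if the socle $S$ of a primitive group $L$ is $r$-elusive, then $L$ preserves a Cartesian decomposition $\Omega = \Delta^k$ and embeds in $G \wr {\rm Sym}_k$ acting by the product action, with $G \leqs {\rm Sym}(\Delta)$ almost simple and primitive having socle $T$, and $S = T^k$. Applied with $r = 2$, this delivers the structural condition (i) in the statement.

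Next, I would verify that $S = T^k$ is $2$-elusive on $\Omega = \Delta^k$ if and only if $T$ is $2$-elusive on $\Delta$. An element $(t_1, \ldots, t_k) \in S$ fixes a point $(\delta_1, \ldots, \delta_k) \in \Omega$ precisely when each $t_i$ fixes $\delta_i$, so the tuple is a derangement on $\Omega$ if and only if some coordinate $t_i$ is a derangement of $T$ on $\Delta$. In one direction, if $t \in T$ is an involutory derangement on $\Delta$, then $(t, 1, \ldots, 1) \in S$ is an involutory derangement on $\Omega$. Conversely, given an involutory derangement in $S$, some coordinate $t_i$ must be a derangement of $T$ on $\Delta$, and $t_i$ necessarily has order $2$ (not $1$, since it is a derangement). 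Since $|\Omega| = |\Delta|^k$, the parity hypothesis on $|\Omega|$ is equivalent to the corresponding hypothesis on $|\Delta|$, so Corollary \ref{c:main2} applies to $(G, H)$ and yields condition (ii).

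The only nontrivial ingredient is the appeal to \cite[Theorem 2.1]{BGW}; everything else is a short coordinate-wise check, so I do not anticipate any serious obstacle beyond correctly invoking that reduction and matching the product-action conventions.
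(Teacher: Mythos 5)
Your proposal is correct and is essentially the proof the paper intends: the paper derives this corollary precisely by combining \cite[Theorem 2.1]{BGW} with Corollary \ref{c:main2}, and your coordinate-wise verification that $S = T^k$ is $2$-elusive on $\Delta^k$ if and only if $T$ is $2$-elusive on $\Delta$ (together with the observation that $|\Omega| = |\Delta|^k$ is even if and only if $|\Delta|$ is) correctly supplies the routine details left implicit there.
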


\vs

In future work, we will investigate the $r$-elusive actions of almost simple exceptional groups of Lie type for all odd primes $r$.

\vs

The structure of this paper is as follows. We begin in Section \ref{s:prel} by presenting a number of preliminary results, which we will need in the proof of Theorem \ref{t:main}. In particular, we fix notation and we introduce our algebraic group setup, which we will use throughout the paper. We also recall some of the main results on the conjugacy classes of maximal subgroups and involutions in the simple exceptional groups (both finite and algebraic), and we briefly explain how we will apply computational methods, working with {\sc Magma} \cite{magma}. The remainder of the paper is dedicated to the proof of Theorem \ref{t:main}. The cases where the point stabilizer $H$ is a parabolic or subfield subgroup are straightforward and they are treated in Sections \ref{ss:parsub} and \ref{ss:subf}, respectively. The low rank groups are then handled in Section \ref{s:proof_low}, which reduces the proof of Theorem \ref{t:main} to the groups with socle $T = F_4(q)$, $E_6^{\e}(q)$, $E_7(q)$ or $E_8(q)$, and these four cases are handled in Sections \ref{s:proof_F4} - \ref{s:e8}, respectively. Finally, the relevant tables referred to in the statement of Theorem \ref{t:main} are presented in Section \ref{s:tab}.

We conclude the introduction by briefly outlining some of the methods used in the proofs of our main results. Let $G$ be an almost simple exceptional group of Lie type
with socle $T$ and let $H$ be a core-free maximal subgroup with $|\Omega| = [G:H]$ even. We will consider each possibility for $G$ and $H$ in turn, with the aim of determining whether or not $H_0 = H \cap T$ intersects every $T$-class of involutions in $T$ (in other words, our goal to determine whether or not the action of $T$ on $\Omega = G/H$ is $2$-elusive). In a handful of cases, we find that the number of $T$-classes of involutions in $T$ exceeds the number of $H_0$-classes of involutions in $H_0$, which immediately implies that $T$ is not $2$-elusive. But in all other cases, we need to study the fusion of $H_0$-classes in $T$ in order to determine whether or not every $T$-class of involutions has a representative in $H_0$.

Given an involution $g \in H_0$, we can usually identify the $T$-class of $g$ by computing the dimension of the fixed point space $C_V(g)$ of $g$ on the adjoint module $V = \mathcal{L}(\bar{G})$ for the ambient simple algebraic group $\bar{G}$ (see Section \ref{ss:invols}). In particular, if we have expressed $g$ explicitly as a product of root elements, then we can compute $\dim C_V(g)$ using {\sc Magma}, as discussed in Section \ref{ss:comp}. For example, this is typically the approach we take when $H_0$ is a subgroup of maximal rank, such as the normalizer of a maximal torus in $T$. In other cases, we will often compute $\dim C_V(g)$ by appealing to information available in the literature on the composition factors of the restriction $V \downarrow H_0$. And in cases where this information is not readily available, then a different argument is required, which may involve working with an explicit construction of $H_0$ and $V \downarrow H_0$.

Recall that we partition the set of maximal subgroups of $G$ into two parts, denoted $\mathcal{C}$ and $\mathcal{S}$. If $H \in \mathcal{C}$ is of the form $H = N_G(\bar{H}_{\s})$ for some $\sigma$-invariant positive-dimensional closed subgroup $\bar{H}$ of $\bar{G}$, then we can often work with a description of the composition factors of $V \downarrow \bar{H}$, which is readily available in the literature (for example, the tables in \cite[Chapter 12]{Thomas} provide a convenient source). Similarly, if $H \in \mathcal{S}$, then $H$ is almost simple and the possibilities for $H$ have been determined up to conjugacy for $T \ne E_7(q), E_8(q)$; in particular, the structure of 
$V \downarrow H_0$ is known and as before we can use this to determine the fusion of $H_0$-classes in $T$. However, it remains an open problem to determine the subgroups $H \in \mathcal{S}$ when $T = E_7(q)$ or $E_8(q)$, despite substantial progress in recent years. Here the possibilities for the socle of $H$ have been narrowed down to a fairly short list of candidates and to handle these cases we typically take a computational approach, working with \emph{feasible characters} as in \cite{Litt}, which we discuss in Section \ref{ss:feasible}. In order to rule out the possibility of a $2$-elusive action of $T$ on $G/H$, the goal is to compute all the feasible characters of $H_0$ on $V$ and then show that none of them are consistent with both 
\begin{itemize}\addtolength{\itemsep}{0.2\baselineskip}
\item[{\rm (a)}] the maximality of $H$ as a subgroup of $G$; and
\item[{\rm (b)}] $H_0$ intersecting every $T$-class of involutions in $T$.
\end{itemize}
If this approach is feasible, then we can use it to rule out the possibility of a $2$-elusive action, without determining whether or not such a maximal subgroup of $G$ exists.

\vs

\noindent \textbf{Acknowledgements.} We thank an anonymous referee for their careful reading of an earlier version of the paper and for sharing many useful comments and suggestions. We also thank David Craven and Alastair Litterick for helpful discussions. MK was supported by NSFC grant 12350410360.

\section{Preliminaries}\label{s:prel}

\subsection{Notation}\label{ss:nota}

We begin by fixing some of the general notation we will use throughout the paper. Further notation will be introduced as and when needed.

Let $n$ be a positive integer and let $A$ and $B$ be groups. We denote a cyclic group of order $n$ by $Z_n$, and often just by $n$, and we will use $[n]$ to denote an unspecified solvable group of order $n$. We write $A{:}B$ for an unspecified split extension (semidirect product) of $A$ and $B$, where $A$ is a normal subgroup. Similarly, $A.B$ denotes an unspecified extension of $A$ and $B$ (possibly nonsplit) and we use $A \circ B$ to denote a central product of $A$ and $B$. And for positive integers $a$ and $b$, we write $(a,b)$ for the highest common factor of $a$ and $b$. 

We adopt the notation for finite simple groups from \cite{KL}; for example, we write ${\rm L}_n(q) = {\rm L}_n^{+}(q) = {\rm PSL}_n(q)$ and ${\rm U}_n(q) = {\rm L}_n^{-}(q) = {\rm PSU}_n(q)$ for linear and unitary groups. In some situations, it will also be convenient to adopt the Lie notation for classical groups, so we will write $A_{n-1}(q)$ and $A_{n-1}^-(q)$ for ${\rm L}_n(q)$ and ${\rm U}_n(q)$, etc.

Our notation for matrices is also fairly standard. First we write $I_n$ for the identity matrix of size $n$, and we use $J_n$ for a unipotent Jordan block of size $n$. We denote a block-diagonal matrix with diagonal components $A_1, \ldots, A_t$ by writing $(A_1, \ldots, A_t)$. For instance, $(I_2, -I_3)$ is the $5 \times 5$ diagonal matrix ${\rm diag}(1,1,-1,-1,-1)$. For a positive integer $k$, we use $J_n^k$ to denote the block-diagonal matrix $(J_n, \ldots, J_n)$, which is a unipotent matrix with $k$ Jordan blocks of size $n$. This extends naturally to the notation $(J_{n_1}^{k_1}, \ldots, J_{n_t}^{k_t})$ for arbitrary unipotent matrices and it will be convenient to adopt the shorthand $(n_1^{k_1}, \ldots, n_t^{k_t})$ for such a matrix. So for example, $(2^3, 1^2)$ denotes an $8 \times 8$ unipotent matrix with $3$ Jordan blocks of size $2$, and $2$ Jordan blocks of size $1$.

\subsection{Setup}\label{ss:setup}

Here we fix our basic setup, which we will adopt for the remainder of the paper. 

Let $G \leqs {\rm Sym}(\O)$ be a finite almost simple primitive permutation group with point stabilizer $H$ and socle $T$. Set $H_0 = H \cap T$ and assume that $T$ is an exceptional group of Lie type over $\mathbb{F}_q$, where $q = p^f$ for some prime $p$ and positive integer $f$. We can write $T = (\bar{G}_{\s})'$, where $\bar{G}$ is a simple algebraic group of adjoint type over the algebraic closure $K = \bar{\mathbb{F}}_q$ and $\bar{G}_{\s}$ is the subgroup of fixed points of an appropriate Steinberg endomorphism $\s$ of $\bar{G}$. If $\bar{G}_{{\rm sc}}$ denotes the simply connected cover of $\bar{G}$ and we write $\s$ for the corresponding Steinberg endomorphism of $\bar{G}_{{\rm sc}}$, then 
$T \cong \left( \bar{G}_{{\rm sc}} \right)_{\s} / Z(\bar{G}_{{\rm sc}})_{\s}$,
unless $T  = {}^2G_2(3)'$, ${}^2F_4(2)'$ or $G_2(2)'$.

We now give a more precise description of $T$, following \cite{SteinbergNotes} and \cite[Chapter 2]{GLS}, which we also refer the reader to for more details. 

Let $\Phi$ be the root system of $\bar{G}$ and choose a base $\Delta = \{\alpha_1, \ldots, \alpha_{\ell}\}$ of simple roots in $\Phi$, where we adopt the standard Bourbaki labelling (see \cite[11.4]{Humphreys}). Let $\Phi^+$ be the corresponding set of positive roots. We may assume that $\bar{G}$ is a Chevalley group as defined in \cite{SteinbergNotes}, generated by the set of root elements $x_{\alpha}(t)$ with $\alpha \in \Phi$ and $t \in K$, which are constructed via reduction modulo $p$, as in \cite[Chapter 3]{SteinbergNotes}.

For $\alpha \in \Phi$, we write $U_{\alpha} = \langle x_{\alpha}(t) \, :\,  t \in K \rangle$ for the root subgroup corresponding to $\alpha$. And for each $t \in K^{\times}$ we define 
\[ 
w_{\alpha}(t) = x_{\alpha}(t) x_{-\alpha}(-t^{-1}) x_{\alpha}(t),\;\;  
h_{\alpha}(t) = w_{\alpha}(t) w_{\alpha}(1)^{-1},\;\; w_{\alpha} = w_{\alpha}(1).
\]
Then 
\[
\bar{T} = \langle h_{\alpha}(t) \,:\, \alpha \in \Phi, \, t \in K^{\times} \rangle = \la h_{\alpha}(t) \,:\, \alpha \in \Delta, \, t \in K^{\times}\ra
\]
is a maximal torus of $\bar{G}$ and $W = N_{\bar{G}}(\bar{T}) / \bar{T}$ is the Weyl group of $\bar{G}$.

We have $w_{\alpha}^2 = h_{\alpha}(-1)$ and $w_{-\alpha} = w_{\alpha}^{-1}$ for all $\alpha \in \Phi$. In addition, $h_{\alpha}(st) = h_{\alpha}(s)h_{\alpha}(t)$ and $h_{-{\alpha}}(t) = h_{\alpha}(t^{-1})$ for all $\alpha \in \Phi$ and $s,t \in K^{\times}$. Recall that $\dim \bar{T} = |\Delta|$ is called the \emph{rank} of $\bar{G}$.

For $\alpha \in \Phi$, we write $s_{\alpha} \in W$ for the image of the element $w_{\alpha} \in N_{\bar{G}}(\bar{T})$, which we refer to as the reflection corresponding to $\alpha$. Note that $W = \la s_{\alpha} \,:\, \alpha \in \Phi\ra = \la s_{\alpha} \,:\, \alpha \in \Delta \ra$.

Let $\s_q: \bar{G} \rightarrow \bar{G}$ be the Frobenius endomorphism of $\bar{G}$ corresponding to the field automorphism $t \mapsto t^q$ of $K$, which maps $x_{\alpha}(t) \mapsto x_{\alpha}(t^q)$ for all $t \in K$ and $\alpha \in \Phi$. In the \emph{untwisted case}, we have $\s = \s_q$ and the group $O^{p'}(\bar{G}_{\s})$ is  generated by the set of root elements $\{x_{\alpha}(t)\,:\, \alpha \in \Phi,\, t \in \mathbb{F}_q\}$. And we note that $O^{p'}(\bar{G}_{\s}) = T$ unless $\bar{G}_{\s} = G_2(2)$, in which case $O^{p'}(\bar{G}_{\s}) = \bar{G}_{\s} = T.2$.

In the \emph{twisted case}, the groups that arise are the \emph{Steinberg groups} and the \emph{Suzuki--Ree groups}. For Steinberg groups, we have $\s = \tau \s_q$, where $\tau$ is a graph automorphism of $\bar{G}$. Here $\tau$ corresponds to a permutation $\tau'$ of the root system $\Phi$ with $\tau'(\Delta) = \Delta$, and as in Corollary (b) of \cite[Theorem 29]{SteinbergNotes} we have 
\[
\tau(x_{\alpha}(t)) = x_{\tau'(\alpha)}(\varepsilon_{\alpha}t),
\]
where $\varepsilon_{\alpha} = \pm$ for all $\alpha \in \Phi$, and $\varepsilon_{\alpha} = +$ for all $\alpha \in \pm \Delta$. (The values of $\varepsilon_{\alpha}$ for $\alpha \notin \pm \Delta$ depend on the structure constants of the Chevalley basis used in the construction of $\bar{G}$.)

In the case of exceptional groups, the Steinberg groups that arise are $T = {}^3D_4(q)$ and $T = {}^2E_6(q)$. If $T = {}^3D_4(q)$, then $\bar{G}$ is of type $D_4$ and we take $\tau$ to be a triality graph automorphism corresponding to the following permutation $\tau'$ of $\Delta = \{\a_1,\a_2,\a_3,\a_4\}$: 
\[ 
\alpha_1 \mapsto \alpha_3 \mapsto \alpha_4 \mapsto \alpha_1, \;\; \alpha_2 \mapsto \alpha_2. 
\]
For example, notice that $T$ contains the elements 
$x_{\alpha_1}(s) x_{\alpha_3}(s^q) x_{\alpha_4}(s^{q^2})$ and $x_{\alpha_2}(t)$ for all $s \in \mathbb{F}_{q^3}$ and $t \in \mathbb{F}_q$.
Similarly, if $T = {}^2E_6(q)$ then $\bar{G} = E_6$ and we take $\tau$ to be an involutory graph automorphism defined by the following permutation of $\Delta$:
\[
\alpha_1 \mapsto \alpha_6 \mapsto \alpha_1, \;\; \alpha_3 \mapsto \alpha_5 \mapsto \alpha_3, \;\; \alpha_2 \mapsto \alpha_2, \;\; \alpha_4 \mapsto \alpha_4.
\]
Then $T$ contains elements such as $x_{\alpha_1}(s)x_{\alpha_6}(s^q)$,  $x_{\alpha_3}(s) x_{\alpha_5}(s^q)$ and $x_{\alpha_2}(t)$, $x_{\alpha_4}(t)$ for all
$s \in \mathbb{F}_{q^2}$ and $t \in \mathbb{F}_q$.

For the Suzuki--Ree groups we have $\s = \psi \s_q$, where $\psi:\bar{G} \to \bar{G}$ is an exceptional isogeny; in this case $q = p^f$ with $f$ odd, and either $p = 2$ and $T \in \{ {}^2B_2(q), {}^2F_4(q)' \}$, or $p = 3$ and $T = {}^2G_2(q)'$ (note that if $T = {}^2B_2(q)$ then we may assume $q \geqs 8$ since ${}^2B_2(2) = 5{:}4$ is solvable). Here $\psi$ is defined via a certain involution $\alpha \mapsto \alpha^\vee$ on $\Phi$ which swaps short and long roots, and preserves angles between the simple roots, see Corollary (b) of \cite[Theorem~29]{SteinbergNotes}. Then  
\[
\psi(x_{\alpha}(t)) = \begin{cases} x_{\alpha^\vee}(\varepsilon_{\alpha} t) & \mbox{if  $\alpha$ is long} \\ 
x_{\alpha^\vee}(\varepsilon_{\alpha} t^2) & \mbox{if $\alpha$ is short,} 
\end{cases}
\]
where once again we have $\varepsilon_{\alpha} = \pm$ for all $\alpha \in \Phi$, and $\varepsilon_{\alpha} = +$ for all $\alpha \in \pm \Delta$.

For more detailed information on the Steinberg groups and the Suzuki--Ree groups, we refer the reader to \cite[Chapter 11]{SteinbergNotes} and \cite[Chapter 2]{GLS}.

\begin{rem}\label{r:ordering}
As stated above, we adopt the standard Bourbaki labelling of the simple roots $\Delta = \{\a_1, \ldots, \a_{\ell}\}$ for $\bar{G}$. But in certain places, we will also need a labelling for the complete set of positive roots $\Phi^+$ (for instance, see Example \ref{ex:e6t1} and the proofs of Propositions \ref{p:f4_2}, \ref{p:e6_2} and Lemmas \ref{l:e7_mr}, \ref{l:e8_2}, \ref{l:e8_3}). It will be convenient to adopt the same ordering of the roots as used by {\sc Magma} \cite{magma} and we will do this consistently throughout the paper. 

Let us briefly explain how the {\sc Magma} ordering of roots is determined. First recall that the \emph{height} of a positive root $\a \in \Phi^{+}$ is the positive integer ${\rm ht}(\a) = \sum_{i} c_i$, where $\a = \sum_{i=1}^{\ell} c_i\a_i$. The positive roots are first ordered by height, so $\operatorname{ht}(\alpha) < \operatorname{ht}(\beta)$ implies $\alpha < \beta$. And then roots of equal height are ordered lexicographically as follows. For positive roots $\alpha, \beta \in \Phi^+$, we have $\alpha - \beta = \sum_{i = 1}^{\ell} d_i \a_i$ for some integers $d_i \in \mathbb{Z}$. If ${\rm ht}(\alpha) = {\rm ht}(\beta)$, then we define $\alpha < \beta$ if and only if $d_k > 0$, where $k$ is minimal such that $d_k \neq 0$. This defines a total order on $\Phi^+$ with the property that the simple roots $\a_1, \ldots, \a_{\ell}$ are the first $\ell$ roots in this ordering. We extend the notation for simple roots by writing $\alpha_i$ for the $i$-th root in $\Phi^+$ with respect to the ordering defined above.
\end{rem}

\subsection{Algebraic groups}\label{ss:alg}

Given the setup introduced in the previous section, we will often work with algebraic groups and their representations in this paper. In doing so, we will typically follow Jantzen's notation from \cite{Jantzen}, some of which is briefly recalled below.

Let $\bar{H}$ be a connected semisimple algebraic group of rank $\ell$ over $K = \bar{\mathbb{F}}_q$. Often we will denote $\bar{H}$ by its type; for example, $\bar{H} = A_4A_1$ means that $\bar{H}$ is a connected semisimple group of type $A_4A_1$. With respect to a fixed maximal torus and a complete set $\{\alpha_1, \ldots, \alpha_{\ell}\}$ of simple roots for $\bar{H}$, we denote the fundamental dominant  weights by $\{\varpi_1, \ldots, \varpi_{\ell} \}$. Throughout the paper, we will always adopt the standard Bourbaki labelling for the $\a_i$ and $\varpi_i$, as described in \cite[11.4]{Humphreys}. The Lie algebra of $\bar{H}$ is denoted by $\mathcal{L}(\bar{H})$ and we refer to it as the \emph{adjoint module} for $\bar{H}$.

By an $\bar{H}$-module, we will always mean a rational module defined over $K$. For a dominant weight $\lambda$, we use $V_{\bar{H}}(\lambda)$ to denote the Weyl module with highest weight $\lambda$. Similarly, $L_{\bar{H}}(\lambda)$ is the irreducible $\bar{H}$-module with highest weight $\lambda$. When there is no confusion, we will also use the notation $V(\lambda)$ and $L(\lambda)$. Similarly, if  $\bar{H} = A_1$, then we will write $c \varpi_1 = c$ for $c \in \Z$, so $V_{\bar{H}}(c \varpi_1) = V_{\bar{H}}(c)$ and $L_{\bar{H}}(c \varpi_1) = L_{\bar{H}}(c)$ for $c \geqs 0$.

For simply connected simple $\bar{H}$ of exceptional type, other than $E_8$, we define the \emph{minimal module} $V_{\mathrm{min}}$ to be the Weyl module $V_{\bar{H}}(\varpi_1), V_{\bar{H}}(\varpi_4), V_{\bar{H}}(\varpi_1), V_{\bar{H}}(\varpi_7)$ for $\bar{H} = G_2, F_4, E_6, E_7$, respectively. Note that $V_{\mathrm{min}}$ has dimension $7$, $26$, $27$, $56$ for $\bar{H} = G_2, F_4, E_6, E_7$, respectively.

Let $W_1$, $W_2$, $\ldots$, $W_t$ be $\bar{H}$-modules. Then for an $\bar{H}$-module $V$, we will use the notation $V = W_1/W_2/\cdots/W_t$ to denote that $V$ has the same composition factors as the direct sum $W_1 \oplus W_2 \oplus \cdots \oplus W_t$.

\subsection{Computational methods}\label{ss:comp}

Throughout this paper, we will often need to identify the conjugacy class of a given involution $g \in \bar{G}$, and in some cases we will adopt a computational approach to do this, working with {\sc Magma} \cite{magma} (version V2.28-11). Typically, the relevant element $g$ will be written as a product of elements of the form $h_{\alpha}(-1)$, $w_{\alpha}$ and $x_{\alpha}(\pm 1)$, with respect to the notation in Section \ref{ss:setup}. As we will explain in Section \ref{ss:invols}, we can determine the 
$\bar{G}$-class of $g$ by computing the dimension of the fixed point space 
\[
C_V(g) = \{ v \in V \,: \, gv = v \}
\]
with respect to the minimal module $V = V_{\mathrm{min}}$ or the adjoint module $V = \mathcal{L}(\bar{G})$, as defined in Section \ref{ss:alg}.

Let us explain how we can compute $\dim C_V(g)$ using {\sc Magma}. As before, let $\bar{G}_{{\rm sc}}$ be the simply connected cover of $\bar{G}$, and define elements $x_{\alpha}'(t)$, $h_{\alpha}'(t)$, $w_{\alpha}'$ in $\bar{G}_{{\rm sc}}$ in the same way we defined the analogous elements in $\bar{G}$ (see Section \ref{ss:setup}). There is an isogeny $\bar{G}_{{\rm sc}} \rightarrow \bar{G}$ mapping $x_{\alpha}'(t) \mapsto x_{\alpha}(t)$, so for the relevant computations we can work with a suitable element $g' \in \bar{G}_{{\rm sc}}$. In the cases we are interested in, the element $g'$ is defined over the prime field $\mathbb{F}_p$ since it can be written as a product of root elements of the form $x_{\alpha}'(\pm 1)$, so $g' \in (\bar{G}_{{\rm sc}})(p)$, which is the group of $\mathbb{F}_p$-rational points of $\bar{G}_{{\rm sc}}$. Furthermore, as a $K[(\bar{G}_{{\rm sc}})(p)]$-module, $V$ is defined over $\mathbb{F}_p$, which means that $V = K \otimes_{\mathbb{F}_p} V_0$ for some absolutely irreducible $\mathbb{F}_p[(\bar{G}_{{\rm sc}})(p)]$-module $V_0$. 

As a consequence, it suffices to determine the action of $g'$ on $V_0$ and then read off $\dim C_{V_0}(g')$, since tensoring by $K$ does not change the dimension of the fixed point space. 

\begin{ex}
For instance, the following {\sc Magma} code verifies that in characteristic $p = 2$, the involution $g = x_{\alpha_1}(1) x_{\alpha_4}(1)$ of $\bar{G} = F_4$ has a fixed point space of dimension $14$ on $V = V_{\mathrm{min}}$, and dimension $28$ on $V = \mathcal{L}(\bar{G})$: 

\vs

{\small
\begin{verbatim}
G := GroupOfLieType("F4", GF(2) : Isogeny := "SC");
g := elt<G | <1,1>, <4,1>>;
r := HighestWeightRepresentation(G, [0,0,0,1]);
A := Matrix(r(g)); // action of g on V_min
Dimension(Kernel(A-1)); // output: 14
s := AdjointRepresentation(G);
B := Matrix(s(g)); // action of g on Lie(G)
Dimension(Kernel(B-1)); // output: 28
\end{verbatim}
}
\end{ex}

Note that since $g$ is an involution, the dimension of $C_V(g)$ uniquely determines the Jordan normal form of $g$ on $V$. For instance, in the above example we see that $g$ has Jordan form $(2^{12}, 1^2)$ on $V_{\mathrm{min}}$ and $(2^{24}, 1^4)$ on $\mathcal{L}(\bar{G})$, and we will often denote this by writing $\mathcal{L}(\bar{G}) \downarrow g = (2^{24},1^4)$, for example. As explained in the next section, this allows us to conclude that $g$ is contained in the $\bar{G}$-class of involutions labelled  $A_1\tilde{A_1}$. 

In the same way, we can calculate the action of $g \in \bar{G}$ on any given Weyl module $V = V_{\bar{G}}(\lambda)$. We will often be interested in performing such calculations when $p=2$, in which case the involutions are unipotent elements. There will also be some cases where we need to do this when $p$ is odd. In the latter setting, each involution is semisimple and it is helpful to observe that we can work over the rational numbers $\mathbb{Q}$ in order to perform the relevant computations. This is advantageous because the computations over $\mathbb{Q}$ allow us to deduce results over $K$, which are independent of the choice of (odd) characteristic $p$. 

Let us explain why we can work over $\mathbb{Q}$ for computations when $p$ is odd. To see this, first let $\bar{G}_{\mathbb{Q}}$ be a simply connected Chevalley group over $\mathbb{Q}$ of the same type as $\bar{G}_{{\rm sc}}$, defined using the same structure constants as $\bar{G}_{{\rm sc}}$. For each $\a \in \Phi$ and $t \in \Q$, let $x_{\alpha}^{\Q}(t)$ be the corresponding root element in $\bar{G}_{\mathbb{Q}}$ and let $V_{\mathbb{Q}}$ be the $\bar{G}_{\mathbb{Q}}$-module with the same highest weight as $V$. Let $\bar{G}_{\Z}$ be the subgroup of $\bar{G}_{\mathbb{Q}}$ generated by the set $\{ x_{\alpha}^{\mathbb{Q}}(t) \,:\, \a \in \Phi, \, t \in \Z\}$. From the Chevalley construction, $V_{\Q}$ contains a $\bar{G}_{\Z}$-invariant lattice $V_{\Z}$.  Furthermore, there exists a homomorphism $$\pi: \bar{G}_{\Z} \rightarrow \bar{G}_{{\rm sc}}$$ defined by $x_{\alpha}^{\Q}(t) \mapsto x_{\alpha}'(t)$ for all $\alpha \in \Phi$ and $t \in \Z$. 

Let $\rho_{\Z}: \bar{G}_{\Z} \rightarrow \GL(V_{\Z})$ be the representation (over $\Z$) corresponding to $V_{\Z}$, and let $\rho: \bar{G}_{{\rm sc}} \rightarrow \GL(V)$ be the representation (over $K$) corresponding to $V$. From the construction of $\bar{G}_{{\rm sc}}$ via reduction modulo $p$ \cite[Chapter 3]{SteinbergNotes}, we can identify $V = K \otimes_{\Z} V_{\Z}$ and we have 
\[
\rho(\pi(x)) = {{\rm Id}}_K \otimes \rho_{\Z}(x)
\]
for all $x \in \bar{G}_{\Z}$.
 
Recall that the element $g' \in \bar{G}_{{\rm sc}}$ we are interested in can be expressed as a product of root elements of the form $x_{\alpha}'(\pm 1)$, say
\[
g' = x_{\beta_1}'(c_1) \cdots x_{\beta_t}'(c_t)
\]
for some roots $\beta_i \in \Phi$ and integers $c_i \in \{1,-1\}$. Now define 
\[
g_{\mathbb{Q}} = x_{\beta_1}^{\mathbb{Q}}(c_1) \cdots x_{\beta_t}^{\mathbb{Q}}(c_t)
\] 
and note that $g' = \pi(g_{\Q})$. 

The following result now justifies our calculations over $\Q$. In the statement, $V$ is an arbitrary Weyl module $V_{\bar{G}}(\l)$.

\begin{lem}\label{l:rationalcompute}
Assume that $p > 2$ and that $g_{\Q}$ acts as an involution on $V_{\Q}$. Then $g' = \pi(g_{\Q})$ acts as an involution on $V$ and we have $\dim C_V(g') = \dim C_{V_{\mathbb{Q}}}(g_{\mathbb{Q}})$.
\end{lem}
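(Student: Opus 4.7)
The plan is to reduce everything to a statement about a single integer matrix, namely $M := \rho_{\Z}(g_{\Q}) \in \GL(V_{\Z})$. By the relation $\rho(\pi(x)) = \mathrm{Id}_K \otimes \rho_{\Z}(x)$ recalled just before the lemma, the action of $g'$ on $V = K \otimes_{\Z} V_{\Z}$ is given by the matrix obtained from $M$ by reducing its entries modulo $p$. Since $M^2 = I$ holds in $\GL(V_{\Z}) \subseteq \GL(V_{\Q})$ (this is just the hypothesis that $g_{\Q}$ acts as an involution, written at the level of matrices), it continues to hold after reduction mod $p$, so $g'$ certainly acts as an element whose square is $\mathrm{Id}_V$.

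For the dimension equality, I would argue by rank counts. Using the equality $C_V(g') = \ker(M - I)$ computed over $K$, we have
\[ \dim C_V(g') = \dim V - \operatorname{rank}_K(M - I), \]
and analogously $\dim C_{V_{\Q}}(g_{\Q}) = \dim V_{\Q} - \operatorname{rank}_{\Q}(M - I)$. Two general facts are in play: first, reduction modulo $p$ can only decrease the rank of an integer matrix, so
\[ \operatorname{rank}_K(M - I) \leqs \operatorname{rank}_{\Q}(M - I), \qquad \operatorname{rank}_K(M + I) \leqs \operatorname{rank}_{\Q}(M + I); \]
second, because $p > 2$ and $M^2 = I$ over $K$, the matrix $M$ is diagonalizable over $K$ (the polynomial $X^2 - 1$ has distinct roots in $K$), so its $\pm 1$-eigenspaces give a decomposition $V = V^+ \oplus V^-$. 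Translated into ranks, this yields
\[ \operatorname{rank}_K(M - I) + \operatorname{rank}_K(M + I) = \dim V. \]
The same identity holds over $\Q$ trivially since $M$ is already an involution there, giving
\[ \operatorname{rank}_{\Q}(M - I) + \operatorname{rank}_{\Q}(M + I) = \dim V_{\Q} = \dim V. \]

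Combining the two displays above with the two inequalities forces both inequalities to be equalities. In particular, $\operatorname{rank}_K(M-I) = \operatorname{rank}_{\Q}(M-I)$, which gives the desired equality $\dim C_V(g') = \dim C_{V_{\Q}}(g_{\Q})$. This also confirms that $g'$ is genuinely an involution on $V$ rather than the identity, since $g_{\Q}$ being a nontrivial involution on $V_{\Q}$ means $\operatorname{rank}_{\Q}(M - I) > 0$, hence $\operatorname{rank}_K(M - I) > 0$.

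There is no real obstacle: the argument is essentially two lines of linear algebra once one notices that having the two ranks sum to $\dim V$ on both sides pins down both inequalities. The only place where the hypothesis $p > 2$ enters is in asserting the diagonalizability over $K$ — without it, $M$ could have a nontrivial unipotent part over $K$ and the sum $\operatorname{rank}_K(M-I) + \operatorname{rank}_K(M+I)$ could fall short of $\dim V$, allowing the fixed-point dimension to jump upon reduction.
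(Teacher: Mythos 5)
Your argument is correct, and it takes a genuinely different route from the paper. The paper's proof invokes the integral representation theory of the cyclic group of order two (Curtis--Reiner, Theorem 74.3, or Borovoi--Timashev, Appendix A): since $V_{\Z}$ is a $g_{\Q}$-invariant lattice and $g_{\Q}$ is an involution, there is a $\Z$-basis of $V_{\Z}$ in which its matrix is block-diagonal of the form $(I_a,-I_b,A^c)$ with $A = \operatorname{antidiag}(1,1)$; one then observes that $A$ is similar to $\operatorname{diag}(1,-1)$ over any field of characteristic different from $2$, so the $+1$-eigenspace has dimension $a+c$ over both $\Q$ and $K$. You avoid any normal form and argue purely by rank semicontinuity applied to the integer matrix $M=\rho_{\Z}(g_{\Q})$: reduction modulo $p$ cannot increase the ranks of $M-I$ and $M+I$, while separability of $X^2-1$ in characteristic $p>2$ forces $\operatorname{rank}(M-I)+\operatorname{rank}(M+I)=\dim V$ over both $\Q$ and $K$, which pins both inequalities to equalities. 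Your version is more elementary and self-contained (no citation to integral representation theory is needed), and it isolates exactly where the hypothesis $p>2$ enters; the paper's version yields slightly more information (the integral conjugacy class of $g_{\Q}$ on $V_{\Z}$), none of which is needed for the lemma. Your closing observation that $g'$ is a genuine involution rather than the identity, because $\operatorname{rank}_{\Q}(M-I)>0$ transfers to $K$ via the established equality, correctly disposes of the one degenerate case.
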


\begin{proof}
First observe that $g_{\Q} \in \bar{G}_{\Z}$, so the lattice $V_{\Z}$ is $g_{\Q}$-invariant. Since $g_{\Q}$ acts as an involution, it is well known (see for example \cite[Theorem 74.3]{CurtisReiner} or \cite[Appendix A]{BorovoiTimashev}) that there is a basis for $V_{\Z}$ such that the corresponding matrix for $g_{\Q}$ is block-diagonal of the form $(I_a, -I_b, A^c)$ for some non-negative integers $a,b$ and $c$, where $A$ is the $2 \times 2$ anti-diagonal matrix ${\rm antidiag}(1,1)$.

Now $\rho(g') = {\rm Id}_K \otimes \rho_{\Z}(g_{\mathbb{Q}})$, so the action of $g'$ on $V$ is given by the reduction modulo $p$ of this matrix. Since the matrix $A$ is similar to ${\rm diag}(1,-1)$ over any field of characteristic $\neq 2$, in particular over $K$ and over $\Q$, it is clear that $g'$ also acts as an involution on $V$. Moreover, we have $\dim C_V(g') = \dim C_{V_{\Q}}(g_{\Q})$ as required.
\end{proof}

\begin{ex}
Let $\bar{G} = E_7$ with $p > 2$ and consider the following elements:
\[
g_1 = h_{\alpha_1}(-1), \;\; g_2 = w_{\alpha_2} w_{\alpha_5} w_{\alpha_7}, \;\; g_3 = h_{\alpha_1}(-1) w_{\alpha_2} w_{\alpha_5} w_{\alpha_7}.
\]
It is clear that each $g_i$ acts as an involution on the adjoint module $V = \mathcal{L}(\bar{G})$. By appealing to Lemma \ref{l:rationalcompute}, the following {\sc Magma} code shows that $\dim C_V(g_i) = 69,79,63$ for $i = 1,2,3$ respectively: 

\vs

{\small
\begin{verbatim}
G := GroupOfLieType("E7", RationalField() : Isogeny := "SC");
g1 := TorusTerm(G,1,-1);
g2 := elt<G|2> * elt<G|5> * elt<G|7>;
g3 := g1*g2;
r := AdjointRepresentation(G);
A1 := Matrix(r(g1)); // action of g1 on Lie(G)
A2 := Matrix(r(g2)); // action of g2 on Lie(G)
A3 := Matrix(r(g3)); // action of g3 on Lie(G)
Dimension(Kernel(A1-1)); // output: 69
Dimension(Kernel(A2-1)); // output: 79
Dimension(Kernel(A3-1)); // output: 63
\end{verbatim}
}

\vs

\noindent As we will explain in the next section, this computation allows us to conclude that the involutions $g_1$, $g_2$ and $g_3$ are contained in the $\bar{G}$-classes of type $A_1D_6$, $E_6T_1$ and $A_7$, respectively.
\end{ex}

\subsection{Involutions}\label{ss:invols} 

Recall that our main aim is to classify the almost simple primitive groups with point stabilizer $H$ and socle $T$, an exceptional group of Lie type, with the property that $T$ is $2$-elusive. This is essentially equivalent to determining the pairs $(T,H_0)$, where $T$ is a simple exceptional group of Lie type and $H_0 = H \cap T$ for some core-free maximal subgroup $H$ of a group with socle $T$ such that $H_0$ intersects every $T$-class of involutions in $T$. So in order to study this problem, we require detailed information on both the conjugacy classes of involutions in simple exceptional groups, as well as the maximal subgroups of the almost simple exceptional groups. In this section, we focus on the involution classes, and we will turn to the maximal subgroups in Section \ref{ss:subs}.

The study of involutions divides naturally into two cases, according to the parity of the underlying characteristic $p$. Of course, if $p = 2$ then the involutions are unipotent elements of $\bar{G}$, while for $p \neq 2$ they are semisimple.  There are some significant differences between these two cases, so in the proofs of our main results, we will often treat the cases $p = 2$ and $p \neq 2$ separately.

We first consider some of the small rank exceptional groups.

\begin{lem}\label{l:b2g2classes}
If $T = {}^2B_2(q)$ or ${}^2G_2(q)'$, then $T$ has a unique conjugacy class of involutions.
\end{lem}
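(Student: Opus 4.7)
The plan is to treat the two families separately, since they differ in characteristic and hence in the nature of their involutions.

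For $T = {}^2B_2(q)$ with $q = 2^{2n+1} \geqs 8$, every involution is unipotent, and I would exploit the classical structure of a Sylow $2$-subgroup $U \leqs T$: it has order $q^2$, exponent $4$, and center $Z(U) = [U,U]$ of order $q$; moreover, the set of involutions in $U$ coincides with $Z(U) \setminus \{1\}$. The normalizer $N_T(U) = U \rtimes H$ is a Borel subgroup, where $H$ is a cyclic maximal torus of order $q-1$ acting on $Z(U) \setminus \{1\}$ regularly by conjugation (this follows from the $\sigma$-twisted action of the ambient torus of $\bar{G} = B_2$ on the long root subgroup fixed by $\sigma$). Combined with Sylow's theorem, which forces every involution of $T$ into a $T$-conjugate of $U$, this immediately yields a single $T$-class of involutions.

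For $T = {}^2G_2(q)'$ with $q = 3^{2n+1}$, every involution is semisimple. If $q = 3$ then $T \cong {\rm L}_2(8)$, whose Sylow $2$-subgroup is elementary abelian of order $8$ normalized by the Frobenius group $2^3{:}7$, so the seven involutions form a single $T$-class. For $q \geqs 27$, the cleanest route is to work in the algebraic group $\bar{G} = G_2$ over $\bar{\mathbb{F}}_3$: one checks that $\bar{G}$ has a unique conjugacy class of involutions, with \emph{connected} centralizer of type $A_1\tilde{A}_1$. A Lang--Steinberg argument then shows that all $\sigma$-fixed involutions in this single $\bar{G}$-class are $\bar{G}_{\sigma}$-conjugate, giving a single $T$-class. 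Alternatively, this may be read off directly from \cite[Table 4.5.1]{GLS}.

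The main obstacle, such as it is, lies in the Ree case: one needs the connectedness of the centralizer of an involution in $\bar{G} = G_2$ in order for the Lang--Steinberg argument to collapse the parametrizing cohomology set to a single element. This is standard, and is encoded in \cite[Table 4.5.1]{GLS}, which I would simply quote. No algebraic-group machinery is required for the Suzuki case, only elementary $p$-local analysis of a single Sylow $2$-subgroup and its Borel normalizer.
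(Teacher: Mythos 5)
Your proposal is correct, but it takes a different route from the paper, whose ``proof'' is essentially a pair of citations: the Suzuki case is attributed to \cite[Proposition 7]{Suzuki}, the Ree case to \cite[p.63]{Ward}, and the only separate argument is the observation that ${}^2G_2(3)' \cong {\rm L}_2(8)$ makes the claim clear there. What you do instead is reconstruct the proofs. For ${}^2B_2(q)$ your $2$-local argument (all involutions of the Suzuki $2$-group $U$ lie in $Z(U)$, the cyclic torus of order $q-1$ acts freely on the $q-1$ nontrivial elements of $Z(U)$ because the Borel is a Frobenius group, and Sylow pushes every involution into a conjugate of $U$) is in substance Suzuki's own argument, so there you are re-deriving the cited result rather than replacing it. For ${}^2G_2(q)$ with $q \geqslant 27$ your route is genuinely different from Ward's: you use that $G_2$ is simply connected, so the centralizer of a semisimple involution is connected, and then Lang--Steinberg collapses $x^{\bar{G}} \cap \bar{G}_{\sigma}$ to a single $\bar{G}_{\sigma}$-class; this is exactly the mechanism the paper itself formalizes later in Lemma \ref{l:Gsigmaclasses} via \cite[I, 2.7(a)]{SpringerSteinberg}, so your argument is both correct and consistent with the paper's general machinery, at the cost of invoking algebraic-group technology where the paper prefers a one-line citation. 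The one point worth making explicit in your Ree argument is that the unique involution class of $\bar{G}$ is $\sigma$-stable and meets $\bar{G}_{\sigma}$ (clear, since $|T|$ is even), after which the cohomological triviality of $C_{\bar{G}}(x)/C_{\bar{G}}(x)^{\circ}$ finishes the job.
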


\begin{proof}
The fact that ${}^2B_2(q)$ and ${}^2G_2(q)$ have a unique conjugacy class of involutions was originally proved in \cite[Proposition 7]{Suzuki} and \cite[p.63]{Ward}, respectively. This establishes the lemma except for ${}^2G_2(3)' \cong \mathrm{L}_2(8)$, in which case the result is clear.
\end{proof}

Next we consider the Steinberg triality group. We refer the reader to Table \ref{table:3D4q} for further information on the two classes of unipotent involutions when $p=2$ (in the table, $V$ denotes the natural $8$-dimensional module for $\bar{G} = D_4$).

\begin{lem}\label{l:3d4qclasses}
Suppose that $T = {}^3D_4(q)$.

\vspace{1mm}

\begin{itemize}\addtolength{\itemsep}{0.2\baselineskip}
\item[{\rm (i)}] If $q$ is odd, then $T$ has a unique conjugacy class of involutions.
\item[{\rm (ii)}] If $q$ is even, then $T$ has two conjugacy classes of involutions, labelled $A_1$ and $A_1^3$.
\end{itemize}
\end{lem}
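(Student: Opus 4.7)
The plan is to classify the involution classes in $T$ by working in the ambient adjoint algebraic group $\bar{G}$ of type $D_4$ equipped with the Steinberg endomorphism $\s = \tau \s_q$, where $\tau$ is the triality graph automorphism cyclically permuting $\a_1, \a_3, \a_4$ and fixing $\a_2$. Since $\bar{G}$ is of adjoint type we have $T = \bar{G}_\s$, and by Lang--Steinberg the $T$-classes of involutions correspond to pairs $(C, [c])$, where $C$ is a $\s$-stable $\bar{G}$-class of involutions and $[c]$ is a $\s$-conjugacy class in the component group $C_{\bar{G}}(g)/C_{\bar{G}}(g)^{\o}$ for a representative $g \in C$.

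For part (i), with $q$ odd, the involutions are semisimple. The classes of semisimple involutions in adjoint $D_4$ can be enumerated either directly from the description via Kac diagrams (involutions correspond to nodes of mark $1$ or $2$ in the extended Dynkin diagram, with additional classes in adjoint type coming from elements of order $4$ in the simply connected cover whose square lies in the centre of $\mathrm{Spin}_8$) or extracted from \cite{GLS}. One obtains one class with centralizer of type $A_1^4$, arising by removing the central node $\a_2$ from the extended Dynkin diagram, together with three further classes indexed by the three non-identity elements of $Z(\mathrm{Spin}_8) \cong \Z_2 \times \Z_2$, which are cyclically permuted by $\tau$. Since $\tau$ fixes $\a_2$ while acting transitively on the centre of $\mathrm{Spin}_8$, the only $\tau$-stable class is the $A_1^4$ one, and after verifying triviality of the relevant $H^1$ term, Lang--Steinberg gives the single $T$-class claimed.

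For part (ii), with $q$ even, I would enumerate the unipotent classes of involutions in $\bar{G}$ in characteristic $2$, for instance using the Aschbacher--Seitz parametrisation for orthogonal groups in bad characteristic or the tables in the Liebeck--Seitz monograph on unipotent classes. The possibilities are a single class of simple root type (labelled $A_1$), classes of type $A_1^2$ (which may split in characteristic $2$ into several $\bar{G}$-classes), and a class of type $A_1^3$. The triality action can be computed on explicit representatives via the formula $\tau(x_\a(t)) = x_{\tau'(\a)}(\e_\a t)$ from Section \ref{ss:setup}: for instance, $x_{\a_1}(1)$ is in the same $\bar{G}$-orbit as $x_{\a_3}(1)$ and $x_{\a_4}(1)$ because all long roots in $D_4$ are $W$-conjugate, so the $A_1$-class is $\tau$-fixed; and $x_{\a_1}(1) x_{\a_3}(1) x_{\a_4}(1)$, representing the triality-symmetric $A_1^3$-class, is manifestly $\tau$-stable. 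The remaining candidate classes of type $A_1^2$ form a single $\tau$-orbit and contribute nothing to $T$, yielding exactly the two classes claimed.

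The chief obstacle is the careful accounting in characteristic $2$, where unipotent classes in $D_n$ can split relative to the picture in good characteristic; one has to track which such split classes remain $\tau$-stable, and verify that for each $\s$-stable $\bar{G}$-class exactly one $T$-class arises (equivalently, that $H^1(\s, C_{\bar{G}}(g)/C_{\bar{G}}(g)^\o)$ is trivial in every relevant case). An alternative route would be to read the classes off the generic character table of ${}^3D_4(q)$ due to Deriziotis and Michler, but the algebraic-group argument above fits more naturally into the framework established earlier in the paper.
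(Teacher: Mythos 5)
Your approach is sound and, for part (i), essentially coincides with the paper's own ``alternative'' direct argument: the paper notes that adjoint $D_4 = {\rm SO}_8(K)/\langle \pm I_8\rangle$ has exactly four classes of involutions, only one of which (the class with connected centralizer of type $A_1^4$) is invariant under triality, and otherwise falls back on citing Kleidman and Gorenstein--Harada. For part (ii) the paper simply cites Spaltenstein, whereas you propose to rederive the statement from the Aschbacher--Seitz/Liebeck--Seitz classification of unipotent involutions in $D_4$ in characteristic $2$; that is a legitimate and more self-contained route, and your two surviving representatives agree with Table \ref{table:3D4q}. What your sketch defers, however, is exactly where the work lies. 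In characteristic $2$ there are five $\bar{G}$-classes of involutions (in the notation of Remark \ref{r:inv_class}(b): $a_2$, $c_2$, $a_4$, $a_4'$, $c_4$), and the assertion that the three classes other than $a_2$ and $c_4$ form a single $\tau$-orbit requires an argument: triality permutes the natural module with the two half-spin modules, so the Jordan form on the natural module is not a $\tau$-invariant, and one must actually check (say by comparing centralizer orders, or Jordan forms on all three $8$-dimensional modules) that $c_2$, $a_4$ and $a_4'$ are cyclically permuted. Likewise, the ``triviality of the relevant $H^1$ term'' that you flag is a genuine computation rather than a formality: for the $A_1^4$ class in part (i) the component group $C_{\bar{G}}(x)/C_{\bar{G}}(x)^{\circ}$ has order $4$, and one needs that $\sigma$ permutes its three involutions (so that $H^1(\sigma, C_{\bar{G}}(x)/C_{\bar{G}}(x)^{\circ})$ is trivial and $x^{\bar{G}} \cap T$ is a single $T$-class), with an analogous check for each surviving class in part (ii). These verifications all succeed, so your plan does go through, but as written the enumeration of classes, the triality action on them, and the non-splitting over $\mathbb{F}_q$ are asserted rather than established; the paper avoids this by citation, and if you want a self-contained proof you should either carry out these steps explicitly or cite Gorenstein--Harada and Spaltenstein as the paper does.
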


\begin{proof}
As noted in \cite[Lemma 2.3(i)]{K}, claim (i) follows from results in \cite{GorensteinHarada}. Alternatively, one can argue directly as follows: In $\bar{G} = D_4 = {\rm SO}_8(K) / \langle \pm I_8 \rangle$, each involution is conjugate to the image of a diagonal matrix in ${\rm SO}_8(K)$, and a calculation shows that there are $4$ conjugacy classes of involutions in $\bar{G}$, only one of which is invariant under a triality graph automorphism of $\bar{G}$. Finally, for part (ii) we refer to \cite{Spaltenstein}.
\end{proof}

{\scriptsize
\begin{table}
\[
\begin{array}{llll} \\ \hline
\mbox{Class} & \mbox{Representative} & |C_T(x)| & V \downarrow x \\ \hline
A_1 & x_{\alpha_1}(1) & q^{12}(q^6 - 1) & (2^2, 1^4) \\
A_1^3 & x_{\alpha_1}(1)x_{\alpha_3}(1)x_{\alpha_4}(1) & q^{10}(q^2 - 1) & (2^4) \\\hline
\end{array}
\]
\caption{The involution classes in $T = {}^3D_4(q)$ for $p=2$} 
\label{table:3D4q}
\end{table}
}

In each of the remaining cases, the ambient algebraic group $\bar{G}$ is simple of exceptional type. We begin by discussing the conjugacy classes of involutions in $\bar{G}$. 

First assume $p = 2$. Here the conjugacy classes of involutions in $\bar{G}$ are recorded in Table \ref{table:evenpinvolutions}, where we adopt the labelling of the classes from the tables in \cite[Chapter 22]{LS_book}. The information in Table \ref{table:evenpinvolutions} is verified as follows. Firstly, the Jordan forms on $V_{\mathrm{min}}$ and $\mathcal{L}(\bar{G})$ can be read off from the tables in \cite{Law} (see Remark \ref{r:nota} below). As a consequence, we observe that the $\bar{G}$-class of a unipotent involution is uniquely determined by its Jordan form on $V_{\mathrm{min}}$ and $\mathcal{L}(\bar{G})$. We will often use this fact to determine the class of a given involution in $\bar{G}$. 

In most cases, the representatives listed in Table \ref{table:evenpinvolutions} are standard ones that can be found in the literature. In any case, by computing the action of the given elements on $V_{\mathrm{min}}$ and $\mathcal{L}(\bar{G})$ (for example, with the aid of {\sc Magma}, as described in Section \ref{ss:comp}), one can verify that the representatives listed in Table \ref{table:evenpinvolutions} are correct by inspecting \cite{Law}. In the cases where $\bar{G}$ admits a graph automorphism $\tau$ (or an exceptional isogeny $\psi$), we have also indicated in Table \ref{table:evenpinvolutions} whether or not the given class is invariant under $\tau$ (or $\psi$). This information is clear from the table, since the representatives listed are either fixed by $\tau$ (or $\psi$), or mapped to another representative.

{\scriptsize
\begin{table}
\[
\begin{array}{llllll} \hline
\bar{G} & \mbox{Class} & \mbox{Representative} & V_{\mathrm{min}} \downarrow x & \mathcal{L}(\bar{G}) \downarrow x & \mbox{Notes} \\ \hline
G_2 & \tilde{A_1} & x_{\alpha_1}(1)  & (2^3, 1) & (2^6, 1^2) & \\
& A_1 & x_{\alpha_2}(1) & (2^2, 1^3) & (2^6, 1^2) & \\
&&&&&\\
F_4 & A_1 & x_{\alpha_1}(1) & (2^6, 1^{14}) & (2^{16}, 1^{20}) & \\
& \tilde{A_1} & x_{\alpha_4}(1) & (2^{10}, 1^6) & (2^{16}, 1^{20}) & \\
& A_1\tilde{A_1} & x_{\alpha_1}(1)x_{\alpha_4}(1) & (2^{12}, 1^2) & (2^{24}, 1^4) & \mbox{$\psi$-invariant} \\
& (\tilde{A_1})_2 & x_{\alpha_2+\alpha_3}(1) x_{\alpha_2+2\alpha_3}(1) & (2^{10}, 1^6) & (2^{21}, 1^{10}) & \mbox{$\psi$-invariant} \\ 
&&&&&\\
E_6 & A_1 & x_{\alpha_2}(1) & (2^6, 1^{15}) & (2^{22}, 1^{34}) & \mbox{$\tau$-invariant} \\
& A_1^2 & x_{\alpha_1}(1)x_{\alpha_6}(1) & (2^{10},1^7) & (2^{32}, 1^{14}) & \mbox{$\tau$-invariant} \\
& A_1^3 & x_{\alpha_1}(1)x_{\alpha_2}(1)x_{\alpha_6}(1) & (2^{12}, 1^3) & (2^{38}, 1^2) & \mbox{$\tau$-invariant} \\
&&&&&\\
E_7 & A_1 & x_{\alpha_1}(1) & (2^{12}, 1^{32}) & (2^{34}, 1^{65}) & \\
 & A_1^2 & x_{\alpha_1}(1)x_{\alpha_2}(1) & (2^{20}, 1^{16}) & (2^{52}, 1^{29}) & \\
 & (A_1^3)^{(1)} & x_{\alpha_2}(1)x_{\alpha_5}(1)x_{\alpha_7}(1) & (2^{28}) & (2^{53}, 1^{27}) & \\
 & (A_1^3)^{(2)} & x_{\alpha_3}(1)x_{\alpha_5}(1)x_{\alpha_7}(1) & (2^{24}, 1^8)      & (2^{62}, 1^9) & \\
& A_1^4 & x_{\alpha_2}(1)x_{\alpha_3}(1)x_{\alpha_5}(1)x_{\alpha_7}(1) & (2^{28})           & (2^{63}, 1^7) & \\
&&&&&\\
E_8 & A_1 & x_{\alpha_1}(1) & & (2^{58}, 1^{132}) \\
 & A_1^2 & x_{\alpha_1}(1)x_{\alpha_4}(1) & & (2^{92}, 1^{64}) \\
 & A_1^3 & x_{\alpha_1}(1)x_{\alpha_4}(1)x_{\alpha_6}(1) & & (2^{110}, 1^{28}) \\
 & A_1^4 & x_{\alpha_1}(1)x_{\alpha_4}(1)x_{\alpha_6}(1)x_{\alpha_8}(1) & & (2^{120}, 1^8) \\ \hline
\end{array}
\]
\caption{The involution classes in $\bar{G}$ of exceptional type, $p = 2$}\label{table:evenpinvolutions}
\end{table}
}

\begin{rem}\label{r:nota}
In Table \ref{table:evenpinvolutions}, we use the labelling of unipotent classes from \cite[Chapter 22]{LS_book} and we note that this differs slightly from the labels used by Lawther in \cite{Law}, which is our main reference for Jordan block sizes. Specifically, to avoid any confusion, we note that for $p = 2$ and $\bar{G} = E_7$, the class $(A_1^3)^{(1)}$ is denoted by $(3A_1)''$ in \cite{Law}, and the class $(A_1^3)^{(2)}$ is denoted by $(3A_1)'$ in \cite{Law}.
\end{rem}

Now assume $p > 2$. Here the conjugacy classes of involutions in $\bar{G}$ are listed in Table \ref{table:oddpinvolutions}, following \cite[Table 4.3.1]{GLS}. In each case, the class of an involution $x$ is labelled by the structure of the connected component $C_{\bar{G}}(x)^{\circ}$, which for exceptional $\bar{G}$ is uniquely determined by the dimension of the centralizer. Since each involution is semisimple, we have 
\[
\dim C_{\bar{G}}(x) = \dim C_{\mathcal{L}(\bar{G})}(x)
\]
by \cite[9.1]{Borel}, so we can determine the $\bar{G}$-class of $x$ by calculating its action on the adjoint module $\mathcal{L}(\bar{G})$. A similar observation is made in \cite[Proposition 1.2]{LS99}. As we did for the case $p = 2$, we also list explicit representatives for each class of involutions in Table \ref{table:oddpinvolutions}, and we indicate the classes that are invariant under the relevant maps $\tau$ and $\psi$. 

{\scriptsize
\begin{table}
\[
\begin{array}{lllccl} \hline
\bar{G} & \mbox{Class} & \mbox{Representative} & \dim C_{V_{\mathrm{min}}}(x) & \dim C_{\mathcal{L}(\bar{G})}(x) & \mbox{Notes} \\ \hline
G_2 & A_1 \tilde{A_1} & h_{\alpha_1}(-1)h_{\alpha_2}(-1) & 3 & 6 & \mbox{$\psi$-invariant if $p = 3$} \\
&&&&&\\
F_4 & A_1C_3 & h_{\alpha_1}(-1) & 14 & 24 & \\
& B_4 & h_{\alpha_4}(-1) & 10 & 36 & \\
&&&&&\\
E_6 & A_1A_5 & h_{\alpha_2}(-1) & 15 & 38 & \mbox{$\tau$-invariant} \\
& D_5T_1 & h_{\alpha_1}(-1)h_{\alpha_6}(-1) & 11 & 46 & \mbox{$\tau$-invariant} \\
&&&&&\\
E_7 & A_1D_6 & h_{\alpha_1}(-1) & \mbox{$32$ or $24$} & 69 & \\
& E_6T_1 & w_{\alpha_2} w_{\alpha_5} w_{\alpha_7}  & 0 & 79 & \\
& A_7 & h_{\alpha_1}(-1)w_{\alpha_2} w_{\alpha_5} w_{\alpha_7} & 0 & 63 & \\
&&&&&\\
E_8 & A_1E_7 & h_{\alpha_1}(-1) & & 136 & \\
& D_8 & h_{\alpha_1}(-1)h_{\alpha_2}(-1) & & 120 & \\ \hline
\end{array}
\]
\caption{The involution classes in $\bar{G}$ of exceptional type, $p \ne 2$}\label{table:oddpinvolutions}
\end{table}
}

\begin{rem}\label{r:e77}
Recall that our simple algebraic group $\bar{G}$ is of adjoint type, which means that $V_{\mathrm{min}}$ is not necessarily a $\bar{G}$-module when $\bar{G}$ is of type $E_6$ or $E_7$. However, in these cases we can lift each involution $g \in \bar{G}$ to an element $g' \in \bar{G}_{{\rm sc}}$ of order $2$ or $4$ in the simply connected cover of $\bar{G}$, and the action on $V_{\mathrm{min}}$ presented in Tables \ref{table:evenpinvolutions} and \ref{table:oddpinvolutions} corresponds to the action of $g'$. More precisely, $g'$ can be chosen to be an involution, except for elements in the classes labelled $E_6T_1$ and $A_7$ in $\bar{G} = E_7$ with $p \ne 2$. In these two cases, $g'$ has order $4$ and 
\[
(g')^2 = z = h_{\alpha_2}'(-1) h_{\alpha_5}'(-1) h_{\alpha_7}'(-1)
\]
generates the center of $\bar{G}_{{\rm sc}}$. (Here $h_{\alpha}'(t)$ is defined in the same way as $h_{\alpha}(t)$ for $\bar{G}$.) 

In addition, we note that if $\bar{G} = E_7$ and $p \ne 2$, then there are two possible choices for the lift $g'$, namely $g'$ and $g'z$. Since $z$ acts as $-I_{56}$ on $V_{\rm min}$, the value of $\dim C_{V_{\rm min}}(g')$ may depend on the choice of lift $g'$. This issue occurs only for involutions of type $A_1D_6$ in $\bar{G} = E_7$, as indicated in Table \ref{table:oddpinvolutions}. 
\end{rem}

We can now use the following lemma to describe the conjugacy classes of involutions in $\bar{G}_{\s}$ in terms of the classes in $\bar{G}$. Note that the lemma also includes the cases $T = {}^2B_2(q)$ and $T = {}^3D_4(q)$.

\begin{lem}\label{l:Gsigmaclasses}
Assume that $T \neq G_2(2)'$, and let $g \in \bar{G}$ be an involution. Then the following statements hold:

\vspace{1mm}

\begin{itemize}\addtolength{\itemsep}{0.2\baselineskip}
\item[{\rm (i)}] $g^{\bar{G}} \cap T$ is nonempty if and only if $g^{\bar{G}}$ is $\s$-invariant.
\item[{\rm (ii)}] If $T$ is untwisted, then $g^{\bar{G}} \cap T$ is nonempty.
\item[{\rm (iii)}] If $g^{\bar{G}} \cap T$ is nonempty, then $g^{\bar{G}} \cap T$ consists of a single $T$-class.
\end{itemize}
\end{lem}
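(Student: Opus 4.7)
For part (i), the forward direction is immediate: if $h \in g^{\bar{G}} \cap T \subseteq \bar{G}_\sigma$, then $\sigma(h) = h$, so the $\bar{G}$-class $g^{\bar{G}} = h^{\bar{G}}$ is setwise fixed by the algebraic endomorphism $\sigma$. For the converse, I would invoke the standard Lang--Steinberg argument: if $g^{\bar{G}}$ is $\sigma$-stable then $\sigma(g) = y^{-1} g y$ for some $y \in \bar{G}$, and since $\bar{G}$ is connected there exists $x \in \bar{G}$ with $x^{-1}\sigma(x) = y$, whence $h := xgx^{-1} \in g^{\bar{G}} \cap \bar{G}_\sigma$. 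To see that $h$ can be chosen in $T = (\bar{G}_\sigma)'$, note that for $p = 2$ the involution $h$ is unipotent and $\bar{G}_\sigma / T$ is abelian (generated by semisimple diagonal elements), so $h \in T$ automatically; for $p$ odd, each of the explicit representatives in Table \ref{table:oddpinvolutions} already lies in $T$, since $h_\alpha(-1)$ and $w_\alpha$ are images in $\bar{G}$ of elements of $(\bar{G}_{{\rm sc}})(p) \leqs (\bar{G}_{{\rm sc}})_\sigma$, whose images in $\bar{G}_\sigma$ lie in $T$ by the isogeny discussion in Section \ref{ss:setup}.

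For part (ii), by (i) it suffices to show that every $\bar{G}$-class of involutions is $\sigma_q$-stable in the untwisted setting. When $p = 2$, Table \ref{table:evenpinvolutions} shows that the classes are determined by the Jordan structure on $V_{\mathrm{min}}$ and $\mathcal{L}(\bar{G})$, which is visibly preserved by $\sigma_q$. When $p$ is odd, every involution in $\bar{G}$ is $W$-conjugate to a product $\prod_i h_{\alpha_i}(-1)^{\epsilon_i} \in \bar{T}$, and such an element has all parameters in $\mathbb{F}_p$, so is pointwise $\sigma_q$-fixed.

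For part (iii), the main mechanism is a second application of Lang--Steinberg, now to the centralizer $C_{\bar{G}}(g)$. Given $g_1, g_2 \in g^{\bar{G}} \cap T$ with $g_2 = x g_1 x^{-1}$, applying $\sigma$ yields $x^{-1} \sigma(x) \in C_{\bar{G}}(g_1)$; provided this centralizer is connected, Lang--Steinberg supplies $c \in C_{\bar{G}}(g_1)$ with $c^{-1}\sigma(c) = x^{-1}\sigma(x)$, and then $x c^{-1} \in \bar{G}_\sigma$ conjugates $g_1$ to $g_2$, giving $\bar{G}_\sigma$-conjugacy. To descend from $\bar{G}_\sigma$-conjugacy to $T$-conjugacy, one exploits that $\bar{G}_\sigma/T$ is represented by elements of the image of the adjoint maximal torus $(\bar{T})_\sigma$: these commute with any semisimple involution chosen in $\bar{T}$ (settling $p$ odd), and in the unipotent case they act trivially on the $T$-classes listed in Table \ref{table:evenpinvolutions} for each exceptional type, so that $g_1^{\bar{G}_\sigma} = g_1^T$. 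The principal obstacle is verifying that $C_{\bar{G}}(g)$ is connected for every involution class in Tables \ref{table:evenpinvolutions} and \ref{table:oddpinvolutions}, since Steinberg's theorem on the connectedness of semisimple centralizers applies only in the simply connected cover: for $p$ odd this must be confirmed in the adjoint $\bar{G}$ case by case from the tabulated centralizer types, and for $p = 2$ the relevant component groups are available in \cite[Chapter 22]{LS_book}. Finally, the cases $T = {}^2B_2(q)$ and $T = {}^3D_4(q)$ can be read off directly from Lemmas \ref{l:b2g2classes} and \ref{l:3d4qclasses}.
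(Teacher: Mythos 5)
Your overall strategy (Lang--Steinberg for (i), rationality of explicit representatives for (ii), Lang--Steinberg on centralizers for (iii)) is the standard one, and the paper's proof is essentially a compressed version of it that delegates the hard points to the literature. But your part (iii) rests on a hypothesis that is actually false in the most delicate case: you propose to verify that $C_{\bar{G}}(g)$ is connected for every involution class, and this fails for $\bar{G} = E_7$ (adjoint) with $p$ odd. For the classes labelled $E_6T_1$ and $A_7$ in Table \ref{table:oddpinvolutions} one has $[C_{\bar{G}}(g):C_{\bar{G}}(g)^{\circ}] = 2$ (the centralizers are $E_6T_1.2$ and $A_7.2$; see Remark \ref{r:TvsInnT}(c)), and correspondingly $g^{\bar{G}} \cap \bar{G}_{\s}$ splits into \emph{two} $\bar{G}_{\s}$-classes, of which only one lies in $T$. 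So the mechanism ``connected centralizer $\Rightarrow$ single $\bar{G}_{\s}$-class $\Rightarrow$ single $T$-class'' cannot be run there; one needs the finer $H^1(\s, C_{\bar{G}}(g)/C_{\bar{G}}(g)^{\circ})$ analysis together with the fact that the $\bar{G}_{\s}$-class meeting $T$ is a single $T$-class, which is exactly what the paper imports from \cite[Table 4.5.1, Theorem 4.2.2(j)]{GLS}. Your ``descend from $\bar{G}_{\s}$- to $T$-conjugacy'' step is also only sketched, and it is precisely this $E_7$, $q$ odd, situation where $[\bar{G}_{\s}:T]=2$ that makes it nontrivial.

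Two smaller points. In part (ii) for $p$ odd you assert that every involution in $\bar{G}$ is conjugate to a product $\prod_i h_{\alpha_i}(-1)^{\epsilon_i}$; for adjoint $E_7$ this is false for the classes $E_6T_1$ and $A_7$, whose elements lift to order-$4$ elements of $\bar{G}_{\rm sc}$ and hence do not lie in the image of the $2$-torsion of the simply connected torus. The correct (and the paper's) argument is that the representatives in Tables \ref{table:evenpinvolutions} and \ref{table:oddpinvolutions} are products of root elements $x_{\alpha}(\pm 1)$, hence lift to $\s_q$-fixed elements of $\bar{G}_{\rm sc}$ and so land in $T$. Finally, your descent arguments implicitly assume $T \cong (\bar{G}_{\rm sc})_{\s}/Z(\bar{G}_{\rm sc})_{\s}$, which fails for $T = {}^2F_4(2)'$ and ${}^2G_2(3)'$; in particular, for ${}^2F_4(2)'$ the claim ``$h$ unipotent implies $h \in T$'' is not automatic since $O^{2'}({}^2F_4(2)) = {}^2F_4(2) = T.2$, and this case must be checked directly (as the paper does at the outset).
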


\begin{proof}
First assume $T \in \{{}^2G_2(3)', {}^2F_4(2)'\}$. For $T = {}^2F_4(2)'$ it is easy to check that $T$ has two conjugacy classes of involutions, which belong to the $\bar{G}$-classes labelled $A_1\tilde{A_1}$ and $(\tilde{A_1})_2$. So the result holds in this case (see Table \ref{table:evenpinvolutions}). And for $T = {}^2G_2(3)'$, the result is clear since $T$ has a unique conjugacy class of involutions (Lemma \ref{l:b2g2classes}).

In the remaining cases $T \cong (\bar{G}_{{\rm sc}})_{\s} / Z(\bar{G}_{{\rm sc}})_{\s}$, where $\bar{G}_{{\rm sc}}$ is the simply connected cover of $\bar{G}$. Claim (i) follows from \cite[I, 2.7(a)]{SpringerSteinberg}. Claim (ii) is true more generally, but for the purposes of this proof it is sufficient to observe that each class representative given in Tables \ref{table:evenpinvolutions} and \ref{table:oddpinvolutions} can be written as a product of root elements of the form $x_{\alpha}(\pm 1)$. Then part (ii) follows since any such root element is fixed by a Frobenius endomorphism.

For (iii) in the case $p = 2$, we refer to the tables in \cite[Chapter 22]{LS_book} when $\bar{G}$ is of exceptional type; alternatively, see \cite{AS}. For $p = 2$ and $T \in \{{}^2B_2(q), {}^3D_4(q)\}$, the claim follows from Lemmas \ref{l:b2g2classes} and \ref{l:3d4qclasses}. And for $p > 2$, we refer to \cite[Table 4.5.1]{GLS}, noting that $x^T = x^{\bar{G}_{\s}}$ for each involution $x \in T$ (see \cite[Theorem 4.2.2(j)]{GLS}). 
\end{proof}

To summarize the results in the case where $\bar{G}$ is of exceptional type and $T \ne G_2(2)'$, Lemma \ref{l:Gsigmaclasses} implies that the conjugacy classes of involutions in $T$ are as described in Table \ref{table:evenpinvolutions} (for $p=2$) and Table \ref{table:oddpinvolutions} (for $p \ne 2$). Therefore, we can determine the $T$-class of each involution in $T$ just by computing the dimension of its fixed point space on $V_{\mathrm{min}}$ or $\mathcal{L}(\bar{G})$. Note that one only needs to do this computation for both modules when $(\bar{G},p) = (F_4,2)$. Finally, for the record we note that $G_2(2)' \cong {\rm U}_3(3)$ has a unique class of involutions.

\begin{rem}\label{r:TvsInnT}
For the most part, there is no distinction between classes of involutions in the socle $T = (\bar{G}_{\s})'$ and the almost simple group $\widetilde{G} = {\rm Inndiag}(T) = \bar{G}_{\s}$. Clearly, if the index $[\widetilde{G}:T]$ is odd, then every involution in $\widetilde{G}$ is contained in $T$. And for $T$ exceptional, $[\widetilde{G}:T]$ is even if and only if $T = G_2(2)'$, ${}^2F_4(2)'$, or $E_7(q)$ with $q$ odd.

\vspace{1mm}

\begin{itemize}\addtolength{\itemsep}{0.2\baselineskip}
\item[{\rm (a)}] If $T = G_2(2)'$ then $\widetilde{G}$ has two classes of involutions, only one of which is contained in $T$. More precisely, the involutions in $T$ are those of type $A_1$ (long root elements), while the class in $\widetilde{G} \setminus T$ consists of involutions of type $\tilde{A_1}$ (short root elements). 

\item[{\rm (b)}] If $T = {}^2F_4(2)'$, then every involution in $\widetilde{G} = {}^2F_4(2)$ is contained in $T$.

\item[{\rm (c)}] Now suppose $T = E_7(q)$ and $q$ is odd. Here there are three $\bar{G}$-classes of involutions, with corresponding centralizers $A_1D_6$, $E_6T_1.2$ and $A_7.2$, and according to \cite[Table 4.5.1]{GLS} there are three classes in $T$ and five in $\widetilde{G}$. More precisely, there is a single $\widetilde{G}$-class of involutions of type $A_1D_6$, which is contained in $T$. Each of the two remaining $\bar{G}$-classes correspond to two $\widetilde{G}$-classes, only one of which is contained in $T$ (the splitting of $x^{\bar{G}}$ into two $\widetilde{G}$-classes corresponds to the fact that $[C_{\bar{G}}(x): C_{\bar{G}}(x)^{\circ}] = 2$). For example, if $q \equiv \e \imod{4}$, then the involutions $x \in \tilde{G}$ with $|C_{\widetilde{G}}(x)| = 2|{\rm SL}^{\e}_8(q)|$ are contained in $T$, while those with $|C_{\widetilde{G}}(x)| = 2|{\rm SL}^{-\e}_8(q)|$ are in $\widetilde{G} \setminus T$.  
\end{itemize}
\end{rem}

\begin{rem}\label{t:inv_fuse}
Let $x \in T$ be an involution and set $A = {\rm Aut}(T)$. By considering the above description of the conjugacy classes of involutions in $T$, it is not difficult to show that 
either 

\vspace{1mm}

\begin{itemize}\addtolength{\itemsep}{0.2\baselineskip}
\item[{\rm (a)}] $x^A = x^T$, or 

\item[{\rm (b)}] $T = F_4(q)$, $p=2$ and $x$ is in one of the $T$-classes labelled $A_1$ or $\tilde{A}_1$ in Table \ref{table:evenpinvolutions}, in which case $x^A = y^T \cup z^T$ with $y \in A_1$ and $z \in \tilde{A}_1$. 
\end{itemize}
\end{rem}

To conclude our discussion of involutions, let us consider the cases where $T$ has a unique class of involutions. As explained above, this property holds if and only if $T$ is one of the following:
\[
{}^2B_2(q), \; {}^2G_2(q)',\; {}^3D_4(q) \mbox{ ($q$ odd)}, \; G_2(q) \mbox{ ($q$ odd)}.
\]

\begin{lem}\label{l:uniqueclasselusive}
Suppose that $T$ has a unique conjugacy class of involutions. Then $T$ is $2$-elusive if and only if $|\O|$ is even.
\end{lem}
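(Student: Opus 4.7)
The plan is to reduce the claim to an inspection of the maximal subgroups of $G$. The ``only if'' direction is immediate from the definition: $T$ can be $2$-elusive on $\Omega$ only when $2 \mid |\Omega|$. For the converse, I would fix any involution $t \in T$; since $T \normeq G$, the $G$-class $t^G$ is contained in $T$ and is a union of $T$-classes of involutions. By hypothesis $T$ has exactly one such class, so $t^G$ coincides with the full set of involutions of $T$. Consequently $t$ has a fixed point on $\Omega = G/H$ if and only if $H_0 := H \cap T$ meets $t^G$, which is equivalent to $H_0$ containing any involution, i.e.\ to $|H_0|$ being even.

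The lemma therefore reduces to the assertion that whenever $|\Omega|$ is even, $|H_0|$ is also even. Writing $|\Omega| = [G:HT] \cdot [T:H_0]$ shows that the parity of $|\Omega|$ does not by itself force the parity of $|H_0|$, so some genuine group-theoretic input is needed. My plan is to invoke the classifications of core-free maximal subgroups of $G$ for each of the four socle families under consideration, namely $T \in \{{}^2B_2(q),\, {}^2G_2(q)',\, {}^3D_4(q)\ (q\ \mathrm{odd}),\, G_2(q)\ (q\ \mathrm{odd})\}$, as established by Suzuki, Kleidman, Cooperstein and others. A direct inspection of these short and explicit lists should show that in every instance $H_0$ has even order; equivalently, none of the groups under consideration admits a core-free maximal subgroup whose intersection with $T$ is a Hall $2'$-subgroup of $T$.

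The main obstacle, such as it is, is that this is intrinsically a finite case check rather than a unified structural argument: one must run through each family separately and, within each, through the short list of possible $H$. In the vast majority of cases $H_0$ is visibly of even order because it contains a long-root element, an involution from the normalizer of a maximal torus, or the socle of an almost simple composition factor. The only situations warranting a small additional check are those in which $H \not\leqslant T$, where one must confirm that $H$ actually contains an \emph{inner} involution rather than only outer ones; here the short structure of ${\rm Out}(T)$ in each of the four families keeps the verification minimal. Once these checks are complete, the implication ``$|\Omega|$ even implies $T$ is $2$-elusive'' follows at once from the reduction in the first paragraph.
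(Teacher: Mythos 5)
Your reduction in the first paragraph is exactly right and matches the paper's implicit use of the hypothesis: since $T \normeq G$ forces $t^G$ to be the full (unique) set of involutions of $T$, the element $t$ fixes a point of $\O = G/H$ if and only if $H_0 = H \cap T$ has even order, and the ``only if'' direction is purely definitional. Where you diverge from the paper is in how you establish that $|H_0|$ is even: you propose to run through the explicit classifications of maximal subgroups of ${}^2B_2(q)$, ${}^2G_2(q)'$, ${}^3D_4(q)$ and $G_2(q)$ family by family, whereas the paper simply quotes \cite[Lemma 2.1]{LSh}, which asserts that $|H \cap T|$ is even for \emph{every} maximal subgroup $H$ of \emph{any} almost simple group $G$ with socle $T$. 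Both routes work, and your inspection would indeed succeed (every $H_0$ on those lists visibly has even order), but note what each approach buys. The citation gives a uniform, one-line argument that is independent of the completeness and correctness of the subgroup lists, and it is the same fact the paper reuses elsewhere; your version is more self-contained in spirit but is really only a plan — the decisive verification is asserted (``should show'') rather than carried out, and it must be done for all almost simple $G$ with the given socle (including novelty maximal subgroups when $G > T$), not just for $G = T$. You also correctly flag the one genuine subtlety, namely that when $H \not\leqs T$ one must check that $H_0$ itself, not merely $H$, has even order; the published tables list $H \cap T$ directly, so this is harmless, but it is precisely the point the Liebeck--Shalev lemma disposes of without any case analysis. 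Finally, your list of socles omits $G_2(2)' \cong {\rm U}_3(3)$, which also has a unique class of involutions and is covered by the lemma as stated; this does not affect the argument but the case check, if you insist on one, should include it.
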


\begin{proof}
This follows immediately from \cite[Lemma 2.1]{LSh}, which implies that $|H \cap T|$ is even for every maximal subgroup $H$ of $G$.
\end{proof}

\begin{rem}
We refer the reader to Theorem \ref{t:odd} below for a complete description of the relevant groups with $|\O|$ odd. 
\end{rem}

Now suppose that $T$ has two or more conjugacy classes of involutions. Clearly, if there are more conjugacy classes of involutions in $T$ than in $H_0$, then $T$ is not $2$-elusive. With this simple observation in mind, we record the following elementary result, which will be useful later.

\begin{lem}\label{l:conj}
Let $L$ be a finite group with a normal subgroup $A$ of odd order. Then $L$ and $L/A$ have the same number of conjugacy classes of involutions.
\end{lem}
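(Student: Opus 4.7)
The plan is to show that the natural projection $\pi : L \to L/A$ induces a bijection $\Phi$ between the $L$-conjugacy classes of involutions in $L$ and the $(L/A)$-conjugacy classes of involutions in $L/A$, via the rule $x^L \mapsto \pi(x)^{L/A}$. The key input throughout is that $|A|$ is odd, so $A$ contains no involutions. In particular, for any involution $x \in L$ the image $\pi(x)$ is a nontrivial element whose square is trivial, hence an involution, and $\Phi$ is well defined.

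For surjectivity of $\Phi$, I would take an involution $\bar y \in L/A$ and any lift $y \in L$. Then $\langle y \rangle A$ is a subgroup of $L$ in which $A$ is normal of index $2$, and since $\gcd(|A|,2)=1$, Schur--Zassenhaus produces a complement of order $2$ whose generator is an involution in $L$ mapping to $\bar y$.

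The only real work is injectivity. Suppose $x_1, x_2 \in L$ are involutions with $\pi(x_1)$ and $\pi(x_2)$ conjugate in $L/A$. After conjugating $x_1$ in $L$ one may assume $\pi(x_1) = \pi(x_2)$, so $x_1 = x_2 a$ for some $a \in A$. Expanding $x_1^2 = 1$ gives $x_2 a x_2^{-1} = a^{-1}$, so $x_2$ inverts every element of the cyclic subgroup $\langle a \rangle \leqs A$. Since $|\langle a \rangle|$ divides $|A|$ and is therefore odd, the squaring map is a bijection on $\langle a \rangle$, so one can choose $b \in \langle a \rangle$ with $b^2 = a^{-1}$. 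A short calculation using $x_2 a = a^{-1} x_2$ and $x_2 b = b^{-1} x_2$ then yields $b^{-1} x_1 b = x_2$, exhibiting $x_1$ and $x_2$ as $A$-conjugate and hence $L$-conjugate.

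I do not anticipate any serious obstacle: the statement reduces entirely to the elementary fact that, in a coprime extension of a group of odd order by a cyclic group of order $2$, any two involutions in a given coset of the odd-order normal subgroup are conjugate by an element of that subgroup. This could alternatively be phrased as the conjugacy portion of Schur--Zassenhaus (valid here because the cyclic quotient of order $2$ is solvable), but the direct calculation sketched above is perhaps the cleanest route.
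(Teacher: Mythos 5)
Your proof is correct and follows essentially the same route as the paper: both arguments lift involutions from $L/A$ using the coprimality of $|A|$ and $2$, and both reduce injectivity to showing that two involutions in the same coset of $A$ are $A$-conjugate. The only (cosmetic) difference is that where you exhibit the conjugating element explicitly via a square root $b$ of $a^{-1}$ in $\langle a\rangle$, the paper instead applies Sylow's theorem to the subgroup $A{:}\langle t_i\rangle = A{:}\langle t_i a\rangle$ of order $2|A|$.
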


\begin{proof}Let $t_1A,\ldots, t_kA$ be representatives for the conjugacy classes of involutions in $L/A$. Since $A$ has odd order, we can assume that each $t_i$ is an involution in $L$. To prove the lemma, it will suffice to show that each involution in $L$ is conjugate to a unique $t_i$. To this end, let $x \in L$ be an involution. Then $xA$ is conjugate to $t_iA$ for a unique $i$, so $x$ is conjugate to $t_i a$ for some $a \in A$. Set $J = \la A, t_i \ra = A{:}\la t_i \ra = A{:}\la t_ia \ra$. Then $\la t_i \ra$ and $\la t_ia \ra$ are $J$-conjugate by Sylow's theorem and the result follows.\end{proof}

As a special case of Lemma \ref{l:conj}, note that if $L/A$ has a unique conjugacy class of involutions, then $L$ also has a unique class of involutions.

\begin{rem}\label{r:inv_class}
In the proof of Theorem \ref{t:main}, we will also need some information on the conjugacy classes of involutions in the finite classical groups. So let $L$ be an almost simple classical group over $\mathbb{F}_q$ with socle $S$ and let $\widetilde{L} = {\rm Inndiag}(S)$ be the subgroup of ${\rm Aut}(S)$ generated by the inner and diagonal automorphisms of $S$.

\vspace{1mm}

\begin{itemize}\addtolength{\itemsep}{0.2\baselineskip}
\item[{\rm (a)}] If $q$ is odd, then detailed information on the conjugacy classes of involutions in $\widetilde{L}$ and $S$ is presented in \cite[Table 4.5.1]{GLS}.

\item[{\rm (b)}] In even characteristic, Aschbacher and Seitz \cite{AS} provide an in-depth analysis of the involution classes in all groups of Lie type, see also \cite[Chapters 6 and 7]{LS_book}. For classical groups, each involution $x \in \widetilde{L}$ is contained in $S$ (unless $\widetilde{L} = {\rm Sp}_4(2) = S.2$) and we observe that $x$ has Jordan form $(2^{k},1^{n-2k})$ on the natural module $V$ for $S$, where $n = \dim V$ and $1 \leqs k \leqs n/2$. 

For linear and unitary groups, the $S$-class and $\widetilde{L}$-class of $x$ coincide, and it is uniquely determined by $k$. Similarly, if $S$ is a symplectic group, then there is a unique such class for $k$ odd, denoted $b_k$ in \cite{AS}, whereas there are two classes for each even $k$, labelled $a_k$ and $c_k$ (the $a$-type involutions have the property that $\b(v,v^x) = 0$ for all $v \in V$, where $\b$ is the defining symplectic form on $V$).

If $S = \O_{n}^{\e}(q)$ is an even-dimensional orthogonal group, then $k$ is even and there are two classes when $k<n/2$, denoted $a_k$ and $c_k$. If $k = n/2$, then there is a unique class $c_{n/2}$ when $\e=-$, whereas there are three such classes when $\e=+$, denoted by $a_{n/2}$, $a_{n/2}'$ and $c_{n/2}$ (here the first two $S$-classes are fused in the full isometry group $S.2 = {\rm O}_{n}^{+}(q)$). Note that in this case we have $S < \operatorname{Sp}_n(q)$, and for all $k$ even the class $a_k$ (respectively $c_k$) corresponds to the class denoted by $a_k$ (respectively $c_k$) in $\operatorname{Sp}_n(q)$.

\item[{\rm (c)}] If $p=2$ and $S$ is a symplectic or orthogonal group, then an involution $x \in S$ acts as a unipotent element on the natural module $V$ of $S$. In terms of the \emph{distinguished normal form} used to describe the unipotent classes in \cite{LS_book}, we note that $V \downarrow x$ admits the following orthogonal decomposition according to the $S$-class of $x$ as described in part (b):
\begin{align*}
&a_k:\ V \downarrow x = W(1)^{n/2-k} \perp W(2)^{k/2},  \\ 
&b_k:\ V \downarrow x = W(1)^{n/2-k} \perp W(2)^{(k-1)/2} \perp V(2), \\ 
&c_k:\  V \downarrow x = W(1)^{n/2-k} \perp W(2)^{(k-2)/2} \perp V(2)^2. 
\end{align*} 
For further details, and a description of the summands arising here, we refer the reader to \cite[Chapters 6 and 7]{LS_book} (and Theorem 7.3 in particular).
\end{itemize}
\end{rem}

\subsection{Subgroup structure}\label{ss:subs}

The purpose of this section is recall some of the main results from the literature on maximal subgroups of almost simple exceptional groups of Lie type. 

As before, let $G$ be an almost simple exceptional group of Lie type over $\mathbb{F}_q$ with socle $T = (\bar{G}_{\s})'$, where $q = p^f$ and $p$ is a prime. Let $\mathcal{M} = \mathcal{C} \cup \mathcal{S}$ be the set of core-free maximal subgroups of $G$, where $\mathcal{C}$ is the collection of maximal subgroups $H$ of the following types (see Definition \ref{d:c}):

\vspace{1mm}

\begin{itemize}\addtolength{\itemsep}{0.2\baselineskip}
\item[{\rm (I)}] $H = N_G(\bar{H}_{\s})$ for some maximal $\sigma$-invariant positive-dimensional closed subgroup $\bar{H}$ of $\bar{G}$;
\item[{\rm (II)}] $H$ is of the same type as $G$ (either a subfield subgroup or a twisted version of $G$);
\item[{\rm (III)}] $H$ is the normalizer of an exotic $r$-local subgroup for some prime $r \neq p$;
\item[{\rm (IV)}] $T = E_8(q)$, $p > 5$, and ${\rm soc}(H) = {\rm Alt}_5 \times {\rm Alt}_6$ (the \emph{Borovik subgroup}).
\end{itemize}

\vspace{1mm}

Here the subgroups arising in (I) and (II) are described in \cite{LS04}. The maximal subgroups in case (III) were classified (up to conjugacy) in \cite{CLSS}, and the subgroup in case (IV) was first described by Borovik \cite{Bor}. By combining results of Borovik \cite{Bor} and Liebeck--Seitz \cite{LS90}, it follows that the maximal subgroups in $\mathcal{S}$ are almost simple.

\begin{rem}\label{r:CS}
Through the work of many authors, spanning several decades, the maximal subgroups of $G$ have been determined (up to conjugacy) when $T \ne E_7(q), E_8(q)$. For example, Craven's recent paper \cite{Craven} completely classifies the maximal subgroups for the groups with socle $T = F_4(q)$, $E_6(q)$ or ${}^2E_6(q)$. Since our main result (Theorem \ref{t:main}) is stated in terms of the collections $\mathcal{C}$ and $\mathcal{S}$, here we provide some more details on the subgroups arising in each collection:

\vspace{1mm}

\begin{itemize}\addtolength{\itemsep}{0.2\baselineskip}
\item[{\rm (a)}] $T = {}^2B_2(q)$ or ${}^2G_2(q)'$: The maximal subgroups were determined up to conjugacy by Suzuki \cite{Suzuki} and Kleidman \cite{K0}, in the two respective cases. For $T \ne {}^2G_2(3)'$, the subgroups in $\mathcal{M}$ are conveniently recorded in \cite[Tables 8.16 and 8.43]{BHR} and we note that $\mathcal{S}$ is empty. And if $T = {}^2G_2(3)' \cong {\rm L}_2(8)$, then every core-free maximal subgroup of $G$ is solvable and so once again $\mathcal{S}$ is empty.

\item[{\rm (b)}] $T = {}^3D_4(q)$: The subgroups in $\mathcal{M}$ were determined up to conjugacy by Kleidman \cite{K} and they are listed in \cite[Table 8.51]{BHR}. The collection $\mathcal{S}$ is empty for all $q$.

\item[{\rm (c)}] $T = {}^2F_4(q)'$: First assume $q \geqs 8$. The subgroups in $\mathcal{M}$ were determined up to conjugacy by Malle \cite{Malle}, modulo the omission of three classes of maximal subgroups isomorphic to ${\rm PGL}_2(13)$ when $G = {}^2F_4(8)$ (see \cite[Remark 4.11]{Craven}). Here the collection $\mathcal{S}$ is empty unless $G = {}^2F_4(8)$, in which case it comprises the aforementioned maximal subgroups isomorphic to ${\rm PGL}_2(13)$.

For $q=2$, the conjugacy classes of maximal subgroups of $G$ were determined by Wilson \cite{Wil84}, noting the omission of a unique class of maximal subgroups ${\rm SU}_3(2).2$ of ${}^2F_4(2)$. We see that $\mathcal{S}$ contains five classes of subgroups when $G = T$, which are isomorphic to ${\rm L}_3(3).2$ (two classes), $A_6.2^2$ (two classes) and ${\rm L}_2(25)$. And for $G = T.2$ we note that $\mathcal{S}$ comprises a unique class of subgroups isomorphic to ${\rm L}_2(25).2$. 

\item[{\rm (d)}] $T= G_2(q)'$: The maximal subgroups of $G$ were determined up to conjugacy by Cooperstein \cite{Coop} (for $p=2$) and Kleidman \cite{K0} (for $p \ne 2$), and they are conveniently listed in \cite[Table 8.30]{BHR} (for $p=2$ and $q \geqs 4$) and \cite[Tables 8.41, 8.42]{BHR} (for $p \ne 2$). 

If $p=2$, then the collection $\mathcal{S}$ is empty unless $q=4$, in which case it comprises two classes of subgroups $H$ with $H \cap T = {\rm L}_2(13)$ or ${\rm J}_2$. Note that if $q=2$ then $T = G_2(2)' \cong {\rm U}_3(3)$ and $G$ has a unique class of nonsolvable maximal subgroups $H$ with $H \cap T = {\rm L}_3(2)$, which are contained in the collection $\mathcal{C}$. Similarly, if $p=3$ then $\mathcal{S}$ is empty unless $q=3$, in which case it contains a class of subgroups $H$ with $H \cap T = {\rm L}_2(13)$. Finally, if $p \geqs 5$ then the subgroups in $\mathcal{S}$ can be read off from \cite[Table 8.41]{BHR} (the relevant cases are labelled $\mathcal{S}_1$ in the first column of \cite[Table 8.41]{BHR}).

\item[{\rm (e)}] $T = F_4(q)$, $E_6(q)$ or ${}^2E_6(q)$: The subgroups in $\mathcal{M}$ have been determined up to conjugacy by Craven in \cite{Craven}. More specifically, the subgroups in $\mathcal{S}$ are recorded in Tables 1, 2 and 3 of \cite{Craven}, while the subgroups comprising $\mathcal{C}$ are presented in \cite[Tables 7 and 8]{Craven} for $T = F_4(q)$ and \cite[Tables 9 and 10]{Craven} for $T = E_6(q)$ and ${}^2E_6(q)$, respectively.

\item[{\rm (f)}] $T = E_7(q)$: The subgroups in $\mathcal{C}$ are recorded up to conjugacy in \cite[Table 4.1]{Craven2}. By the main theorem of \cite{Craven2}, the collection $\mathcal{S}$ comprises the subgroups in \cite[Table 1.1]{Craven2}, which are described up to conjugacy, together with the possible inclusion of some additional subgroups with socle ${\rm L}_2(q')$ for $q' \in \{7,8,9,13\}$.

\item[{\rm (g)}] $T = E_8(q)$: Here the collection $\mathcal{C}$ comprises $8$ conjugacy classes of maximal parabolic subgroups, together with the maximal rank subgroups of the form $N_G(\bar{H}_{\s})$ recorded in \cite[Tables 5.1, 5.2]{LSS}. In addition, $\mathcal{C}$ contains several classes of non-maximal rank subgroups of the form $N_G(\bar{H}_{\s})$, where the possibilities for ${\rm soc}(\bar{H}_{\s})$ are listed in \cite[Table 3]{LS03}, as well as subfield subgroups of type $E_8(q_0)$ and the exotic local subgroups appearing in \cite[Table 1]{CLSS}. And for $p > 5$, the collection $\mathcal{C}$ also contains the Borovik subgroup $H$ with ${\rm soc}(H) = {\rm Alt}_5 \times {\rm Alt}_6$. In particular, all the subgroups in $\mathcal{C}$ have been determined up to conjugacy.

At the time of writing, it remains an open problem to determine the subgroups in $\mathcal{S}$, even up to isomorphism. However, by combining earlier results of several authors \cite{Craven0,Craven_alt,LS99,LS98,Litt} with Craven's ongoing work \cite{CravenE8}, we know that if $S$ denotes the socle of a subgroup in $\mathcal{S}$, then one of the following holds (here ${\rm Lie}(p)$ denotes the set of finite simple groups of Lie type in characteristic $p$):

\vspace{1mm}

\begin{itemize}\addtolength{\itemsep}{0.2\baselineskip}
\item $S \in {\rm Lie}(p)$ and either $S = {\rm L}_2(q_0)$ with $7 \leqs q_0 \leqs (2,q-1)\cdot 1312$, or 
\[
S \in \{ {\rm L}_3^{\e}(3), \, {\rm L}_3^{\e}(4), \, {\rm U}_3(8), \, {\rm U}_4(2), \, {}^2B_2(8) \}.
\]

\item $S \not\in {\rm Lie}(p)$ and either 

\vspace{1mm}

\begin{itemize}\addtolength{\itemsep}{0.2\baselineskip}
\item $S = {\rm Alt}_6$ $(p \ne 5)$, ${\rm Alt}_7$ $(p \ne 2)$, ${\rm M}_{11}$ $(p =3,11)$, ${\rm J}_3$ $(p=2)$ or ${\rm Th}$ $(p=3)$;

\item $S = {\rm L}_2(q')$ with $q' \in \{7,8,11,13,16,17,19,25,29,31,32,41,49,61\}$; or

\item $S = {\rm L}_3(3)$, ${\rm L}_3(5)$, ${\rm L}_4(5)$, ${\rm U}_3(3)$, ${\rm U}_4(2)$, ${\rm PSp}_4(5)$, ${}^2B_2(8)$, ${}^2B_2(32)$, ${}^3D_4(2)$ or ${}^2F_4(2)'$.
\end{itemize}
\end{itemize}

\vspace{1mm}

It is worth noting that the collection $\mathcal{S}$ is known to be nonempty for certain values of $q$. For example, if $G = E_8(2)$ then the main theorem of \cite{PS} shows that $\mathcal{S}$ contains unique conjugacy classes of subgroups isomorphic to ${\rm L}_3(5).2$ and ${\rm PGSp}_4(5)$.
\end{itemize}
\end{rem}

Next we look more closely at the subgroups of type (I) in the definition of the collection $\mathcal{C}$. Here $H = N_G(\bar{H}_{\s})$ for some $\s$-stable positive-dimensional closed subgroup $\bar{H}$ of $\bar{G}$ and the possibilities for $\bar{H}$ can be divided into the following cases (recall that a closed subgroup $\bar{H}$ of $\bar{G}$ has \emph{maximal rank} if $\bar{H}^{\circ}$ contains a maximal torus of $\bar{G}$):

\vspace{1mm}

\begin{itemize}\addtolength{\itemsep}{0.2\baselineskip}
\item[{\rm (a)}] $\bar{H}$ is a parabolic subgroup;
\item[{\rm (b)}] $\bar{H}^\circ$ is a maximal torus;
\item[{\rm (c)}] $\bar{H}$ is reductive of maximal rank and $\bar{H}^\circ$ is not a torus;
\item[{\rm (d)}] $\bar{H}$ is reductive, not of maximal rank.
\end{itemize}

\vspace{1mm}

In general, the structure of the finite group $\bar{H}_{\s}$ depends on the choice of  $\bar{G}$-conjugate of $\bar{H}$. Let us explain this in more general terms.

Let $X$ be a group that $\sigma$ acts on. We denote by $H^1(\sigma, X)$ the equivalence classes of $X$ under the relation $\sim$ defined by $x \sim y$ if and only if $x = \sigma(g)^{-1}yg$ for some $g \in X$. The following result is \cite[2.7]{SpringerSteinberg}.

\begin{lem}\label{l:sigmaclasses}
Let $X$ be a $\sigma$-invariant closed subgroup of $\bar{G}$, and let $\mathcal{X}$ be the set of $\sigma$-invariant $\bar{G}$-conjugates of $X$. Denote by $\mathcal{X}/\bar{G}_{\s}$ the set of $\bar{G}_{\s}$-classes in $\mathcal{X}$. Then there is a bijection 
\[
\mathcal{X}/\bar{G}_{\s} \to H^1(\sigma, N_{\bar{G}}(X)/N_{\bar{G}}(X)^\circ),
\]
which maps the class $[X^g]$ to the image of $\sigma(g)g^{-1}$ in $H^1(\sigma, N_{\bar{G}}(X)/N_{\bar{G}}(X)^\circ)$.
\end{lem}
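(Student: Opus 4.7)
The plan is to construct the bijection directly and verify the three standard properties (well-definedness, injectivity, surjectivity), with both surjectivity and injectivity reduced to the Lang--Steinberg theorem applied to connected groups. Throughout, set $N = N_{\bar{G}}(X)$ and write $\bar{n}$ for the image of $n \in N$ in $N/N^\circ$. Note that $N^\circ$ is $\sigma$-invariant (connected components are permuted by $\sigma$), so $\sigma$ induces an endomorphism of $N/N^\circ$.

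First I would verify well-definedness. For $g \in \bar{G}$, the conjugate $X^g$ is $\sigma$-invariant exactly when $\sigma(g)g^{-1} \in N$, so the map $g \mapsto \overline{\sigma(g)g^{-1}}$ makes sense. Two elements $g, h$ represent the same conjugate iff $h = ng$ for some $n \in N$, and a direct computation gives $\sigma(h)h^{-1} = \sigma(n)\,\sigma(g)g^{-1}\,n^{-1}$, which represents the same $H^1$-class (using the equivalence $x \sim \sigma(n)^{-1}xn$ from the definition, rewritten via inversion). Similarly, replacing $g$ by $gy$ with $y \in \bar{G}_\sigma$ leaves $\sigma(g)g^{-1}$ unchanged, so the induced map on $\bar{G}_\sigma$-orbits is well-defined.

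Surjectivity is the easier of the two main steps. Given $z \in N$ representing a class in $H^1(\sigma, N/N^\circ)$, the Lang--Steinberg theorem applied to the connected group $\bar{G}$ produces $g \in \bar{G}$ with $\sigma(g)g^{-1} = z$. Since $z \in N$ normalises $X$, we get $\sigma(X^g) = X^{\sigma(g)} = X^{zg} = X^g$, so $X^g \in \mathcal{X}$, and by construction it maps to the required class.

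The heart of the argument is injectivity, and this is the step I expect to require the most care. Suppose $X^g$ and $X^h$ map to the same class, so after adjusting $g$ up to left multiplication by an element of $N$ we may write
\[
\sigma(h)h^{-1} = \sigma(n_0)\,\sigma(g)g^{-1}\,n_0^{-1}\cdot m
\]
for some $n_0 \in N$ and $m \in N^\circ$. The goal is to absorb the error $m \in N^\circ$ by replacing $n_0$ with $n_0 t$ for a suitable $t \in N^\circ$, so that the corresponding $y := g^{-1}(n_0 t)^{-1} h$ lies in $\bar{G}_\sigma$ and witnesses $X^h = (X^g)^y$. Setting $z = \sigma(g)g^{-1}$, the required identity on $t$ reduces after cancellation to $z^{-1}\sigma(t)z\, t^{-1} = m'$, where $m' = n_0^{-1}m n_0 \in N^\circ$. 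Now the crucial observation is that $\sigma'(t) := z^{-1}\sigma(t)z$ is again a Steinberg endomorphism of the connected group $N^\circ$ (it is the composite of $\sigma|_{N^\circ}$ with the inner automorphism of $N^\circ$ given by $z$, which normalises $N^\circ$). The Lang--Steinberg theorem applied to $(N^\circ, \sigma')$ gives $t \in N^\circ$ with $\sigma'(t)t^{-1} = m'$, completing the injectivity.

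The main obstacle is keeping the book-keeping between left and right cocycle conventions consistent and, in particular, recognising that the twisted endomorphism $z^{-1}\sigma(-)z$ on $N^\circ$ remains a Steinberg endomorphism — this is precisely what allows the second invocation of Lang--Steinberg and is the only place the assumption that $\sigma$ is a Steinberg endomorphism (rather than an arbitrary automorphism) enters beyond the surjectivity step.
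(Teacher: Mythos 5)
Your argument is correct. The paper does not actually prove this lemma — it is quoted verbatim from \cite[I, 2.7]{SpringerSteinberg} — and your two applications of the Lang--Steinberg theorem (to the connected group $\bar{G}$ for surjectivity, and to the connected group $N_{\bar{G}}(X)^\circ$ equipped with the twisted Steinberg endomorphism $t \mapsto z^{-1}\sigma(t)z$ for injectivity) constitute precisely the standard proof of that cited result, with the cocycle book-keeping consistent with the paper's convention $x \sim \sigma(a)^{-1}ya$.
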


In particular, if $N_{\bar{G}}(X)$ is connected, then Lemma \ref{l:sigmaclasses} implies that all of the $\sigma$-invariant $\bar{G}$-conjugates of $X$ are conjugate in $\bar{G}_{\s}$. In the context of maximal subgroups, one special case where this applies is when $\bar{H}$ is a parabolic subgroup, in which case $N_{\bar{G}}(\bar{H}) = \bar{H}$ is connected. 

For each maximal subgroup $H = N_G(\bar{H}_{\s})$ as in case (I), we have $\bar{H} = N_{\bar{G}}(\bar{H})$ by the maximality of $\bar{H}$. Therefore, Lemma \ref{l:sigmaclasses} implies that the $\bar{G}_{\s}$-classes of $\s$-invariant conjugates of $\bar{H}$ are in bijection with the set $H^1(\s, \bar{H}/\bar{H}^\circ)$. More precisely, a $\s$-invariant conjugate $\bar{H}^g$ with $g \in \bar{G}$ corresponds to the image of the element $w = \sigma(g)g^{-1}$ in $H^1(\s, \bar{H}/\bar{H}^\circ)$. Furthermore,
\begin{equation}\label{e:bar}
\left( \bar{H}^g \right)_{\s} = \left( \bar{H}_{w\s} \right)^g \mbox{ and } \left(\bar{G}_{\s}\right)' = \left( \left(\bar{G}_{w\s} \right)' \right)^g,
\end{equation}
where $w\s$ denotes the Steinberg endomorphism $x \mapsto \sigma(x)^w$ of $\bar{G}$. In particular, if we are interested in studying the $2$-elusivity of $T$, then it will be sufficient to consider involutions in the group $\widetilde{H_0} = N_{\bar{G}}(\bar{H}_{w\s}) \cap \left(\bar{G}_{w\s} \right)'$. We formalize this observation in the following lemma.

\begin{lem}\label{l:tildeH0}
Let $\bar{H}$ be a maximal $\sigma$-invariant positive-dimensional closed subgroup of $\bar{G}$ and let $\bar{H}^g$ be a $\s$-invariant conjugate of $\bar{H}$, corresponding to the image of $w = \s(g)g^{-1}$ in $H^1(\s, \bar{H}/\bar{H}^\circ)$. Assume $H = N_{G}(\left( \bar{H}^g \right)_{\s})$ is a maximal subgroup of $G$ and set 
\[
\widetilde{H_0} = N_{\bar{G}}(\bar{H}_{w\s}) \cap \left(\bar{G}_{w\s} \right)'.
\]
Then the following statements hold:

\vspace{1mm}

\begin{itemize}\addtolength{\itemsep}{0.2\baselineskip}
\item[{\rm (i)}]  We have $(\widetilde{H_0})^g = H_0$.
\item[{\rm (ii)}] Assume that $T \neq G_2(2)'$. Then $T$ is $2$-elusive on $\Omega = G/H$ if and only if $\widetilde{H_0}$ meets every $\s$-invariant conjugacy class of involutions in $\bar{G}$.
\end{itemize}
\end{lem}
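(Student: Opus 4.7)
My plan is to derive part (i) directly from equation \eqref{e:bar}, and then deduce part (ii) by combining Lemma~\ref{l:Gsigmaclasses}, part (i), and the observation that conjugation by elements of $\bar{G}$ preserves $\bar{G}$-conjugacy classes.

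For part (i), the starting observation is that the maximality of $\bar{H}$ forces $N_{\bar{G}}(\bar{H}) = \bar{H}$, and since $T \trianglelefteq G$ the subgroup $H = N_G((\bar{H}^g)_\sigma)$ satisfies $H_0 = H \cap T = N_T((\bar{H}^g)_\sigma)$. I would then conjugate this identity by $g^{-1}$: by the two clauses of \eqref{e:bar}, the ambient group $T$ becomes $(\bar{G}_{w\sigma})'$ and the normalised subgroup $(\bar{H}^g)_\sigma$ becomes $\bar{H}_{w\sigma}$. Combining this with the elementary identity $N_{(\bar{G}_{w\sigma})'}(\bar{H}_{w\sigma}) = N_{\bar{G}}(\bar{H}_{w\sigma}) \cap (\bar{G}_{w\sigma})'$ gives $H_0^{g^{-1}} = \widetilde{H_0}$, as required.

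For part (ii), the plan is to string together three equivalences. First, adopting the convention made explicit in the proof outline preceding this lemma (and in the abstract), ``$T$ is $2$-elusive on $\Omega = G/H$'' is to be read as ``$H_0$ meets every $T$-class of involutions in $T$''. Second, because $T \neq G_2(2)'$, Lemma~\ref{l:Gsigmaclasses} provides a bijection between the $T$-classes of involutions in $T$ and the $\sigma$-invariant $\bar{G}$-classes of involutions in $\bar{G}$, given by $x^T \longleftrightarrow x^{\bar{G}}$; thus meeting every $T$-class is equivalent to meeting every $\sigma$-invariant $\bar{G}$-class. Third, part (i) gives $\widetilde{H_0} = H_0^{g^{-1}}$ with $g \in \bar{G}$, and since conjugation by elements of $\bar{G}$ fixes every $\bar{G}$-class setwise, $H_0$ intersects a given $\bar{G}$-class if and only if $\widetilde{H_0}$ does. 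Concatenating these three steps yields the equivalence in (ii).

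The only delicate point is the first equivalence in (ii): the original fixed-point definition of $2$-elusivity involves $G$-orbits on involutions in $T$, which may strictly coarsen the $T$-classes when $G$ induces outer automorphisms that fuse them. By Remark~\ref{t:inv_fuse} the only case where this occurs is $T = F_4(q)$ with $p = 2$, where the $T$-classes $A_1$ and $\tilde{A}_1$ may collapse to a single $G$-orbit. Since the rest of the paper consistently uses the ``$H_0$ meets every $T$-class'' version of the condition, I would adopt that convention at the first step of the chain, flag it explicitly, and avoid reopening the subtlety inside the proof itself.
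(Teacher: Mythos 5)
Your proof is correct and follows essentially the same route as the paper, whose own proof consists of exactly the two ingredients you use: equation \eqref{e:bar} for part (i), and Lemma \ref{l:Gsigmaclasses} combined with part (i) for part (ii). The details you supply (conjugating $N_T((\bar{H}^g)_{\s})$ by $g^{-1}$, the bijection between $T$-classes and $\s$-invariant $\bar{G}$-classes, and the invariance of $\bar{G}$-classes under conjugation by $g$) are all accurate.

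One remark on your ``delicate point'': it is not actually delicate, and you do not need to adopt a convention. Since $H$ is core-free and maximal and $T\normeq G$, we have $G=TH$, so $T$ acts transitively on $\O$ with point stabilizer $H_0$; hence an involution $x\in T$ has a fixed point on $\O$ if and only if $x^{T}\cap H_0\neq\emptyset$ (equivalently, $x^{G}\cap H_0\neq\emptyset$ reduces to $x^{T}\cap H_0\neq\emptyset$ because every $h\in H$ normalizes $H_0$). So the fixed-point definition of $2$-elusivity for $T$ is literally equivalent to ``$H_0$ meets every $T$-class of involutions'', and the potential $G$-fusion of $T$-classes (the $F_4(q)$, $p=2$ case of Remark \ref{t:inv_fuse}) never enters.
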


\begin{proof}
The first claim follows from equation \eqref{e:bar} above. And in view of (i), the second claim is an immediate consequence of Lemma \ref{l:Gsigmaclasses}.
\end{proof}

\begin{rem}\label{r:e7_inv}
Suppose $\bar{G}$ is of exceptional type and $\bar{G}_{\s} \ne G_2(2), {}^2G_2(3), {}^2F_4(2)$. If $\bar{G} \ne E_6,E_7$, then $\bar{G}$ is simply connected and
\[
\left(\bar{G}_{w\s} \right)' = \bar{G}_{w\s} \mbox{ and } \widetilde{H_0} = N_{\bar{G}_{w\s}}(\bar{H}_{w\s}).
\]
In the proof of Theorem \ref{t:main}, no issues arise when $\bar{G} = E_6$ since the index of $(\bar{G}_{w\s})'$ in $\bar{G}_{w\s}$ is $(3,q \pm 1)$ and thus every involution in $\bar{G}_{w\s}$ is contained in $(\bar{G}_{w\s})'$. However, if $\bar{G} = E_7$ then we have $[\bar{G}_{w\s} : (\bar{G}_{w\s})'] = (2,q-1)$ and so there is a difference between the classes of involutions in $\bar{G}_{w\s}$ and $(\bar{G}_{w\s})'$ when $q$ is odd, as highlighted in Remark \ref{r:TvsInnT}. But here we can verify that a given element $x \in N_{\bar{G}_{w\s}}(\bar{H}_{w\s})$ is contained in $\left(\bar{G}_{w\s} \right)'$ by checking that it lifts to an element $x' \in \left(\bar{G}_{{\rm sc}}\right)_{w'\s}$, where $w'$ is a lift of $w$ to $\bar{G}_{{\rm sc}}$. This is simply a consequence of the fact that $$\left(\bar{G}_{{\rm sc}}\right)_{w'\s} / Z(\bar{G}_{{\rm sc}})_{w'\s} \cong \left(\bar{G}_{w\s} \right)',$$ where the isomorphism is induced by an isogeny $\bar{G}_{{\rm sc}} \rightarrow \bar{G}$. In proofs, we will often verify $x \in \left(\bar{G}_{w\s} \right)'$ by identifying a specific lift $x' \in \left(\bar{G}_{{\rm sc}}\right)_{w'\s}$ that centralizes both $w'$ and $\s$. These calculations can be done with {\sc Magma}, and appear for example in the proof of Lemma \ref{l:e7_mr}. 
\end{rem}

\begin{ex}\label{ex:e6t1}
To illustrate such a calculation with an example, consider $\bar{G} = E_7$ with $q$ odd, and let $\bar{H} = E_6T_1.2$. (This is discussed in Case 1.6 of the proof of Lemma \ref{l:e7_mr}, which also contains more details.) Here we can take $\bar{H}$ to be the normalizer of a Levi factor of type $E_6$: $$\bar{H}^\circ = \langle \bar{T}, U_{\alpha} : \alpha \in \Phi' \rangle,$$ where $\bar{T}$ is a maximal torus of $\bar{G}$ and $\Phi'$ is the root subsystem of $\Phi$ with base $\{\alpha_1, \ldots, \alpha_6\}$. Then $\bar{H} = \bar{H}^\circ{:}\langle w \rangle$, where $$w = w_{\alpha_1} w_{\alpha_2} w_{\alpha_5} w_{\alpha_7} w_{\alpha_{37}} w_{\alpha_{55}} w_{\alpha_{61}}$$ corresponds to the longest element of the Weyl group of $E_7$. Here $\alpha_i$ denotes the $i$-th root in the ordering of $\Phi$ used in {\sc Magma} (see Remark \ref{r:ordering}).

The following {\sc Magma} code verifies that in the simply connected cover $\bar{G}_{\rm sc}$, a lift $w'$ of $w$ centralizes $w_{\alpha}'$ and $h_{\alpha}'(-1)$ in $\bar{G}_{\rm sc}$ for all $\alpha \in \Phi$. (The fact that $w'$ centralizes $h_{\alpha}(-1)$ is also clear from the fact that $w'$ acts as $\alpha \mapsto -\alpha$ on $\Phi$.)

\vs

{\small
\begin{verbatim}
G := GroupOfLieType("E7", RationalField() : Isogeny := "SC");
w := elt<G|1>*elt<G|2>*elt<G|5>*elt<G|7>*elt<G|37>*elt<G|55>*elt<G|61>;
{w*elt<G|i> eq elt<G|i>*w : i in [1..63]}; // output: {true}
{w*TorusTerm(G,i,-1) eq TorusTerm(G,i,-1)*w : i in [1..63]}; // output: {true}
\end{verbatim}
}

\vs

Moreover, as $w_{\alpha}'$ and $h_{\alpha}'(-1)$ are clearly fixed in $\bar{G}_{\rm sc}$ by $\s$ for all $\alpha \in \Phi$, we can conclude that $$w_{\alpha}, h_{\alpha}(-1) \in (\bar{G}_{w\s})'$$ for all $\alpha \in \Phi$. Define $t = h_{\alpha_2}(-1)$ and \[
h = w_{\alpha_2} w_{\alpha_{28}} w_{\alpha_{38}} w_{\alpha_{46}} \in \bar{H}^{\circ},
\] 
so $t,h \in \bar{H} \cap (\bar{G}_{w\s})'$ and $h$ corresponds to the longest element of the Weyl group of $E_6$. With the following {\sc Magma} code, we verify that $t$, $w$, and $thw$ are involutions in $\bar{G}$. Moreover, we calculate that in the adjoint representation, these elements have respective fixed point spaces of dimensions $69$, $79$ and $63$:

\vs

{\small
\begin{verbatim}
G := GroupOfLieType("E7", RationalField() : Isogeny := "SC");
w := elt<G|1>*elt<G|2>*elt<G|5>*elt<G|7>*elt<G|37>*elt<G|55>*elt<G|61>;
h := elt<G|2>*elt<G|28>*elt<G|38>*elt<G|46>;
t := TorusTerm(G,2,-1);
r := AdjointRepresentation(G);

A1 := Matrix(r(t));
A2 := Matrix(r(t*h*w));
A3 := Matrix(r(w));

[Order(i) eq 2 : i in [A1,A2,A3]]; // output: [true,true,true]
[Dimension(Kernel(i-1)) : i in [A1,A2,A3]]; // output: [ 69, 79, 63 ]
\end{verbatim}
}

\vs

We conclude then from Table \ref{table:oddpinvolutions} that $t$, $thw$, $w$ are involutions of type $A_1D_6$, $E_6T_1$, $A_7$, respectively. Consequently $\bar{H} \cap (\bar{G}_{w\s})'$ meets every $\bar{G}$-class of involutions.
\end{ex}

Typically, the set $H^1(\sigma, \bar{H}/\bar{H}^\circ)$ is very small (and often trivial) unless $\bar{H}^\circ$ is a maximal torus. So next we will look more closely at the normalizers of maximal tori in $\bar{G}_{\s}$, referring the reader to \cite[3.3]{Carter} for more details. 

First recall that a \emph{maximal torus} of $\bar{G}_{\s}$ is a subgroup of the form $\bar{T}_{\s}$, where $\bar{T}$ is a $\s$-invariant maximal torus of $\bar{G}$. In order to describe the $\bar{G}_{\s}$-classes of $\s$-invariant maximal tori, let us first observe that $\sigma$ acts on the Weyl group $W = N_{\bar{G}}(\bar{T}) / \bar{T}$, since $\bar{T}$ is $\sigma$-invariant, and we refer to $H^1(\s, W)$ as the set of $\sigma$-conjugacy classes of $W$. Note that if $\s = \s_q$ is a Frobenius endomorphism, then $\sigma(w_{\alpha}) = w_{\alpha}$ for all $\alpha \in \Phi$. So in the untwisted case, $H^1(\s, W)$ is just the set of conjugacy classes of $W$. In the general setting, we have the following result, which is a special case of Lemma \ref{l:sigmaclasses} (see Propositions 3.3.1, 3.3.2 and 3.3.3 in \cite{Carter}). Here $\pi: N_{\bar{G}}(\bar{T}) \rightarrow W$ is the quotient map.

\begin{lem}\label{l:torus1}
Let $g \in \bar{G}$. Then the map $\bar{T}^g \mapsto \pi(\s(g)g^{-1})$ defines a bijection from the set of $\bar{G}_{\s}$-classes of $\sigma$-invariant maximal tori in $\bar{G}$ to the set $H^1(\s,W)$.
\end{lem}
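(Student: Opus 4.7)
The plan is to obtain Lemma \ref{l:torus1} as a direct specialization of Lemma \ref{l:sigmaclasses}, applied with $X = \bar{T}$. The main point is simply to identify the three ingredients of that general statement---the set $\mathcal{X}$, the quotient $N_{\bar{G}}(X)/N_{\bar{G}}(X)^{\circ}$, and the cohomology set---with their concrete counterparts in the torus setting.

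First, I would argue that $\mathcal{X}$ is precisely the set of $\sigma$-invariant maximal tori of $\bar{G}$. By definition $\mathcal{X}$ consists of the $\sigma$-invariant $\bar{G}$-conjugates of $\bar{T}$, and since any two maximal tori of a connected algebraic group are $\bar{G}$-conjugate, every maximal torus of $\bar{G}$ arises as $\bar{T}^{g}$ for some $g \in \bar{G}$. Hence the $\bar{G}_{\s}$-classes in $\mathcal{X}$ coincide with the $\bar{G}_{\s}$-classes of $\s$-invariant maximal tori in $\bar{G}$.

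Next, I would identify the relevant quotient with the Weyl group. Since $\bar{T}$ is a maximal torus of the connected reductive group $\bar{G}$, we have $N_{\bar{G}}(\bar{T})^{\circ} = \bar{T}$, a standard fact from the structure theory of reductive groups. Therefore
\[
N_{\bar{G}}(\bar{T}) / N_{\bar{G}}(\bar{T})^{\circ} = N_{\bar{G}}(\bar{T})/\bar{T} = W,
\]
with the quotient map being exactly $\pi$. Consequently $H^{1}(\s, N_{\bar{G}}(\bar{T})/N_{\bar{G}}(\bar{T})^{\circ}) = H^{1}(\s, W)$.

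Finally, I would simply read off the bijection from Lemma \ref{l:sigmaclasses}: it sends the $\bar{G}_{\s}$-class of $\bar{T}^{g}$ to the image of $\s(g)g^{-1}$ in $H^{1}(\s, N_{\bar{G}}(\bar{T})/N_{\bar{G}}(\bar{T})^{\circ})$, which under the identification above is $\pi(\s(g)g^{-1}) \in H^{1}(\s, W)$. This is exactly the map described in the statement. There is no real obstacle here; the proof is essentially the specialization of a general principle, and the only ``content'' is the two structural facts that maximal tori are all $\bar{G}$-conjugate and that $\bar{T}$ equals its own connected normalizer.
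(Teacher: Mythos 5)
Your proof is correct and matches the paper's approach: the paper likewise presents Lemma \ref{l:torus1} as the special case $X = \bar{T}$ of Lemma \ref{l:sigmaclasses} (citing Carter), and the two structural facts you supply — conjugacy of maximal tori and $N_{\bar{G}}(\bar{T})^{\circ} = \bar{T}$ — are exactly what is needed to make the specialization explicit.
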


Let $w \in W$ and write $w = \pi(n)$, where $n = \s(g)g^{-1} \in N_{\bar{G}}(\bar{T})$ for some $g \in \bar{G}$. Then under the bijection of Lemma \ref{l:torus1}, the $\s$-class of $w$ corresponds to the $\bar{G}_{\s}$-class of $\bar{T}_w := \bar{T}^g$. And as before (see \eqref{e:bar}), we have 
\[
\left( \bar{T}_w \right)_{\s} = \left( \bar{T}_{n\s} \right)^g \mbox{ and } N_{\bar{G}_{\s}}(\bar{T}_w) = \left( N_{\bar{G}}(\bar{T})_{n\s} \right)^g,
\]
where $n\s$ denotes the Steinberg endomorphism $x \mapsto \sigma(x)^n$ of $\bar{G}$. In particular, for computations involving $N_{\bar{G}_{\s}}(\bar{T}_w)$, it will often be more convenient to work with the $\bar{G}$-conjugate $N_{\bar{G}}(\bar{T})_{n\s}$ instead.

In order to describe the structure of the normalizer of $\bar{T}_w$ in $\bar{G}_{\s}$, we define the $\s$-centralizer of $w \in W$ by 
\[
C_{W,\s}(w) = \{x \in W \,:\, \sigma(x)^{-1} w x = w\}.
\]
Then the following result is \cite[Proposition 3.3.6]{Carter}.

\begin{lem}\label{l:torus2}
We have $N_{\bar{G}_{\s}}(\bar{T}_w) / \left( \bar{T}_w \right)_{\s} \cong C_{W,\s}(w)$ for all $w \in W$.
\end{lem}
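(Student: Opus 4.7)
The plan is to prove the isomorphism by reducing to a statement about fixed points under a twisted Steinberg endomorphism, and then invoking Lang--Steinberg. First I would use the conjugation relations recorded just before Lemma \ref{l:torus2}, namely
\[
(\bar{T}_w)_{\s} = (\bar{T}_{n\s})^g \quad \text{and} \quad N_{\bar{G}_{\s}}(\bar{T}_w) = \bigl(N_{\bar{G}}(\bar{T})_{n\s}\bigr)^g,
\]
where $w = \pi(n)$ and $n = \s(g)g^{-1}$, to reduce the problem to showing
\[
N_{\bar{G}}(\bar{T})_{n\s} \big/ \bar{T}_{n\s} \;\cong\; C_{W,\s}(w).
\]

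The main tool is the quotient map $\pi\colon N_{\bar{G}}(\bar{T}) \to W$, which has kernel $\bar{T}$ and satisfies $\pi(\s(x)) = \s(\pi(x))$ for every $x \in N_{\bar{G}}(\bar{T})$, since the $\s$-action on $W$ is induced from the one on $\bar{G}$. I would first verify that $\pi$ sends $N_{\bar{G}}(\bar{T})_{n\s}$ into $C_{W,\s}(w)$: if $x$ is fixed by $n\s$, i.e.\ $x = n^{-1}\s(x)n$, then applying $\pi$ gives $\pi(x) = w^{-1}\s(\pi(x))w$, which is exactly the defining condition $\s(\pi(x))^{-1} w\, \pi(x) = w$ of $C_{W,\s}(w)$. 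The kernel of this restricted map is $\bar{T} \cap N_{\bar{G}}(\bar{T})_{n\s} = \bar{T}_{n\s}$, so we obtain an injective homomorphism
\[
N_{\bar{G}}(\bar{T})_{n\s} \big/ \bar{T}_{n\s} \;\hookrightarrow\; C_{W,\s}(w).
\]

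The crux of the proof, and the step I expect to be the main obstacle, is surjectivity. Given $y \in C_{W,\s}(w)$, choose any lift $m \in N_{\bar{G}}(\bar{T})$ of $y$; the condition $y \in C_{W,\s}(w)$ then translates to $n^{-1}\s(m)n = \tau m$ for some $\tau \in \bar{T}$, measuring the failure of $m$ itself to be $n\s$-fixed. I would then look for a correction $t \in \bar{T}$ so that $tm$ is fixed by $n\s$; a short computation reduces this to solving the equation $t\cdot (n\s)(t)^{-1} = \tau$ in $\bar{T}$. Since $\bar{T}$ is $n\s$-stable (because $n \in N_{\bar{G}}(\bar{T})$ and $\s(\bar{T}) = \bar{T}$) and $n\s$ is a Steinberg endomorphism of $\bar{G}$, its restriction is a Steinberg endomorphism of the connected group $\bar{T}$. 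The Lang--Steinberg theorem then guarantees that the map $t \mapsto t \cdot (n\s)(t)^{-1}$ is surjective on $\bar{T}$, producing the required $t$.

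Combining injectivity with the surjectivity supplied by Lang--Steinberg yields $N_{\bar{G}}(\bar{T})_{n\s}/\bar{T}_{n\s} \cong C_{W,\s}(w)$, and transporting back by $g$ gives the stated isomorphism. The only delicate point is ensuring that the $\s$-action used to define $C_{W,\s}(w)$ matches the one induced by $\pi$ on both sides of the calculation; once conventions for the twisted endomorphism $n\s$ and for $\s$-conjugation on $W$ are fixed consistently, the rest of the argument is essentially formal.
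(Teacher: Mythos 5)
Your argument is correct: the quotient map $\pi$ restricted to $N_{\bar{G}}(\bar{T})_{n\s}$ lands in $C_{W,\s}(w)$ with kernel exactly $\bar{T}_{n\s}$, and the Lang--Steinberg theorem applied to the connected group $\bar{T}$ with the Steinberg endomorphism $n\s$ (whose fixed points are finite, being contained in $\bar{G}_{n\s}$) gives the surjectivity, after which conjugation by $g$ transports the isomorphism to the stated form. The paper offers no proof of its own, simply citing Carter's Proposition 3.3.6, and the argument you give is precisely the standard one found there, so the two approaches coincide.
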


In particular, as a consequence of Lemma \ref{l:torus2}, we deduce that 
\[
N_{\bar{G}}(\bar{T})_{n\s} / \bar{T}_{n\s} \cong C_{W,\s}(w),
\]
with an isomorphism induced by the quotient map $\pi: N_{\bar{G}}(\bar{T}) \rightarrow W$.

The following result, which is due to Tits \cite{Tits}, will also be useful.

\begin{thm}\label{t:titsweyl}
Let $\bar{T}$ be a maximal torus and set $W_0 = \la w_{\a} \,:\, \a \in \Phi\ra$. Then $N_{\bar{G}}(\bar{T}) = \bar{T} W_0$. Furthermore, if $p = 2$, then $W \cong W_0$ and $N_{\bar{G}}(\bar{T}) = \bar{T}{:}W_0$ is a split extension.
\end{thm}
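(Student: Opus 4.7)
My plan is to prove the two assertions separately: the first by lifting each reflection $s_\alpha \in W$ to the element $w_\alpha \in W_0$, and the second by analyzing the kernel of the natural surjection $W_0 \twoheadrightarrow W$ and observing that it collapses in characteristic $2$.

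For the first claim, I would observe that for each $\alpha \in \Phi$, the element $w_\alpha$ normalizes $\bar{T}$ and its image in $W = N_{\bar{G}}(\bar{T})/\bar{T}$ is the reflection $s_\alpha$, as recorded in Section \ref{ss:setup}. Since $W$ is generated by the $s_\alpha$, the subgroup $\bar{T} W_0$ surjects onto $W$ under the quotient map; combined with $\bar{T} \leqs \bar{T} W_0 \leqs N_{\bar{G}}(\bar{T})$, this forces $N_{\bar{G}}(\bar{T}) = \bar{T} W_0$.

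For the second claim, suppose $p = 2$ and set $K = W_0 \cap \bar{T}$, which is the kernel of the restriction to $W_0$ of the quotient map $N_{\bar{G}}(\bar{T}) \to W$. The crucial step is to show that $K$ is generated by elements of the form $h_\alpha(-1)$ for $\alpha \in \Phi$: this is the content of Tits' result on the extended Weyl group, which asserts that the $w_\alpha$ satisfy the braid relations of the Coxeter system for $W$ together with the torus relations $w_\alpha^2 = h_\alpha(-1)$, and that these relations are defining for $W_0$. Granted this, in characteristic $2$ we have $-1 = 1$ in $K^\times$, so $h_\alpha(-1) = h_\alpha(1) = 1$ for every $\alpha \in \Phi$, and hence $K = 1$. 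Therefore the quotient map $W_0 \to W$ restricts to an isomorphism, and combined with $N_{\bar{G}}(\bar{T}) = \bar{T} W_0$ together with $\bar{T} \cap W_0 = K = 1$ we obtain the split extension $N_{\bar{G}}(\bar{T}) = \bar{T}{:}W_0$. The main obstacle is establishing the Tits presentation of $W_0$: verifying the braid relations among the $w_\alpha$ exactly in $\bar{G}$ (and not merely modulo $\bar{T}$) requires a careful analysis of Chevalley's commutator formulas applied to the explicit expression $w_\alpha = x_\alpha(1)x_{-\alpha}(-1)x_\alpha(1)$.
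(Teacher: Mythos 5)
The paper gives no proof of this statement, simply citing Tits' theorem on extended Weyl groups, and your argument is precisely the standard derivation from that result: the first part follows because each $w_\alpha$ maps onto the reflection $s_\alpha$ generating $W$, and the second because the kernel $W_0 \cap \bar{T}$ is generated by the elements $h_\alpha(-1)$ (the substantive content of Tits' presentation, which you correctly identify as the point requiring the Chevalley commutator analysis), all of which are trivial when $p=2$. Your proposal is correct and takes essentially the same route the paper relies on.
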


Finally, we conclude this section by presenting Theorem \ref{t:odd} below, which describes all the core-free maximal subgroups of odd index in an almost simple exceptional group of Lie type. This is a special case of a more general result of Liebeck and Saxl \cite{LS} on primitive permutation groups of odd degree.
 
In the second column of Table \ref{tab:odd}, we refer to the \emph{type of $H$}, which gives an approximate description of the structure of $H$ (working with the Lie notation for classical groups). In each case, the precise structure is readily available in the literature. For example, we refer the reader to \cite[Tables  5.1, 5.2]{LSS} in the cases where $H$ is a non-parabolic maximal rank subgroup. Note that for $T=E_6(q)$ we write $P_1$ and $P_6$ for representatives of the two $T$-classes of maximal parabolic subgroups with Levi factors of type $D_5(q)$. Also observe that in the final column of Table \ref{tab:odd} we write ``graphs'' to indicate that the given subgroup is maximal only if $G$ contains a graph (or graph-field) automorphism of $T$, and similarly if we write ``no graphs''.

\begin{thm}\label{t:odd}
Let $G \leqs {\rm Sym}(\O)$ be a finite almost simple primitive permutation group with point stabilizer $H$ and socle $T$, which is a simple exceptional group of Lie type over $\mathbb{F}_q$. Then $|\O|$ is odd if and only if one of the following holds:

\vspace{1mm}

\begin{itemize}\addtolength{\itemsep}{0.2\baselineskip}
\item[{\rm (i)}] $q$ is even and $H$ is a parabolic subgroup of $G$.
\item[{\rm (ii)}] $q$ is odd and $H$ is a subfield subgroup over $\mathbb{F}_{q_0}$, where $q=q_0^k$ and $k$ is an odd prime.
\item[{\rm (iii)}] $q$ is odd and $(G,H)$ is one of the cases recorded in Table \ref{tab:odd}.
\end{itemize}
\end{thm}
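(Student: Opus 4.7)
The plan is to derive the statement as a specialization of the Liebeck--Saxl classification \cite{LS} of primitive permutation groups of odd degree, reconciled with the description of maximal subgroups of $G$ recalled in Section \ref{ss:subs} and Remark \ref{r:CS}. The key observation throughout is that $|\O|$ is odd if and only if $H_0 = H \cap T$ contains a Sylow $2$-subgroup of $T$, so the whole analysis reduces to comparing $|H_0|_2$ with $|T|_2$ across the list of candidates for $H$.

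For $q$ even, the Sylow $2$-subgroup of $T$ coincides with the unipotent radical of a Borel subgroup of $T$ and is therefore contained in every parabolic subgroup. Hence every parabolic of $G$ has odd index, giving one direction of (i); conversely, any maximal subgroup of $G$ whose intersection with $T$ contains a full Sylow $2$-subgroup must be parabolic by the Borel--Tits theorem (or directly by \cite{LS}). For $q$ odd, parabolic subgroups are immediately ruled out, since for each exceptional type the index $[T:P]$ contains a factor of the form $q \pm 1$ that contributes enough even content. A subfield subgroup over $\mathbb{F}_{q_0}$ with $q = q_0^k$ produces odd index precisely when $k$ is an odd prime: this follows from the identity $(q^d-1)/(q_0^d-1) = 1 + q_0^d + \cdots + q_0^{d(k-1)}$, which is a sum of $k$ odd terms and hence odd, yielding (ii). The remaining subgroups from $\mathcal{C}$ consist of the non-parabolic maximal rank subgroups in \cite[Tables 5.1, 5.2]{LSS}, the non-maximal rank reductive subgroups in \cite[Table 3]{LS03}, the exotic local subgroups of \cite{CLSS}, and the Borovik subgroup for $E_8$. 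For each of these I would compute $|H|_2$ and compare it to $|T|_2$: the exotic local and Borovik subgroups are ruled out by a direct order estimate, and among the reductive subgroups only the entries of Table \ref{tab:odd} survive. Since every $H \in \mathcal{S}$ is almost simple with a socle whose order is small relative to $|T|$, these too are eliminated by an order comparison.

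The principal obstacle is the non-parabolic maximal rank case, where for each candidate $N_G(\bar{H}_{\sigma})$ one must determine the precise structure of the normalizer and read off its $2$-part; here Lemmas \ref{l:sigmaclasses}, \ref{l:torus1} and \ref{l:torus2}, together with \cite[Tables 5.1, 5.2]{LSS}, provide the bookkeeping framework. Additional care is needed for $T = E_6(q)$ (to distinguish the two $T$-classes of parabolics with $D_5$ Levi factors), for the fusion of classes under diagonal and graph automorphisms (explaining the ``graphs'' and ``no graphs'' qualifications in the last column of Table \ref{tab:odd}), and for the low-rank and twisted cases where the table structure from \cite{LSS} does not directly apply. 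Each surviving entry is then matched against the exceptional-type rows of Tables~1 and~2 of \cite{LS} to confirm completeness; beyond this bookkeeping, no new mathematical input is required.
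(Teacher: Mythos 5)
Your overall strategy -- reduce to the condition that $H \cap T$ contains a Sylow $2$-subgroup of $T$ and then work case by case through the classification of maximal subgroups -- is the right one, and it is essentially how the result is established in \cite{LS}; the paper itself offers no proof of Theorem \ref{t:odd} beyond citing Liebeck--Saxl. However, several of your blanket dismissals are contradicted by entries of Table \ref{tab:odd} itself, so the argument as written would produce an incomplete list. First, the claim that for $q$ odd ``parabolic subgroups are immediately ruled out'' is false: for $T = E_6(q)$ the index of the $D_5$-parabolic $P_1$ (or $P_6$) is
\[
\frac{(q^9-1)(q^{12}-1)}{(q-1)(q^4-1)} = (1+q+\cdots+q^8)(1+q^4+q^8),
\]
a product of two odd integers when $q$ is odd, and indeed $P_1$, $P_6$ appear in Table \ref{tab:odd} (with the ``no graphs'' caveat). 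Your later remark that $E_6(q)$ ``needs additional care'' does not repair this, because the stated reason for excluding parabolics -- a factor of the form $q \pm 1$ in the index -- simply does not occur here.

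Second, the assertion that the exotic local subgroups and the subgroups in $\mathcal{S}$ are ``eliminated by a direct order estimate'' also fails. For $T = G_2(p)$ with $p \equiv \pm 3 \imod{8}$ one has $|T|_2 = 2^6$, and both the exotic $2$-local subgroup $2^3.{\rm L}_3(2)$ and the almost simple subgroup $G_2(2) \cong {\rm U}_3(3).2$ have $2$-part exactly $2^6$, hence odd index; both appear in Table \ref{tab:odd} (similarly $2^3{:}7 < {}^2G_2(3)'$). You also omit the ``same type, twisted version'' subgroups from case (II) of Definition \ref{d:c}, such as ${}^2E_6(q_0) < E_6(q_0^2)$, which must be checked alongside the subfield subgroups. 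None of these gaps is fatal to the method -- each is resolved by exactly the $2$-part computation you propose -- but the proof has to carry that computation out for every family rather than dismiss whole classes a priori, and the low rank groups ${}^2B_2(q)$, ${}^2G_2(q)'$, $G_2(q)$, ${}^3D_4(q)$, ${}^2F_4(q)'$ contribute a disproportionate share of the entries in Table \ref{tab:odd} precisely because their Sylow $2$-subgroups are small.
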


{\scriptsize
\begin{table}
\[
\begin{array}{lll} \hline
T & \mbox{Type of $H$}  & \mbox{Conditions} \\ \hline
E_8(q) & A_1(q)^8, \, D_8(q), \, D_4(q)^2 & \\
& (q-\e)^8 & q \equiv \e \imod{4} \\
E_7(q) & A_1(q)^7,\, A_1(q)D_6(q), \, A_1(q)^3D_4(q) & \\
& (q-\e)^7 & q \equiv \e \imod{4} \\
E_6(q) & P_1,\, P_6 & \mbox{no graphs} \\
& D_5(q) \times (q-1) & \mbox{graphs} \\
& D_4(q) \times (q-1)^2 & \\
& (q-1)^6 & q \equiv 1 \imod{4} \\ 
{}^2E_6(q) & D_5^{-}(q) \times (q+1),\, D_4(q) \times (q+1)^2 & \\
& (q+1)^6 & q \equiv 3 \imod{4} \\ 
F_4(q) & B_4(q),\, D_4(q) & \\
{}^3D_4(q) & G_2(q),\, A_1(q)A_1(q^3) & \\
& A_2^{\e}(q) \times (q^2 + \e q +1) & q \equiv \e \imod{4} \\
G_2(q) & A_1(q)^2 &  \\
& A_2^{\e}(q) & q \equiv \e \imod{4} \\
& (q-\e)^2 & \mbox{$p=3$, $q \geqs 9$, $q \equiv \e \imod{4}$, graphs} \\ 
& G_2(2),\, 2^3.{\rm L}_3(2) & q=p \equiv \pm 3 \imod{8} \\
{}^2G_2(q)' & A_1(q) & q \geqs 27 \\ 
& 2^3{:}7 & q = 3 \\ \hline
\end{array}
\]
\caption{The groups with $|\O|$ odd in Theorem \ref{t:odd}(iii)}
\label{tab:odd}
\end{table}
}

\subsection{Feasible characters}\label{ss:feasible}

As before, let $G$ be an almost simple exceptional group of Lie type with socle $T = (\bar{G}_{\s})'$ and write $\mathcal{M} = \mathcal{C} \cup \mathcal{S}$ for the set of core-free maximal subgroups of $G$. Recall that the subgroups in $\mathcal{M}$ have been determined up to conjugacy, with the exception of a finite number of  open cases involving the collection $\mathcal{S}$ when $T = E_7(q)$ or $E_8(q)$ (see Remark \ref{r:CS}). We conclude this preliminary section by explaining how the theory of feasible characters can be used to study $2$-elusivity in the presence of these undetermined cases. As before, we write $K$ for the algebraic closure of $\mathbb{F}_q$, where $q = p^f$ with $p$ a prime.

We begin with a general definition. Let $H$ be a finite group and let $V$ be a finite-dimensional $\bar{G}$-module. Following Litterick \cite[Definition 3.2]{Litt}, we say that a $KH$-module $V_0$ with Brauer character $\chi$ is a \emph{feasible decomposition} of $H$ on $V$ if $\dim V_0 = \dim V$ and each character value $\chi(x)$ for a $p'$-element $x \in H$ corresponds to the trace on $V$ of some semisimple element in $\bar{G}$. Moreover, this correspondence should be compatible with the associated power maps. In this situation, $\chi$ is called a \emph{feasible character}. Clearly, if $H$ is a finite subgroup of $\bar{G}$, then the restriction $V \downarrow H$ is a feasible decomposition of $H$ on $V$. However, it is not true that every feasible decomposition corresponds to a finite subgroup of $\bar{G}$.

In \cite{Litt}, Litterick considers the case where $H$ is a finite simple group, which is not a group of Lie type in characteristic $p$, and he develops a computational approach for finding the list of feasible decompositions of $H$ with respect to the minimal module $V_{\operatorname{min}}$ and the adjoint module $\mathcal{L}(\bar{G})$. This describes the possible embeddings of such a finite simple group $H$ into $\bar{G}$, which in turn severely limits the possibilities for the socle of a maximal subgroup of $G$ in class $\mathcal{S}$.

In this paper, we will use Litterick's {\sc Magma} code in \cite{LittGithub} to find feasible characters in certain cases where $H$ is almost simple, but not necessarily simple. Referring to the proof of Theorem \ref{t:main}, this will help us to overcome some of the difficulties that arise when $T = E_7(q)$ or $E_8(q)$, and the point stabilizer $H$ is an almost simple maximal subgroup in the class $\mathcal{S}$, where a complete list of such subgroups is not currently known.

In order to develop these ideas further, we need to introduce some additional notation and terminology. 

Let $H$ be a finite subgroup of $\bar{G}$. Following Craven \cite[Definition 3.2]{CravenPSL2}, we say that $H$ is \emph{strongly imprimitive} if $H < L < \bar{G}$ for some subgroup $L$ of $\bar{G}$ such that 

\vspace{1mm}

\begin{itemize}\addtolength{\itemsep}{0.2\baselineskip}
\item[{\rm (a)}] $L$ is a positive-dimensional maximal closed subgroup of $\bar{G}$; and
\item[{\rm (b)}] $L$ is both $\sigma$-stable and $N_{{\rm Aut}^+(\bar{G})}(H)$-stable, where ${\rm Aut}^{+}(\bar{G})$ is the group generated by the inner, graph and $p$-power field automorphisms of $\bar{G}$ (note that the latter are automorphisms of $\bar{G}$ as an abstract group, but not as an algebraic group).
\end{itemize}

\vspace{1mm}

Now each automorphism of $\bar{G}_{\s}$ extends to an element of ${\rm Aut}^{+}(\bar{G})$. Therefore, if $H < \bar{G}_{\s}$ is strongly imprimitive with $L$ as above, then $N_{\Aut(\bar{G}_{\s})}(H)$ is contained in $N_{\Aut(\bar{G}_{\s})}(L_{\s})$ (see \cite[p.12]{CravenPSL2}). As a consequence, we obtain the following result.

\begin{lem}\label{l:stronglyimprimitive}
Let $H$ be a subgroup of $G$ such that $H \cap T$ is a strongly imprimitive subgroup of $\bar{G}_{\s}$. Then $H$ is not contained in the collection $\mathcal{S}$.
\end{lem}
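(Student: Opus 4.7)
The plan is to argue by contradiction: assume $H \in \mathcal{S}$, so that $H$ is a core-free maximal subgroup of $G$ not contained in $\mathcal{C}$, and derive a contradiction by producing a witness that places $H$ in class (I) of Definition \ref{d:c}.

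Write $H_0 = H \cap T$. Since $H_0$ is strongly imprimitive, by definition there is a maximal positive-dimensional closed subgroup $L$ of $\bar{G}$ with $H_0 < L < \bar{G}$, where $L$ is both $\sigma$-stable and $N_{\Aut^+(\bar{G})}(H_0)$-stable. As recorded in the paragraph immediately preceding the lemma, these properties imply
\[
N_{\Aut(\bar{G}_{\s})}(H_0) \;\leqs\; N_{\Aut(\bar{G}_{\s})}(L_{\s}).
\]
Now $H_0 \normeq H$, so every element of $H$ normalizes $H_0$; identifying $G$ with its image in $\Aut(\bar{G}_{\s})$, this yields $H \leqs N_G(L_{\s})$.

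The next step is to verify that $N_G(L_{\s})$ is a \emph{proper} subgroup of $G$. Since $L$ is a proper closed subgroup of $\bar{G}$ and $T = (\bar{G}_{\s})'$ is Zariski-dense in $\bar{G}$, no subgroup of $L$ can contain $T$. If $L_{\s}$ were normalized by $T$, then $L_{\s} \cap T$ would be a normal subgroup of the simple group $T$; and this intersection contains $H_0$, which is nontrivial (the strongly imprimitive condition is vacuous otherwise), so we would be forced to have $T \leqs L_{\s} \leqs L$, contradicting the Zariski-density observation. Hence $T \not\leqs N_G(L_{\s})$ and in particular $N_G(L_{\s}) < G$.

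By the maximality of $H$, the chain $H \leqs N_G(L_{\s}) < G$ forces $H = N_G(L_{\s})$. Since $L$ is a maximal positive-dimensional closed subgroup of $\bar{G}$ that happens to be $\sigma$-invariant, it is a fortiori a maximal $\sigma$-invariant positive-dimensional closed subgroup of $\bar{G}$, so $H = N_G(L_{\s})$ satisfies condition (I) of Definition \ref{d:c}. Thus $H \in \mathcal{C}$, contradicting $H \in \mathcal{S}$. The only real subtlety is the verification that $N_G(L_{\s}) < G$; once that is in hand, the conclusion is a direct matching with the definition of class (I).
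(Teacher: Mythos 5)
Your proposal is correct and follows essentially the same route as the paper, which states the lemma as an immediate consequence of the containment $N_{\Aut(\bar{G}_{\s})}(H_0) \leqs N_{\Aut(\bar{G}_{\s})}(L_{\s})$ recorded just before it. You simply make explicit the two details the paper leaves implicit, namely that $N_G(L_{\s})$ is proper in $G$ (via Zariski-density of $T$ and simplicity of $T$) and that maximality of $H$ then forces $H = N_G(L_{\s})$, placing $H$ in class (I) of Definition \ref{d:c}.
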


The next lemma, which follows from \cite[Proposition 4.5]{CravenPSL2}, is our main tool for showing that a given subgroup of $G$ is strongly imprimitive. Note that if $\bar{G} = E_7$ or $E_8$, then the adjoint module $\mathcal{L}(\bar{G})$ is irreducible unless $(\bar{G},p) = (E_7,2)$.

\begin{lem}\label{l:liefixed}
Let $H$ be a subgroup of $G$ and assume that the $\bar{G}$-module $V = \mathcal{L}(\bar{G})$ is irreducible. If $H$ fixes a nonzero element of $V$, then $H$ is strongly imprimitive. 
\end{lem}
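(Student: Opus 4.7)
The strategy is to exhibit a proper positive-dimensional closed subgroup $L_0$ of $\bar{G}$ containing $H$ that is stable under both $\sigma$ and $N_{\operatorname{Aut}^+(\bar{G})}(H)$, and then pass to a maximal such subgroup. The natural candidate is built from the $H$-fixed subspace of $V$.

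First I would set $W = V^H$, the subspace of $H$-fixed vectors in $V = \mathcal{L}(\bar{G})$. By hypothesis $W$ is nonzero, and since $V$ is irreducible and $\bar{G}$ is of adjoint type (so the adjoint action is faithful), $W$ is a proper subspace---otherwise $H$ would lie in $Z(\bar{G}) = 1$ and the conclusion is vacuous (take $L$ to be any maximal positive-dimensional closed subgroup of $\bar{G}$). Let $L_0 = \operatorname{Stab}_{\bar{G}}(W)$. Then $L_0$ is a proper closed subgroup of $\bar{G}$ containing $H$: properness follows from irreducibility of $V$, since a proper nonzero subspace cannot be $\bar{G}$-invariant. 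Crucially, because $\sigma$ and each $\phi \in N_{\operatorname{Aut}^+(\bar{G})}(H)$ normalize $H$ and act on $V$ compatibly with the action of $\bar{G}$, they preserve $V^H$ setwise, and this stability transfers directly to $L_0$.

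The main technical step is to verify that $L_0$ is positive-dimensional. For $\dim W = 1$ this is immediate: for any nonzero $v \in W$, the orbit map $\bar{G} \to \mathbb{P}(V)$, $g \mapsto g \cdot [v]$, has image of dimension at most $\dim V - 1 = \dim \bar{G} - 1$, so $\dim L_0 = \dim \operatorname{Stab}_{\bar{G}}(Kv) \geq 1$. For $\dim W > 1$ the naive Grassmannian bound is weaker, and here I would use the standard Lie-algebraic estimate $\dim C_{\bar{G}}(v) \geq \operatorname{rank}(\bar{G})$ for any nonzero $v \in \mathcal{L}(\bar{G})$, applied at each vector of a basis of $W$, to see that the pointwise fixer $\operatorname{Fix}_{\bar{G}}(W) \leq L_0$ has a positive-dimensional Lie algebra; this can be refined if necessary by working with an $N_{\operatorname{Aut}^+(\bar{G})}(H)$-invariant flag in $W$ to locate a $1$-dimensional invariant line and reduce to the $\dim W = 1$ case.

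Finally, I would enlarge $L_0$ to a maximal positive-dimensional closed subgroup $L$ of $\bar{G}$. The collection of such maximal overgroups is permuted by $\sigma$ and by $N_{\operatorname{Aut}^+(\bar{G})}(H)$, and a standard finiteness and invariant-selection argument supplies a choice of $L$ stable under both, giving $H < L < \bar{G}$ satisfying the definition of strong imprimitivity. The principal obstacle is the positive-dimensionality of $L_0$ in the case $\dim W > 1$, where the straightforward dimension-counting on the Grassmannian is insufficient and one relies on the Lie-algebraic centralizer estimate; a secondary subtlety is the simultaneous preservation of $\sigma$- and $N_{\operatorname{Aut}^+(\bar{G})}(H)$-stability when selecting the maximal overgroup.
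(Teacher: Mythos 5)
The paper does not actually prove this lemma: it is taken directly from \cite[Proposition 4.5]{CravenPSL2} without further argument, so your proposal is an attempt at Craven's underlying result. Your argument does work when $\dim C_V(H)=1$: the line $W=C_V(H)$ is canonically attached to $H$, hence stable under $\sigma$ and $N_{{\rm Aut}^{+}(\bar{G})}(H)$, and the orbit-map bound on $\mathbb{P}(V)$ gives $\dim {\rm Stab}_{\bar{G}}(W)\geqs \dim\bar{G}-\dim V+1=1$. But the case $\dim W>1$, which you rightly identify as the crux, is not closed by either of your patches. Knowing that $\dim C_{\bar{G}}(v)\geqs {\rm rank}(\bar{G})$ for each basis vector $v$ of $W$ says nothing about the intersection $\bigcap_{v}C_{\bar{G}}(v)$: an intersection of positive-dimensional subgroups can be finite. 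Indeed the pointwise centralizer of all of $\mathcal{L}(\bar{G})$ is trivial, and already for a plane spanned by a regular semisimple element $v_1$ (whose connected centralizer is a maximal torus $\bar{T}$) and a second generic vector $v_2$, the group $C_{\bar{T}}(v_2)$ is finite. Nor is there any reason for $W$ to contain a line invariant under $N_{{\rm Aut}^{+}(\bar{G})}(H)$, since that group may act irreducibly on $W$; and, as you yourself observe, the dimension count on the Grassmannian does not force the setwise stabilizer of a $k$-plane with $k\geqs 2$ to be positive-dimensional. So the central claim --- that some canonically chosen proper closed overgroup of $H$ is positive-dimensional --- is exactly the point that is missing, and it is the part of \cite[Proposition 4.5]{CravenPSL2} that requires an argument genuinely different from the stabilizer-of-the-fixed-space route.

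There is also a secondary gap at the end: given a proper positive-dimensional closed subgroup $L_0$ containing $H$ and stable under $\sigma$ and $N_{{\rm Aut}^{+}(\bar{G})}(H)$, the passage to a \emph{maximal} positive-dimensional closed subgroup with the same stability properties is not a routine invariant selection, because $L_0$ may lie in several maximal positive-dimensional subgroups that are permuted nontrivially by $N_{{\rm Aut}^{+}(\bar{G})}(H)$; establishing this reduction is itself one of the main points of Section 4 of \cite{CravenPSL2}. For the purposes of this paper the honest proof is the citation; a self-contained argument would need to follow Craven's actual construction rather than the one proposed here.
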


In order to effectively apply Lemma \ref{l:liefixed}, we need a condition which forces a $KH$-module to have a nonzero fixed point. The following cohomological condition was introduced by Litterick in \cite[Chapter 6, p.70]{Litt}, where it is referred to as property (\textbf{P}).

\begin{defn}\label{d:P}
Let $H$ be a finite group and let $V_0$ be a finite-dimensional $KH$-module with composition factors $W_1, \ldots, W_t$. Let $m \geqs 0$ be the number of trivial composition factors and let $W_i^*$ be the dual of $W_i$. Then we say that $V_0$ (or the Brauer character of $V_0$) has property (\textbf{P}) if and only if

\vspace{1mm}

\begin{itemize}\addtolength{\itemsep}{0.2\baselineskip}
\item[{\rm (a)}] $\sum_{i = 1}^t \dim H^1(H,W_i) \geqs m$; and
\item[{\rm (b)}] If $\sum_{i = 1}^t \dim H^1(H,W_i) = m$, then for some $i \in \{1, \ldots, t\}$ we have $H^1(H,W_i) = 0$ and $H^1(H,W_i^*) \ne 0$, or $H^1(H,W_i) \ne 0$ and $H^1(H,W_i^*) = 0$.
\end{itemize}
\end{defn}

Then the key result here is the following.

\begin{prop}\label{p:condP}
Let $H$ be a subgroup of $G$ and assume that $V = \mathcal{L}(\bar{G})$ is an irreducible $\bar{G}$-module. If the restriction of $V$ to $H \cap T$ does not have property {\rm (\textbf{P})}, then $H$ is not contained in the collection $\mathcal{S}$.
\end{prop}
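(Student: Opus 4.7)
The plan is to reduce the claim to showing that the restriction $V_0 := V|_{H_0}$, where $H_0 = H \cap T$, admits a nonzero $H_0$-fixed vector. Once this is established, we apply Lemma \ref{l:liefixed} (with $H_0$ in place of $H$) to conclude that $H_0$ is strongly imprimitive in $\bar{G}$, and then Lemma \ref{l:stronglyimprimitive} delivers $H \notin \mathcal{S}$. So the substantive task is to derive $V_0^{H_0} \ne 0$ from the failure of property (\textbf{P}), using the hypothesis that $V = \mathcal{L}(\bar{G})$ is irreducible.

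To do this, I fix a composition series $0 = V^{(0)} \subset V^{(1)} \subset \cdots \subset V^{(t)} = V_0$ with $V^{(i)}/V^{(i-1)} \cong W_i$ and apply the long exact sequence in cohomology at each step. If $W_i$ is non-trivial then $H^0(H_0, V^{(i)}) = H^0(H_0, V^{(i-1)})$, while if $W_i \cong K$ is trivial then $\dim H^0(H_0, V^{(i)}) = \dim H^0(H_0, V^{(i-1)}) + 1 - \varepsilon_i$, where $\varepsilon_i \in \{0,1\}$ is the dimension of the image of the connecting map $H^0(H_0, W_i) \to H^1(H_0, V^{(i-1)})$. Pushing the long exact sequence further gives $\dim H^1(H_0, V^{(i)}) \leqs \dim H^1(H_0, V^{(i-1)}) - \varepsilon_i + \dim H^1(H_0, W_i)$, and telescoping yields the bookkeeping inequality
\[
\dim H^0(H_0, V_0) - \dim H^1(H_0, V_0) \geqs m - \sum_{i=1}^t \dim H^1(H_0, W_i).
\]
When condition (a) of (\textbf{P}) fails, the right-hand side is strictly positive and $V_0^{H_0} \ne 0$ follows at once.

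The remaining case is when (a) holds with equality $\sum_i \dim H^1(H_0, W_i) = m$ but condition (b) fails, so that the vanishing of $H^1(H_0, W_i)$ and $H^1(H_0, W_i^*)$ is synchronised for every $i$. Here one exploits the self-duality $V \cong V^*$ coming from the Killing form on the semisimple algebraic group $\bar{G}$, so that $V_0 \cong V_0^*$ as $KH_0$-modules and in particular the multiset of composition factors is stable under the involution $W_i \mapsto W_i^*$. Assuming $V_0^{H_0} = 0$ for contradiction, the bookkeeping above forces every $\varepsilon_i = 1$ and $H^1(H_0, V_0) = 0$. Running the parallel analysis on the head-filtration of $V_0$ (equivalently, on the socle-filtration of $V_0^*$) produces an analogous constraint indexed by the duals $W_i^*$. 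The failure of (b) is precisely the symmetry that prevents these two families of extension classes from coexisting along a single composition series, yielding the desired contradiction and hence $V_0^{H_0} \ne 0$.

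The main obstacle is this last cohomological step: the clean bookkeeping inequality handles case (a) in one line, whereas case (b) demands careful tracking of extension classes and their duality partners. The precise combinatorial argument is the core technical content of property (\textbf{P}) as introduced by Litterick; once the fixed vector $V_0^{H_0} \ne 0$ is in hand, the remainder of the proof is a routine application of Lemmas \ref{l:liefixed} and \ref{l:stronglyimprimitive}.
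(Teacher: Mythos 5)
Your overall architecture is exactly the paper's: derive a nonzero $H_0$-fixed vector on $V$ from the failure of property (\textbf{P}), then apply Lemma \ref{l:liefixed} to get strong imprimitivity and Lemma \ref{l:stronglyimprimitive} to conclude $H \notin \mathcal{S}$. The difference is that the paper does not re-prove the fixed-vector step at all: it simply cites \cite[Proposition 3.6]{Litt}, which states precisely that failure of (\textbf{P}) forces a one-dimensional trivial submodule of $V \downarrow H_0$. Your telescoping long-exact-sequence argument for case (a) is correct and is the standard proof of that half of Litterick's result.

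The gap is in case (b). When $\sum_i \dim H^1(H_0,W_i) = m$ and condition (b) fails, you assert that "running the parallel analysis on the head-filtration" produces a constraint on the duals $W_i^*$ and that "the failure of (b) is precisely the symmetry that prevents these two families of extension classes from coexisting", but you never state what that constraint is or why the two are incompatible — and you concede as much in your final sentence by deferring to Litterick for "the precise combinatorial argument". Note in particular that failure of (b) only tells you the vanishing of $H^1(H_0,W_i)$ and $H^1(H_0,W_i^*)$ is synchronised for each $i$; it does not give you $\sum_i \dim H^1(H_0,W_i^*) = m$ for free, so the dual bookkeeping inequality does not immediately close the argument, and some genuine extra work (tracking which factors absorb the connecting maps, as Litterick does) is required. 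As written, this is an assertion rather than a proof. The simplest repair is to do what the paper does and cite \cite[Proposition 3.6]{Litt} for the whole implication "not (\textbf{P}) $\Rightarrow$ trivial submodule"; if you insist on a self-contained argument, case (b) needs to be written out in full.
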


\begin{proof}
By \cite[Proposition 3.6]{Litt}, if (\textbf{P}) does not hold, then $H \cap T$ has a $1$-dimensional trivial submodule on $V$. The result now follows from Lemmas \ref{l:stronglyimprimitive} and \ref{l:liefixed}.
\end{proof}

\begin{rem}\label{r:cohomology}
Let $H$ be a subgroup of $G$ and let $V_0$ be a feasible decomposition of $H$ on  the adjoint module $V = \mathcal{L}(\bar{G})$. Then we can use {\sc Magma} to check property (\textbf{P}) in Definition \ref{d:P}. To do this, first observe that  each composition factor $W_i$ is defined over a finite field, so we have $W_i \cong K \otimes_{\mathbb{F}_{q_i}} W_i'$ for some absolutely irreducible $\mathbb{F}_{q_i}H$-module $W_i'$, where $q_i$ is some power of $p$. Then 
\[
H^1(H,W_i) \cong K \otimes_{\mathbb{F}_{q_i}} H^1(H, W_i')
\]
and thus $\dim_K H^1(H,W_i) = \dim_{\mathbb{F}_{q_i}} H^1(H,W_i')$. Cohomology groups over finite fields can be computed with {\sc Magma} and this allows us to compute $\dim_K H^1(H,W_i)$ for all $i$.
\end{rem}

\begin{ex} 
For instance, the following {\sc Magma} code first constructs all seven absolutely irreducible modules $W$ of $H = {\rm PGL}_2(7)$ in characteristic $p = 3$. We then calculate that each cohomology group $H^1(H,W)$ is trivial, with a single exception, which is $1$-dimensional.

\vs

{\small
\begin{verbatim}
H := PGL(2,7);
M := AbsolutelyIrreducibleModules(H, GF(3));
[CohomologicalDimension(x,1) : x in M]; // output: [ 0, 0, 0, 0, 0, 1, 0 ]
\end{verbatim}
}
\end{ex}

We are now in a position to summarize our approach for studying the $2$-elusive problem when $T = E_7(q)$ or $E_8(q)$ and the point stabilizer $H$ is contained in the collection $\mathcal{S}$. Set $V = \mathcal{L}(\bar{G})$ and assume that $(\bar{G},p) \ne (E_7,2)$, so $V$ is an irreducible $\bar{G}$-module. Suppose $H$ is an almost simple subgroup in $\mathcal{S}$ with $H_0 = H \cap T$. We can   proceed as follows:

\vspace{1mm}

\begin{itemize}\addtolength{\itemsep}{0.2\baselineskip}
\item[{\rm (a)}] First we use Litterick's {\sc Magma} code \cite{LittGithub} to determine all the feasible decompositions $V_0$ for the action of $H_0$ on $V$. 

\item[{\rm (b)}] For each feasible decomposition in (a), we proceed as in Remark \ref{r:cohomology} to determine if $V_0$ has property (\textbf{P}).

\item[{\rm (c)}] Given a feasible decomposition $V_0$ with property (\textbf{P}), we then study the composition factors of $V_0$ in order to  compute $\dim C_{V_0}(x)$ for each involution $x \in H_0$. See Remark \ref{r:cvx} below for more details.

\item[{\rm (d)}] By examining Tables \ref{table:evenpinvolutions} and  \ref{table:oddpinvolutions}, we see that $\dim C_V(x)$ uniquely determines the $T$-class of each involution $x \in H_0$. So from (c), we can determine if there are any 
feasible characters with property (\textbf{P}) such that $H_0$ meets every $T$-class of involutions.

\item[{\rm (e)}] If no such feasible characters are identified in (d), we can conclude that there is no action of $G$ with point stabilizer $H \in \mathcal{S}$ such that $H \cap T = H_0$ and $T$ is $2$-elusive.
\end{itemize}

\begin{rem}\label{r:cvx}
In step (c) above, we need to compute $\dim C_{V_0}(x)$ for each involution $x \in H_0$. If $p \ne 2$, then $x$ is semisimple and we can read off $\dim C_{V_0}(x)$ from the Brauer character $\chi$ of $V_0$. Indeed, we have 
\[
\dim C_{V_0}(x) = \frac{1}{2}(\dim V_0 + \chi(x))
\]
for every involution $x \in H_0$. 

For $p=2$, the calculation is more difficult and our approach will depend on the situation. For example, if $V_0$ is irreducible as a $KH_0$-module (and thus absolutely irreducible since $K$ is algebraically closed), then we can usually construct $V_0$ in {\sc Magma} via the function  \path{AbsolutelyIrreducibleModules}, which constructs all absolutely irreducible modules in a given characteristic. In some cases, we find that $|H_0|$ is large and constructing all absolutely irreducible modules is not feasible, but in our situation it is important to note that $\dim V_0 \leqs 248$. Indeed, this allows us to appeal to the work of Hiss and Malle \cite{HissMalle,HissMalleCorr}, where all the absolutely irreducible modules of dimension at most $250$ of all quasisimple finite groups are determined. In particular, the possible composition factors of $V_0 \downarrow \operatorname{soc}(H_0)$ are known and can be constructed. Then by Frobenius-Nakayama reciprocity, the $KH_0$-module $V_0$ can in turn be constructed as a simple quotient of an induced module $\operatorname{Ind}_{\operatorname{soc}(H_0)}^{H_0}(W)$, see Example \ref{e:psl4-5}.

We note that in many cases, explicit generators for certain quasisimple groups with respect to certain low-dimensional absolutely irreducible modules are available in the Web Atlas \cite{WebAt}. In other cases, we can often obtain the relevant module as a composition factor of a suitable permutation module corresponding to the action of $H_0$ on the cosets of a maximal subgroup, working with the {\sc Magma} function \texttt{PermutationModule} to construct the relevant permutation module. We illustrate this approach in Example \ref{e:psl4-5}.
\end{rem}

\begin{ex}\label{ex:e8cfc}
To illustrate the approach outlined above, let us assume $T = E_8(q)$ and $(H_0,p) = (\PGL_2(7), 3)$, $(\PGL_2(11),5)$ or $(\PGL_2(13),7)$. We present the following information in Tables \ref{table:PGL2_7_char3}, \ref{table:PGL2_11_char5} and \ref{table:PGL2_13_char7}, which has been computed using {\sc Magma}:

\vspace{1mm}

\begin{itemize}\addtolength{\itemsep}{0.2\baselineskip}
\item[(a)] The absolutely irreducible $KH_0$-modules $W$ are listed according to their dimension and they are indexed alphabetically. For example, if $(H_0, p) = (\PGL_2(7), 3)$ then $H_0$ has exactly three $6$-dimensional absolutely irreducible modules, denoted by $6_a$, $6_b$ and $6_c$. In the first row of each table, we record $\dim H^1(H_0,W)$.
		
\item[(b)] In each case, $H_0$ has exactly two classes of involutions, with  representatives denoted $t_1$ and $t_2$, where $C_{H_0}(t_1) = {\rm D}_{2(p-1)}$ and $C_{H_0}(t_2) = {\rm D}_{2(p+1)}$. In the second and third row of each table, we present the character values $\chi(t_i)$ for each irreducible $KH_0$-module in (a).
		
\item[(c)] The remaining rows in each table give the multiplicities of composition factors for all the feasible characters of $H_0$ on $V = \mathcal{L}(\bar{G})$ with  property (\textbf{P}). In general, there can be many feasible characters that do not have property (\textbf{P}). For example, if $(H_0,p) = (\PGL_2(7),3)$ then we calculate that there are $49$ feasible characters of $H_0$ on $V$, but only $14$ of them have property (\textbf{P}) and they are denoted \texttt{1}-\texttt{14} in Table \ref{table:PGL2_7_char3}.

\item[(d)] In the final column of each table, we use the symbol $(\star)$ to denote the feasible characters with property (\textbf{P}) and the additional condition that   $H_0$ intersects both classes of involutions in $T$. So for example, if $p=5$ and there exists a subgroup $H \in \mathcal{S}$ with $H_0 = \PGL_2(11)$, then $T$ is $2$-elusive if and only if $V \downarrow H_0$ has composition factors $1_a$, $10_a$, $10_b$, $10_c$, $11_a$ and $11_b$, with respective multiplicities $12$, $2$, $2$, $2$, $14$ and $2$. But it remains an open problem to determine whether or not such a maximal subgroup actually exists.
\end{itemize}
\end{ex}

{\scriptsize
\begin{table}
\[
\begin{array}{r|*{8}{c}}
&  1_a &  1_b &  6_a & 6_b & 6_c & 7_a & 7_b & \\ \hline
\dim H^1(H_0,W)  &  0 &  0 &  0 &  0 &  0 &  1 &  0 & \\
\chi(t_1)        &  1 &  2 &  0 &  0 &  0 &  1 &  2 & \\
\chi(t_2)        &  1 &  1 &  1 &  2 &  2 &  2 &  2 & \\  \hline                  
\texttt{1}                 &  5 &  9 &  5 &  3 &  3 & 10 & 14 &  \\
\texttt{2}                &  6 &  8 &  5 &  3 &  3 &  9 & 15 &  \\
\texttt{3}                &  6 & 10 &  7 &  3 &  3 &  9 & 13 &  \\
\texttt{4}                 &  7 &  9 &  7 &  1 &  5 &  8 & 14 &  \\
\texttt{5}                 &  7 &  9 &  7 &  5 &  1 &  8 & 14 &  \\
\texttt{6}                 &  7 & 10 &  0 &  7 &  7 &  8 & 13 & (\star)  \\
\texttt{7}                 &  7 & 12 &  2 &  7 &  7 &  8 & 11 & (\star)  \\
\texttt{8}                 & 13 &  1 &  5 &  3 &  3 & 18 &  6 & (\star)  \\
\texttt{9}                 & 14 &  0 &  5 &  3 &  3 & 17 &  7 & (\star)  \\
\texttt{10}                & 14 &  2 &  7 &  3 &  3 & 17 &  5 & (\star)  \\
\texttt{11}                & 15 &  1 &  7 &  1 &  5 & 16 &  6 & (\star)  \\
\texttt{12}                & 15 &  1 &  7 &  5 &  1 & 16 &  6 & (\star)  \\
\texttt{13}                & 15 &  2 &  0 &  7 &  7 & 16 &  5 &  \\
\texttt{14}                & 15 &  4 &  2 &  7 &  7 & 16 &  3 &  \\  \hline
\end{array}
\]
\caption{$H_0 = \PGL_2(7)$, $p = 3$}
\label{table:PGL2_7_char3}
\end{table}
}

{\scriptsize
\begin{table}
\[
\begin{array}{r|*{10}{c}}
& 1_a & 1_b & 10_a & 10_b & 10_c & 10_d & 10_e & 11_a & 11_b & \\ \hline
\dim H^1(H_0,W)  &  0 &  0 &  0 &  0 &  0 &  0 &  0 &  1 &  0 & \\
\chi(t_1) &  1 &  4 &  0 &  0 &  0 &  0 &  0 &  1 &  4 & \\
\chi(t_2) &  1 &  1 &  3 &  3 &  2 &  2 &  2 &  4 &  4 & \\ \hline
\texttt{1}                &  4 &  8 &  2 &  2 &  2 &  0 &  0 &  6 & 10 &  \\
\texttt{2}                 &  5 &  8 &  1 &  4 &  0 &  1 &  1 &  5 & 10 &  \\
\texttt{3}                 &  5 &  8 &  3 &  2 &  0 &  1 &  1 &  5 & 10 &  \\
\texttt{4}                 & 12 &  0 &  2 &  2 &  2 &  0 &  0 & 14 &  2 & (\star) \\ \hline
\end{array}
\]
\caption{$H_0 = \PGL_2(11)$, $p = 5$}
\label{table:PGL2_11_char5}
\end{table}
}

{\scriptsize
\begin{table}
\[
\begin{array}{r|*{10}{c}}
& 1_a & 1_b & 12_a & 12_b & 14_a & 14_b & 14_c & 14_d & 14_e & \\ \hline
\dim H^1(H_0,W)  &  0 &  0 &  1 &  0 &  0 &  0 &  0 &  0 &  0 & \\
\chi(t_1) &  1 &  1 &  0 &  0 &  2 &  2 &  5 &  5 &  5 & \\
\chi(t_2) &  1 &  6 &  5 &  2 &  0 &  0 &  0 &  0 &  0 & \\ \hline
\texttt{1}               &  1 &  1 &  7 &  3 &  1 &  1 &  1 &  3 &  3 &  \\
\texttt{2}               &  2 &  2 &  4 &  0 &  2 &  2 &  2 &  4 &  4 &  \\
\texttt{3}               &  3 &  1 &  8 &  3 &  0 &  1 &  3 &  2 &  2 &  \\
\texttt{4}               &  3 &  3 &  8 &  4 &  0 &  0 &  5 &  1 &  1 &  \\
\texttt{5}               &  4 &  2 &  5 &  0 &  1 &  2 &  4 &  3 &  3 &  \\
\texttt{6}               &  4 &  2 &  5 &  0 &  3 &  0 &  4 &  3 &  3 &  \\
\texttt{7}               &  4 &  4 &  5 &  1 &  1 &  1 &  6 &  2 &  2 &  \\
\texttt{8}               &  4 &  4 &  5 &  1 &  4 &  6 &  0 &  1 &  1 & (\star) \\ \hline
\end{array}
\]
\caption{$H_0 = \PGL_2(13)$, $p = 7$}
\label{table:PGL2_13_char7}
\end{table}
}

\begin{ex}\label{e:psl4-5}
Here we present an example where generators for the relevant representation are not available in the Web Atlas \cite{WebAt}. Suppose that $\bar{G} = E_8$, and denote the adjoint module by $V = \mathcal{L}(\bar{G})$. We consider the case where $p = 2$ and $H_0$ has socle $S = {\rm L}_4(5)$, noting that the existence of an embedding ${\rm L}_4(5) < E_8(4)$ is proved in \cite[Section 5]{CLSS}. Then \cite[6.329]{Litt} implies that $V \downarrow S$ is irreducible. It follows from the discussion in \cite{HissMalleCorr} that in characteristic $p = 2$, there are two absolutely irreducible modules of dimension $248$ for $S$, and both of these are defined over $\mathbb{F}_2$. 

Now up to conjugacy, there is a unique maximal subgroup $J < S$ of index $806$. This maximal subgroup arises from a parabolic subgroup of $\operatorname{SL}_4(5)$ with structure $5^4{:}\operatorname{SL}_2(5)^2{:}4$ (this is the stabilizer of a $2$-dimensional subspace of the natural module). From the natural coset action of $S$ on $S/J$, we can construct the permutation module $\mathbb{F}_2[S/J]$, and it turns out that the two irreducible $\mathbb{F}_2[S]$-modules of dimensional $248$ arise as composition factors of $\mathbb{F}_2[S/J]$. The following {\sc Magma} code constructs these two composition factors.

\vs

{\small
\begin{verbatim}

S := PSL(4,5);
L := MaximalSubgroups(S);
L := [x : x in L | (#S div x`order) eq 806]; 

// unique maximal subgroup of index 806, up to conjugacy
J := L[1]`subgroup; 

// permutation module corresponding to the coset action on S/J
V := PermutationModule(S,J,GF(2)); 

C := CompositionFactors(V);

// contains two non-isomorphic irreducible modules of dimension 248.
C := [x : x in C | Dimension(x) eq 248]; 
M1 := C[1];
M2 := C[2];
IsIsomorphic(M1,M2); // output: false
\end{verbatim}
}

\vs

Continuing this example, we also need to consider the possibility that 
$S < H_0 \leqs {\rm Aut}(S)$. In this case since $W = V \downarrow S$ is irreducible, it follows from Frobenius-Nakayama reciprocity that $V \downarrow H_0$ arises as a simple quotient of the induced module $\operatorname{Ind}_{S}^{H_0}(W)$. Consider, for example, the case where $H_0 = S.2$ is the unique index $2$ subgroup of $\operatorname{PGL}_4(5)$. Here each $248$-dimensional irreducible $\mathbb{F}_2[S]$-module $W$ extends to $H_0$ and we find that ${\rm Ind}_{S}^{H_0}(W)$ is uniserial, with two composition factors of dimension $248$. Hence, the extension of $W$ to $H_0$ is unique, up to isomorphism, and it can be constructed with the following {\sc Magma} code (continuing from the code above).

\vs

{\small
\begin{verbatim}
G0 := PGL(4,5);
H0 := sub<G0 | S, G0.1^2>; // index 2 in PGL(4,5)

I1 := Induction(M1,H0);
I2 := Induction(M2,H0);

N1 := I1/Socle(I1);
N2 := I2/Socle(I2);

[IsIrreducible(x) : x in [N1,N2]]; // output: [true,true]
[Dimension(x) : x in [N1,N2]]; // output: [248,248]
IsIsomorphic(N1,N2); // output: false
\end{verbatim}
}

\vs

\end{ex}

\section{Parabolic subgroups}\label{ss:parsub}

We are now ready to begin the proof of Theorem \ref{t:main} and we start by handling the groups where the point stabilizer $H$ is a maximal parabolic subgroup. Here, and for the remainder of the paper, we freely adopt all of the notation and terminology introduced in Section \ref{s:prel}. 

Recall that every $\sigma$-invariant parabolic subgroup of $\bar{G}$ is conjugate to a standard parabolic $P_I$, where the subset $I \subseteq \Delta = \{\a_1, \ldots, \a_{\ell}\}$ is $\s$-stable (note that if $T = (\bar{G}_{\s})'$ is untwisted, then every subset of $\Delta$ is $\s$-stable). We have a Levi decomposition $P_I = U_I{:}L_I$, with unipotent radical 
\[
U_I = \prod_{\alpha \in \Phi^+ \setminus \Z I} U_{\alpha}
\]
and Levi factor 
\[
L_I = \langle \bar{T}, U_{\alpha} \,:\, \alpha \in \Phi \cap \Z I \rangle
\]
for some maximal torus $\bar{T}$ of $\bar{G}$. Both $U_I$ and $L_I$ are $\sigma$-invariant and we have 
\[
(P_I)_{\s} = (U_I)_{\s}{:}(L_I)_{\s}.
\]
Note that $L_I$ is a reductive group with root system $\Phi \cap \Z I$. In addition,  $L_I = Z(L_I)^\circ L_I'$ and the derived subgroup $L_I'$ is semisimple (or trivial). 

Recall that we use the standard Bourbaki labelling of the simple roots $\alpha_i$, as given in \cite[11.4]{Humphreys}. For each simple root $\a_m$, we will adopt the standard notation $P_m$ for the maximal parabolic subgroup $P_{\Delta \setminus \{\a_m\}}$ of $\bar{G}$ (and similarly for the corresponding subgroup of $T = (\bar{G}_{\s})'$ if the subset $\Delta \setminus \{\a_m\}$ is $\s$-invariant). 

The following easy observation will be useful in the proof of Proposition \ref{p:parab} below.

\begin{lem}\label{l:oddeasy}
Let $V$ be an $n$-dimensional vector space over $\mathbb{F}_q$, where $n$ is odd and $q \equiv 3 \imod{4}$. Then there is no element $g \in {\rm GL}(V)$ such that $g^2 = -I_n$.
\end{lem}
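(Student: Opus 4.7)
The plan is to take determinants on both sides of the equation $g^2 = -I_n$. Indeed, if such a $g$ existed, then
\[
\det(g)^2 = \det(-I_n) = (-1)^n = -1
\]
since $n$ is odd. So $\det(g) \in \mathbb{F}_q^\times$ would be a square root of $-1$, i.e.\ $-1$ would be a square in $\mathbb{F}_q^\times$.

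The second step is to rule this out using the hypothesis on $q$. Since $\mathbb{F}_q^\times$ is cyclic of order $q - 1$, the element $-1$ (which is the unique element of order $2$) is a square if and only if $4 \mid q - 1$, i.e.\ $q \equiv 1 \imod 4$. The hypothesis $q \equiv 3 \imod 4$ contradicts this, completing the argument.

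The argument is essentially a one-line determinant computation, so there is no real obstacle; the only point to watch is that both $n$ odd and $q \equiv 3 \imod 4$ are used (if either condition fails, the statement itself fails: $-I_2$ over $\mathbb{F}_3$ has a square root $\bigl(\begin{smallmatrix}0 & -1 \\ 1 & 0\end{smallmatrix}\bigr)$, and over $\mathbb{F}_5$ the scalar $2I_n$ satisfies $(2I_n)^2 = -I_n$ for any $n$).
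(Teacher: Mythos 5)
Your proof is correct and is essentially identical to the one in the paper: both take determinants to get $\det(g)^2 = (-1)^n = -1$ and then observe that $-1$ is not a square in $\mathbb{F}_q$ when $q \equiv 3 \imod{4}$. Your additional remarks on the necessity of both hypotheses are a nice sanity check but not needed.
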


\begin{proof}
If $g^2 = -I_n$ for some $g \in {\rm GL}(V)$, then by taking determinants we get $\det(g)^2 = -1$. But since $\det(g) \in \mathbb{F}_q$, this implies that $-1$ is a square in $\mathbb{F}_q$, which is incompatible with the condition $q \equiv 3 \imod{4}$.
\end{proof}

\begin{prop}\label{p:parab}
If $H$ is a parabolic subgroup, then either 

\vspace{1mm}

\begin{itemize}\addtolength{\itemsep}{0.2\baselineskip}
\item[{\rm (i)}] every involution in $T$ has fixed points; or 

\item[{\rm (ii)}] $T = E_7(q)$, $q \equiv 3 \imod{4}$ and $H_0 = P_2$, $P_5$ or $P_7$.
\end{itemize}

\vspace{1mm}

\noindent Furthermore, if (ii) holds then every involution in $H_0$ is of type $A_1D_6$.
\end{prop}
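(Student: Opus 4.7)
The proof splits according to the parity of $p$.

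When $p = 2$, every involution in $T$ is unipotent and hence $T$-conjugate to an element of the unipotent radical of a Borel subgroup of $T$. Since every maximal parabolic $H = P_m$ contains a Borel subgroup, every involution in $T$ is $T$-conjugate to an element of $H_0 = H \cap T$, so case (ii) cannot occur in even characteristic.

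For $p$ odd, involutions are semisimple. Any involution $x \in P_m \cap T$ is $(U_m)_\sigma$-conjugate to an element of $(L_m)_\sigma$ and further $L_m$-conjugate to an element of the maximal torus $\bar{T} \subseteq L_m$. As $\bar{T}$ is contained in every Levi, any involution class with a representative of the form $\prod_i h_{\alpha_i}(-1) \in \bar{T}$ meets $H_0$ for every $m$. Inspecting Table \ref{table:oddpinvolutions}, for $\bar{G}$ of type $G_2$, $F_4$, $E_6$ or $E_8$, every involution class admits such a representative, so the proposition holds in these cases.

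It remains to treat $T = E_7(q)$ with $q$ odd, in which case the three involution classes are $A_1D_6$, $E_6T_1$ and $A_7$. The class $A_1D_6$ has representative $h_{\alpha_1}(-1) \in \bar{T}$, so meets every $L_m$. The key input for the remaining classes is Remark \ref{r:e77}: any involution $x \in T$ of type $E_6T_1$ or $A_7$ lifts to an element $\tilde{x} \in (\bar{G}_{{\rm sc}})_\sigma$ of order $4$ whose square is the nontrivial central involution $z \in \bar{G}_{{\rm sc}}$, and $z$ acts as $-I$ on the $56$-dimensional minimal module $V_{\mathrm{min}}$. If $x$ lies in $L_m$, then $\tilde{x}$ lies in the preimage $L_m^{{\rm sc}}$ and acts $\mathbb{F}_q$-linearly on every $L_m$-invariant $\mathbb{F}_q$-subspace $W \subseteq V_{\mathrm{min}}$ with $(\tilde{x}|_W)^2 = -I_W$. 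When $\dim W$ is odd and $q \equiv 3 \imod{4}$, Lemma \ref{l:oddeasy} yields a contradiction.

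The proof is thus reduced to computing the restrictions $V_{\mathrm{min}} \downarrow L_m'$ via standard branching rules (see e.g.\ \cite[Chapter 12]{Thomas}). For $m = 7$ with $L_7' = E_6$ one obtains summands of dimensions $27, 27, 1, 1$; for $m = 2$ with $L_2' = A_6$, summands of dimensions $7, 7, 21, 21$; and for $m = 5$ with $L_5' = A_4 A_2$, the decomposition includes a summand of dimension $3$. In each case the relevant summands are irreducible over $\mathbb{F}_q$ and odd-dimensional, so Lemma \ref{l:oddeasy} eliminates the $E_6T_1$ and $A_7$ classes from $H_0$ when $q \equiv 3 \imod{4}$, giving the exceptional cases in (ii) and the final assertion that every involution of $H_0$ is of type $A_1D_6$. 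For the remaining parabolics $m \in \{1, 3, 4, 6\}$, the analogous decompositions contain only even-dimensional summands, and one constructs explicit representatives of the $E_6T_1$ and $A_7$ classes in $L_m$ using elements of $W(L_m)$ together with suitable torus elements. The main obstacle is the careful verification of the branching decompositions and the explicit construction of class representatives in the non-exceptional parabolics.
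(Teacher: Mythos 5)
Your argument for the crucial exclusion step ($T = E_7(q)$, $q \equiv 3 \imod{4}$, $H_0 = P_m$ with $m \in \{2,5,7\}$) is essentially the paper's: lift a putative $E_6T_1$- or $A_7$-involution to an order-$4$ element of $\bar{G}_{\rm sc}$ whose square is central and acts as $-I_{56}$ on $V_{\rm min}$, conjugate into the Levi factor, locate an odd-dimensional composition factor of $V_{\rm min}$ restricted to the Levi defined over $\mathbb{F}_q$, and invoke Lemma \ref{l:oddeasy}. Your treatment of $p=2$ (unipotent involutions lie in a Sylow $p$-subgroup, hence in a Borel, hence in every parabolic up to conjugacy) is a valid alternative to the paper's appeal to Theorem \ref{t:odd} ($|\Omega|$ odd), and your reduction via torus representatives $\prod_i h_{\alpha_i}(-1)$ for $G_2$, $F_4$, $E_6^{\e}$, $E_8$ matches the paper.

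However, there is a genuine gap: you never treat $T = E_7(q)$ with $q \equiv 1 \imod{4}$ and $m \in \{2,5,7\}$. The proposition asserts that these cases fall under (i), so you must actually \emph{exhibit} involutions of type $E_6T_1$ and $A_7$ in $P_m \cap T$; the mere absence of the determinant obstruction when $-1$ is a square in $\mathbb{F}_q$ does not produce such elements. The paper closes this case by taking $\l \in \mathbb{F}_q$ with $\l^2 = -1$ and checking that the torus elements $h_{\alpha_2}(\l)h_{\alpha_5}(\l)h_{\alpha_7}(\l)$ and $h_{\alpha_1}(-1)h_{\alpha_2}(\l)h_{\alpha_5}(\l)h_{\alpha_7}(\l)$ lie in $H_0$ and represent the classes $E_6T_1$ and $A_7$ (note $\l^q = \l$ is needed here, which is exactly where the congruence on $q$ enters); since they lie in $\bar{T}$ they belong to every parabolic. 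Two smaller deferred points: for $m \in \{1,3,4,6\}$ you do not need the (unverified, and logically irrelevant) claim that all composition factors are even-dimensional, since $\alpha_2,\alpha_5,\alpha_7 \in \Delta \setminus \{\a_m\}$ and the Table \ref{table:oddpinvolutions} representatives $w_{\alpha_2}w_{\alpha_5}w_{\alpha_7}$ and $h_{\alpha_1}(-1)w_{\alpha_2}w_{\alpha_5}w_{\alpha_7}$ already lie in the Levi; and the groups ${}^3D_4(q)$ and ${}^2G_2(q)'$ with $q$ odd are not covered by your list of types but are immediate from Lemmas \ref{l:3d4qclasses}, \ref{l:b2g2classes} and \ref{l:uniqueclasselusive}, since they have a unique class of involutions.
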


\begin{proof}
If $q$ is even, then $|\O|$ is odd (see Theorem \ref{t:odd}) and thus every involution in $T$ has fixed points. For the remainder, we may assume $q$ is odd. 

If $T = {}^2G_2(q)'$ or ${}^3D_4(q)$ then $T$ has a unique class of involutions (see Lemmas \ref{l:b2g2classes} and \ref{l:3d4qclasses}) and the result follows from Lemma \ref{l:uniqueclasselusive}. In the remaining cases, $\bar{G}$ is  of exceptional type and either $T$ is untwisted or $T = {}^2E_6(q)$. Since $H_0 = \bar{H}_{\s} \cap T$ contains $\bar{T}_{\s} \cap T$, where $\bar{T}$ is a $\s$-stable maximal torus of $\bar{G}$, by inspecting Table \ref{table:oddpinvolutions} we deduce that $H_0$ meets every $T$-class of involutions, with the possible exception of the groups with $T = E_7(q)$ (note that in the latter case, we have $[\bar{G}_{\s}:T] = 2$).

So let us assume $T = E_7(q)$ and $q$ is odd, in which case $H_0 = P_m$ for some $m \in \{1, \ldots, 7\}$ and the involution classes in $T$ are listed in Table \ref{table:oddpinvolutions}. Recall (see Remark \ref{r:e7_inv}) that here an involution $x \in \bar{G}_{\sigma}$ is contained in $T$ if and only if it lifts to a $\sigma$-invariant element in the simply connected cover $\bar{G}_{{\rm sc}}$ of $\bar{G}$. With this in mind, we first observe that $h_{\alpha_1}(-1) \in H_0$, so by inspecting Table \ref{table:oddpinvolutions} we deduce that $H_0$ contains an involution of type $A_1D_6$. 

If $m \in \{1,3,4,6\}$, then $H_0$ also contains the elements $w_{\alpha_2}$, $w_{\alpha_5}$ and $w_{\alpha_7}$, which in turn implies that $H_0$ contains representatives of the classes labelled $E_6T_1$ and $A_7$ in Table \ref{table:oddpinvolutions}. 

So we may assume $m \in \{2,5,7\}$. We first consider $q \equiv 1 \imod{4}$, in which case $\l^2 = -1$ for some $\l \in \mathbb{F}_q$. Then $\l^q = \l$, so it follows that $H_0$ contains the involutions 
\[
x = h_{\alpha_2}(\l)h_{\alpha_5}(\l)h_{\alpha_7}(\l),\;\; 
y = h_{\alpha_1}(-1)h_{\alpha_2}(\l)h_{\alpha_5}(\l)h_{\alpha_7}(\l).
\]
A {\sc Magma} computation (see Section \ref{ss:comp}) shows that $\dim C_V(x) = 79$ and $\dim C_V(y) = 63$ with respect to the adjoint module $V = \mathcal{L}(\bar{G})$. So by inspecting Table \ref{table:oddpinvolutions} we conclude that $x$ and $y$ are of type $E_6T_1$ and $A_7$, respectively, and this means that every involution in $T$ has fixed points.

Finally, let us assume $T = E_7(q)$, $q \equiv 3 \imod{4}$ and $H_0 = P_m$ with $m \in \{2,5,7\}$. Here $\bar{H} = P_I = U_I{:}L_I$, where $I = \Delta \setminus \{\a_m\}$ and $L_I'$ is semisimple of type $E_6$, $A_4A_2$, $A_6$ for $m=2,5,7$, respectively. To complete the proof of the proposition, we will show that $H_0$ does not contain involutions of type $E_6T_1$ or $A_7$. (We remark that the elements $x$ and $y$ defined in the previous paragraph are not in $H_0$, since the property $\l^q = -\l$ implies that $x,y \in \bar{G}_{\s}\setminus T$.)

Seeking a contradiction, suppose that $g \in H_0$ is an involution of type $E_6T_1$ or $A_7$. Let $\bar{G}_{{\rm sc}}$ be the simply connected cover of $\bar{G}$. Now $g$ lifts to an element $g' \in \left(\bar{G}_{{\rm sc}}\right)_{\s}$ of order $4$ such that 
\[
(g')^2 = h_{\alpha_2}'(-1) h_{\alpha_5}'(-1) h_{\alpha_7}'(-1)
\]
generates the center of $\bar{G}_{{\rm sc}}$ (see Remark \ref{r:e77}). Here $(g')^2$ acts as a scalar on every irreducible $\bar{G}_{{\rm sc}}$-module, and on the $56$-dimensional minimal module $V_{\operatorname{min}}$ it acts as $-I_{56}$. 

Let $P$ be the parabolic subgroup of $\bar{G}_{{\rm sc}}$ corresponding to $P_{I}$, with Levi factorization $P = U{:}L$. Then $g' \in P_{\s}$ and every $2$-element of $P_{\s} = U_{\s}{:}L_{\s}$ can be conjugated into $L_{\s}$, so we can assume that $g'$ is contained in the Levi factor $L_{\s}$. 

The composition factors of $V_{\operatorname{min}} \downarrow L'$ are given in \cite[Table 13.4]{Thomas} and the submodule structure for $V_{\operatorname{min}} \downarrow L$ is identical. It follows that $V_{\operatorname{min}} \downarrow L$ has an odd-dimensional composition factor (specifically, one of dimension $27,15,7$ for $L' = E_6, A_4A_2, A_6$, respectively). 

Since the highest weight of $V_{\operatorname{min}}$ is $p$-restricted, we have $V_{\operatorname{min}} = K \otimes_{\mathbb{F}_q} V_0$ for some absolutely irreducible $\mathbb{F}_q[(\bar{G}_{{\rm sc}})_{\s}]$-module $V_0$. Moreover, by inspecting \cite[Table 13.4]{Thomas}, we see that the highest weight of each composition factor of $V_{\operatorname{min}} \downarrow L'$ is also $p$-restricted, so the dimensions of the composition factors of $V_0 \downarrow L_{\s}$ and $V_{\operatorname{min}} \downarrow L$ are the same. In particular, $V_0 \downarrow L_{\s}$ has a composition factor of odd dimension. But we have already noted that $(g')^2$ acts as $-I_{56}$ on $V$, so by appealing to Lemma \ref{l:oddeasy} we reach a contradiction.
\end{proof}

\begin{rem}
An alternative approach to the proof of Proposition \ref{p:parab} is as follows. Let $H$ be a parabolic subgroup, and let $\chi = 1^T_{H_0}$ be the corresponding permutation character for $T$. For $p$ odd, we could use \cite[Corollary 3.2]{LLS2} to show that $\chi(x) > 0$ for each involution $x \in T$, except when (ii) holds in Proposition \ref{p:parab}. However, evaluating the expression for $\chi(x)$ in \cite[Corollary 3.2]{LLS2} for each involution $x$ is a non-trivial calculation, so we prefer the more direct approach we have adopted in the proof of Proposition \ref{p:parab}.
\end{rem}

\section{Subfield subgroups and twisted versions}\label{ss:subf}

In this short section, we prove Theorem \ref{t:main} in the cases where $H \in \mathcal{C}$ is of type (II) in Definition \ref{d:c}, which means that $H$ is either a subfield subgroup, or a twisted version of $G$. We begin by handling the subfield subgroups.

\begin{prop}\label{p:subfield}
Suppose $H$ is a subfield subgroup over $\mathbb{F}_{q_0}$, where $q=q_0^k$ and $k$ is a prime. Then either $qk$ is odd, in which case $|\O|$ is odd, or $T$ is $2$-elusive. In particular, every involution in $T$ has fixed points.
\end{prop}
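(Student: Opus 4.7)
The strategy is to split on the parity of $qk$. If $qk$ is odd, then $q$ is odd and $k$ is an odd prime, so Theorem~\ref{t:odd}(ii) yields that $|\O|$ is odd and every involution in $T$ has fixed points. For the remainder, assume $qk$ is even; the goal is to show that $T$ is $2$-elusive, equivalently that $H_0$ meets every $T$-class of involutions.

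Since $H$ is a subfield subgroup of the same type as $G$ over $\mathbb{F}_{q_0}$, we may take $(\bar{G}_{\s_0})' \subseteq H_0$ for some Steinberg endomorphism $\s_0$ of $\bar{G}$ satisfying $\s_0^k = \s$ (up to conjugacy) and of the same twisted type as $\s$: writing $\s = \psi_0 \s_q$ with $\psi_0 \in \{1, \tau, \psi\}$ (the identity, a graph automorphism, or an exceptional isogeny), we take $\s_0 = \psi_0 \s_{q_0}$. The key observation is that each involution class representative listed in Tables~\ref{table:evenpinvolutions} and~\ref{table:oddpinvolutions}, together with those arising from Lemmas~\ref{l:b2g2classes} and~\ref{l:3d4qclasses}, can be expressed as a product of elements of the form $x_\a(\pm 1)$, $h_\a(-1)$, and $w_\a$. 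All such elements lie in the $\mathbb{F}_p$-rational points of $\bar{G}$ and are therefore fixed by the Frobenius factor $\s_{q_0}$. Moreover, the tables identify which $\bar{G}$-classes of involutions are $\psi_0$-invariant, and for those classes the listed representatives are $\psi_0$-fixed; for example, $h_{\a_2}(-1)$ and $h_{\a_1}(-1)h_{\a_6}(-1)$, representing the classes $A_1 A_5$ and $D_5 T_1$ in $E_6$, are both fixed by the graph automorphism $\tau$ (the second because $\tau$ swaps $\a_1 \leftrightarrow \a_6$). Hence each such representative lies in $\bar{G}_{\s_0} \subseteq \bar{G}_\s$, and therefore in $H_0$, so by Lemma~\ref{l:Gsigmaclasses} every $T$-class of involutions has a representative in $H_0$.

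The main technical point will be verifying the $\psi_0$-invariance of the chosen representatives when $\psi_0 \ne 1$, notably for the twisted groups $T = {}^3D_4(q)$ and $T = {}^2E_6(q)$; in each case this is routine by inspection of the relevant tables and lemmas. The Suzuki case $T = {}^2B_2(q)$ arises only with $qk$ even (since $q$ is even there) and is handled by Lemma~\ref{l:b2g2classes} together with Lemma~\ref{l:uniqueclasselusive}, since $T$ has a unique class of involutions; and $T = {}^2G_2(q)'$ cannot arise in the main case at all, since $q$ and $k$ are then both odd so we are in the first case. Finally, small-rank exceptions $T \in \{G_2(2)', {}^2G_2(3)', {}^2F_4(2)'\}$ do not occur here, since $q = 2$ or $3$ admits no proper subfield subgroup of the form $\mathbb{F}_{q_0}$ with $q = q_0^k$ and $k$ prime.
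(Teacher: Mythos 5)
Your proposal is correct and takes essentially the same route as the paper: after disposing of the case $qk$ odd via Theorem~\ref{t:odd}, one observes that every relevant class of involutions has a representative expressible as a product of root elements with coefficients $\pm 1$, hence lying in the subfield subgroup (the paper shortcuts the groups ${}^2B_2(q)$, ${}^2G_2(q)'$ and ${}^3D_4(q)$ with $q$ odd via Lemma~\ref{l:uniqueclasselusive}, but this is only a packaging difference). One small caution: your claim that the \emph{listed} representatives are $\psi_0$-fixed is not literally true in every twisted case --- e.g.\ $x_{\alpha_1}(1)$ in Table~\ref{table:3D4q} is moved by triality and must be replaced by $x_{\alpha_2}(1)$, and for ${}^2F_4(q)$ the signs $\varepsilon_{\alpha}$ at non-simple roots depend on structure constants --- but this is harmless, since one can either choose suitable alternative representatives or argue directly that the $\s$-invariant and $\s_0$-invariant $\bar{G}$-classes coincide and then apply Lemma~\ref{l:Gsigmaclasses}(i),(iii).
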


\begin{proof}
We can assume that $|\O|$ is even. If $T$ has a unique conjugacy class of involutions, then $T$ is $2$-elusive by Lemma \ref{l:uniqueclasselusive}. So this takes care of the groups with $T = {}^2B_2(q)$ or ${}^2G_2(q)'$ (see Lemma \ref{l:b2g2classes}), and also $T = {}^3D_4(q)$ with $q$ odd (Lemma \ref{l:3d4qclasses}). 

In the remaining cases, by inspecting Tables \ref{table:3D4q},  \ref{table:evenpinvolutions} and \ref{table:oddpinvolutions}, we see that every conjugacy class of involutions in $T$ has a representative that can be written as a product of root elements of the form $x_{\alpha}( \pm 1)$. Since the scalars $\pm 1$ are contained in the prime field $\mathbb{F}_p$, it follows that the subfield subgroup $H$ intersects every conjugacy class of involutions in $T$ and therefore $T$ is $2$-elusive.
\end{proof}

\begin{prop}\label{p:twisted}
Suppose that $H$ is of the same type as $T$, but a twisted version. Then $T$ is not $2$-elusive if and only if $T = F_4(q)$, $p=2$ and $H_0 = {}^2F_4(q_0)$ with $q = q_0^2$.
\end{prop}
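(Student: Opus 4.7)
The plan is to enumerate the possible twisted versions, handle each case by leveraging the conjugacy class information in Tables \ref{table:evenpinvolutions} and \ref{table:oddpinvolutions} together with Lemma \ref{l:Gsigmaclasses}. Since $\bar{G}$ must admit a non-trivial isogeny or graph automorphism that gives rise to a twisted form of the same Dynkin type, the relevant pairs $(T,H_0)$ reduce to: $T = G_2(q)$ with $p=3$ and $H_0 = {}^2G_2(q_0)$; $T = F_4(q)$ with $p=2$ and $H_0 = {}^2F_4(q_0)$; and $T = E_6^{\e}(q)$ with $H_0 = {}^2E_6(q_0)$, in each case with $q = q_0^2$. In each case, $H_0$ is contained in the fixed-point subgroup $\bar{G}_{\rho\s_{q_0}}$ of the corresponding Steinberg endomorphism, where $\rho$ is the relevant exceptional isogeny ($\psi$) or graph automorphism ($\tau$).

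For $T = G_2(q)$ with $p=3$, we appeal directly to Table \ref{table:oddpinvolutions} and Lemma \ref{l:uniqueclasselusive}: $T$ has a unique conjugacy class of involutions, so $T$ is automatically $2$-elusive whenever $|\Omega|$ is even. For $T = E_6^{\e}(q)$, we observe from Tables \ref{table:evenpinvolutions} and \ref{table:oddpinvolutions} that every $\bar{G}$-class of involutions is $\tau$-invariant, and the class representatives given there are supported on $\tau$-stable subsets of $\Delta$ (for instance $h_{\a_2}(-1)$ and $h_{\a_1}(-1)h_{\a_6}(-1)$ for $p$ odd, or the analogous root element products for $p=2$). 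A short verification shows that these representatives are fixed by both $\tau$ and $\s_{q_0}$, hence lie in $\bar{G}_{\tau\s_{q_0}}$, and therefore in $H_0 = {}^2E_6(q_0)$. Thus $H_0$ meets every involution class of $T$, and $T$ is $2$-elusive.

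The heart of the proof lies in the remaining case $T = F_4(q)$ with $p = 2$. Here $T$ has exactly four involution classes (Table \ref{table:evenpinvolutions}), labelled $A_1$, $\tilde{A_1}$, $A_1\tilde{A_1}$, $(\tilde{A_1})_2$. The table records that $A_1\tilde{A_1}$ and $(\tilde{A_1})_2$ are $\psi$-invariant; on the other hand, since $\psi$ interchanges long and short root subgroups, it permutes the long-root class $A_1$ and the short-root class $\tilde{A_1}$ nontrivially. Applying Lemma \ref{l:Gsigmaclasses}(i) with the Steinberg endomorphism $\psi\s_{q_0}$ in place of $\s$, a $\bar{G}$-class $x^{\bar{G}}$ meets $\bar{G}_{\psi\s_{q_0}}$ only when it is $\psi\s_{q_0}$-invariant; since every involution class has representatives defined over $\mathbb{F}_2$ and is therefore $\s_{q_0}$-invariant, this condition reduces to $\psi$-invariance. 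Consequently, $H_0 \subseteq \bar{G}_{\psi\s_{q_0}}$ contains no involution of $\bar{G}$-class $A_1$ or $\tilde{A_1}$, so these two $T$-classes (which are contained in the respective $\bar{G}$-classes by Lemma \ref{l:Gsigmaclasses}) are missed by $H_0$. Therefore $T$ is not $2$-elusive.

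The main obstacle is the careful bookkeeping of which $\bar{G}$-classes are invariant under the relevant twist, and identifying specific class representatives that lie in the twisted subgroup. In the $F_4$ case this is the crux of the \emph{only if} direction, but the information needed is already encoded in the last column of Table \ref{table:evenpinvolutions}; in the $E_6$ case the $\tau$-invariance of \emph{all} involution classes makes the \emph{if} direction routine. Once the behaviour of $\psi$ and $\tau$ on the class representatives is understood, Lemma \ref{l:Gsigmaclasses} delivers the result uniformly.
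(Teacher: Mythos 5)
Your proof is correct and follows essentially the same route as the paper: discard the groups with a unique involution class, then for $F_4(q)$ with $p=2$ use that the $A_1$ and $\tilde{A_1}$ classes are not $\psi$-invariant, and for $E_6(q)$ use that every involution class is stable under both $\tau$ and $\s_{q_0}$. (The only cosmetic differences are that you allow $\e=-$ in the $E_6^{\e}$ case, which is vacuous since ${}^2E_6(q)$ has no twisted-version subgroup, and that you verify explicit $\tau$- and $\s_{q_0}$-fixed class representatives where the paper invokes class-stability together with Lemma \ref{l:Gsigmaclasses}(i).)
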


\begin{proof}
By arguing as in the proof of Proposition \ref{p:subfield}, we may assume $T$ has at least two classes of involutions. Therefore, $q=q_0^2$ and either $(T,H_0) = (F_4(q), {}^2F_4(q_0))$ with $p=2$, or $(T,H_0) = (E_6(q), {}^2E_6(q_0))$.

If $T = F_4(q)$ and $p=2$, then neither of the two $T$-classes of root elements are $\psi$-invariant (see Table \ref{table:evenpinvolutions}), where $\psi$ is an exceptional isogeny of the algebraic group $\bar{G} = F_4$. Therefore, $H_0 = {}^2F_4(q_0) = C_T(\psi)$ does not contain any involutions of type $A_1$ nor $\tilde{A_1}$. 

Now assume $T = E_6(q)$ and $H_0 = {}^2E_6(q_0) = C_T(\gamma)$, where $\gamma = \s_{q_0}\tau$ is an involutory graph-field automorphism of $T$. As noted in the proof of Proposition \ref{p:subfield}, each class of involutions in $T$ is stable under the field automorphism $\s_{q_0}$. And by inspecting Tables \ref{table:evenpinvolutions} and \ref{table:oddpinvolutions}, we see that each class is also stable under $\tau$, whence $H_0$ meets every $T$-class of involutions and thus $T$ is $2$-elusive.
\end{proof}

\section{The low rank groups}\label{s:proof_low}

In this section, our goal is to complete the proof of Theorem \ref{t:main} for the low rank groups with socle one of the following:
\[
{}^2B_2(q), \; {}^2G_2(q)',\; G_2(q)', \; {}^2F_4(q)', \; {}^3D_4(q).
\]

\begin{prop}\label{l:1}
If $T \in \{{}^2B_2(q), {}^2G_2(q)', G_2(q)'\}$ and $|\O|$ is even, then $T$ is $2$-elusive unless $(T,H_0) = (G_2(4), {\rm L}_{2}(13))$.
\end{prop}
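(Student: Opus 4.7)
The plan is to reduce to the only genuinely new case, namely $T = G_2(q)$ with $q$ even and $q \geqs 4$, and then to work through a short list of maximal subgroups.

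First, I would dispose of the cases in which $T$ has a unique conjugacy class of involutions. By Lemma \ref{l:b2g2classes} this covers $T = {}^2B_2(q)$ and $T = {}^2G_2(q)'$; for $T = G_2(q)'$ it covers $q$ odd (as noted immediately before Lemma \ref{l:uniqueclasselusive}) and also $q = 2$, since $T = G_2(2)' \cong \mathrm{U}_3(3)$ contains only the $A_1$-type involutions of $\tilde{G} = G_2(2)$ by Remark \ref{r:TvsInnT}(a). In all of these situations the proposition follows immediately from Lemma \ref{l:uniqueclasselusive}, because $|\Omega|$ is assumed to be even.

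It remains to handle $T = G_2(q)$ with $q$ even and $q \geqs 4$, where by Table \ref{table:evenpinvolutions} there are exactly two $T$-classes of involutions (the long-root class $A_1$ and the short-root class $\tilde{A}_1$). The core-free maximal subgroups are listed in \cite[Table 8.30]{BHR}. Parabolic subgroups contribute $|\Omega|$ odd by Theorem \ref{t:odd}(i) and are excluded by hypothesis, and subfield subgroups $G_2(q_0)$ produce $2$-elusive actions by Proposition \ref{p:subfield}. The remaining candidates are the maximal rank subgroups $\mathrm{SL}_3(q){:}2$, $\mathrm{SU}_3(q){:}2$, and the normalizers of maximal tori of the form $(q \pm 1)^2{:}D_{12}$ and $(q^2 \pm q + 1){:}6$, together with the two subgroups $\mathrm{L}_2(13)$ and $\mathrm{J}_2$ that appear in $\mathcal{S}$ only when $q = 4$ (see Remark \ref{r:CS}(d)).

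For each of the maximal rank subgroups I would exhibit explicit root elements showing that $H_0$ meets both $T$-classes: the $\mathrm{SL}_3(q){:}2$ subgroup comes from the long-root $A_2$ subsystem (so its unipotent radical contains $A_1$-type involutions) and the outer coset is built from the exceptional isogeny swapping long and short root subgroups, which yields an involution of type $\tilde{A}_1$; the case $\mathrm{SU}_3(q){:}2$ is analogous. For the torus normalizers, one uses that in characteristic $2$ the elements $w_\alpha$ are themselves involutions (Theorem \ref{t:titsweyl}), and a direct computation of the action on $V_{\mathrm{min}}$ (using the method of Section \ref{ss:comp}) distinguishes between the long and short reflection representatives. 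For the exceptional pair with $q = 4$: $\mathrm{L}_2(13)$ has a single conjugacy class of involutions and so necessarily meets only one $T$-class, giving the unique exception $(G_2(4), \mathrm{L}_2(13))$ advertised in the statement; on the other hand $\mathrm{J}_2$ has two classes $2A$ and $2B$ of involutions, and one can verify via its natural $6$-dimensional representation over $\mathbb{F}_4$ (or the induced action on $V_{\mathrm{min}}$ of $\bar{G} = G_2$) that they have different fixed-point dimensions, hence lie in distinct $T$-classes, so $T$ is $2$-elusive.

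The main obstacle is the last step: correctly identifying, for each maximal rank subgroup and for the $\mathrm{J}_2$ embedding, the $\bar{G}$-class of its involutions. The torus normalizers and the exceptional $\mathrm{J}_2$ case are the most delicate, since they require either an explicit root-element computation or a character-theoretic argument on $V_{\mathrm{min}}$; everything else follows quickly from the uniqueness of involution classes combined with Lemma \ref{l:uniqueclasselusive}, Theorem \ref{t:odd} and Proposition \ref{p:subfield}.
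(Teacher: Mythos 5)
Your overall strategy matches the paper's: reduce via Lemma \ref{l:uniqueclasselusive} to $T = G_2(q)$ with $q \geqslant 4$ even, discard parabolic and subfield subgroups via Theorem \ref{t:odd} and Proposition \ref{p:subfield}, and then check the remaining maximal subgroups case by case. However, your list of remaining candidates is wrong in two ways. First, the torus normalizers $(q\pm1)^2{:}{\rm D}_{12}$ and $(q^2\pm q+1){:}6$ are not maximal in $G_2(q)$ for $q$ even (they do not appear in \cite[Table 8.30]{BHR}; in this paper torus normalizers only arise for $G_2(q)$ when $p=3$ and $G$ contains graph automorphisms, cf.\ Table \ref{tab:odd}), so that part of your case analysis is vacuous. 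Second, and more seriously, you omit the genuine maximal rank subgroup $H_0 = {\rm SL}_2(q)\times{\rm SL}_2(q)$, which the paper does treat. That case is easy to repair — the two ${\rm SL}_2(q)$ factors are generated by long and short root elements respectively, so involutions of the form $(x,1)$ and $(1,y)$ lie in the $A_1$ and $\tilde{A}_1$ classes and $T$ is $2$-elusive — but as written your proof simply misses a maximal subgroup.

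A smaller point: your mechanism for producing a $\tilde{A}_1$-involution in the outer coset of ${\rm SL}_3^{\e}(q).2$ via "the exceptional isogeny swapping long and short root subgroups" does not exist for $G_2$ in characteristic $2$ (the exceptional isogeny of $G_2$ occurs only in characteristic $3$). The outer involution here is a graph automorphism of the $A_2$ subsystem subgroup, and the paper identifies its class by observing that it interchanges the two $3$-dimensional summands of $L_{\bar{G}}(\varpi_1)\downarrow A_2 = W\oplus W^{*}$ and hence has Jordan form $(2^3)$ on the $6$-dimensional module, whereas inner involutions have Jordan form $(2^2,1^2)$; since $T$ has only two involution classes this already gives $2$-elusivity. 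Your treatment of ${\rm L}_2(13)$ and ${\rm J}_2$ for $q=4$ agrees with the paper.
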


\begin{proof}
If $T \in \{ {}^2B_2(q), {}^2G_2(q)'\}$, or if $T = G_2(q)'$ with $q=2$ or $q$ odd, then $T$ has a unique class of involutions and the result follows from Lemma \ref{l:uniqueclasselusive}. So for the remainder we may assume $T = G_2(q)$ and $q \geqs 4$ is even, in which case $T$ has two classes of involutions corresponding to long and short root elements, labelled $A_1$ and $\tilde{A}_1$ in Table \ref{table:evenpinvolutions}. The possibilities for $H$ are conveniently listed in \cite[Table 8.30]{BHR} and we note that $H$ is non-parabolic since $|\O|$ is even (see Theorem \ref{t:odd}). In view of Proposition \ref{p:subfield}, we may also assume that $H$ is not a subfield subgroup. 

First assume that $H \in \mathcal{S}$, so $q = 4$ and $H_0 = {\rm L}_{2}(13)$ or ${\rm J}_2$ (see Remark \ref{r:CS}(d)). If $H_0 = {\rm L}_{2}(13)$ then $H_0$ has a unique class of involutions and thus $T$ is not $2$-elusive (for the record, the involutions in $H_0$ are of type $A_1$). On the other hand, if $H_0 = {\rm J}_2$ then $H_0$ has two classes of involutions and one can check (with the aid of {\sc Magma} \cite{magma}, for example) that they are not fused in $T$, so $T$ is $2$-elusive. For the remainder, we may assume $H \in \mathcal{C}$.

Suppose $H_0 = {\rm SL}_{3}^{\e}(q).2 = N_T(\bar{H}_{\s})$, where $\bar{H} = A_2.2$ is a $\s$-stable maximal rank subgroup of $\bar{G}$. If $V$ denotes the $6$-dimensional irreducible module $L_{\bar{G}}(\varpi_1)$ for $\bar{G}$, then $V \downarrow \bar{H}^{\circ} = W \oplus W^*$, where $W$ is the natural module for $\bar{H}^{\circ} = A_2$. If $x \in \bar{H}^{\circ}$ is an involution, then it has Jordan form $(2,1)$ on $W$, and hence $(2^2,1^2)$ on $V$. On the other hand, if $y \in \bar{H}$ is an involutory graph automorphism of $\bar{H}^{\circ}$, then $y$ interchanges $W$ and $W^*$, and therefore has Jordan form $(2^3)$ on $V$. It follows that $x$ and $y$ are not $T$-conjugate and thus $T$ is $2$-elusive.

Finally, if $H_0 = {\rm SL}_{2}(q) \times {\rm SL}_2(q)$ then involutions of the form $(x,1)$ and $(1,y)$ in $H_0$ are not $T$-conjugate (indeed, the two ${\rm SL}_2(q)$ factors are generated by long and short root elements, respectively). Therefore $T$ is $2$-elusive.   
\end{proof}

Next we turn to the large Ree groups with socle $T = {}^2F_4(q)'$. Here the maximal subgroups of $G$ were determined by Wilson \cite{Wil84} (for $q=2$, noting the omission of a maximal subgroup ${\rm SU}_3(2).2$ of ${}^2F_4(2)$) and Malle \cite{Malle} (for $q \geqs 8$, noting the omission of $3$ conjugacy classes of maximal subgroups ${\rm PGL}_2(13)$ in ${}^2F_4(8)$, as observed by Craven \cite[Remark 4.11]{Craven}). For $q=2$, recall that $\mathcal{S}$ comprises the maximal subgroups with socle ${\rm Alt}_6$, ${\rm L}_2(25)$ or ${\rm L}_3(3)$ (see Remark \ref{r:CS}(c)). And for $q \geqs 8$, the collection $\mathcal{S}$ is empty unless $G = {}^2F_4(8)$, in which case it comprises the three conjugacy classes of subgroups isomorphic to ${\rm PGL}_2(13)$.

The special case $q=2$ can be handled using {\sc Magma}.

\begin{prop}\label{l:2f42}
Suppose $T = {}^2F_4(2)'$ and $|\O|$ is even.

\vspace{1mm}
 
\begin{itemize}\addtolength{\itemsep}{0.2\baselineskip}
\item[{\rm (i)}] If $H \in \mathcal{C}$, then $T$ is $2$-elusive unless  
$H_0 = 3^{1+2}{:}{\rm D}_8$ or $13{:}6$.
\item[{\rm (ii)}] If $H \in \mathcal{S}$, then $T$ is $2$-elusive if and only if  
$H_0 = {\rm Alt}_6.2^2$.
\end{itemize}
\end{prop}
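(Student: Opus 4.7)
The plan is to verify both parts by a direct computation in \textsc{Magma}, since $T = {}^2F_4(2)'$ has order $17\,971\,200$ and its complete maximal subgroup structure is known. Recall from Table \ref{table:evenpinvolutions} and the proof of Lemma \ref{l:Gsigmaclasses} that $T$ has exactly two conjugacy classes of involutions, lying in the $\bar{G}$-classes of types $A_1\tilde{A_1}$ and $(\tilde{A_1})_2$; I would fix representatives $t_1, t_2 \in T$ accordingly at the outset.

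For each $G \in \{T, T.2\}$, I would enumerate the core-free maximal subgroups of $G$ using the \texttt{MaximalSubgroups} command, cross-referencing with Wilson's classification in \cite{Wil84} (incorporating the omitted class ${\rm SU}_3(2).2$ of $T.2$ noted in Remark \ref{r:CS}(c)). Each maximal subgroup $H$ is then sorted into $\mathcal{C}$ or $\mathcal{S}$ according to Definition \ref{d:c}. For each such $H$ we form $H_0 = H \cap T$ and test whether $H_0$ meets both $t_1^T$ and $t_2^T$, by iterating over a set of conjugacy class representatives of involutions in $H_0$ and identifying the $T$-class of each via an \texttt{IsConjugate} test in $T$ (or, equivalently, by comparing centralizer orders in $T$). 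This is entirely routine because $T$ and all of the relevant $H_0$ are small.

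The resulting output is precisely the statement of the proposition. For part (ii): the classes of subgroups in $\mathcal{S}$ consist of ${\rm L}_3(3).2$, ${\rm Alt}_6.2^2$ and ${\rm L}_2(25)$ (for $G = T$), together with ${\rm L}_2(25).2$ (for $G = T.2$), and the computation shows that only $H_0 = {\rm Alt}_6.2^2$ meets both $T$-classes. For part (i): every $H \in \mathcal{C}$ meets both $T$-classes of involutions, with the sole exceptions $H_0 = 3^{1+2}{:}{\rm D}_8$ (arising as $H \cap T$ for $H = {\rm SU}_3(2).2 < T.2$) and $H_0 = 13{:}6$ (arising as $H \cap T$ for the Sylow $13$-normalizer $H = 13{:}12 < T.2$). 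No conceptual obstacle arises: the only point of care is to match the \textsc{Magma} output with the structural labels $3^{1+2}{:}{\rm D}_8$ and $13{:}6$ in the statement, which is straightforward from the group orders ($216$ and $78$), the Sylow structure, and composition factors (or directly via \texttt{GroupName}).
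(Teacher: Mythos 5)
Your proposal is correct and follows essentially the same route as the paper: a direct \textsc{Magma} computation that constructs $G \in \{T, T.2\}$, enumerates the core-free maximal subgroups, and identifies the $T$-class of each involution in $H_0$ (the paper uses centralizer orders in $T$ rather than an explicit conjugacy test, but this is the same check). The extra structural identifications you give (e.g.\ $3^{1+2}{:}{\rm D}_8 = H\cap T$ for $H = {\rm SU}_3(2).2 < T.2$ and $13{:}6 = H \cap T$ for the torus normalizer $13{:}12 < T.2$) are consistent with the paper's remark that the two exceptions in (i) occur only for $G = T.2$.
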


\begin{proof}
This is a straightforward {\sc Magma} computation. To do this, we first use the function \texttt{AutomorphismGroupSimpleGroup} to construct $G$ as a permutation group of degree $1755$. We then use the command \texttt{MaximalSubgroups} to construct a representative of each conjugacy class of core-free maximal subgroups $H$ of $G$. We can then take a set of representatives for the conjugacy classes of involutions $x \in H_0 = H \cap T$ and we can read off the corresponding $T$-class by computing $|C_T(x)|$. In this way, it is easy to verify the result.

Finally, note that the maximality of $H$ implies that $G = T.2$ for $H_0 \in \{3^{1+2}{:}{\rm D}_8, 13{:}6\}$ in part (i), whereas $G = T$ in (ii) with $H_0 =  {\rm Alt}_6.2^2$.
\end{proof}

\begin{prop}\label{l:3}
Suppose $T = {}^2F_4(q)$ and $|\O|$ is even, where $q \geqs 8$.

\vspace{1mm}
 
\begin{itemize}\addtolength{\itemsep}{0.2\baselineskip}
\item[{\rm (i)}] If $H \in \mathcal{C}$, then $T$ is $2$-elusive unless $H_0 = {\rm PGU}_3(q).2$, ${\rm SU}_3(q).2$, $(q+1)^2{:}{\rm GL}_2(3)$ or $(q^2 \pm \sqrt{2q^3}+q \pm \sqrt{2q}+1){:}12$.
\item[{\rm (ii)}] If $H \in \mathcal{S}$, then $T$ is not $2$-elusive. 
\end{itemize}
\end{prop}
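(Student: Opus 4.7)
Proof plan. The maximal subgroups of $G$ are classified by Malle \cite{Malle} (with the correction noted in Remark \ref{r:CS}(c)), so it suffices to run through that list. By Propositions \ref{p:parab} and \ref{p:subfield}, the parabolic subgroups give $|\Omega|$ odd (since $q$ is even) and the subfield subgroups ${}^2F_4(q_0)$, $q=q_0^k$ with $k$ an odd prime, give $2$-elusive actions; Proposition \ref{p:twisted} does not apply here since ${}^2F_4(q)$ has no proper twisted analogue. The subgroups in $\mathcal{C}$ still to analyse are the non-parabolic reductive maximal rank subgroups and the normalizers of maximal tori. For each such $H$, the goal is to decide whether $H_0$ meets both of the $\psi$-invariant $\bar G$-classes $A_1\tilde{A_1}$ and $(\tilde{A_1})_2$ from Table \ref{table:evenpinvolutions}.

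For the reductive maximal rank subgroups I would identify $\bar H$ as the connected component of the fixed points of the appropriate $\psi$-stable subsystem of $\bar G = F_4$, then exhibit explicit unipotent involutions in $H_0$ as products of root elements $x_\alpha(\pm 1)$ and determine their $\bar G$-class by computing Jordan structures on $V_{\mathrm{min}}$ and $\mathcal{L}(\bar G)$ via the \textsc{Magma} procedure of Section \ref{ss:comp} and comparison with Table \ref{table:evenpinvolutions}. In the cases ${\rm SU}_3(q).2$ and ${\rm PGU}_3(q).2$, coming from the $\psi$-swap on the $A_2 \tilde{A_2}$ subsystem, every unipotent involution of $\bar H^\circ$ is a product of one long and one short root element of $\bar G$ and lies in a single $\bar G$-class, and the graph-type outer involutions (from the $.2$) also form a single class, so only one of the two relevant $\bar G$-classes of $T$ is hit; this produces the two exceptions of $A_2\tilde{A_2}$ type in the statement. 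In the remaining reductive cases (such as ${}^2B_2(q) \wr 2$), both involution classes of $T$ can be reached by direct root-element constructions.

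For the torus normalizers I would use Lemma \ref{l:tildeH0}, noting that $p = 2$ forces every maximal torus $\bar T_\sigma$ to have odd order; consequently every involution of $H_0$ lies in a non-identity coset of $\bar T_{n\sigma}$ in $N_{\bar G}(\bar T)_{n\sigma}$, and corresponds by Lemma \ref{l:torus2} to a $\sigma$-twisted involution class of $W$. Using Tits' splitting (Theorem \ref{t:titsweyl}), I would lift each such Weyl-group class to an explicit product of $w_\alpha$'s and read off the $\bar G$-class from its action on $V_{\mathrm{min}}$. For $(q-1)^2{:}{\rm GL}_2(3)$ the $\sigma$-classes produce representatives in both $\bar G$-classes, so $T$ is $2$-elusive; for $(q+1)^2{:}{\rm GL}_2(3)$ and the cyclic normalizers $(q^2 \pm \sqrt{2q^3} + q \pm \sqrt{2q} + 1){:}12$ the cocycle restrictions on $H^1(\sigma, \bar H/\bar H^\circ)$ leave only a single $\bar G$-class represented, yielding the remaining exceptions. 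The main obstacle in this step is keeping track of the various $\sigma$-twistings of ${\rm GL}_2(3)$ and of $\mathbb{Z}/12$ and identifying the correct representative in each $\sigma$-twisted class, which I expect to carry out by direct \textsc{Magma} computation.

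For part (ii), Remark \ref{r:CS}(c) reduces us to the single case $G = {}^2F_4(8)$ with $H_0 \cong {\rm PGL}_2(13)$ (arising from three $G$-conjugacy classes of embeddings). Here $H_0$ has exactly two classes of involutions, one contained in ${\rm PSL}_2(13)$ and one outside. I would determine the composition factors of $V_{\mathrm{min}} {\downarrow} H_0$ either through Malle's explicit construction or via the feasible character machinery of Section \ref{ss:feasible}, and then compute the Jordan type of each involution class on $V_{\mathrm{min}}$ from the corresponding Brauer character. The expected outcome is that both classes have the same Jordan type and hence lie in a single $\bar G$-class; by Lemma \ref{l:Gsigmaclasses}(iii) this would force them into a single $T$-class, and $T$ would fail to be $2$-elusive in all three embeddings.
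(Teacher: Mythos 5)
Your overall strategy is sound and, where it applies, reaches the correct conclusions, but by a genuinely different route from the paper: you propose explicit root-element and Jordan-form computations throughout, whereas the paper dispatches most cases by elementary centralizer-order arguments (the key observation being that an involution $y$ of type $(\tilde{A}_1)_2$ has $|C_T(y)| = q^{10}|{}^2B_2(q)|$, which is coprime to $3$ and to $q+1$, so any involution whose centralizer in $H_0$ has order divisible by $3$ or by $q+1$ must be of type $A_1\tilde{A}_1$), together with Lemma \ref{l:conj} for the cyclic torus normalizers and \cite[Corollary 4.4]{BT} for involutions inverting a maximal torus. Your computational route would also work, but note one logical slip in the $A_2\tilde{A}_2$ case: showing that the inner unipotent involutions of ${\rm SU}_3(q).2$ or ${\rm PGU}_3(q).2$ form a single $\bar{G}$-class and that the graph-type outer involutions form a single $H_0$-class does not by itself show that only one $\bar{G}$-class of $T$ is hit; you must still verify that the outer class lands in $A_1\tilde{A}_1$ rather than $(\tilde{A}_1)_2$ (the paper does this by noting the centralizer of a graph involution in ${\rm PGU}_3(q)$ is ${\rm SL}_2(q)$, of order divisible by $3$).

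The genuine gap is in your enumeration of the torus normalizers. The subgroup $(q-1)^2{:}{\rm GL}_2(3)$ that you analyse does not occur in Malle's list of maximal subgroups of ${}^2F_4(q)$ (the torus $(q-1)^2$ lies in a parabolic), so that case is spurious; more seriously, you omit the two genuine torus normalizers $(q + \e\sqrt{2q}+1)^2{:}[96]$ with $\e = \pm$ (the second requiring $q \geqs 32$). These are precisely cases in which $T$ \emph{is} $2$-elusive, so they do not appear in the exception list of the statement and must be verified explicitly: one needs an involution inverting the torus $S = (q+\e\sqrt{2q}+1)^2$ (which is of type $A_1\tilde{A}_1$) \emph{and} an involution interchanging the two factors of $S$, whose centralizer order is divisible by $q+\e\sqrt{2q}+1$ and which is therefore of type $(\tilde{A}_1)_2$ via the embedding $S < {}^2B_2(q)^2 < {}^2B_2(q)\wr{\rm Sym}_2$. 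Without these cases your case analysis is incomplete and the proposition as stated is not proved. Part (ii) of your plan is essentially the paper's argument (both involution classes of ${\rm PGL}_2(13)$ have Jordan form $(2^{12},1^2)$ on the $26$-dimensional module, hence both are of type $A_1\tilde{A}_1$), with the small caveat that Jordan form on $V_{\rm min}$ alone does not separate $\tilde{A}_1$ from $(\tilde{A}_1)_2$ in $F_4$; the inference is valid here only because just the two $\psi$-invariant classes meet ${}^2F_4(q)$.
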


\begin{proof}
First recall that $T$ has two classes of involutions, say $x^T$ and $y^T$, labelled $A_1\tilde{A}_1$ and $(\tilde{A}_1)_2$ in Table \ref{table:evenpinvolutions}, with $|C_T(x)| = q^9|{\rm SL}_2(q)|$ and $|C_T(y)| = q^{10}|{}^2B_2(q)|$, as recorded in  \cite[Table 22.2.5]{LS_book}. In particular, $y$ does not commute with any element in $T$ of order $3$. By Theorem \ref{t:odd}(i) and Proposition \ref{p:subfield}, we may assume that $H$ is neither a parabolic nor a subfield subgroup of $T$. We now inspect the remaining possibilities in turn.

First assume $H \in \mathcal{S}$, so $G = {}^2F_4(8)$ and $H = {\rm PGL}_{2}(13)$, as noted above. Here $H$ has two classes of involutions, say $a^H$ and $b^H$, with $|C_H(a)| = 24$ and $|C_H(b)| = 28$, and it follows that $a$ is in the $T$-class labelled $A_1\tilde{A}_1$ since $|C_T(y)|$ is indivisible by $3$. In order to determine the $T$-class of $b$, we can work with Craven's construction $H < G < {\rm GL}_{26}(8)$ in {\sc Magma}, which is defined in terms of the action of $G$ on the $26$-dimensional minimal module $V_{26}$ over $\mathbb{F}_8$. Here explicit matrices generating $H$ are given in the supplementary file \path{ConstructPGL213in2F48.txt} to \cite{Craven}. We find that both $a$ and $b$ have Jordan form $(2^{12}, 1^2)$ on $V_{26}$, so by inspecting Table \ref{table:evenpinvolutions} we deduce that both involutions are in the $\bar{G}$-class labelled $A_1\tilde{A}_1$. In particular, every involution in $H$ is of type $A_1\tilde{A}_1$ and thus $T$ is not $2$-elusive.

For the remainder, we may assume $H \in \mathcal{C}$, noting that the possibilities for $H$ are recorded in the main theorem of \cite{Malle}. 

First assume $H_0 = {\rm PGU}_3(q).2$. If $t \in H_0$ is an involutory graph automorphism of ${\rm PGU}_3(q)$, then $|C_{{\rm PGU}_3(q)}(t)| = |{\rm SL}_2(q)|$ is divisible by $3$, which places $t$ in the class labelled $A_1\tilde{A}_1$. And if $t \in {\rm PGU}_3(q)$ is an involution, then $|C_{{\rm PGU}_3(q)}(t)| = q^3(q+1)$ does not divide $|C_T(y)| = q^{10}|{}^2B_2(q)|$, so once again $t$ is an $A_1\tilde{A}_1$ involution. It follows that $T$ is not $2$-elusive. The case $H_0 = {\rm SU}_3(q).2$ is entirely similar and the same conclusion holds. 

Next assume $H_0 = {}^2B_2(q) \wr {\rm Sym}_2$ and set $B_0 = {}^2B_2(q)^2$. In this paragraph, we remind the reader that $C_{\ell}$ denotes the simple algebraic group ${\rm Sp}_{2\ell}(K)$, rather than a cyclic group of order $\ell$ (for the latter we use $Z_{\ell}$ (or just $\ell$), as stated in Section \ref{ss:nota}). If $t \in H_0 \setminus B_0$ is an involution, then $C_{B_0}(t) = {}^2B_2(q)$ and we deduce that $t$ is in the class $(\tilde{A}_1)_2$. If we take $t = (t_1,t_2) \in B_0$, where both $t_1$ and $t_2$ are involutions, then we find that $t \in C_2 \times C_2 < C_4$ embeds in $C_4$ as a $c_4$-type involution in the notation of \cite{AS} (see Remark \ref{r:inv_class}(b) and the proof of Lemma 4.17 in \cite{BT}). Then by considering the embedding of $C_4$ in $\bar{G}$, we deduce that $t$ is in the $T$-class labelled $A_1\tilde{A}_1$ and we conclude that $T$ is $2$-elusive. The case $H_0 = {\rm Sp}_4(q){:}2$ is similar: $c_2$-type  involutions in ${\rm Sp}_4(q)<H_0$ are in the $A_1\tilde{A}_1$ class, while those of type $a_2$ are in $(\tilde{A}_1)_2$. 

To complete the proof, we may assume $H$ is the normalizer of a maximal torus of $G$. First assume $H_0 = (q^2 \pm \sqrt{2q^3}+q \pm \sqrt{2q}+1){:}12$. Here Lemma \ref{l:conj} implies that $H_0$ has a unique class of involutions and thus 
$T$ is not $2$-elusive. (In fact, by appealing to \cite[Corollary 4.4]{BT}, we see that every involution in $H_0$ is contained in the $T$-class labelled $A_1\tilde{A}_1$.)

Next suppose $H_0 = N_T(S) = S{:}[96]$, where $S = (q +\e\sqrt{2q}+1)^2$ and $\e = \pm$, with $q \geqs 32$ if $\e=-$. By \cite[Corollary 4.4]{BT}, there is an involution $t_1 \in H_0$ that inverts the maximal torus $S$ and this is in the $T$-class labelled $A_1\tilde{A}_1$. In addition, since
\[
S < {}^2B_2(q)^2 < {}^2B_2(q) \wr {\rm Sym}_2 < T
\]
we see that $H_0$ also contains an involution $t_2$ interchanging the two factors of $S$. This implies that $|C_{H_0}(t_2)|$ is divisible by $q + \e\sqrt{2q}+1$, which places $t_2$ in the $T$-class labelled $(\tilde{A}_1)_2$. In particular, $T$ is $2$-elusive.

Finally, let us assume $H_0 = N_T(S) = S{:}{\rm GL}_2(3)$, where $S =  (q+1)^2$. Let $t \in H_0$ be an involution. If $t$ inverts $S$, then  \cite[Corollary 4.4]{BT} implies that $t$ is contained in the $T$-class $A_1\tilde{A}_1$. If not, then $t$ either inverts one of the factors of $S$ (and centralizes the other), or it interchanges the two factors. In both cases, it follows that $|C_{H_0}(t)|$ is divisible by $q+1$ and this means that $t$ is of type $A_1\tilde{A}_1$. We conclude that every involution in $H_0$ is of type $A_1\tilde{A}_1$ and thus $T$ is not $2$-elusive.
\end{proof}

To conclude our analysis of the low rank groups, we may assume $T = {}^3D_4(q)$. Here the maximal subgroups of $G$ were determined by Kleidman \cite{K} and we refer to \cite[Table 8.51]{BHR} for a convenient list of the subgroups that arise.

\begin{prop}\label{l:4}
Suppose $T = {}^3D_4(q)$ and $|\O|$ is even. Then $H \in \mathcal{C}$ and $T$ is $2$-elusive unless $q$ is even and either $H_0 = (q^2 \pm q +1)^2.{\rm SL}_2(3)$ or $(q^4-q^2+1).4$, or $q \geqs 4$ and $H_0 = {\rm PGL}_3^{\e}(q)$ with $q \equiv \e \imod{3}$.
\end{prop}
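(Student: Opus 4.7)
The plan is to reduce to the case where $q$ is even and then work through the core-free maximal subgroups listed in \cite[Table 8.51]{BHR} one type at a time. If $q$ is odd, then $T$ has a unique class of involutions by Lemma~\ref{l:3d4qclasses}, and Lemma~\ref{l:uniqueclasselusive} immediately gives $2$-elusivity whenever $|\O|$ is even. So I assume $q$ is even. Moreover, the collection $\mathcal{S}$ is empty for $T = {}^3D_4(q)$ by Remark~\ref{r:CS}(b), so automatically $H \in \mathcal{C}$, and Theorem~\ref{t:odd}(i) together with Proposition~\ref{p:subfield} rule out parabolic and subfield point stabilizers.

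The key invariant for separating the two $T$-classes of involutions is the Jordan form on the natural $8$-dimensional module $V$ for $\bar{G} = D_4$: the class $A_1$ has Jordan form $(2^2, 1^4)$ with $|C_T(x)| = q^{12}(q^6-1)$, while the class $A_1^3$ has Jordan form $(2^4)$ with $|C_T(x)| = q^{10}(q^2-1)$, as recorded in Table~\ref{table:3D4q}. Given an involution $t \in H_0$ written as a product of root elements, I identify its $T$-class either by computing its Jordan form on $V$ directly, following Section~\ref{ss:comp}, or by comparing $|C_{H_0}(t)|$ with the two centralizer orders above.

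With this setup I would proceed through the remaining types of maximal subgroups case by case. For $H_0 = G_2(q)$, the two classes of involutions in $G_2(q)$ in characteristic~$2$ (long and short root elements, Table~\ref{table:evenpinvolutions}) embed into $T$ as the classes $A_1$ and $A_1^3$ respectively, because the short roots of $G_2$ arise as sums of triality-conjugate simple roots of $D_4$; hence $T$ is $2$-elusive. For $H_0$ of type $\operatorname{SL}_2(q^3) \times \operatorname{SL}_2(q)$, a long root involution in the $\operatorname{SL}_2(q)$ factor is triality-fixed and so contributes an $A_1$ element, while one in the $\operatorname{SL}_2(q^3)$ factor is a product of three commuting triality-conjugate root elements and contributes an $A_1^3$ element, so again $T$ is $2$-elusive. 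For the maximal rank subgroup with $H_0 \cong \operatorname{PGL}_3^{\e}(q)$ (with $q \equiv \e \imod{3}$, and $q \geqs 4$ to exclude the solvable $\operatorname{PSU}_3(2)$), every involution is a transvection, which embeds in $T$ as a long root element of the underlying $A_2^{\e}$-subsystem, hence into class $A_1$; so $T$ is not $2$-elusive.

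The torus normalizers complete the analysis. For $H_0 = (q^2 \pm q + 1)^2.\operatorname{SL}_2(3)$ and $H_0 = (q^4 - q^2 + 1).4$, the torus factor has odd order in even characteristic, so Lemma~\ref{l:conj} gives the same number of involution classes in $H_0$ as in $\operatorname{SL}_2(3)$ or in the cyclic group of order $4$, each of which contains a unique involution; consequently $T$ is not $2$-elusive. For the remaining torus normalizers, whose Weyl-type quotients are larger, I would exhibit two explicit involutions covering both $T$-classes by choosing suitable products of the $h_{\alpha}(\pm 1)$ and $w_\alpha$ terms and computing their fixed point spaces on $V$ as in Section~\ref{ss:comp}. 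The main obstacle is not conceptual but organizational: every entry of \cite[Table 8.51]{BHR} must be visited, and for each one I must either produce representatives in both $T$-classes or use a centralizer / Jordan form argument to rule one out; the trickiest bookkeeping is verifying the triality fusion correctly in the ``both classes present'' cases, which is why identification via Jordan form on the natural $D_4$-module is the safest uniform tool.
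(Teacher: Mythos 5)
Your reduction to $q$ even, the appeal to Lemmas \ref{l:3d4qclasses}, \ref{l:uniqueclasselusive} and \ref{l:conj}, and your treatment of the parabolic, subfield, $G_2(q)$, ${\rm L}_2(q^3)\times{\rm L}_2(q)$, ${\rm PGL}_3^{\e}(q)$ and torus-normalizer cases all match the paper's conclusions (and in the $G_2(q)$ and ${\rm L}_2(q^3)\times{\rm L}_2(q)$ cases your direct identification of short-root involutions of $G_2$ and of involutions in the ${\rm L}_2(q^3)$ factor with the representative $x_{\alpha_1}(1)x_{\alpha_3}(1)x_{\alpha_4}(1)$ of the class $A_1^3$ is a cleaner route than the paper's detour through $C_3$ and $D_2^2$). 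Note, though, that your ${\rm PGL}_3^{\e}(q)$ case needs nothing beyond the observation that $H_0$ has a unique class of involutions, and that there are in fact no ``remaining torus normalizers'' in \cite[Table 8.51]{BHR} beyond the two you list.

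However, there is a genuine gap: your enumeration of \cite[Table 8.51]{BHR} omits the maximal subgroup $H_0 = J.2$ with $J = ((q^2+\e q+1) \circ {\rm SL}_3^{\e}(q)).f_{\e}$, $f_{\e} = (3,q^2+\e q+1)$, which is the normalizer of a maximal rank subgroup of type $T_2A_2$ (it is neither a torus normalizer nor the subgroup ${\rm PGL}_3^{\e}(q)$, and for $q$ even it has even index). This is precisely the case where the proposition's claim of $2$-elusivity is hardest to verify: every involution in $J$ is of type $A_1$ (since $q^2+\e q+1$ does not divide $|C_T(x)| = q^{10}(q^2-1)$ for $x$ of type $A_1^3$), so one must produce an involution of type $A_1^3$ in $H_0 \setminus J$. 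The paper does this by identifying the outer involution with $w = w_{\alpha_1}w_{\alpha_3}w_{\alpha_4}$, verifying via the $H^1(\s,\bar{H}/\bar{H}^\circ)$ machinery of Lemmas \ref{l:sigmaclasses} and \ref{l:tildeH0} that $w$ lies in the correct $\s$-twisted form $\bar{H}_{n\s}$ for both values of $\e$, and computing that $w$ has Jordan form $(2^4)$ on the natural $8$-dimensional module. Without this analysis one cannot rule out that this subgroup meets only the class $A_1$, i.e.\ that it should appear among the exceptions in the statement; so this case must be added to complete the proof.
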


\begin{proof} 
First recall that $T$ has a unique class of involutions when $q$ is odd (see Lemma \ref{l:3d4qclasses}). So in view of Lemma \ref{l:uniqueclasselusive}, we may assume that $q$ is even. Here $T$ has two classes of involutions, labelled $A_1$ and $A_1^3$ in Table \ref{table:3D4q}, with respective centralizer orders $q^{12}(q^6-1)$ and $q^{10}(q^2-1)$. Note that $H \in \mathcal{C}$ since the collection $\mathcal{S}$ is empty (see Remark \ref{r:CS}(b)). We proceed by inspecting the cases arising in \cite[Table 8.51]{BHR}. Note that $|\O|$ is odd if $H$ is a parabolic subgroup (see Proposition \ref{t:odd}), while Proposition \ref{p:subfield} applies if $H$ is a subfield subgroup.

If $H_0 = {\rm PGL}_3^{\e}(q)$, then $H_0$ has a unique class of involutions and thus $T$ is not $2$-elusive. Similarly, if $H_0$ is the normalizer of a maximal torus, then $H_0 = (q^2 \pm q +1)^2.{\rm SL}_2(3)$ or $(q^4-q^2+1).4$, and Lemma \ref{l:conj} implies that $H_0$ has a unique class of involutions. So once again, we conclude that $T$ is not $2$-elusive.

Next suppose $H_0 = G_2(q)$ and note that $H_0$ has two classes of involutions, labelled $A_1$ and $\tilde{A}_1$ in Table \ref{table:evenpinvolutions}. We can embed the ambient algebraic group $\bar{H} = G_2$ in $\bar{G}$ by first embedding it in a subgroup $L = C_3$ of $\bar{G}$ (here $\bar{H}$ acts irreducibly on the natural module for $L$, while $L$ is embedded in $\bar{G}$ by restricting one of the two $8$-dimensional spin modules for $\bar{G}$). Under the embedding $\bar{H} < L$, we find that involutions in the $A_1$ class of $H_0$ are of type $a_2$ when viewed as elements of $L$ (with respect to the notation of \cite{AS}), while those in the other class are of type $b_3$. And then by considering the embedding of $L$ in $\bar{G}$, we deduce that $H_0$ meets both $T$-classes of involutions and thus $T$ is $2$-elusive. 

Next suppose $H_0 = {\rm L}_2(q^3) \times {\rm L}_2(q)$. Clearly, if $t = (1,s) \in H_0$ is an involution, then ${\rm L}_2(q^3) \leqs C_{H_0}(t)$ and thus $t$ is in the $T$-class $A_1$. We claim that if $t = (s,1) \in H_0$ is an involution, then $t$ is in the other class of involutions and thus $T$ is $2$-elusive. To see this, it will be helpful to view $H_0$ in terms of the following embedding:
\[
H_0 = \{(x,x^{\psi},x^{\psi^2},y) \,:\, x \in {\rm L}_2(q^3), y \in {\rm L}_2(q)\} < A_1^4 = D_2^2 < D_4 = \bar{G},
\]
where $\psi$ is an order $3$ field automorphism of ${\rm L}_2(q^3)$. Here we view $D_2 = A_1^2$, identifying the natural $4$-dimensional module for $D_2$ with the tensor product of the natural modules for the two $A_1$ factors. Then in terms of this embedding, $t = (s,1) \in H_0$ is  of the form $(J_2,J_2,J_2,I_2)$ and therefore has Jordan form 
\[
(J_2 \otimes J_2) \oplus (J_2 \otimes I_2) = (J_2^4)
\]
on the natural module for $\bar{G}$. By inspecting Table \ref{table:3D4q} we deduce that $t$ is contained in the $T$-class labelled $A_1^3$, as claimed. 

To complete the proof, we may assume $H_0 = J.2$, where $J= ((q^2+\e q+1) \circ {\rm SL}_3^{\e}(q)).f_{\e}$ and $f_{\e} = (3,q^2+\e q+1)$. As noted in \cite[Table II]{K}, we have $J = C_T(x)$ and $H_0 = N_T(J) = N_T(\langle x \rangle)$ for a certain semisimple element $x \in T$ of order $q^2+\e q+1$. Since the centralizer of an involution of type $A_1^3$ has order $q^{10}(q^2-1)$, which is not divisible by $q^2+\e q+1$, it follows that every involution in $J$ is of type $A_1$.

We claim that $H_0$ also contains an involution of type $A_1^3$, which means that $T$ is $2$-elusive. To see this, we will work with a more precise description of the structure of $H_0$. First observe that the representative for the conjugacy class of $x$ used in \cite{K} is $y = s_4$ for $\e = +$ and $y = s_9$ for $\e = -$, where $s_4$ and $s_9$ are defined as follows in \cite[Table 2.1]{DM}: 
\begin{align*} 
s_4 &= h_{\alpha_1}(t) h_{\alpha_3}(t^q) h_{\alpha_4}(t^{q^2}), \mbox{ where $t \ne 1$ and 
$t^{q^2+q+1} = 1$} \\
s_9 &= h_{\alpha_1}(t) h_{\alpha_3}(t^{-q}) h_{\alpha_4}(t^{q-1}), \mbox{ where $t \neq 1$ and  $t^{q^2-q+1} = 1$}
\end{align*} 
with respect to the notation we defined in Section \ref{ss:setup}. In both cases,  $C_{\bar{G}}(y) = T_2A_2$ is a subsystem subgroup that corresponds to a subsystem of $\Phi$ with base $\{\alpha_2, -\alpha_0\}$ (denoted by $J_3$ in \cite[Table 1.0]{DM}), where $\alpha_0$ is the longest root of $\Phi$.

Next let $w \in N_{\bar{G}}(\bar{T})$ be an element with image $-s_{\alpha_1 + 2\alpha_2 + \alpha_3 + \alpha_4}$ in the Weyl group $W$ of $\bar{G}$. (Note that $W$ contains a central involution and we have $-s_{\alpha_1 + 2\alpha_2 + \alpha_3 + \alpha_4} = -w_{1+2}$ in the notation of \cite{DM}.) Since $-s_{\alpha_1 + 2\alpha_2 + \alpha_3 + \alpha_4} = s_{\alpha_1} s_{\alpha_3} s_{\alpha_4}$, we can choose $w = w_{\alpha_1} w_{\alpha_3} w_{\alpha_4}$. Note that $w$ acts on $\Phi$ by mapping $\alpha_1 \mapsto -\alpha_1$, $\alpha_3 \mapsto -\alpha_3$ and $\alpha_4 \mapsto -\alpha_4$, so $y \in \{s_4,s_9\}$ is inverted by $w$. In addition, note that $w_{\alpha}^2 = h_{\alpha}(-1) = 1$ for all $\alpha \in \Phi$ (recall that $p=2$) and thus $w$ is an involution.

It follows that the normalizer of $C_{\bar{G}}(y)$ is equal to $\bar{H} = C_{\bar{G}}(y).2 = C_{\bar{G}}(y){:}\langle w \rangle$. Now following \cite[Table 2.1]{DM}, we have $H = N_{G}(\left(\bar{H}^g\right)_\s)$, where under the bijection of Lemma \ref{l:sigmaclasses}, the $\sigma$-invariant subgroup $\bar{H}^g$ corresponds to the image of $n \in \bar{G}$ in $H^1(\sigma, \bar{H}/\bar{H}^\circ)$, with $$n = \begin{cases} 1, & \text{ if } \e = +, \\ w, & \text{ if } \e = -.\end{cases}$$ Thus by Lemma \ref{l:tildeH0}, it will suffice to consider involutions in $\widetilde{H_0} = \bar{H}_{n \s} \cap \left(\bar{G}_{n \s}\right)' = \bar{H}_{n \s}$. Then $w \in \widetilde{H_0}$, since $w = w_{\alpha_1} w_{\alpha_3} w_{\alpha_4}$ is fixed by both $\s$ and $n \in \{1,w\}$.

Finally, we will now verify that $w$ is an involution of type $A_1^3$, which will complete the proof of the proposition. Let $w'$ be the involution corresponding to $w$ in the simply connected cover of $\bar{G}$. As explained in Section \ref{ss:comp}, we can use {\sc Magma} to show that $w'$ has Jordan form $(J_2^4)$ on the $8$-dimensional natural module for $\bar{G}$. Then by inspecting Table \ref{table:3D4q}, we see that $w$ is in the $T$-class labelled $A_1^3$ and hence $T$ is $2$-elusive.
\end{proof}

\section{The groups with socle $F_4(q)$}\label{s:proof_F4} 

Next we handle the groups with socle $T = F_4(q)$.  The maximal subgroups of $G$ have been determined up to conjugacy by Craven \cite{Craven}, extending earlier work of Norton and Wilson \cite{NW} for $q=2$. In particular, the subgroups comprising the collection $\mathcal{C}$ are listed in \cite[Tables 7 and 8]{Craven}, while those in $\mathcal{S}$ are presented in \cite[Table 1]{Craven}. Referring to Section \ref{ss:invols}, let us recall that $T$ has exactly $2+2\delta_{2,p}$ classes of involutions.

\begin{rem}\label{r:torn}
Here we take the opportunity to point out that $(q^2+1)^2.({\rm SL}_2(3){:}4)$ is the correct structure of the torus normalizer expressed as $(q^2+1)^2.(4 \circ {\rm GL}_2(3))$ in \cite[Table 8]{Craven}; the source of this error is \cite[Table 2]{LSS}. Referring to the bijection in Lemma \ref{l:torus1}, the maximal torus $(q^2+1)^2$ corresponds to a certain conjugacy class of elements of order $4$ in the Weyl group $W$ of $\bar{G} = F_4$, with representative $w_{(22)}$ given in \cite[p.93]{LawtherB4F4}. A computation shows that such an element has centralizer ${\rm SL}_2(3){:}4$ in $W$, which implies that the torus normalizer has structure $(q^2+1)^2.({\rm SL}_2(3){:}4)$ (see Lemma \ref{l:torus2}). In fact, one can check that $W$ does not have a subgroup isomorphic to $4 \circ {\rm GL}_2(3)$.
\end{rem}

We begin by handling the subgroups in $\mathcal{S}$.

\begin{prop}\label{p:f4_1}
Suppose $T = F_4(q)$ and $|\O|$ is even. If $H \in \mathcal{S}$, then $T$ is $2$-elusive if and only if $q=2$ and $H_0 = {\rm L}_4(3).2_2$, or if $q=p \geqs 3$ and $H_0 = {}^3D_4(2).3$.
\end{prop}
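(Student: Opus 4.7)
The plan is to enumerate the candidate pairs $(H_0, q)$ with $H \in \mathcal{S}$ using Craven's classification in \cite[Table 1]{Craven}, and to determine for each one whether $H_0$ meets every $T$-class of involutions. Recall from Section \ref{ss:invols} that $T = F_4(q)$ has exactly four $T$-classes of involutions when $p = 2$ (see Table \ref{table:evenpinvolutions}) and exactly two when $p$ is odd (see Table \ref{table:oddpinvolutions}), so a necessary condition for $T$ to be $2$-elusive is that $H_0$ contains at least this many conjugacy classes of involutions.

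For each candidate I would first apply this counting test, which will dispose of a large number of candidates at a stroke. In particular, any $H_0$ whose socle is $\mathrm{L}_2(q')$, or which otherwise has fewer than two (resp.\ four) classes of involutions when $p$ is odd (resp.\ $p=2$), is immediately excluded. In view of Lemma \ref{l:conj}, the class count is unaffected by normal subgroups of odd order; for example for ${}^3D_4(2).3$ it suffices to work with the two involution classes of ${}^3D_4(2)$ recorded in Lemma \ref{l:3d4qclasses}(ii). For the candidates that survive the counting test, my strategy is to identify the $T$-class of each involution $x \in H_0$ by computing $\dim C_{V_{\mathrm{min}}}(x)$ and, when $p=2$, also $\dim C_{\mathcal{L}(\bar{G})}(x)$, and then consulting Table \ref{table:evenpinvolutions} or Table \ref{table:oddpinvolutions}. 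These dimensions are read off from the composition factors of $V_{\mathrm{min}} \downarrow H_0$ and $\mathcal{L}(\bar{G}) \downarrow H_0$, which are available in \cite{Craven} or which I would reconstruct in \textsc{Magma} via the methods of Section \ref{ss:comp} and Section \ref{ss:feasible} (using induction from a maximal subgroup of ${\rm soc}(H_0)$ where necessary, as in Example \ref{e:psl4-5}).

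For the two positive cases in the statement, the verification is direct. When $q = 2$ and $H_0 = \mathrm{L}_4(3).2_2$, I would work with the concrete finite group $G = F_4(2)$ and an explicit copy of $H_0$ inside it in \textsc{Magma} (as in the proof of Proposition \ref{l:2f42}), compute centralizer orders of involution class representatives, and match them against the four $T$-classes of involutions in $F_4(2)$ listed in Table \ref{table:evenpinvolutions}. When $q = p \geq 3$ and $H_0 = {}^3D_4(2).3$, I would exhibit an involution of type $A_1C_3$ and an involution of type $B_4$ inside $H_0$ by computing $\dim C_{\mathcal{L}(\bar{G})}(x)$ for representatives of the two involution classes of ${}^3D_4(2)$; one of these dimensions must equal $24$ and the other $36$. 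The composition factors of $\mathcal{L}(\bar{G}) \downarrow {}^3D_4(2)$ (available from the work on $F_4 \supset {}^3D_4$ embeddings in characteristic $\neq 2$) should make this computation routine.

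The main obstacle will be the handful of remaining candidates in $\mathcal{S}$ for which $H_0$ has enough involution classes but where one must still show that they fuse into a proper subset of the $T$-classes. These are precisely the cases where the class count passes but the positive conclusion fails, and they require careful extraction of composition factor multiplicities on $V_{\mathrm{min}}$ and $\mathcal{L}(\bar{G})$ to force two $H_0$-classes of involutions into the same $T$-class. For a few sporadic almost simple candidates whose explicit embedding is subtle, this will likely need a feasible character analysis in the spirit of Section \ref{ss:feasible}, though in the $F_4$ case this analysis is much lighter than for $E_7$ or $E_8$ because $\dim \mathcal{L}(\bar{G}) = 52$ is small and Craven's list in \cite[Table 1]{Craven} is short.
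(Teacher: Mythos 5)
Your overall strategy (enumerate Craven's list, apply an involution-class counting test, then identify $T$-classes via fixed point spaces on $V_{\mathrm{min}}$ and $\mathcal{L}(\bar{G})$, falling back on feasible characters for awkward cases) is essentially the paper's, but there are two concrete problems with the proposal as written.

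First, your blanket claim that any $H_0$ with socle ${\rm L}_2(q')$ is ``immediately excluded'' by the counting test is false, and the exception is precisely the one nontrivial negative case. For $p$ odd, $T = F_4(q)$ has only two classes of involutions, and $H_0 = {\rm PGL}_2(13)$ (which occurs in \cite[Table 1]{Craven} for several congruence classes of $q$) also has exactly two, so it survives the count. The paper disposes of it by computing all feasible characters of ${\rm PGL}_2(13)$ on $\mathcal{L}(\bar{G})$ and checking that every involution has trace $-4$, hence lies in the class $A_1C_3$; some such argument is unavoidable here, and your plan as stated would skip it. (Your conclusion for this case would happen to be correct, but for an invalid reason.)

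Second, for $q=2$ and $H_0 = {\rm L}_4(3).2_2$, matching involutions to $T$-classes by centralizer orders in $F_4(2)$ cannot work on its own: the classes $A_1$ and $\tilde{A}_1$ are interchanged by the graph automorphism of $F_4(2)$ and so have equal centralizer orders, whereas you must show $H_0$ meets \emph{both}. The paper resolves this by computing the Jordan forms of the five involution classes of $H_0$ on the two $26$-dimensional modules $L_{\bar{G}}(\varpi_1)$ and $L_{\bar{G}}(\varpi_4)$ separately (Table \ref{tab:JF4}), where the two classes are distinguished by the forms $(2^6,1^{14})$ versus $(2^{10},1^6)$; alternatively one could test conjugacy of explicit representatives directly in {\sc Magma}, but centralizer orders alone do not suffice. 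Your treatment of $H_0 = {}^3D_4(2).3$ for $q = p \geqs 3$ is fine and matches the paper: $\mathcal{L}(\bar{G}) \downarrow H_0$ is irreducible of dimension $52$ and lifts to characteristic zero, so the traces of the two involution classes can be read from the ordinary character table and give fixed point spaces of dimensions $24$ and $36$.
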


\begin{proof}
The possibilities for $H_0$ are listed in \cite[Table 1]{Craven} and we recall that $T$ has $2(1+\delta_{2,p})$ classes of involutions (see Tables \ref{table:evenpinvolutions} and \ref{table:oddpinvolutions}). Of course, if $H_0$ contains a unique class of involutions, then $T$ is not $2$-elusive. And similarly if $q$ is even and $H_0$ has at most three involution classes. So by inspecting \cite{Craven}, we quickly deduce that $T$ is $2$-elusive only if one of the following holds:

\vspace{1mm}

\begin{itemize}\addtolength{\itemsep}{0.2\baselineskip}
\item[(a)] $H_0 = {\rm L}_4(3).2_2$, $q=2$; 
\item[(b)] $H_0 = {}^3D_4(2).3$, $q=p \geqs 3$; 
\item[(c)] $H_0 = {\rm PGL}_2(13)$ and either $q=7$, or $q=p \ne 13$ and $p \equiv \pm 1 \imod{7}$, or $q=p^3$ is odd and $p \equiv \pm 2, \pm 3 \imod{7}$. 
\end{itemize}

Consider case (a). Here the notation indicates that $H_0$ contains involutory graph automorphisms of ${\rm L}_4(3)$ with centralizer ${\rm PGSp}_4(3)$ and we find that $H_0$ has precisely $5$ classes of involutions, with representatives $x_1, \ldots, x_5$. Let $V_1 = L_{\bar{G}}(\varpi_1)$ and $V_2 = L_{\bar{G}}(\varpi_4)$ be the two $26$-dimensional irreducible modules for $\bar{G}$, and note that $V_1$ and $V_2$ are interchanged by a graph automorphism of $\bar{G}$. As recorded in \cite[Table 6.32]{Litt}, $H_0$ acts irreducibly on $V_1$ and $V_2$. Moreover, both modules for $H_0$ can be constructed using {\sc Magma} (see Remark \ref{r:cvx}, for example) and this allows us to compute the respective Jordan forms of each involution in $H_0$. This is detailed in Table \ref{tab:JF4}, up to some choice of labelling of the involution class representatives in $H_0$. By considering Table \ref{table:evenpinvolutions} we immediately deduce that $x_1 \in A_1$, $x_2 \in \tilde{A}_1$ and $x_4,x_5 \in A_1\tilde{A}_1$ (up to a choice of labelling for $x_1$ and $x_2$). In addition, since $x_3$ has the same Jordan form on $V_1$ and $V_2$, it follows that $x_3 \in (\tilde{A}_1)_2$, and we conclude that $T$ is $2$-elusive.

{\scriptsize
\begin{table}
\[
\begin{array}{cccc} \hline
i & |x_i^{H_0}| & \mbox{Jordan form on $V_1$} & \mbox{Jordan form on $V_2$} \\ \hline
1 & 117 & (2^6,1^{14}) & (2^{10},1^{6}) \\
2 & 117 & (2^{10},1^{6}) & (2^6,1^{14}) \\
3 & 2106 & (2^{10},1^{6}) & (2^{10},1^{6}) \\
4 & 5265 & (2^{12},1^{2}) & (2^{12},1^{2}) \\
5 & 10530 & (2^{12},1^{2}) & (2^{12},1^{2}) \\ \hline
\end{array}
\]
\caption{The case $H_0 = {\rm L}_4(3).2_2 < F_4(2)$}
\label{tab:JF4}
\end{table}
}

Next let us turn to case (b) above, so $q = p$ is odd. Here both $H_0$ and $T$ have two classes of involutions, with every involution of $H_0$ contained in $\operatorname{soc}(H_0) = {}^3D_4(2)$. Moreover, we note that $H_0$ acts irreducibly on the $52$-dimensional adjoint module $V = \mathcal{L}(\bar{G})$ (see \cite[6.1.35, Table 6.36]{Litt}). By inspecting the Brauer character table of $H_0$ (see \cite[pp.250--253]{ModularAtlas}), we observe that a $52$-dimensional irreducible $K[H_0]$-module arises as the reduction modulo $p$ of a $52$-dimensional representation in characteristic zero. Thus the dimension of the fixed point spaces on $V$ for each involution in $H_0$ can be read off from the character table of $H_0$. From this, it follows that the two $H_0$-classes are not fused in $T$, and thus $T$ is $2$-elusive.

Finally, let us consider case (c). As discussed in Section \ref{ss:feasible}, we can use the {\sc Magma} code from \cite{LittGithub} to compute the feasible characters of $H_0 = {\rm PGL}_2(13)$ on $V = \mathcal{L}(\bar{G})$. For each feasible character, we check that every involution in $H_0$ has trace $-4$ on $V$, which implies that every involution in $H_0$ is of type $A_1C_3$. In particular, $T$ is not $2$-elusive.
\end{proof}

We now complete the proof of Theorem \ref{t:main} for $T = F_4(q)$ by handling the cases with $H \in \mathcal{C}$. Recall that the possibilities for $H$ are listed in \cite[Tables 7 and 8]{Craven} (see Remark \ref{r:torn}).

\begin{prop}\label{p:f4_2}
Suppose $T = F_4(q)$ and $|\O|$ is even. If $H \in \mathcal{C}$ then $T$ is $2$-elusive unless one of the following holds:

\vspace{1mm}

\begin{itemize}\addtolength{\itemsep}{0.2\baselineskip}
\item[{\rm (i)}] $q$ is odd and $H_0$ is one of the following:
\[
{}^3D_4(q).3, \; {\rm PGL}_2(q) \, (p \geqs 13), \; G_2(q) \, (p=7), \; {\rm ASL}_3(3) \, (q=p \geqs 5).
\]
\item[{\rm (ii)}] $q$ is even and $H_0$ is one of the following:
\[
{}^2F_4(q_0), \; ({\rm SL}_3^{\e}(q) \circ {\rm SL}_3^{\e}(q)).(3,q-\e).2, \; {\rm Sp}_4(q^2).2,
\]
\[
(q^2 \pm q +1)^2.(3 \times {\rm SL}_2(3)), \; (q^4-q^2+1).12, \; (q^2+1)^2.(\operatorname{SL}_2(3){:}4),
\]
where $q=q_0^2$.
\end{itemize}
\end{prop}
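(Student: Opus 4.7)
My plan is to work through each family of maximal subgroups $H \in \mathcal{C}$ appearing in Craven's tables \cite[Tables 7 and 8]{Craven}. By Propositions \ref{p:parab}, \ref{p:subfield} and \ref{p:twisted}, we may assume $H$ is not parabolic, not a subfield subgroup, and not a twisted version of $G$; in particular, the twisted case ${}^2F_4(q_0)$ with $q = q_0^2$ in part (ii) is already known from Proposition \ref{p:twisted}. The remaining candidates are: reductive subgroups of maximal rank with $\bar H^\circ$ non-toral (types $B_4$, $D_4$, $A_1C_3$, $A_2\tilde A_2$, $B_2B_2$, $C_4$ (only when $p=2$), together with the triality subgroup ${}^3D_4.3$); non-maximal rank reductive subgroups (essentially $G_2$, $A_1G_2$, and certain almost-simple $A_1$ subgroups for large $p$); maximal torus normalizers; and the exotic local $\mathrm{ASL}_3(3)$.

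For each such $\bar H$, I use Lemma \ref{l:tildeH0} to reduce the question to whether $\widetilde{H_0} = N_{\bar G}(\bar H_{w\sigma}) \cap (\bar G_{w\sigma})'$ meets every $\sigma$-invariant $\bar G$-class of involutions. The class of an involution $x \in \widetilde{H_0}$ is then determined by computing $\dim C_V(x)$ on $V = \mathcal{L}(\bar G)$, and, in the ambiguous case when $p=2$, also on $V_{\min} = L_{\bar G}(\varpi_4)$, and comparing against Tables \ref{table:evenpinvolutions} and \ref{table:oddpinvolutions}. The required restrictions of $V$ and $V_{\min}$ to the various $\bar H$ are read off from \cite[Chapter 12]{Thomas}, and when $\bar H$ is a subsystem subgroup I will work with explicit root-element representatives $x_\alpha(\pm 1)$ and $h_\alpha(-1)$, as in Section \ref{ss:comp}, supplemented by {\sc Magma} computations where necessary.

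The listed exceptions arise from two structural reasons. First, if $\bar H^\circ$ (or its fixed point group) has only one class of involutions and no outer element adds a new class, then $H_0$ contains at most one $T$-class of involutions. This accounts for ${}^3D_4(q).3$ via Lemma \ref{l:3d4qclasses} combined with Lemma \ref{l:conj}, for $G_2(q)$ with $p=7$ and $\mathrm{PGL}_2(q)$ with $p\geq 13$ (both contain a single class of semisimple involutions), for $\mathrm{ASL}_3(3)$ (the unique involution class of $\mathrm{SL}_3(3)$ translates to a single $T$-class), and for the two torus normalizers $(q^4-q^2+1).12$ and $(q^2+1)^2.(\mathrm{SL}_2(3){:}4)$ in characteristic $2$, via Lemma \ref{l:torus2} plus a direct check that $C_{W,\sigma}(w)$ has only one involution class in its action on the torus. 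Second, in characteristic $2$, the modules $V$ and $V_{\min}$ restricted to $\bar H$ can force all involutions of $\widetilde{H_0}$ to share the same Jordan type: this is what happens for $(\mathrm{SL}_3^\epsilon(q)\circ\mathrm{SL}_3^\epsilon(q)).(3,q-\epsilon).2$ (all four $T$-classes require an $(\tilde A_1)_2$ representative, which the $A_2\tilde A_2$ decomposition excludes), for $\mathrm{Sp}_4(q^2).2$ (coming from a $B_2B_2$ with a graph-field twist identifying the two factors), and for the last torus normalizer $(q^2\pm q+1)^2.(3\times \mathrm{SL}_2(3))$.

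In every other family I expect $\widetilde{H_0}$ to meet every $T$-class. The most delicate step is the characteristic $2$ analysis, where distinguishing between the classes $A_1$, $\tilde A_1$, $A_1\tilde A_1$ and $(\tilde A_1)_2$ requires computing both $\dim C_V(x)$ and $\dim C_{V_{\min}}(x)$, and where the class in $T$ also depends (via Remark \ref{t:inv_fuse}) on whether an outer element of $G$ fuses $A_1$ and $\tilde A_1$. The torus normalizer cases are the main organizational obstacle: one has to enumerate the conjugacy classes of involutions in $C_{W,\sigma}(w)$, determine which of these invert, fix, or permute the factors of the corresponding torus $\bar T_{n\sigma}$, and, for $p=2$, exhibit explicit lifts to $N_{\bar G}(\bar T)$ of Weyl group involutions (using Theorem \ref{t:titsweyl}) in order to compute their Jordan type on $V$ and $V_{\min}$. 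Once these computations are assembled, the case-by-case verification yields precisely the exceptions stated.
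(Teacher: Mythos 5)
Your proposal follows essentially the same route as the paper: reduce via Propositions \ref{p:parab}, \ref{p:subfield} and \ref{p:twisted}, then work case-by-case through the maximal rank subgroups, torus normalizers and remaining reductive and local subgroups from \cite[Tables 7 and 8]{Craven}, identifying the $T$-class of each involution by computing fixed point space dimensions on $\mathcal{L}(\bar G)$ and $V_{\mathrm{min}}$ against Tables \ref{table:evenpinvolutions} and \ref{table:oddpinvolutions}, with Lemma \ref{l:tildeH0} handling twisted forms and the restrictions taken from \cite[Chapter 12]{Thomas}. One correction: ${\rm PGL}_2(q)$ with $q$ odd has \emph{two} classes of involutions, not one; the case fails because the ambient $\bar H = A_1$ has a unique involution class, so the two $H_0$-classes fuse in $\bar G$ --- and similarly the torus normalizers $(q^2\pm q+1)^2.(3\times{\rm SL}_2(3))$ and $(q^4-q^2+1).12$ fail simply by Lemma \ref{l:conj} (odd-order torus, quotient with a unique involution class), not by any module computation.
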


\begin{proof}
By Propositions \ref{p:parab} and \ref{p:subfield}, we know that $T$ is $2$-elusive if $H$ is a parabolic or subfield subgroup, while Proposition \ref{p:twisted} shows that $T$ is not $2$-elusive when $H_0 = {}^2F_4(q_0)$ with $q = q_0^2$. For the remainder of the proof, it will be convenient to write $V = \mathcal{L}(\bar{G})$ for the $52$-dimensional adjoint module for $\bar{G}$.

\vs

\noindent \emph{Case 1. $H$ is a maximal rank subgroup.}

\vs

We begin by considering the subgroups of the form $H = N_G(\bar{H}_{\s})$, where $\bar{H}$ is a maximal rank subgroup. By inspecting \cite[Tables 5.1, 5.2]{LSS}, we see that the possibilities for $\bar{H}$ are as follows:
\begin{equation}\label{e:f4list1}
B_4 \, (p \ne 2), \, C_4 \, (p=2), \, D_4.{\rm Sym}_3, \, A_2^2.2, \, A_1C_3 \, (p \ne 2), \, C_2^2.2 \, (p=2), \, T_4.W \, (p=2)
\end{equation}
where $T_4 = \bar{T}$ is a maximal torus of $\bar{G}$ and $W = {\rm O}_4^{+}(3)$ is the Weyl group of $\bar{G}$. 

In each of these cases, we are free to assume that $H = N_G(\left( \bar{H}^g \right)_{\s})$, where $\bar{H}$ is the normalizer of a standard subsystem subgroup 
\[
\bar{H}^\circ = \langle \bar{T}, U_{\alpha} \,:\, \alpha \in \Phi' \rangle,
\]
and $\bar{H}^g$ is a $\s$-invariant conjugate of $\bar{H}$. Here $\bar{T}$ is a maximal torus of $\bar{G}$ as in the setup of Section \ref{ss:setup}, and $\Phi'$ is a root subsystem of $\Phi$. In each case, the precise structure of $H$ is recorded in \cite[Tables 5.1, 5.2]{LSS} and we refer the reader to \cite[p.300]{LSS} for some details on how the structure of $H$ is determined from $\bar{H}$ (also see \cite[Tables 7 and 8]{Craven}). Referring to \eqref{e:f4list1}, we now consider each possibility for $\bar{H}$ in turn. 

\vs

\noindent \emph{Case 1.1. $\bar{H}^{\circ}$ is not a maximal torus of $\bar{G}$.}

\vs

First suppose $\bar{H} = B_4$, so $q$ is odd and the subsystem $\Phi'$ has base $\{-\alpha_0, \alpha_1, \alpha_2, \alpha_3\}$, where $\alpha_0$ is the longest root of $\Phi$. We have $H_0 = \bar{H}_{\s} = {\rm Spin}_9(q) = 2.\O_9(q)$, which is the centralizer in $T$ of a $B_4$-type involution. Moreover, $h_{\alpha_1}(-1) \in H_0$ is an involution of type $A_1C_3$ (Table \ref{table:oddpinvolutions}), so $T$ is $2$-elusive. By inspecting \cite[Table 14]{Law09}, we deduce that the same conclusion holds when $\bar{H} = C_4$ and $q$ is even, in which case $H_0 = {\rm Sp}_8(q)$.

Next assume $\bar{H} = D_4.{\rm Sym}_3$. Here the subsystem $\Phi'$ has base $\{-\alpha_0, \alpha_1, \alpha_2, \beta\}$, where $\beta = \alpha_2 + 2 \alpha_3 + 2 \alpha_4$, and $-\beta$ is the highest root of the $B_4$ subsystem discussed in the previous paragraph. In this case there are two possibilities for $H_0$. If $H_0 = {}^3D_4(q).3$ then either $q$ is odd and $H_0$ has a unique class of involutions, or $q$ is even and $H_0$ has two such classes (see Proposition \ref{l:3d4qclasses}); in both cases it is clear that $T$ is not $2$-elusive. Now assume $H_0$ is of type ${\rm P\O}_{8}^{+}(q)$, so we have  
\[
H_0 = \bar{H}_{\s} = {\rm Spin}_8^{+}(q).{\rm Sym}_3 = (2,q-1)^2.{\rm P\O}_{8}^{+}(q).{\rm Sym}_3.
\]
If $q$ is odd then by inspecting \cite[Table 13]{BT} we see that $H_0$ contains involutions of type $A_1C_3$. In addition the element $t = h_{-\alpha_0}(-1) h_{\alpha_1}(-1)$ is contained in $H_0$. A computation with {\sc Magma} (see Section \ref{ss:comp}) shows that $t$ has fixed point space of dimension $36$ on $V$, and thus is an involution of type $B_4$ (Table \ref{table:oddpinvolutions}). Therefore, $T$ is $2$-elusive when $q$ is odd. By considering the embeddings $D_4 < B_4 < F_4$ and inspecting \cite[Section 4.4]{Law09}, it is easy to check that $T$ is also $2$-elusive when $q$ is even. For example, if $t \in \O_8^{+}(q)$ is a $c_4$-type involution (in the notation of \cite{AS}; see Remark \ref{r:inv_class}(b)), then the $D_4$-class of $t$ is labelled $A_1+D_2$, which embeds in the $B_4$-class labelled $A_1+B_1^{(2)}$ in \cite[Table 4]{Law09}, and this in turn is contained in the $\bar{G}$-class $A_1\tilde{A}_1$.

Next suppose $\bar{H} = A_2^2.2$, in which case the subsystem $\Phi'$ has base $\{-\alpha_0, \alpha_1\} \cup \{\alpha_3,\alpha_4\}$. We have $\bar{H} = \langle \bar{H}^\circ, w \rangle$, where $w \in N_{\bar{G}}(\bar{T})$ corresponds to the central involution in the Weyl group of $F_4$ that acts as $\alpha \mapsto -\alpha$ on $\Phi$. Explicitly, we can choose $w = w_{\alpha_1}w_{\alpha_3}w_{\alpha_{14}}w_{\alpha_{21}}$, where $\alpha_i$ is the $i$-th root respect to the specific ordering of the roots of $\bar{G}$ used by {\sc Magma} (as described in Remark \ref{r:ordering}).

Now under the bijection of Lemma \ref{l:sigmaclasses}, the $\s$-invariant conjugate $\bar{H}^g$ corresponds to the image of $n \in \bar{H}$ in $H^1(\s, \bar{H}/\bar{H}^\circ)$, where $n \in \{1,w\}$. Then
\[
H_0 = ({\rm SL}_3^{\e}(q) \circ {\rm SL}_3^{\e}(q)).2 = e.({\rm L}_3^{\e}(q) \times {\rm L}_3^{\e}(q)).e.2
\]
where $e = (3,q-\e)$, and $\e = +$ or $\e = -$ according to whether $n = 1$ or $n = w$. Now by Lemma \ref{l:tildeH0}, it suffices to consider involutions in $\widetilde{H_0} = \bar{H}_{n\s}$. We have $h_{\alpha_1}(-1), h_{\alpha_4}(-1) \in \widetilde{H_0}$, since both elements are contained in $\bar{H}$ and are fixed by $w$ and $\s$. In other words, $\bar{H}$ contains representatives for the classes $A_1C_3$ and $B_4$ from Table \ref{table:oddpinvolutions}, so $T$ is $2$-elusive when $q$ is odd.  

Now assume $\bar{H} = A_2^2.2$ and $q$ is even, in which case $\bar{H} = \bar{H}^\circ{:}\langle w \rangle$ since $w$ is an involution. By inspecting \cite[Section 4.7]{Law09}, we observe that there are no involutions of type $(\tilde{A}_1)_2$ in $\bar{H}^{\circ}$. Since $w$ acts on $\Phi$ as $\alpha \mapsto -\alpha$, the action of $w$ on both $A_2$ factors of $\bar{H}^\circ = A_2^2$ is via the standard inverse-transpose graph automorphism. Hence it follows from \cite[Lemma 4.4.6]{GLS} that there is a unique $\bar{H}$-class of involutions in $\bar{H} \setminus \bar{H}^\circ$, and so every involution in $\bar{H} \setminus \bar{H}^\circ$ is conjugate to $w$. A computation with {\sc Magma} (see Section \ref{ss:comp}) shows that $w$ has Jordan form $(2^{24},1^4)$ on $V$, and from Table \ref{table:evenpinvolutions} we deduce that $w$ is contained in the class labelled $A_1\tilde{A}_1$. So for $\bar{H} = A_2^2.2$ we conclude that $T$ is $2$-elusive if and only if $q$ is odd.

Next suppose $\bar{H} = A_1C_3$, so $q$ is odd and $H_0$ is the centralizer of an $A_1C_3$-type involution. By inspecting \cite[Table 13]{BT}, we see that $H_0$ also contains involutions of type $B_4$ and thus $T$ is $2$-elusive. 

To complete the proof in Case 1.1, we may assume $q$ is even and $\bar{H} = C_2^2.2$. Here we have $H_0 = {\rm Sp}_4(q)^2.2 = {\rm Sp}_4(q) \wr {\rm Sym}_2$ and the 
maximality of $H$ implies that $G$ contains a graph (or graph-field) automorphism of $T$ (see \cite[Table 5.1]{LSS}). Now $H_0 < {\rm Sp}_8(q) < T$ and it is easy to check that every class of involutions in ${\rm Sp}_8(q)$ has a representative in $H_0$. Since we have already noted that $T$ is $2$-elusive when $H_0 = {\rm Sp}_8(q)$, we deduce that the same conclusion holds when $H_0 = {\rm Sp}_4(q)^2.2$. Similarly, if $H_0 = {\rm Sp}_4(q^2).2$ then by considering the natural embedding of $H_0$ in ${\rm Sp}_8(q)$ we deduce that every involution in $H_0$ is of type $a_4$ or $c_4$ as an element of ${\rm Sp}_8(q)$, which in turn implies that there are no involutions of type $A_1$ or $\tilde{A}_1$ in $H_0$. In particular, $T$ is not $2$-elusive in this case. 

\vs

\noindent \emph{Case 1.2. $\bar{H}^{\circ}$ is a maximal torus of $\bar{G}$.}

\vs

Now assume $H_0$ is the normalizer of a maximal torus of $T$, so $\bar{H} = \bar{T}.W$, $q$ is even and the maximality of $H$ implies that $G$ contains graph or graph-fields automorphisms (see \cite[Table 5.2]{LSS}). For the tori of order $(q^2 \pm q +1)^2$ and $q^4-q^2+1$, Lemma \ref{l:conj} implies that $H_0$ has a unique class of involutions and thus $T$ is not $2$-elusive. Similarly, $H_0 = (q^2+1)^2.({\rm SL}_2(3){:}4)$ has only two classes of involutions and so once again $T$ is not $2$-elusive. 

To complete the proof for normalizers of maximal tori, we may assume $H_0 = (q-\e)^4.W$. Let $w = w_{\alpha_1}w_{\alpha_3}w_{\alpha_{14}}w_{\alpha_{21}}$ be an element corresponding to the central involution of $W$ as in the case $\bar{H} = A_2^2.2$. Then under the bijection of Lemma \ref{l:sigmaclasses}, the $\s$-invariant conjugate $\bar{T}^g$ corresponds to the image of $n \in \bar{H}$ in $H^1(\s, \bar{H}/\bar{H}^\circ)$, where $n = 1$ if $\e = +$ and $n = w$ if $\e = -$.

By Lemma \ref{l:tildeH0}, it will suffice to consider involutions in $\widetilde{H_0} = N_{\bar{G}}(\bar{T})_{n\s}$. Note that $w$ is contained in the subgroup $W_0 = \langle w_{\alpha} \,:\, \alpha \in \Phi \rangle$, which is isomorphic to $W$ by Theorem \ref{t:titsweyl}. In particular, $W_0 \leqs \widetilde{H_0}$. With the aid of {\sc Magma} (see Section \ref{ss:comp}), we can calculate the Jordan form of each involution in $W_0$ on both the minimal module $V_{\operatorname{min}}$ and the adjoint module $V = \mathcal{L}(\bar{G})$. In this way, we can verify that $W_0$ meets every conjugacy class of involutions in $T$ and thus $T$ is $2$-elusive. More specifically, by computing the Jordan forms of the elements 
\[
w_{\alpha_1}, \; w_{\alpha_4}, \; w_{\alpha_1} w_{\alpha_4}, \; w_{\alpha_2 + \alpha_3} w_{\alpha_3}
\]
on $V_{\operatorname{min}}$ and $V$, we can read off from Table \ref{table:evenpinvolutions} that they are contained in the classes labelled $A_1$, $\tilde{A_1}$, $A_1\tilde{A_1}$, $(\tilde{A_1})_2$, respectively. 

\vs

\noindent \emph{Case 2. The remaining cases.}

\vs

By inspecting \cite[Tables 7 and 8]{Craven}, in order to complete the proof of the proposition we may assume $q$ is odd and $H_0$ is one of the following:
\[
{\rm PGL}_2(q) \times G_2(q) \, (q \geqs 5), \, {\rm PGL}_2(q) \, (p \geqs 13), \, G_2(q) \, (p=7), \, {\rm ASL}_3(3) \, (q=p \geqs 5).
\]

In the latter two cases, $H_0$ has a unique class of involutions and thus $T$ is not $2$-elusive. Next assume $H_0 = {\rm PGL}_2(q) \times G_2(q)$. Here $H_0$ contains involutions of type $B_4$ (see \cite[Table 18]{BT}) and from the fact that
\[
V \downarrow A_1G_2 = \mathcal{L}(A_1G_2)/(V_{A_1}(4) \otimes V_{G_2}(\omega_1))
\]
(see \cite[Table 12.2]{Thomas}) we deduce that an involution $t \in H_0$ of the form $(1,s)$ is of type $A_1C_3$. Indeed, $t$ has Jordan form $(-I_8,I_9)$ on $\mathcal{L}(A_1G_2) \cong V_{A_1}(2) \oplus V_{G_2}(\omega_2)$ and it acts on  
$V_{A_1}(4) \otimes V_{G_2}(\omega_1)$ as $I_5 \otimes (-I_4,I_3) = (-I_{20},I_{15})$, so $\dim C_V(t) = 24$ and the claim follows. Therefore, $T$ is $2$-elusive.
 
Finally, suppose $H_0 = {\rm PGL}_2(q)$ with $p \geqs 13$. In this case $H_0 = \bar{H}_{\s}$ where $\bar{H} = \bar{H}^\circ = A_1$. Since $\bar{H}$ has a unique conjugacy class of involutions, it follows that the two $H_0$-classes of involutions are fused in $\bar{G}$. In particular, $T$ is not $2$-elusive in this case.
\end{proof}

\section{The groups with socle $E_6^{\e}(q)$}\label{s:proof_E6}

In this section we establish Theorem \ref{t:main} for the groups with socle $E_6(q)$ or ${}^2E_6(q)$. In order to handle both cases simultaneously, we will write $T = E_6^{\e}(q)$, where $\e = +$ if $T = E_6(q)$, and $\e=-$ if $T = {}^2E_6(q)$. In addition, $V_{{\rm min}}$ will denote the $27$-dimensional minimal module $V_{\bar{G}_{\rm sc}}(\varpi_1)$ for the simply connected cover $\bar{G}_{\rm sc}$ of $\bar{G}$. In this section, we will occasionally determine the class of an involution $g \in \bar{G}$ by computing its action on $V_{{\rm min}}$, and by this we mean the action of the unique involution $g' \in \bar{G}_{\rm sc}$ that $g$ lifts to (see Remark \ref{r:e77}). It may be helpful to recall that $T = E_6^{\e}(q)$ has exactly $2+\delta_{2,p}$ classes of involutions (see Section \ref{ss:invols}).

We begin by considering the groups with $H \in \mathcal{S}$, noting that the possibilities for $H$ are recorded in \cite[Table 2]{Craven} (for $\e=+$) and \cite[Table 3]{Craven} (for $\e=-$).

\begin{prop}\label{p:e6_1}
Suppose $T = E_6^{\e}(q)$ and $|\O|$ is even. If $H \in \mathcal{S}$, then $T$ is $2$-elusive if and only if $H_0 = {}^2F_4(2)$ and $q=p \equiv \e \imod{4}$, or $(\e,q)=(-,2)$ and $H_0 = \O_7(3)$ or ${\rm Fi}_{22}$.
\end{prop}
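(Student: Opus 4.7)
The plan is to proceed by direct inspection of Craven's classification of the subgroups in $\mathcal{S}$, enumerated in \cite[Tables 2 and 3]{Craven}, and for each candidate $H_0$ decide whether every $T$-class of involutions meets $H_0$. Recall from Section \ref{ss:invols} that $T = E_6^{\e}(q)$ has exactly two classes of involutions when $p$ is odd (types $A_1A_5$ and $D_5T_1$) and three when $p = 2$ (types $A_1$, $A_1^2$, $A_1^3$), and that in all cases the $\bar{G}$-class of an involution is uniquely determined by the dimension of its fixed space on the $78$-dimensional adjoint module $V = \mathcal{L}(\bar{G})$ (and also by its Jordan form on $V_{\min}$ when $p = 2$).

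First I would carry out a cheap preliminary pass: for each $H_0$ in Craven's tables, compare the number of $H_0$-conjugacy classes of involutions with the number of $T$-classes. Whenever $H_0$ has strictly fewer involution classes than $T$, the fusion map $H_0 \to T$ cannot be surjective and $T$ is not $2$-elusive; a large proportion of the candidates will fall out here (for example, those with $H_0$ having a unique involution class, or with only two classes when $p = 2$). In parallel, Proposition \ref{p:condP} and Lemma \ref{l:stronglyimprimitive} can be used to discard further candidates whose putative embedding would force strong imprimitivity.

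For the candidates that survive, I would compute fusion class-by-class. Where $H_0$ has a known explicit embedding in $\bar{G}$ (for instance via a reductive overgroup in $\mathcal{C}$, or through an explicit matrix representation on $V_{\min}$ or $V$ available in {\sc Magma}), I would calculate $\dim C_V(x)$ for representatives of each involution class in $H_0$ and read off the $T$-class from Tables \ref{table:evenpinvolutions} and \ref{table:oddpinvolutions}. Where no such embedding is given directly — which will be the typical situation for the small almost simple subgroups arising in generic characteristic — I would invoke the feasible character machinery of Section \ref{ss:feasible}: enumerate all feasible characters of $H_0$ on $V$ using Litterick's code \cite{LittGithub}, discard those failing property (\textbf{P}), and then read involution traces directly from each remaining Brauer character. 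The three asserted positive cases are then verified as follows: for $H_0 = {}^2F_4(2)$ with $q = p \equiv \e \imod 4$ (where the residue condition is dictated by the form of the maximal subgroup in $G$), I would show that the feasible characters with property (\textbf{P}) force the two $H_0$-classes of involutions to map to the two distinct $T$-classes of types $A_1A_5$ and $D_5T_1$; for $H_0 \in \{\O_7(3), \mathrm{Fi}_{22}\}$ with $(\e,q) = (-,2)$, both $T = {}^2E_6(2)$ and $H_0$ are finite groups of moderate size admitting explicit construction in {\sc Magma}, so the fusion of all involutions can be computed directly from centralizer orders and Jordan forms on the $27$-dimensional module.

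I expect the main obstacle to be the collection of candidates with socle of the form $\mathrm{L}_2(q')$ or other small rank-one Lie-type groups in non-defining characteristic, since for these the subgroup is only known abstractly and the feasible character list can be long. Here the work is to show that every feasible character with property (\textbf{P}) that is consistent with maximality of $H$ forces at least one $T$-class of involutions to be missed — typically because all involutions in $H_0$ have the same trace on $V$ (placing them in a common $T$-class) or because the constraints from the power-map structure of the feasible character eliminate the possibility of hitting the remaining class. The residue conditions mod $4$ appearing in the ${}^2F_4(2)$ case will require careful bookkeeping via Lemma \ref{l:sigmaclasses} and the twisting parameter $w$, to ensure we identify exactly those $G$ in which the relevant $H$ is maximal and contains the claimed involutions.
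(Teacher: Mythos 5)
Your proposal follows essentially the same route as the paper: a first pass counting involution classes in each $H_0$ from Craven's Tables 2 and 3 reduces to the cases $H_0 = {}^2F_4(2)$, ${\rm M}_{12}$ (at $(\e,q)=(+,5)$, which survives the count and must then be eliminated by showing all its involutions have trace $-8$ on $\mathcal{L}(\bar{G})$), $\O_7(3)$ and ${\rm Fi}_{22}$, after which the fusion is settled by computing fixed-space dimensions of involutions on $V_{\min}$ or $\mathcal{L}(\bar{G})$ via the known irreducible restrictions recorded by Litterick. The only difference is one of implementation detail: for ${}^2F_4(2)$ the paper reads the traces off the ordinary character table (since the $27$-dimensional module lifts to characteristic zero) rather than running the full feasible-character machinery, and for $(\e,q)=(-,2)$ it works with the $78$-dimensional adjoint module rather than the $27$-dimensional one.
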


\begin{proof}
By inspecting \cite[Tables 2 and 3]{Craven} and considering the number of classes of involutions in $H_0$, we immediately deduce that $T$ is $2$-elusive only if one of the following holds:

\vspace{1mm}

\begin{itemize}\addtolength{\itemsep}{0.2\baselineskip}
\item[(a)] $H_0 = {}^2F_4(2)$, $q=p \equiv \e \imod{4}$;
\item[(b)] $H_0 = {\rm M}_{12}$, $(\e,q)=(+,5)$; 
\item[(c)] $H_0 = \O_7(3)$ or ${\rm Fi}_{22}$, $(\e,q)=(-,2)$.
\end{itemize}

\vspace{1mm}

First consider (a). As recorded in \cite[Table 6.109]{Litt}, we note that $H_0$ lifts to a subgroup of $\bar{G}_{\rm sc}$ that acts irreducibly on $V_{{\rm min}}$. By inspecting the Brauer character table of $H_0$ (see \cite[pp.188--191]{ModularAtlas}), we deduce that the $27$-dimensional irreducible $K[H_0]$-modules arise as the reduction modulo $p$ of a $27$-dimensional irreducible representation in characteristic zero. Thus the dimension of the fixed point spaces on $V$ for each involution of $H_0$ can be read from the character table of $H_0$, and from this we deduce that $T$ is $2$-elusive.

Similarly, in (b) we find that $H_0$ acts irreducibly on the adjoint module $V = \mathcal{L}(\bar{G})$ (see \cite[Table 6.61]{Litt}) and using {\sc Magma} we calculate that every involution in $H_0$ acts as $(-I_{40},I_{38})$ on $V$. This means that every involution is of type $A_1A_5$ and thus $T$ is not $2$-elusive. 

Finally, consider the cases in (c). Here $H_0$ acts irreducibly on $V = \mathcal{L}(\bar{G})$ (see \cite[6.2.67, 6.105]{Litt}) and using {\sc Magma} we can compute the Jordan form of every involution in $H_0$. In both cases, $H_0$ has three classes of involutions, labelled \texttt{2A}, \texttt{2B} and \texttt{2C} in the Atlas \cite{Atlas}, and we find that each class representative has the following Jordan form on $V$:
\[
\texttt{2A}{:}\; (2^{22},1^{34}), \;\; \texttt{2B}{:}\; (2^{32},1^{14}), \;\; \texttt{2C}{:}\; (2^{38},1^{2}).
\]
In this way, we conclude that $T$ is $2$-elusive.
\end{proof}

For the remainder of this section, we may assume $H \in \mathcal{C}$. Here the possibilities for $H$ are listed in \cite[Table 9]{Craven} (for $\e=+$) and \cite[Table 10]{Craven} (for $\e=-$).

\begin{prop}\label{p:e6_2}
Suppose $T = E_6^{\e}(q)$, $|\O|$ is even and $H \in \mathcal{C}$. Set $e = (3,q-\e)$. Then $T$ is $2$-elusive unless one of the following holds:

\vspace{1mm}

\begin{itemize}\addtolength{\itemsep}{0.2\baselineskip}
\item[{\rm (i)}] $H_0 = {\rm L}_3^{\e}(q^3).3$, $({}^3D_4(q) \times (q^2+\e q+1)/e).3$ or $G_2(q)$.
\item[{\rm (ii)}] $H_0 = (q^2+\e q+1)^3/e.(3^{1+2}.{\rm SL}_2(3))$.
\item[{\rm (iii)}] $H_0 = {\rm PGL}_3^{\pm}(q).2$, where $p \geqs 5$ and $q \equiv \e \imod{4}$.
\item[{\rm (iv)}] $H_0 = 3^{3+3}{:}{\rm SL}_3(3)$, where $q=p \geqs 5$ and $q \equiv \e \imod{3}$.
\end{itemize}
\end{prop}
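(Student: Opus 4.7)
The plan is to work through the subgroups in $\mathcal{C}$, as enumerated in Tables 9 and 10 of \cite{Craven}, and dispose of them in the following order. First, the cases where $H$ is a parabolic, subfield, or twisted-version subgroup are immediate from Propositions \ref{p:parab}, \ref{p:subfield}, and \ref{p:twisted} respectively; in each of these $T$ is $2$-elusive (and none of them appear in (i)--(iv)). The remaining subgroups split as (a) $H = N_G(\bar{H}_\sigma)$ with $\bar{H}$ a positive-dimensional $\sigma$-stable subgroup of $\bar{G}$, plus (b) the exotic local subgroup from class (III) of Definition \ref{d:c}. For each, the strategy is the same: produce (or rule out) involutions in $H_0$ witnessing each of the $T$-classes listed in Tables \ref{table:evenpinvolutions} and \ref{table:oddpinvolutions}, identifying the $T$-class of a given involution $g$ by computing $\dim C_V(g)$ on $V = \mathcal{L}(\bar{G})$ (and, if needed when $p=2$, also on $V_{\mathrm{min}}$), following Section \ref{ss:invols}.

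Next I would handle the maximal rank subgroups $\bar{H}$ of $\bar{G}$ with $\bar{H}^\circ$ not a torus, as listed in \cite[Tables 5.1, 5.2]{LSS}. The generic candidates are $\bar{H}^\circ$ of type $A_1A_5$, $A_2^3$, $D_5T_1$, and $F_4$; the structures of $H_0$ are read off from \cite[Tables 9, 10]{Craven}. For each I would produce root-element representatives of the form $x_\alpha(\pm 1)$ or torus elements $h_\alpha(-1)$, combined with Weyl group elements $w_\alpha$ coming from the ${\rm Sym}_3$/graph factors, working with a $\sigma$-invariant conjugate corresponding to an element of $H^1(\sigma,\bar{H}/\bar{H}^\circ)$ via Lemma \ref{l:sigmaclasses} and Lemma \ref{l:tildeH0}. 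For example, $D_5T_1$-type involutions $h_{\alpha_1}(-1)h_{\alpha_6}(-1)$ live in the $A_2^3.{\rm Sym}_3$ and $D_5T_1$ subgroups, while $A_1A_5$-type involutions $h_{\alpha_2}(-1)$ live in the $A_1A_5$ and $F_4$ subgroups; and for $p=2$ the action of the graph factor on root elements provides $A_1^2$-type involutions in $A_2^3.{\rm Sym}_3$. In each of these cases I expect $T$ to be $2$-elusive. The subsystem $\bar{H}^\circ$ of type $A_2^3$ (inner form, giving ${}^3D_4$-type fixed points in the twisted class) produces the exception $H_0 = ({}^3D_4(q) \times (q^2+\e q+1)/e).3$: here the derived subgroup is essentially a triality-twisted $A_2^3$, so by Lemma \ref{l:3d4qclasses} and Lemma \ref{l:conj} the group $H_0$ has a single class of involutions of type $A_1A_5$ (both when $q$ is even and when $q$ is odd), giving the failure of $2$-elusivity. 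The exceptional $H_0 = {\rm L}_3^\e(q^3).3$, which arises from a different $\sigma$-twist of $A_2^3$, is handled the same way using Lemma \ref{l:conj}.

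For the remaining maximal rank case, where $\bar{H}^\circ = \bar{T}$ is a torus, I would use Lemma \ref{l:torus1}, Lemma \ref{l:torus2} and Theorem \ref{t:titsweyl}. For generic tori of type $(q-\e)^6$, $(q^2-1)\times(q-\e)^4$, etc., the Weyl group factor in the normalizer contains the long word $w_0$, which lifts via Theorem \ref{t:titsweyl} to an element meeting both conjugacy classes of involutions in $\bar{G}$ when $p$ is odd, so $T$ is $2$-elusive; for $p=2$, one checks by direct Jordan-form computations on $V_{\mathrm{min}}$ and $\mathcal{L}(\bar{G})$, aided by {\sc Magma} as in Section \ref{ss:comp}, that the three $p=2$ classes are all represented. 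The exception $H_0 = (q^2+\e q+1)^3/e.(3^{1+2}.{\rm SL}_2(3))$ corresponds to a torus whose normalizer quotient has odd order outside a small $2$-group, and Lemma \ref{l:conj} together with the centralizer orders in Tables \ref{table:evenpinvolutions}, \ref{table:oddpinvolutions} force every involution to lie in a single $T$-class.

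Finally, for the reductive non-maximal rank subgroups and the exotic local, I would reason case by case: $H_0 = G_2(q)$ has a unique class of involutions when $q$ is odd (Lemma \ref{l:b2g2classes}/Proposition \ref{l:1}) and, when $q$ is even, only contains long and short root involutions which both lie in the $T$-class $A_1$ via the irreducible embedding on $V_{\mathrm{min}} \downarrow G_2 = V_{G_2}(\varpi_1) \oplus 0$ (so we land in (i)); $H_0 = {\rm PGL}_3^{\pm}(q).2$ with $p \geqs 5$ admits only involutions whose centralizer order forces the $A_1A_5$ class, giving (iii) (using that the graph involution and inner involutions of ${\rm PGL}_3^\pm(q)$ have centralizer ${\rm PGL}_2(q)$ and $\operatorname{GL}_2^{\pm}(q)/{\rm scalars}$ respectively, both incompatible with $D_5T_1$); the subgroups $F_4(q)$ and $C_4(q)$ (for $q$ odd) contain both $A_1A_5$ and $D_5T_1$ representatives by inspection of their involution classes via Remark \ref{r:inv_class} and the embeddings into $\bar{G}$, so they are $2$-elusive; and for the exotic local $H_0 = 3^{3+3}{:}{\rm SL}_3(3)$ from \cite[Table 1]{CLSS}, Lemma \ref{l:conj} applied to the normal $3$-group together with the involution structure of ${\rm SL}_3(3)$ shows all involutions are $T$-conjugate, producing (iv). I expect the main technical obstacle to be the careful bookkeeping in the $A_2^3.{\rm Sym}_3$ case, namely producing the correct $\sigma$-twist $n\sigma$ (Lemma \ref{l:tildeH0}) which distinguishes the ${\rm L}_3^\e(q^3).3$, $({}^3D_4(q)\times(q^2+\e q+1)/e).3$ and $({\rm L}_3^\e(q)^3/e.3).{\rm Sym}_3$ types and then correctly identifying which involutions of the graph-factor $w \in N_{\bar{G}}(\bar{T})$ actually descend to $T$ rather than to $\bar{G}_\sigma \setminus T$.
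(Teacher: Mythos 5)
Your overall strategy mirrors the paper's (enumerate the subgroups from \cite[Tables 9, 10]{Craven}, pass to $\widetilde{H_0}$ via Lemmas \ref{l:sigmaclasses} and \ref{l:tildeH0}, exhibit explicit representatives, and use Lemma \ref{l:conj} to count involution classes), but two of your case analyses contain genuine errors. First, you attribute the exception $H_0 = ({}^3D_4(q) \times (q^2+\e q+1)/e).3$ to a triality twist of the $A_2^3$ subsystem. This is wrong: ${}^3D_4(q)$ arises from the maximal rank subgroup $\bar{H} = D_4T_2.{\rm Sym}_3$, with $D_4$ the subsystem on $\{\a_2,\a_4,\a_3,\a_5\}$, a subgroup that is missing from your list entirely; the triality twist of $A_2^3$ yields ${\rm L}_3^{\e}(q^3).3$, which is a different entry of part (i). Because you omit $D_4T_2.{\rm Sym}_3$, you also never verify $2$-elusivity for its untwisted form, of type ${\rm P\O}_{8}^{+}(q) \times ((q-\e)/d)^2$, which must be checked (and turns out to be $2$-elusive, requiring explicit Weyl-group representatives when $q$ is even). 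Moreover, your claim that the ${}^3D_4$ case has a single class of involutions ``both when $q$ is even and when $q$ is odd'' contradicts Lemma \ref{l:3d4qclasses}(ii): for $q$ even, ${}^3D_4(q)$ has two classes. The correct, and still sufficient, statement is that $H_0$ has $1+\delta_{2,p}$ classes while $T$ has $2+\delta_{2,p}$, so the count is deficient in either case.

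Second, your treatment of $H_0 = {\rm PGL}_3^{\pm}(q).2$ in part (iii) does not work. You propose to force the $A_1A_5$ class by centralizer orders, but $|{\rm PGL}_2(q)|$ and the order of ${\rm GL}_2^{\pm}(q)$ modulo scalars both divide $|C_T(x)|$ for $x$ of type $D_5T_1$ (whose centralizer contains a ${\rm Spin}_{10}^{\e}(q)$ factor), so divisibility yields no obstruction whatsoever. The argument that actually works is module-theoretic: for $p \geqs 5$ one has the completely reducible decomposition $\mathcal{L}(\bar{G}) \downarrow A_2 = \mathcal{L}(A_2) \oplus V_{A_2}(4\omega_1+\omega_2) \oplus V_{A_2}(\omega_1+4\omega_2)$; the inner involution $h_{\beta}(-1)$ has fixed space of dimension $4+17+17 = 38$, and an outer involution acts as a graph automorphism of $A_2$, fixing a $3$-dimensional subspace of $\mathcal{L}(A_2)$ and swapping the two $35$-dimensional summands, again giving fixed space of dimension $38$; hence every involution is of type $A_1A_5$. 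Some such computation is unavoidable here. Smaller issues: $F_4$ is not a maximal rank subgroup of $E_6$; your claimed $V_{\mathrm{min}} \downarrow G_2 = V_{G_2}(\varpi_1)\oplus 0$ is $8$-dimensional and cannot be correct (though the bare count of involution classes in $G_2(q)$ already settles that case); and you omit ${\rm L}_3^{\e}(q)\times G_2(q)$ from your list of reductive non-maximal-rank cases.
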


\begin{proof}
By Propositions \ref{p:parab}, \ref{p:subfield}, and \ref{p:twisted}, we know that $T$ is $2$-elusive if $H$ is a parabolic subgroup, or a subfield subgroup, or if $\e = +$ and $H_0 = {}^2E_6(q_0)$ with $q=q_0^2$. So to complete the proof we need to work through the remaining cases arising in \cite[Tables 9 and 10]{Craven}. Set $d = (2,q-1)$.

\vs

\noindent \emph{Case 1. $H$ is a maximal rank subgroup.}

\vs

First assume $H = N_G(\bar{H}_{\s})$, where $\bar{H}$ is a maximal rank subgroup. By \cite[Tables 5.1, 5.2]{LSS}, the possibilities for $\bar{H}$ are as follows:
\begin{equation}\label{e:e6list1}
A_1A_5, \; D_5T_1, \; A_2^3.{\rm Sym}_3, \; D_4T_2.{\rm Sym}_3, \; T_6.W,
\end{equation}
where in the latter case, $T_6$ is a maximal torus of $\bar{G}$ and $W = {\rm PGSp}_4(3)$ is the Weyl group. As in the proof of Proposition \ref{p:f4_2}, we are free to assume that $H = N_G(\left( \bar{H}^g \right)_{\s})$, where $\bar{H}$ is the normalizer of a standard subsystem subgroup 
\[
\bar{H}^\circ = \langle \bar{T}, U_{\alpha} \,:\, \alpha \in \Phi' \rangle,
\]
and $\bar{H}^g$ is a $\s$-invariant conjugate of $\bar{H}$. Here $\bar{T}$ is a maximal torus of $\bar{G}$, as in the setup of Section \ref{ss:setup}, and $\Phi'$ is a $\s$-invariant root subsystem of $\Phi$. As in the proof of Proposition \ref{p:f4_2}, we use \cite[Tables 5.1, 5.2]{LSS} to read off the precise structure of $H$ (also see \cite[Tables 9 and 10]{Craven}) and we refer the reader to 
\cite[p.300]{LSS} for further details. We now consider each possibility in \eqref{e:e6list1}.

\vs

\noindent \emph{Case 1.1. $\bar{H}^{\circ}$ is not a maximal torus of $\bar{G}$.}

\vs

Suppose $\bar{H} = A_1A_5$, in which case $H_0 = d.({\rm L}_2(q) \times {\rm L}_6^{\e}(q)).de$. For $q$ even, we deduce that $T$ is $2$-elusive by inspecting \cite[Section 4.8]{Law09}. And for $q$ odd, we first note that $H_0$ is the centralizer in $T$ of an involution of type $A_1A_5$ and by inspecting \cite[Table 13]{BT} we see that $H_0$ also contains involutions of type $D_5T_1$. Therefore, $T$ is $2$-elusive for all $q$.

In the case $\bar{H} = D_5T_1$, the subsystem $\Phi'$ has base $\{-\alpha_0, \alpha_2, \alpha_4, \alpha_3, \alpha_5\}$, and we have $H_0 = \bar{H}_{\s} \cap T$. If $q$ is odd, then $H_0$ is the centralizer in $T$ of an involution of type $D_5T_1$, and by inspecting \cite[Table 13]{BT} we deduce that $T$ is $2$-elusive. Now assume $q$ is even, so $H_0 = \O_{10}^{\e}(q) \times (q-\e)/e$, and $H_0$ contains the involutions 
\[
x_{\alpha_2}(1), \; x_{\alpha_3}(1) x_{\alpha_5}(1),\; x_{\alpha_2}(1) x_{\alpha_3}(1) x_{\alpha_5}(1).
\]
Using {\sc Magma}, as described in Section \ref{ss:comp}, we can calculate the action of these involutions on $V = V_{{\rm min}}$ and we deduce that the dimensions of the respective fixed point spaces are $21$, $17$ and $15$. By inspecting Table \ref{table:evenpinvolutions}, we conclude that these involutions are contained in the $T$-classes labelled $A_1$, $A_1^2$ and $A_1^3$, respectively, and thus $T$ is $2$-elusive. (It is also easy to see that these involutions are conjugate by a Weyl group element to the representatives listed in Table \ref{table:evenpinvolutions}.)

Next suppose $\bar{H} = A_2^3.{\rm Sym}_3$, so $\Phi'$ has base $\{\alpha_1, \alpha_3\} \cup \{\alpha_5,\alpha_6\} \cup \{-\alpha_0,\alpha_2\}$, and $H$ is of type $A_2^{\e}(q)^3$, $A_2(q^2)A_2^{-\e}(q)$ or $A_2^{\e}(q^3)$. In the latter case, we note that $H_0 = {\rm L}_3^{\e}(q^3).3$ has a unique class of involutions and thus $T$ is not $2$-elusive. We claim that $T$ is $2$-elusive in the other two cases.

Suppose $q$ is even and let $t$ be an involution in a group of type $A_2$. Then both $H_0 = A_2^{\e}(q)^3$ and $H_0 = A_2(q^2)A_2^{-\e}(q)$ contain conjugates of $(1,1,t)$, $(t,t,1)$, and $(t,t,t)$ from $\bar{H}^\circ = A_2^3$. And by inspecting \cite[Section 4.9]{Law09}, we deduce that these elements represent the three classes of involutions in $\bar{G}$, whence $T$ is $2$-elusive. 

Now assume $q$ is odd. From \cite[Table 13]{BT} we note that $H_0$ contains involutions of type $D_5T_1$. And as above, $H_0$ contains a conjugate of $(1,1,t) \in \bar{H}^{\circ}$, where $t \in A_2$ is an involution. A simple group of type $A_2$ has a unique conjugacy class of involutions, which for the factor corresponding to the simple roots $\{-\alpha_0,\alpha_2\}$ has $h_{\alpha_2}(-1)$ as a representative. This is precisely the representative of the class $A_1A_5$ presented in Table \ref{table:oddpinvolutions}, so we conclude that $T$ is $2$-elusive.

Now assume $\bar{H} = D_4T_2.{\rm Sym}_3$, in which case $\Phi'$ has base $\{\alpha_2, \alpha_4, \alpha_3, \alpha_5\}$ and $\bar{H}^\circ$ is a Levi factor. If $H$ is of type ${}^3D_4(q) \times (q^2+\e q+1)$ then $H_0$ has $1+\delta_{2,p}$ classes of involutions, so $T$ is not $2$-elusive. For the remainder, we may assume $H_0 = J.{\rm Sym}_3$ with 
\[
J = d^2.({\rm P\O}_{8}^{+}(q) \times ((q-\e)/d)^2/e).d^2.
\]
Let $w \in N_{\bar{H}}( \bar{T} )$ be an element that corresponds to the longest element of the Weyl group of $E_6$, explicitly we choose $w = w_{\alpha_{36}} w_{\alpha_4} w_{\alpha_{15}} w_{\alpha_{23}}$, where $\alpha_i$ is the $i$-th root respect to the specific ordering of the roots of $\bar{G}$ used by {\sc Magma} (see Remark \ref{r:ordering}).

Now under the bijection of Lemma \ref{l:sigmaclasses}, the $\s$-invariant conjugate $\bar{H}^g$ corresponds to the image of $n \in \bar{H}$ in $H^1(\s, \bar{H}/\bar{H}^\circ)$, where $n = 1$ if $\e = +$ and $n = w$ if $\e = -$. Then in view of Lemma \ref{l:tildeH0}, it suffices to consider involutions in $\widetilde{H_0} = N_{\bar{G}}(\bar{H}_{n \s}) \cap \left(\bar{G}_{n\s}\right)'$. Moreover, every involution in $\bar{H}_{n\s}$ is contained in $\left(\bar{G}_{n\s}\right)'$, so we only need to find involutions in $\bar{H}_{n\s}$.

If $q$ is odd, then we observe that the torus $\bar{T}_{n\s} < \bar{H}_{n\s}$ contains the class representatives in Table \ref{table:oddpinvolutions} and thus $T$ is $2$-elusive. Now assume $q$ is even and note that we may assume $w$ is contained in the subgroup $W_0 = \langle w_{\alpha} \, :\,  \alpha \in \Phi \rangle$. We have $W_0 \cong W$ (see Theorem \ref{t:titsweyl}), so the elements $w_{\alpha_2}$, $w_{\alpha_3} w_{\alpha_5}$ and $w_{\alpha_1} w_{\alpha_2} w_{\alpha_6}$ are fixed by $w$ and $\s$ since the corresponding elements in $W$ are centralized by the longest element. Using {\sc Magma} to compute the respective Jordan normal forms on $\mathcal{L}(\bar{G})$ (see Section \ref{ss:comp}) and inspecting Table \ref{table:evenpinvolutions}, we deduce that $w_{\alpha_2}$, $w_{\alpha_3} w_{\alpha_5}$ and $w_{\alpha_1} w_{\alpha_2} w_{\alpha_6}$ are involutions of type $A_1$, $A_1^2$ and $A_1^3$, respectively. Since each of these involutions is contained in $\bar{H}_{n\s}$, we conclude that $T$ is also $2$-elusive when $q$ is even.

\vs

\noindent \emph{Case 1.2. $\bar{H}^{\circ}$ is a maximal torus of $\bar{G}$.}

\vs

To complete the analysis of maximal rank subgroups, we may assume $\bar{H} = T_6.W$ is the normalizer of a maximal torus. As noted in \cite[Table 5.2]{LSS} (also see \cite[Tables 9 and 10]{Craven}), there are two separate cases to consider. First assume $H_0 = ((q-\e)^6/e).W$. We claim that $T$ is $2$-elusive. As in the previous paragraph, to justify the claim it suffices to show that $\bar{H}_{n\s}$ meets every $\bar{G}$-class of involutions, where $n = 1$ if $\varepsilon = +$, and $n$ corresponds to the longest element of $W$ if $\varepsilon = -$. As before, if $q$ is odd, then $\bar{T}_{n\s} < \bar{H}_{n\s}$ contains the class representatives listed in Table \ref{table:oddpinvolutions} and the claim follows. Similarly, if $q$ is even then $\bar{H}_{n\s}$ contains the involutions $w_{\alpha_2}$, $w_{\alpha_3} w_{\alpha_5}$ and $w_{\alpha_1} w_{\alpha_2} w_{\alpha_6}$. As noted in the previous paragraph, these elements represent the three classes of involutions in $\bar{G}$.

Finally, suppose
\[
H_0 = (q^2+\e q+1)^3/e.(3^{1+2}.{\rm SL}_2(3)).
\]
Since $\operatorname{SL}_2(3)$ has a unique conjugacy class of involutions, Lemma \ref{l:conj} implies that $H_0$ also has a unique class of involutions and thus $T$ is not $2$-elusive. 

\vs

\noindent \emph{Case 2. The remaining cases.}

\vs

By inspecting \cite[Tables 9 and 10]{Craven}, in order to complete the proof we may assume $H_0$ is one of the following:

\vspace{1mm}

\begin{itemize}\addtolength{\itemsep}{0.2\baselineskip}
\item[(a)] $H_0 = F_4(q)$;
\item[(b)] $H_0 = {\rm PGSp}_8(q)$, $p \ne 2$; 
\item[(c)] $H_0 = {\rm L}_3^{\e}(q) \times G_2(q)$;
\item[(d)] $H_0 = G_2(q)$;
\item[(e)] $H_0 = {\rm PGL}_3^{\delta}(q).2$, $p \geqs 5$;
\item[(f)] $H_0 = 3^{3+3}{:}{\rm SL}_3(3)$, $p \geqs 5$.
\end{itemize}

\vspace{1mm}

In case (f), Lemma \ref{l:conj} implies that $H_0$ has a unique class of involutions and thus $T$ is not $2$-elusive. The same conclusion holds in (d) since $H_0 = G_2(q)$  has $1+\delta_{2,p}$ classes of involutions. In each of the remaining cases we have $H_0 = N_T(\bar{H}_{\s})$, where $\bar{H}$ is a $\s$-stable subgroup of $\bar{G}$ of rank at most $4$. 

First consider case (a). If $q$ is even then the information in \cite[Table A]{Law} indicates that $T$ is $2$-elusive, so let us assume $q$ is odd and let $V$ be the adjoint module for $\bar{G}$. Then $V \downarrow F_4 = \mathcal{L}(F_4)/V_{\operatorname{min}}(F_4)$ (see \cite[Table 12.3]{Thomas}), so from Table \ref{table:oddpinvolutions} we deduce that $\dim C_V(t) = 38$ for the involutions $t \in H_0$ of type $A_1C_3$, whereas $\dim C_V(t) = 46$ for those of type $B_4$. Therefore, $T$ is $2$-elusive.

Next assume (b) holds, so $\bar{H}$ is of type $C_4$ and $q$ is odd. Let $\{\omega_1, \ldots, \omega_4\}$ be a set of fundamental dominant weights for $\bar{H}$. By lifting $\bar{H}$ to the simply connected cover of $\bar{G}$ and inspecting \cite[Table 12.3]{Thomas}, we note that $V_{\operatorname{min}} \downarrow C_4 = V_{C_4}(\omega_2)$. Then by a {\sc Magma} calculation over $\Q$ (see Lemma \ref{l:rationalcompute}), we see that $\bar{H}$ contains involutions with fixed point spaces of dimensions $11$ and $15$ on $V_{{\rm min}}$. For example, we can take the involutions $h_{\alpha}'(-1)$ and $h_{\beta}'(-1)$, where $\alpha$ and $\beta$ are short and long roots, respectively, in the root system of $C_4$. It follows that $T$ is $2$-elusive.

Now assume we are in case (c), so $\bar{H} = A_2G_2$ and we write $\{\omega_1,\omega_2\}$ and $\{\delta_1,\delta_2\}$ for the fundamental dominant weights of the two simple factors. By appealing to \cite[Section 5.12]{Law09}, we deduce that $T$ is $2$-elusive when $q$ is even. Now assume $q$ is odd. Here \cite[Table 18]{BT} indicates that $H_0$ contains involutions of type $T_1D_5$. If $V = \mathcal{L}(\bar{G})$ is the adjoint module, then \cite[Table 12.3]{Thomas} gives
\[
V \downarrow A_2G_2 = \mathcal{L}(A_2G_2)/(V_{A_2}(\omega_1+\omega_2) \otimes V_{G_2}(\delta_1)).
\]
And with the aid of {\sc Magma} (see Section \ref{ss:comp}) we calculate that an involution $x \in H_0$ of the form $(1,s)$ has Jordan form $(-I_{40},I_{38})$ on $V$, which places $x$ in the class $A_1A_5$. In particular, $T$ is $2$-elusive.

Finally, let us turn to case (e). Here $\bar{H} = A_2.2$ with $p \geqs 5$ and we will show that $T$ is not $2$-elusive by proving that every involution in $\bar{H}$ is of type $A_1A_5$. 

To do this, let $V = \mathcal{L}(\bar{G})$ be the adjoint module and observe that
\[
V \downarrow A_2 = \mathcal{L}(A_2)/V_{A_2}(4\omega_1+\omega_2)/V_{A_2}(\omega_1+4\omega_2)
\]
as in \cite[Table 12.3]{Thomas}, where $\{\omega_1,\omega_2\}$ are fundamental dominant weights for $\bar{H}^{\circ} = A_2$. Now the composition factors of $V \downarrow A_2$ are irreducible Weyl modules by \cite[6.6]{Lubeck}, so it follows from \cite[Lemma II.2.14]{Jantzen} that $V \downarrow A_2$ is completely reducible. Therefore
\[
V \downarrow A_2 = \mathcal{L}(A_2) \oplus V_{A_2}(4\omega_1+\omega_2) \oplus V_{A_2}(\omega_1+4\omega_2).
\]
If $x \in \bar{H} \setminus \bar{H}^\circ$ is an involution, then $x$ acts on $\bar{H}^\circ = A_2$ as a graph automorphism (see \cite[Claim, p.314]{Testerman1989}). Thus $x$ acts as $(-I_5,I_3)$ on $\mathcal{L}(A_2)$, and it interchanges the two $35$-dimensional summands in the above decomposition, which means that $x$ acts as $(-I_{40},I_{38})$ on $V$.

The connected component $\bar{H}^\circ$ has a unique conjugacy class of involutions, represented by $x = h_{\beta}'(-1)$, where $\beta$ is a root of $A_2$. Using {\sc Magma}, we calculate that $x$ acts as $(-I_4,I_4)$ on $\mathcal{L}(A_2)$ and as $(-I_{18},I_{17})$ on the other two summands. It follows that $\dim C_V(t) = 38$ for every involution $t \in \bar{H}$, whence every involution in $\bar{H}$ is of type $A_1A_5$ and we conclude that $T$ is not $2$-elusive in case (e). This completes the proof of the proposition.
\end{proof}

\section{The groups with socle $E_7(q)$}\label{s:e7}

The main goal of this section is to prove Theorem \ref{t:main} for the groups with socle $T = E_7(q)$. As before, we will divide our analysis according to whether or not $H$ is in $\mathcal{C}$ or $\mathcal{S}$. Recall that $T$ has $3+2\delta_{2,p}$ classes of involutions (see Tables \ref{table:evenpinvolutions} and \ref{table:oddpinvolutions}).

We begin by considering the groups where $H$ is an almost simple subgroup contained in the collection $\mathcal{S}$. At the time of writing, the subgroups in $\mathcal{S}$ have not been fully determined, but Craven's recent work in \cite{Craven2} severely restricts the possibilities. More precisely, he proves that $H$ is either one of the almost simple subgroups recorded in \cite[Table 1.1]{Craven2}, all of which are known to be maximal, or $H$ is a putative maximal subgroup with socle ${\rm L}_2(r)$ for $r \in \{7,8,9,13\}$ arising in part (iii) of \cite[Theorem 1.1]{Craven2} (with the restrictions on $q$ listed in \cite[Table 1.2]{Craven2}). By considering each  possibility in turn, we will show that $T$ always contains a derangement of order $2$ (see Proposition \ref{p:e7_as}).

\begin{rem}
Typically, we find that $T$ is $2$-elusive if $H$ is of the form $N_{G}(\bar{H}_{\s})$ and $\bar{H}$ is a maximal rank subgroup. And as in previous cases, we will often verify this by identifying explicit representatives in $H_0$ for each $T$-class of involutions. Now if $p = 2$, then $T = (\bar{G}_{\s})' = \bar{G}_{\s}$ and so it suffices to find suitable representatives in $\bar{H}_{\s} \leqs H_0$. However, if $p$ is odd then $[\bar{G}_{\s} : T] = 2$ and so it is not sufficient to find representatives in $\bar{H}_{\s}$. In the latter case, in order to verify that a given element $x \in \bar{H}$ is contained in $H_0$, we need to check that $x$ lifts to an element in the simply connected cover $\bar{G}_{\rm sc}$ that is fixed by $\s$. We refer the reader to Remark \ref{r:e7_inv} for further details.
\end{rem}

\begin{prop}\label{p:e7_as}
Suppose $T = E_7(q)$, $|\O|$ is even and $H \in \mathcal{S}$. Then $T$ is not $2$-elusive.
\end{prop}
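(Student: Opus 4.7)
The plan is to work through the possible subgroups $H \in \mathcal{S}$ one at a time, using the classification in \cite[Theorem 1.1]{Craven2}: the candidates are the known almost simple subgroups listed in \cite[Table 1.1]{Craven2}, together with the putative maximal subgroups with socle $\mathrm{L}_2(q')$ for $q' \in \{7,8,9,13\}$ arising in part (iii) of that theorem (subject to the conditions on $q$ in \cite[Table 1.2]{Craven2}). Recall from Section \ref{ss:invols} that $T$ has $3$ classes of involutions when $p$ is odd (labelled $A_1D_6, E_6T_1, A_7$ in Table \ref{table:oddpinvolutions}) and $5$ classes when $p = 2$ (Table \ref{table:evenpinvolutions}). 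For each candidate $H_0 = H \cap T$, the goal is to produce a $T$-class of involutions with no representative in $H_0$.

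My first reduction would be to count conjugacy classes of involutions in $H_0$. In every putative $\mathrm{L}_2(q')$ case the simple socle has a unique class of involutions, and after adjoining any compatible outer involutions the full group $H_0$ carries at most two involution classes, so already in odd characteristic $H_0$ cannot cover all three $T$-classes; the case $p = 2$ is even more restrictive. For many of the entries in \cite[Table 1.1]{Craven2} (for example those whose socle has only one or two involution classes) the same elementary counting argument closes the case immediately.

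For the remaining candidates from \cite[Table 1.1]{Craven2}, which correspond to genuine embeddings $H_0 < T$, I would apply the fixed-point-space criterion from Section \ref{ss:invols}: the $T$-class of an involution $x$ is determined by $\dim C_V(x)$ on the $133$-dimensional adjoint module $V = \mathcal{L}(\bar{G})$, with distinguishing values $69,79,63$ in odd characteristic and the Jordan-form data of Table \ref{table:evenpinvolutions} in characteristic $2$. For each $H_0$ the composition factors of $V \downarrow H_0$ are either tabulated by Litterick \cite{Litt} or directly computable in {\sc Magma}; from these one reads off the trace (equivalently, the fixed-point dimension) of each involution class of $H_0$, and verifies that at least one of the target dimensions is never attained. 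Some care is required in odd characteristic to check that chosen class representatives genuinely lie in $T$ rather than in $\bar{G}_\sigma \setminus T$, which can be arranged via the lift to $\bar{G}_{\mathrm{sc}}$ described in Remark \ref{r:e7_inv}.

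For the putative $\mathrm{L}_2(q')$ subgroups, since no explicit embedding is available, the natural tool is the feasible-character method of Section \ref{ss:feasible}. Concretely, I would use Litterick's {\sc Magma} code \cite{LittGithub} to enumerate all feasible characters of $H_0$ on $V$, then discard those that fail property $(\mathbf{P})$: by Proposition \ref{p:condP} the corresponding subgroup would be strongly imprimitive and hence not in $\mathcal{S}$. For each surviving feasible character I would read off $\dim C_V(x)$ on each $H_0$-class of involutions and check that the set of values does not contain all three target dimensions $\{69,79,63\}$ (respectively the full Jordan-form data in characteristic $2$). The expected outcome, as in the analogous tables of Example \ref{ex:e8cfc}, is that no surviving feasible character is compatible with $2$-elusivity. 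The main obstacle is precisely this last step: while the feasible-character computation is conceptually routine, one must carry it out carefully for each $q'$ and each relevant outer extension, and verify that property $(\mathbf{P})$ successfully eliminates every feasible character that would otherwise meet all three targets.
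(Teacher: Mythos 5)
Your overall strategy is the same as the paper's: reduce by counting involution classes, then pin down the $T$-class of each surviving involution via traces/fixed-point dimensions on $V = \mathcal{L}(\bar{G})$, using Litterick's composition-factor data for the known subgroups and feasible characters for the putative ones. For the entries of \cite[Table 1.1]{Craven2} this works exactly as you describe (the only survivor of the counting step is ${\rm M}_{12}.2$ with $q=p=5$, which is then eliminated by a trace computation).

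However, there is a concrete error in your second paragraph. You assert that for every putative socle ${\rm L}_2(q')$ with $q' \in \{7,8,9,13\}$, the group $H_0$ "carries at most two involution classes", so that counting already closes these cases in odd characteristic. This is false for $q' = 9$: since ${\rm L}_2(9) \cong {\rm Alt}_6$, the extensions $H_0 = {\rm L}_2(9).2 \cong {\rm Sym}_6$ and $H_0 = {\rm L}_2(9).2^2$ each have \emph{three} conjugacy classes of involutions (e.g.\ in ${\rm Sym}_6$: the $(2,2)$-cycles inside ${\rm Alt}_6$, plus transpositions and triple transpositions outside). These are precisely the cases that require real work, and your counting reduction does not dispose of them. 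Your later paragraph on feasible characters would rescue the argument if you actually ran it for $H_0 = {\rm Sym}_6$ and ${\rm L}_2(9).2^2$ at each admissible odd $p$ (for $p=5$ one finds that every involution has trace $-8$ on $V$, hence is of type $A_1D_6$), but as written your proposal treats the ${\rm L}_2(q')$ cases as already settled before that step, so the ${\rm L}_2(9)$ extensions fall through the cracks. For the record, the paper handles $p \geqs 7$ by a cleaner observation that avoids feasible characters entirely: the two $8$-dimensional irreducible factors of $V \downarrow {\rm Alt}_6$ are interchanged by an outer involution of ${\rm Sym}_6$ but occur in $V \downarrow {\rm Alt}_6$ with unequal multiplicities ($4$ and $3$), so ${\rm Sym}_6$ cannot act on $V$ at all and no maximal subgroup with three involution classes exists in that characteristic.
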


\begin{proof}
Let $S$ denote the socle of $H$ and assume for now that $H$ is one of the  subgroups recorded in \cite[Table 1.1]{Craven2}. Just by considering the number of classes of involutions in $H_0$, we see that $T$ is $2$-elusive only if $H_0 = {\rm M}_{12}.2$ and $q=p=5$. Here both $T$ and $H_0$ have three classes of involutions and by considering the composition factors of the adjoint module $V = \mathcal{L}(\bar{G})$ restricted to $S$ (see \cite[Table 6.147]{Litt}) we deduce that every involution in $S$ has trace $5$ on $V$. This implies that every involution in $S$ is of type $A_1D_6$ (see Table \ref{table:oddpinvolutions}). And since there is a unique class of involutions in $H_0 \setminus S$, we conclude that $T$ is not $2$-elusive.

To complete the proof, we may assume $S = {\rm L}_2(r)$ with $r \in \{7,8,9,13\}$, as in part (iii) of \cite[Theorem 1.1]{Craven2}. Recall that it remains an open problem to determine whether or not $G$ has a maximal subgroup of this form. In any case, we will prove that if such a subgroup $H$ does exist, then the corresponding action of $T$ on $G/H$ is not $2$-elusive. As before, by considering the number of involution classes in $H_0$, we deduce that $T$ is $2$-elusive only if $H_0 = {\rm L}_2(9).2 \cong {\rm Sym}_6$ or ${\rm L}_2(9).2^2$. Here $p \geqs 5$ and $H_0$ has three classes of involutions. As before, let $V = \mathcal{L}(\bar{G})$ be the adjoint module.

Suppose $p \geqs 7$. Then \cite[Section 6.3]{Craven2} gives
\[
V \downarrow S = 10^2 \oplus 9^3 \oplus 8_1^4 \oplus 8_2^3 \oplus 5_1^3 \oplus 5_2^3,
\]
where the irreducible $8$-dimensional modules $8_1$ and $8_2$ are interchanged by an involution in $H_0 \setminus S$ (corresponding to a transposition in ${\rm Sym}_6$). But the multiplicities of $8_1$ and $8_2$ as composition factors of $V \downarrow S$ are unequal, so $H_0$ does not act on $V$ and we have a contradiction. We conclude that if $p \geqs 7$ and $H$ is a maximal subgroup with socle ${\rm L}_2(9)$, then $H_0$ has at most two conjugacy classes of involutions and so $T$ is not $2$-elusive.

Finally, let us assume $p=5$. First suppose $H_0 \cong {\rm Sym}_6$. By using {\sc Magma} to compute the feasible characters of $H_0$ on $V$ (see Section \ref{ss:feasible}), we deduce that every involution in $H_0$ has trace $-8$ on $V$. This is in fact true for all feasible characters of $H_0$, not just the ones with property (\textbf{P}). In any case, every involution in $H_0$ is of type $A_1D_6$ and so $T$ is not $2$-elusive. This also proves that $T$ is not $2$-elusive in the case $H_0 = {\rm L}_2(9).2^2$, since $H_0$ has three conjugacy classes of involutions, and two of them are contained in ${\rm L}_2(9).2 \cong {\rm Sym}_6$.
\end{proof}

For the remainder of this section we may assume $H \in \mathcal{C}$ is one of the maximal subgroups recorded in \cite[Table 4.1]{Craven2}. Our main result is the following.

\begin{prop}\label{p:e7_ns}
Suppose $T = E_7(q)$, $|\O|$ is even and $H \in \mathcal{C}$. Then $T$ is $2$-elusive unless $H_0$ is one of the following:
\[
P_m \, (\mbox{$m \in \{2,5,7\}$ {\rm and} $q \equiv 3 \imod{4}$}),
\]
\[
({\rm L}_2(q^3) \times {}^3D_4(q)).3, \, {\rm L}_2(q^7).7, \, {\rm L}_2(q) \times {\rm PGL}_2(q) \, (p \geqs 5), 
\]
\[
{\rm PGL}_3^{\pm}(q).2 \, (p \geqs 5), \, {}^3D_4(q).3 \, (p \geqs 3), \, {\rm L}_2(q) \, (\mbox{{\rm two classes;} $p \geqs 17,19$})
\]
\end{prop}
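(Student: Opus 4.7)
The strategy is to handle each type of subgroup in $\mathcal{C}$ in turn. Parabolic subgroups are settled by Proposition \ref{p:parab}, which yields exactly the three families $H_0 = P_m$ with $m \in \{2,5,7\}$ and $q \equiv 3 \imod{4}$ appearing in the exception list, and shows that $H_0$ contains only involutions of type $A_1D_6$ in these cases. Subfield subgroups contribute no exceptions by Proposition \ref{p:subfield}. The normalizers of exotic local subgroups, classified in \cite{CLSS}, can be handled directly: Lemma \ref{l:conj} together with the odd-order $r$-structure collapses the count of $H_0$-classes of involutions, and a trace check on $V = \mathcal{L}(\bar{G})$ places the unique class in $A_1D_6$. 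It then remains to address the subgroups $H = N_G(\bar{H}_\s)$ with $\bar{H}$ a positive-dimensional closed subgroup of $\bar{G}$ of semisimple rank at least one, split into the maximal rank cases from \cite[Tables 5.1, 5.2]{LSS} and the non-maximal rank reductive cases from \cite[Table 3]{LS03}.

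For the maximal rank subgroups, the approach exactly parallels Case 1 of the proofs of Propositions \ref{p:f4_2} and \ref{p:e6_2}. For each $\s$-invariant conjugate $\bar{H}^g$ parametrised by $H^1(\s, \bar{H}/\bar{H}^\circ)$ via Lemma \ref{l:sigmaclasses}, we pass to $\widetilde{H_0} = N_{\bar{G}}(\bar{H}_{n\s}) \cap (\bar{G}_{n\s})'$ by Lemma \ref{l:tildeH0} and exhibit involutions in $\widetilde{H_0}$ written as products of the generators $h_\a(-1)$, $w_\a$ and $x_\a(\pm 1)$. Each such involution is assigned to a $\bar{G}$-class by computing $\dim C_V(g)$ on $V = \mathcal{L}(\bar{G})$ using the {\sc Magma} recipe of Section \ref{ss:comp}, with Lemma \ref{l:rationalcompute} allowing the $p > 2$ case to be settled over $\mathbb{Q}$, and matched against Tables \ref{table:evenpinvolutions} and \ref{table:oddpinvolutions}. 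For $q$ odd we must additionally verify, as in Example \ref{ex:e6t1}, that each candidate lifts to a $\s$-invariant element of the simply connected cover $\bar{G}_{\mathrm{sc}}$ in order to guarantee membership in $T$ rather than merely in $\bar{G}_\s$; see Remark \ref{r:e7_inv}. The two families $({\rm L}_2(q^3) \times {}^3D_4(q)).3$, arising from the $A_1^3 D_4$ system with triality, and ${\rm L}_2(q^7).7$, from the $A_1^7$ subsystem twisted by a $7$-cycle of the Weyl group, are the only maximal rank exceptions: in both cases the outer cyclic factor has odd order so that Lemma \ref{l:conj} collapses the number of $H_0$-classes and trace computations place them all inside a single $T$-class. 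For all remaining maximal rank $\bar{H}$, including $A_1 D_6$, $A_7$, $E_6 T_1.2$, $A_5 A_2$, $A_1^7.W$ (other twists), and the various torus normalizers, a short list of explicit root-theoretic involutions meets every $T$-class.

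For the non-maximal rank reductive subgroups $\bar{H}$, we instead read off the restriction $V \downarrow \bar{H}$ from \cite[Chapter 12]{Thomas}, so that for any involution $x \in \bar{H}$ its trace on $V$ can be computed composition factor by composition factor. The cases $\bar{H} = A_1 F_4$, $G_2 C_3$, and $A_1 G_2$ split into products of factor-wise involutions and diagonal involutions, and all three $T$-classes are met. The exceptional families arise exactly when every involution in $\bar{H}$ acts with the same trace on $V$: for $\bar{H} = A_2.2$ with $p \geqs 5$ both the inner involution $h_\beta(-1)$ and the graph automorphism act with the trace corresponding to class $A_1D_6$, by the same argument used in Case 2(e) of the proof of Proposition \ref{p:e6_2}, giving the exception ${\rm PGL}_3^{\pm}(q).2$; for $\bar{H} = A_1 A_1$ with $p \geqs 5$ a factor-wise involution analysis gives the exception ${\rm L}_2(q) \times {\rm PGL}_2(q)$; for the embedding $\bar{H} = D_4 \cdot S_3$ giving rise to ${}^3D_4(q).3$ with $p \geqs 3$, Lemma \ref{l:3d4qclasses} shows $H_0$ has at most $1+\delta_{2,p}$ classes of involutions and a trace check on $V$ confirms that they all lie in $A_1D_6$ for $p$ odd; and the two classes of irreducible $\bar{H} = A_1$ subgroups with $p \geqs 17,19$ have a unique class of involutions each.

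The main obstacle will be the careful bookkeeping in the $q$ odd maximal rank cases, where the index $[\bar{G}_\s : T] = 2$ forces us to track the $\widetilde{G}$-splitting of $\bar{G}$-classes described in Remark \ref{r:TvsInnT}(c). Concretely, for each candidate involution in $\widetilde{H_0}$ that we wish to place in the $E_6T_1$ or $A_7$ class, we must produce an explicit lift to $(\bar{G}_{\mathrm{sc}})_{n\s}$, and this is exactly the computation performed in Example \ref{ex:e6t1} that will have to be replayed, with the appropriate twist $w$, for every relevant maximal rank $\bar{H}$. Once the lifting is controlled, the remaining fusion checks reduce to routine {\sc Magma} computations on the generators of $N_{\bar{G}}(\bar{H})$.
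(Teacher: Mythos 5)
Your overall decomposition (parabolics, subfield subgroups, maximal rank subgroups via $H^1(\s,\bar H/\bar H^\circ)$ and explicit root-theoretic involutions, then the non--maximal-rank reductive cases via $V\downarrow\bar H$ from \cite[Chapter 12]{Thomas}) is exactly the route the paper takes, and you identify the correct exception list. However, there is one genuine gap: your treatment of the exotic local subgroup is wrong. For $T=E_7(q)$ the only exotic $r$-local from \cite{CLSS} has $r=2$ (and $q$ odd), namely $H=N_G(E)$ with $E=2^2$ and $N_{\bar G_\s}(E)=(E\times{\rm Inndiag}({\rm P\O}_8^{+}(q))).{\rm Sym}_3$. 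Your claim that ``the odd-order $r$-structure collapses the count of $H_0$-classes of involutions'' to a unique class lying in $A_1D_6$ is inapplicable here: $r=2$ is not odd, $H_0$ contains ${\rm P\O}_8^{+}(q)$ and so has many involution classes, and the correct conclusion is the opposite of what you assert --- $T$ \emph{is} $2$-elusive for this subgroup, which is why it does not appear in the exception list. Establishing this requires a nontrivial argument: one must show the three involutions of $E$ are of type $A_7$, identify $C_{\bar G}(e)^{\circ}\cong{\rm SL}_8(K)/\langle\zeta^2I_8\rangle$ for $e\in E$, and use $\mathcal{L}(\bar G)\downarrow A_7=\mathcal{L}(A_7)/\Lambda^4(W)$ to locate involutions of type $A_1D_6$ and $E_6T_1$ inside the ${\rm P\O}_8^{+}(q)$ factor. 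As written, your argument would either wrongly add $(2^2\times{\rm P\O}_8^{+}(q).2^2).{\rm Sym}_3$ to the exception list or simply leaves this case unproved; note also that within the family $\bar H=(2^2\times D_4).{\rm Sym}_3$ you only treat the ${}^3D_4(q).3$ form and never return to the other form of $H_0$.

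Two smaller points. First, Lemma \ref{l:conj} concerns a normal subgroup of \emph{odd order}, so it does not literally apply to extensions such as $J.3$ or ${\rm L}_2(q^7).7$ where the odd-order piece sits on top; for ${\rm L}_2(q^7).7$ the relevant (easy) fact is just that all involutions lie in the simple socle, which has a unique class. Second, for $({\rm L}_2(q^3)\times{}^3D_4(q)).3$ with $q$ even it is not true that all involutions fall into a single $T$-class: the five candidate products land in the classes $(A_1^3)^{(1)}$, $(A_1^3)^{(2)}$, $A_1$ and $A_1^4$, and the failure of $2$-elusivity comes from missing $A_1^2$. Your trace-computation framework would detect this, but the statement as you phrased it is false in even characteristic.
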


We divide the proof of Proposition \ref{p:e7_ns} into a sequence of lemmas, recalling that we have already proven the result when $H$ is a parabolic or a subfield subgroup (see Propositions \ref{p:parab} and \ref{p:subfield}). Set $d = (2,q-1)$.

\begin{rem}\label{r:e7note}
Here we take the opportunity to correct an error in the original arXiv version of \cite{Craven2}. This concerns the precise structure of a $2$-local maximal subgroup, which is presented as $(2^2 \times {\rm P\Omega}_8^{+}(q).2^2).3$ in \cite[Table 4.1]{Craven2}. Here $q$ is odd and 
the correct structure is 
\[
H_0 = \begin{cases} 
(2^2 \times {\rm P\Omega}_8^{+}(q).2^2).{\rm Sym}_3 & \mbox{if $q \equiv \pm 1 \imod{8}$} \\ 
(2^2 \times {\rm P\Omega}_8^{+}(q).2^2).3 & \mbox{if $q \equiv \pm 3 \imod{8}$.}
\end{cases}
\]
This error originates from an inaccuracy in one of the entries in \cite[Table 1]{AnDietrich} and we refer the reader to \cite{Kor} for more details.
\end{rem}

We begin by handling the remaining maximal rank subgroups of $G$.

\begin{lem}\label{l:e7_mr}
The conclusion to Proposition \ref{p:e7_ns} holds if $H = N_G(\bar{H}_{\s})$ and $\bar{H}$ is a reductive $\s$-stable maximal rank subgroup of $\bar{G}$.
\end{lem}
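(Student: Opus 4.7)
The plan is to enumerate the $\s$-stable maximal rank subgroups of $\bar{G} = E_7$ using \cite[Tables 5.1, 5.2]{LSS} and treat each possibility in turn. Up to $\bar{G}$-conjugacy, the possibilities for $\bar{H}$ are the normalizers of subsystem subgroups of type $A_1D_6$, $A_7$, $E_6T_1$, $A_5A_2$, $A_1^3D_4$, $A_1^7$, together with the normalizer $\bar{T}.W$ of a maximal torus, where $W = 2 \times {\rm Sp}_6(2)$ is the Weyl group of $E_7$. For each $\bar{H}$, the $\bar{G}_{\s}$-classes of $\s$-invariant conjugates are parametrized by $H^1(\s, \bar{H}/\bar{H}^\circ)$ via Lemma \ref{l:sigmaclasses}, and by Lemma \ref{l:tildeH0} it suffices to exhibit representatives of each $\bar{G}$-class of involutions inside $\widetilde{H_0} = N_{\bar{G}}(\bar{H}_{n\s}) \cap (\bar{G}_{n\s})'$, where $n$ runs through a set of representatives for $H^1(\s, \bar{H}/\bar{H}^\circ)$.

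In each case, I will write candidate involutions as short products of elements $h_{\a}(-1)$, $w_{\a}$ (for $p$ odd) or $x_{\a}(1)$ (for $p = 2$), and identify their $\bar{G}$-class by computing $\dim C_V(x)$ on the adjoint module $V = \mathcal{L}(\bar{G})$ (and, when $(\bar{G},p)=(E_7,2)$, also on $V_{\rm min}$). These Jordan form computations can be carried out directly by reading the composition factors of $V \downarrow \bar{H}$ from \cite{Thomas} and \cite[Table 13]{BT}, or by the {\sc Magma} approach of Section \ref{ss:comp}, invoking Lemma \ref{l:rationalcompute} when $p$ is odd so as to perform calculations over $\mathbb{Q}$ uniformly in $p$. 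The extra subtlety flagged in Remark \ref{r:e7_inv} is that when $p$ is odd, an element $x \in \bar{G}_{n\s}$ lies in $(\bar{G}_{n\s})'$ only if it lifts to an $n'\s$-fixed element of the simply connected cover $\bar{G}_{\rm sc}$; as illustrated in Example \ref{ex:e6t1}, this can be checked explicitly in {\sc Magma}.

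For the subsystem cases $A_1D_6$, $A_7$, $E_6T_1$, $A_5A_2$, I expect $\widetilde{H_0}$ to contain representatives of all three $\bar{G}$-classes of involutions (for $p$ odd this already follows from \cite[Table 13]{BT} in the $A_1D_6$ case and from Example \ref{ex:e6t1} in the $E_6T_1$ case, while $A_7$ and $A_5A_2$ are handled by explicit semisimple involutions). For $p = 2$, the five classes are exhibited using the Tits subgroup $W_0 = \la w_{\a} \,:\, \a \in \Phi\ra$ of Theorem \ref{t:titsweyl} together with computed Jordan forms. For $\bar{H}$ of type $A_1^3D_4$, the untwisted form and the form with triality acting only on the $D_4$ factor yield $H_0$ containing all classes, but the form where $\s$ permutes the three $A_1$ factors cyclically and twists $D_4$ by triality gives $H_0 = ({\rm L}_2(q^3) \times {}^3D_4(q)).3$, for which Lemma \ref{l:conj} reduces the counting of involution classes to those in ${\rm L}_2(q^3) \times {}^3D_4(q)$, falling short of the three $T$-classes in $T$. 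The analogous twist of $A_1^7$ by a $7$-cycle produces the exception ${\rm L}_2(q^7).7$. For the torus normalizer $\bar{T}.W$, since $W$ contains the central involution $-1$, suitable elements $w_{\a} \in W_0$ combined with torus elements $h_{\a}(-1)$ provide representatives of every $\bar{G}$-class of involutions, provided the relevant twist element $w \in W$ commutes with the chosen reflections; in each torus structure this commutation can be arranged.

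The main obstacle will be a careful bookkeeping of the $\s$-twists in the $A_1^k$ and torus cases, where nontrivial $H^1(\s,\bar{H}/\bar{H}^\circ)$ produces several non-conjugate forms of $\widetilde{H_0}$ with markedly different structure, and where for $p$ odd it is essential to verify not merely that a candidate involution lies in $\bar{H}_{n\s}$ but that it descends to $T \cong (\bar{G}_{{\rm sc}})_{n'\s}/Z(\bar{G}_{{\rm sc}})_{n'\s}$, as opposed to $\bar{G}_\s \setminus T$. Once this bookkeeping is handled, the only two forms that fail to be $2$-elusive are the cyclic triality/septet twists yielding $({\rm L}_2(q^3) \times {}^3D_4(q)).3$ and ${\rm L}_2(q^7).7$, matching the exceptions of maximal-rank type listed in Proposition \ref{p:e7_ns}.
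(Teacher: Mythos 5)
Your overall strategy coincides with the paper's: the same enumeration of maximal rank subgroups from \cite[Tables 5.1, 5.2]{LSS}, the same use of Lemmas \ref{l:sigmaclasses} and \ref{l:tildeH0} to reduce to $\widetilde{H_0} = \bar{H}_{n\s} \cap (\bar{G}_{n\s})'$ for each twist $n$, the same identification of classes via $\dim C_V(\cdot)$ on $\mathcal{L}(\bar{G})$ (and $V_{\rm min}$ when $p=2$), the same use of the Tits complement $W_0$ for the torus normalizer in even characteristic, and the same care about descending to $(\bar{G}_{n\s})'$ rather than $\bar{G}_{n\s}$ when $q$ is odd. The two exceptional subgroups you name are also the correct ones.

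However, there is a genuine gap in your treatment of the case $H_0 = ({\rm L}_2(q^3) \times {}^3D_4(q)).3$. You assert that reducing to the involutions of $J = {\rm L}_2(q^3) \times {}^3D_4(q)$ leaves you ``falling short of the three $T$-classes,'' i.e.\ you propose a counting argument. But the count does \emph{not} fall short: for $q$ odd, ${\rm L}_2(q^3)$ and ${}^3D_4(q)$ each have a unique class of involutions (Lemma \ref{l:3d4qclasses}(i)), so $J$ has exactly three classes, matching the three classes of $T$; for $q$ even, $J$ has exactly five classes, matching the five classes of $T$. So no counting argument can settle this case, and the failure of $2$-elusivity is a fusion phenomenon that must be computed. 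For $q$ odd the point is that all three involutions of $J$ can be written as products of commuting elements $h_{\a}(-1)$ and hence lift to \emph{involutions} in $\bar{G}_{\rm sc}$, which forces all of them into the class $A_1D_6$ (the classes $E_6T_1$ and $A_7$ lift only to elements of order $4$, cf.\ Remark \ref{r:e77}); for $q$ even one computes Jordan forms on $\mathcal{L}(\bar{G})$ and finds the five classes of $J$ land in $(A_1^3)^{(1)}$, $(A_1^3)^{(2)}$, $A_1$, $A_1^4$, $A_1^4$, so that $A_1^2$ is missed. Your argument as stated would not detect either failure. (Two smaller inaccuracies: Lemma \ref{l:conj} concerns quotients by odd-order normal subgroups and is not what places the involutions of $H_0$ inside $J$ — that is just the oddness of $[H_0:J]$; and only the torus $(q-\e)^7/d$ gives a maximal torus normalizer here, so there are not ``several torus structures'' to arrange.)
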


\begin{proof}
By inspecting \cite[Tables 5.1, 5.2]{LSS}, we see that $\bar{H}$ is one of the following types:
\begin{equation}\label{e:e7list1}
A_1D_6, \, E_6T_1.2, \, A_7.2, \, A_2A_5.2, \,  A_1^3D_4.{\rm Sym}_3, \, A_1^7.{\rm L}_3(2), \, T_7.W.
\end{equation}
Here $T_7$ is a maximal torus of $\bar{G}$ and $W = 2 \times {\rm Sp}_6(2)$ is the Weyl group of $\bar{G}$. As in the proof of Propositions \ref{p:f4_2} and \ref{p:e6_2}, we are free to assume that $H = N_G(\left( \bar{H}^g \right)_{\s})$, where $\bar{H}$ is the normalizer of a standard subsystem subgroup 
\[
\bar{H}^\circ = \langle \bar{T}, U_{\alpha} \,:\, \alpha \in \Phi' \rangle
\]
and $\bar{H}^g$ is a $\s$-invariant conjugate of $\bar{H}$. Here $\bar{T}$ is a maximal torus of $\bar{G}$ as in the setup of Section \ref{ss:setup}, and $\Phi'$ is a root subsystem of $\Phi$. As in the proof of Propositions \ref{p:f4_2} and \ref{p:e6_2}, we refer to \cite[Tables 5.1, 5.2]{LSS} for the precise structure of $H$ in each case (also see \cite[Table 4.1]{Craven2}).

Note that the Weyl group of $\bar{H}^\circ$ can be identified with the subgroup $\langle s_{\alpha} : \alpha \in \Phi' \rangle$ of $W$. Then $\bar{H}$ is generated by $\bar{H}^\circ$, together with the elements of $N_{\bar{G}}(\bar{T})$ that normalize the Weyl group of $\bar{H}^\circ$. In particular, $\bar{H}$ contains an element $w \in N_{\bar{G}}(\bar{T})$ which maps to the central involution of $W$. Explicitly, we have
\begin{equation}\label{e:wdef}
w = w_{\alpha_1} w_{\alpha_2} w_{\alpha_5} w_{\alpha_7} w_{\alpha_{37}} w_{\alpha_{55}} w_{\alpha_{61}} \in \bar{H},
\end{equation}
where $\alpha_i$ denotes the $i$-th positive root in $\Phi$ with respect to the specific ordering of the roots of $\bar{G}$ used by {\sc Magma} (see Remark \ref{r:ordering}). Then $w$ acts as $\alpha \mapsto -\alpha$ on $\Phi$, and a computation shows that 
\[
w^2 = h_{\alpha_2}(-1) h_{\alpha_5}(-1) h_{\alpha_7}(-1) = 1,
\]
so $w$ is an involution in $\bar{G}$. We will write $\a_0$ for the longest root of $\Phi$.

Let $n \in N_{\bar{G}}(\bar{T})$ be an element of $\bar{H}$ such that under the bijection of Lemma \ref{l:sigmaclasses}, the $\s$-invariant conjugate $\bar{H}^g$ corresponds to the image of $n$ in $H^1(\s, \bar{H}/\bar{H}^\circ)$. Then in view of Lemma \ref{l:tildeH0}, for determining $2$-elusivity it suffices to consider involutions in $\widetilde{H_0} = N_{\bar{G}}(\bar{H}_{n \s}) \cap \left(\bar{G}_{n\s}\right)'$. 

We now make an observation which will be useful in the cases where $n \in \{1,w\}$. A computation with {\sc Magma} (which can be done over $\mathbb{Q}$, as described in Section \ref{ss:comp}) shows that the lift of $w$ to $\bar{G}_{\rm sc}$ centralizes the lifts of $h_{\alpha}(-1)$ and $w_{\alpha}$ for all $\alpha \in \Phi$. Moreover $h_{\alpha}(-1)$ and $w_{\alpha}$ are both fixed by $\s$, which leads to the following conclusion:
\begin{equation}\label{e:e7centralize}
h_{\alpha}(-1), \; w_{\alpha} \in \left(\bar{G}_{\s}\right)' \cap \left(\bar{G}_{w\s}\right)'\ \text{ for all } \alpha \in \Phi.
\end{equation}

Let us now consider the possibilities for $\bar{H}$, which we listed above in \eqref{e:e7list1}. First assume $\bar{H} = T_7.W$ is the normalizer of a maximal torus of $\bar{G}$. We claim that $T$ is $2$-elusive. Here $H_0 = S.W = N_T(S)$ with $S = (q -\e)^7/d$ (see \cite[Table 4.1]{Craven2}), and we can take $n = 1$ if $\e = +$ and $n = w$ if $\e = -$. We begin by assuming $q$ is odd. In view of~\eqref{e:e7centralize}, we have $h_{\alpha}(-1), w_{\alpha} \in H_0$ for all $\alpha \in \Phi$. Hence $\widetilde{H_0}$ contains the following involutions:
\[
h_{\alpha_1}(-1), \; w_{\alpha_2} w_{\alpha_5} w_{\alpha_7},\; h_{\alpha_1}(-1) w_{\alpha_2} w_{\alpha_5} w_{\alpha_7},
\]
which are of type $A_1D_6$, $E_6T_1$ and $A_7$, respectively (see Table \ref{table:oddpinvolutions}). We conclude that $T$ is $2$-elusive.

Now assume $q$ is even. Here Theorem \ref{t:titsweyl} implies that $H_0 = S{:}W$ is a split extension and we may identify $W$ with the subgroup of $E_7(2)$ generated by the set $\{ w_{\alpha} \, : \, \alpha \in \Phi \}$. Then using {\sc Magma} (see Section \ref{ss:comp}), we can calculate the Jordan form of each involution class representative in $W$ on the adjoint module $\mathcal{L}(\bar{G})$ and by inspecting Table \ref{table:evenpinvolutions} we conclude that $T$ is $2$-elusive.

We now turn to the remaining possibilities for $\bar{H}$, dividing the analysis according to the parity of $q$. To begin with, we will assume $q$ is odd.

\vs

\noindent \emph{Case 1.1. $\bar{H} = A_1D_6$, $q$ odd.}
 
\vs

Here $\bar{H} = \bar{H}^\circ$ corresponds to a subsystem $\Phi'$ with base $\{-\alpha_0, \alpha_2, \alpha_3, \alpha_4, \alpha_5, \alpha_6, \alpha_7\}$. Then $H_0 = \bar{H} \cap T$ contains the elements $h_{\alpha_3}(-1)$, $w_{\alpha_2} w_{\alpha_5} w_{\alpha_7}$ and $h_{\alpha_3}(-1)w_{\alpha_2} w_{\alpha_5} w_{\alpha_7}$, and it is easy to check that they are conjugate to the three class representatives listed in Table \ref{table:oddpinvolutions}. (This can also be verified by computing the dimension of each fixed point space on the adjoint module $\mathcal{L}(\bar{G})$, as discussed in Section \ref{ss:comp}.) We conclude that $T$ is $2$-elusive.

\vs

\noindent \emph{Case 1.2. $\bar{H} = A_7.2$, $q$ odd.}

\vs

In this case, $\bar{H}^\circ = A_7$ corresponds to a subsystem $\Phi'$ with base 
\[
\{\beta_1, \ldots, \beta_7\} = \{-\alpha_0, \alpha_1, \alpha_3, \alpha_4, \alpha_5, \alpha_6, \alpha_7\}.
\]
In addition, the element $w$ defined in \eqref{e:wdef} is contained in $\bar{H} \setminus \bar{H}^\circ$ (since the Weyl group of $A_7$ does not contain $-1$), so we have a split extension $\bar{H} = \bar{H}^\circ{:}\langle w \rangle$. Thus in this case we can take $n \in \{1,w\}$ and it suffices to consider involutions in $\widetilde{H_0} = \bar{H} \cap \left(\bar{G}_{n\s}\right)'$.

Define $t = h_{\alpha_4}(-1) \in \bar{H}$, which is an involution of type $A_1D_6$. Next we choose an involution $g \in \bar{H}^{\circ}$ corresponding to the longest element in the Weyl group of $\bar{H}^\circ$. Specifically, we take 
\[
g = w_{\beta_1} (w_{\beta_2} w_{\beta_1}) \cdots (w_{\beta_7} w_{\beta_6} \cdots w_{\beta_1}),
\]
which one can verify is an involution by direct calculation. 

In view of~\eqref{e:e7centralize}, we have $t, w, g \in \left(\bar{G}_{n\s}\right)'$, so we deduce that $t, w, g \in \widetilde{H_0}$. Then by computing the actions of $t$, $tgw$ and $w$ on $\mathcal{L}(\bar{G})$, we see that they are involutions with fixed point spaces of dimensions $69$, $79$ and $63$, respectively. So by inspecting Table \ref{table:oddpinvolutions}, we deduce that $T$ is $2$-elusive.

\vs

\noindent \emph{Case 1.3. $\bar{H} = A_2A_5.2$, $q$ odd.}

\vs

Here $\bar{H}^\circ = A_2A_5$ corresponds to a subsystem of $\Phi$ with base $\{-\alpha_0, \alpha_1, \alpha_2, \alpha_4, \alpha_5, \alpha_6, \alpha_7\}$. As in the previous case, we have $\bar{H} = \bar{H}^\circ{:}\langle w \rangle$, so we can take $n \in \{1,w\}$ and it suffices to consider involutions in $\widetilde{H_0} = \bar{H} \cap \left(\bar{G}_{n\s}\right)'$. 

Now the elements $t = h_{\alpha_1}(-1)$ and $g = w_{\alpha_2} w_{\alpha_5} w_{\alpha_7}$ are contained in $\bar{H}$, and moreover $t,g \in \widetilde{H_0}$ by~\eqref{e:e7centralize}. Then $t$, $g$ and $tg$ coincide precisely with the representatives given in Table \ref{table:oddpinvolutions}, so $T$ is $2$-elusive.

\vs

\noindent \emph{Case 1.4. $\bar{H} = A_1^3D_4.{\rm Sym}_3$, $q$ odd.}

\vs

First observe that 
\[
\bar{H}^\circ = A_1^3 D_4  = A_1D_2D_4 < A_1D_6,
\]
where $A_1D_6$ is the maximal rank subgroup we treated in Case 1.1. In addition, 
$\bar{H}^\circ$ corresponds to a subsystem of $\Phi$ with base $\{-\alpha_0, \alpha_7, -\beta, \alpha_2, \alpha_3, \alpha_4, \alpha_5\}$, where $\beta = \alpha_{49}$ is the longest root of the $D_6$ factor in $A_1D_6$.

There are two possibilities for $H_0$, as recorded in \cite[Table 4.1]{Craven2}. The first is 
\[
H_0 = 2^2.({\rm L}_2(q)^3 \times {\rm P\Omega}_8^{+}(q)).2^2.{\rm Sym}_3.
\]
In this case we can take $n = 1$, so $\widetilde{H_0} = H_0 = \bar{H} \cap T$. Then $H_0$ contains the elements $h_{\alpha_3}(-1)$, $w_{\alpha_2} w_{\alpha_5} w_{\alpha_7}$ and $h_{\alpha_3}(-1)w_{\alpha_2} w_{\alpha_5} w_{\alpha_7}$ from Case 1.1 and thus $T$ is $2$-elusive.

The other possibility is $H_0 = J.3$, where $J = {\rm L}_2(q^3) \times {}^3D_4(q)$. Clearly, every involution in $H_0$ is contained in $J$ and we note that there are three such classes. As in the other cases, it suffices to consider involutions in $\widetilde{H_0} = \bar{H} \cap \left(\bar{G}_{n\s}\right)'$.  Here we can choose $n \in N_{\bar{G}}(\bar{T})$ such that $n$ transitively permutes the three roots $\{-\alpha_0, \alpha_7, -\beta \}$ of the $A_1^3$ factor and induces a triality automorphism on the $D_4$ factor, transitively permuting the roots $\{\alpha_2, \alpha_3, \alpha_5\}$.

Then the unique conjugacy classes of involutions in the ${\rm L}_2(q^3)$ and ${}^3D_4(q)$ factors are represented by the elements $t,t' \in \widetilde{H_0}$, respectively, where
\[
t = h_{\alpha_7}(-1)h_{\alpha_0}(-1)h_{\beta}(-1), \;\; t' = h_{\alpha_2}(-1)h_{\alpha_3}(-1)h_{\alpha_5}(-1),
\]
and the third class of involutions in $\widetilde{H_0}$ is represented by their product $tt' = t't$. But this means that every involution in $\widetilde{H_0}$ lifts to an involution in the simply connected cover $\bar{G}_{{\rm sc}}$, and hence every involution in $\widetilde{H_0}$ is of type $A_1D_6$. So in this case, we conclude that $T$ is not $2$-elusive.

\vs

\noindent \emph{Case 1.5. $\bar{H} = A_1^7.{\rm L}_3(2)$, $q$ odd.}

\vs

Here the connected component $\bar{H}^\circ = A_1^7$ is embedded in $\bar{G}$ via
\[
\bar{H}^\circ = A_1^7 = A_1(A_1^2)^3 < A_1(A_1^2D_4) < A_1D_6 < \bar{G}.
\]
In particular, $\bar{H}^{\circ}$ corresponds to a subsystem with base $\{\alpha_0, \alpha_2, \alpha_3, \alpha_5, \gamma, \beta, \alpha_7\}$, where $\gamma = \alpha_{28}$ is the longest root in the $D_4$ factor of $A_1^2 D_4 < D_6$, and $\beta = \alpha_{49}$ is the longest root in the $D_6$ factor of $A_1D_6$. As noted in \cite[Table 4.1]{Craven2}, there are two possibilities for $H_0$. 

If $H_0 = {\rm L}_2(q^7).7$, then $H_0$ has a unique class of involutions and thus $T$ is not $2$-elusive. The other possibility is $H_0 = 2^3.{\rm L}_2(q)^7.2^3.{\rm L}_3(2)$, in which case $H_0 = \bar{H} \cap T$ and it follows that $H_0$ contains the involutions listed in Case 1.1. Therefore, $T$ is $2$-elusive.

\vs

\noindent \emph{Case 1.6. $\bar{H} = E_6T_1.2$, $q$ odd.}

\vs

To complete the proof of the lemma for $q$ odd, we may assume $\bar{H} = E_6T_1.2$, so $\bar{H}^\circ$ is a standard Levi factor corresponding to a root system with base $\{\alpha_1, \ldots, \alpha_6\}$. As in Case 1.2, we have $\bar{H} = \bar{H}^\circ{:}\langle w \rangle$, so we can take $n \in \{1,w\}$ and it suffices to consider involutions in $\widetilde{H_0} = \bar{H} \cap \left(\bar{G}_{n\s}\right)'$. 

Set $t = h_{\alpha_1}(-1) \in \bar{H}^{\circ}$, which is an involution of type $A_1D_6$, and define 
\[
h = w_{\alpha_2} w_{\alpha_{28}} w_{\alpha_{38}} w_{\alpha_{46}} \in \bar{H}^{\circ},
\] 
which corresponds to the longest element of the Weyl group of $E_6$. As in Case 1.2, we have $t, w, h \in \widetilde{H_0}$. A computation (discussed also in Remark \ref{r:e7_inv}) shows that $t$, $w$, $hw$ are involutions in $\widetilde{H_0}$ from classes $A_1D_6$, $A_7$, $E_6T_1$, respectively. Therefore $T$ is $2$-elusive.

\vs

In order to complete the proof of the lemma, we may assume $q$ is even. Note that in this case $\bar{G}_{\s} = \left( \bar{G}_{\s} \right)'$ and thus $\widetilde{H_0} = \bar{H}_{n\s}$. As above, we partition the analysis into a number of subcases.

\vs

\noindent \emph{Case 2.1. $\bar{H} = A_1D_6$ or $A_2A_5.2$, $q$ even.}

\vs

In both cases, the $\bar{G}$-class of each unipotent element in $\bar{H}^{\circ}$ has been determined by Lawther, see \cite[Sections 4.10, 4.12]{Law09}, and it follows that $\bar{H}^{\circ}$ meets every $\bar{G}$-class of involutions. Since $H_0$ meets every unipotent conjugacy class in $\bar{H}^{\circ}$, we deduce that $T$ is $2$-elusive.

\vs

\noindent \emph{Case 2.2. $\bar{H} = A_7.2$, $q$ even.}

\vs

By inspecting \cite[Section 4.1]{Law09}, we deduce that $\bar{H}^{\circ}$ contains involutions of type $A_1$, $A_1^2$ and $(A_1^3)^{(2)}$. Since $H_0 = \operatorname{PGL}_8^{\e}(q).2$ meets every unipotent conjugacy class of $\bar{H}^{\circ}$, it follows that $H_0$ contains involutions in each of these classes.

As noted in Case 1.2, we have $\bar{H} = \bar{H}^{\circ}{:}\langle w \rangle$. Thus we can take $n \in \{1,w\}$ and it suffices to consider involutions in $\widetilde{H_0} = \bar{H}_{n\s}$. We now define $g \in \bar{H}^{\circ}$ as in Case 1.2; by~\eqref{e:e7centralize} we have $g, w \in \widetilde{H_0}$. Another computation with {\sc Magma} (as described in Section \ref{ss:comp}) shows that $w$ and $gw$ are involutions with respective Jordan forms $(2^{63},1^7)$ and $(2^{53},1^{27})$ on $\mathcal{L}(\bar{G})$, so they are in the $\bar{G}$-classes labelled $A_1^4$ and $(A_1^3)^{(1)}$, respectively. Since $w, gw \in \widetilde{H_0}$, we deduce that $T$ is $2$-elusive.

\vs

\noindent \emph{Case 2.3. $\bar{H} = A_1^3D_4.{\rm Sym}_3$, $q$ even.}

\vs

As in Case 1.4, $\bar{H}^\circ$ corresponds to a subsystem with base $\{-\alpha_0, \alpha_7, -\beta, \alpha_2, \alpha_3, \alpha_4, \alpha_5\}$, where $\beta$ is the longest root in the root system of $D_6$. There are two possibilities for $H_0$.

First assume $H_0 = ({\rm L}_2(q)^3 \times \Omega_8^{+}(q)).{\rm Sym}_3$, in which case $H_0 = \bar{H}_{\s}$. Setting $x_i = x_{\a_i}$, we see that $H_0$ contains the following involutions
\begin{equation}\label{e:inv_list}
x_{7}(1),\; x_{7}(1)x_{5}(1),\; x_{7}(1)x_{5}(1)x_{3}(1),\; x_{7}(1)x_{5}(1)x_{2}(1),\;
x_{0}(1)x_{7}(1)x_{5}(1)x_{2}(1).
\end{equation}
By using {\sc Magma} to calculate the Jordan form of each of these elements on the adjoint module $\mathcal{L}(\bar{G})$, we deduce that the corresponding $\bar{G}$-classes are $A_1$, $A_1^2$, $(A_1^3)^{(2)}$, $(A_1^3)^{(1)}$ and $A_1^4$, respectively, and it follows that $T$ is $2$-elusive.

Now assume $H_0 = ({\rm L}_2(q^3) \times {}^3D_4(q)).3$. As in Case 1.4, it suffices to consider involutions in $\widetilde{H_0} = \bar{H}_{n\s}$, where $n$ fixes $\a_4$ and it acts as a $3$-cycle on the sets $\{-\alpha_0, \alpha_7, -\beta \}$ and $\{\alpha_2, \alpha_3, \alpha_5\}$. Explicitly, we can choose $$n = w_{\alpha_{20}} w_{\alpha_{48}} w_{\alpha_{47}} w_{\alpha_{30}}.$$ Then the ${\rm L}_2(q^3)$ factor has a unique class of involutions, with a representative given by $t = x_{{-\alpha_0}}(1) x_{-\beta}(1) x_{\alpha_7}(1)$. There are two classes of involutions in ${}^3D_4(q)$, with representatives given by $s = x_{\alpha_2}(1) x_{\alpha_3}(1) x_{\alpha_5}(1)$ and $s' = x_{\alpha_4}(1)$ (see Table \ref{table:3D4q}). We have $t,s,s' \in \widetilde{H_0}$, and any involution in $\widetilde{H_0}$ is conjugate to $t$, $s$, $s'$, $ts$ or $ts'$. By computing the respective Jordan forms on the adjoint module $\mathcal{L}(\bar{G})$, we find that the corresponding $\bar{G}$-classes are $(A_1^3)^{(1)}$, $(A_1^3)^{(2)}$, $A_1$, $A_1^4$ and $A_1^4$, respectively. Therefore, $\widetilde{H_0}$ does not contain any involutions of type $A_1^2$ and thus $T$ is not $2$-elusive.

\vs

\noindent \emph{Case 2.4. $\bar{H} = A_1^7.{\rm L}_3(2)$, $q$ even.}

\vs

As in Case 1.5, $\bar{H}^\circ$ corresponds to a subsystem with base $\{\alpha_0, \alpha_2, \alpha_3, \alpha_5, \gamma, \beta, \alpha_7\}$, where $\gamma$ is the longest root for the $D_4$ factor of $A_1^2 D_4 < D_6$, and $\beta$ is the longest root for the $D_6$ factor of $A_1D_6$. If $H_0 = {\rm L}_2(q)^7.{\rm L}_3(2)$, then $H_0 = \bar{H}_{\s}$ contains all the involutions in \eqref{e:inv_list} and thus $T$ is $2$-elusive by the argument in Case 2.3. On the other hand, if $H_0 = {\rm L}_2(q^7).7$ then $H_0$ has a unique class of involutions and thus $T$ is not $2$-elusive. 

\vs

\noindent \emph{Case 2.5. $\bar{H} = E_6T_1.2$, $q$ even.}

\vs

Finally, to complete the proof we may assume $q$ is even and $\bar{H} = E_6T_1.2$, which is the normalizer of a standard Levi factor $\bar{H}^\circ = E_6T_1$ with base $\{\alpha_1, \ldots, \alpha_6\}$ for the root system of $(\bar{H}^\circ)' = E_6$. As in Case 1.6, it suffices to consider involutions in $\widetilde{H_0} = \bar{H}_{n\s}$ with $n \in \{1,w\}$. It follows from~\eqref{e:e7centralize} that $\widetilde{H_0}$ contains the following involutions 
\[
w_{\alpha_2}, \; w_{\alpha_2} w_{\alpha_3}, \; w_{\alpha_2} w_{\alpha_3} w_{\alpha_5}.
\]
By computing their action on $\mathcal{L}(\bar{G})$, we see that these involutions are contained in the $\bar{G}$-classes labelled $A_1$, $A_1^2$ and $(A_1^3)^{(2)}$, respectively. 

Next define $h \in \bar{H}^\circ$ as in Case 1.6, which corresponds to the longest element of the Weyl group of $E_6$. As before, we have $h, w \in \widetilde{H_0}$ and with the aid of {\sc Magma} we can show that $w$ and $hw$ are involutions with respective Jordan forms $(2^{63},1^7)$ and $(2^{53},1^{27})$ on $\mathcal{L}(\bar{G})$. By inspecting Table \ref{table:evenpinvolutions}, we deduce that $w$ and $hw$ are in the $\bar{G}$-classes labelled $A_1^4$ and $(A_1^3)^{(1)}$, respectively, and this allows us to conclude that $T$ is $2$-elusive. 
\end{proof}

To complete the proof of Proposition \ref{p:e7_ns}, which in turn completes the proof of Theorem \ref{t:main} for the groups with socle $E_7(q)$, it just remains to consider the cases where $H = N_G(\bar{H}_{\s})$ and $\bar{H}$ is a maximal positive-dimensional closed subgroup of $\bar{G}$ that does not contain a maximal torus. Note that this includes the family of exotic $2$-local subgroups from \cite[Table 1]{CLSS}, which we treat separately in the following lemma.

\begin{lem}\label{l:e7_exotic}
Suppose $p$ is odd and $H = N_G(E)$, where $E = 2^2$ and
\[
N_{\bar{G}_{\s}}(E) = (E \times {\rm Inndiag}({\rm P\O}_8^{+}(q)).{\rm Sym}_3.
\]
Then $T$ is $2$-elusive.
\end{lem}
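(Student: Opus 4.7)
The plan is to exhibit elements of $H_0 = H \cap T$ representing each of the three $\bar G$-conjugacy classes of involutions in $T$ from Table~\ref{table:oddpinvolutions} (namely $A_1D_6$, $E_6T_1$ and $A_7$). Set $\bar D = C_{\bar G}(E)^{\circ}$, which is simply connected of type $D_4$ with $Z(\bar D) = E$, and let $V = \mathcal{L}(\bar G)$. Since $V^E = \mathcal{L}(\bar D)$ has dimension $28$, its complement in $V$ decomposes under $E$ into three isotypic components $L_1, L_2, L_3$ indexed by the nontrivial characters of $E$, each of dimension $(133-28)/3 = 35$ by the $\operatorname{Sym}_3$-symmetry coming from $N_{\bar G}(E)/C_{\bar G}(E)$.

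From this decomposition, each nontrivial $t \in E$ acts trivially on $V^E$ and on exactly one of the $L_j$, so that $\dim C_V(t) = 28 + 35 = 63$, and hence $t$ is of class $A_7$ by Table~\ref{table:oddpinvolutions}. Next, for any root $\alpha$ of $\bar D$ the element $s = h_\alpha(-1)$ lies in $H_0$ (one checks, as in Remark~\ref{r:e7_inv}, that $s$ lifts to a $\s$-fixed element of the simply connected cover), and since every $h_\beta(-1)$ with $\beta$ a root of $\bar G$ is $W(\bar G)$-conjugate to the representative $h_{\alpha_1}(-1)$ from Table~\ref{table:oddpinvolutions}, such an $s$ is of class $A_1D_6$.

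The main obstacle is to produce an involution of class $E_6T_1$ in $H_0$, and I would find it in the outer $\operatorname{Sym}_3$-coset of $N_{\bar G}(E)$. A lift $\tilde\tau \in \bar G$ of a transposition in $\operatorname{Sym}_3$ acts on $\bar D$ as a graph involution with fixed subalgebra of type $B_3$, contributing $2 \cdot 21 - 28 = 14$ to $\operatorname{tr}(\tilde\tau \mid \mathcal{L}(\bar D))$, and permutes the $L_j$ by fixing one and swapping the other two (so the swapped pair contributes $0$ to the trace). After adjusting $\tilde\tau$ by a suitable element of $\bar D \cdot E$ so that it becomes an honest involution in $T$, a direct computation of the remaining trace on the fixed $L_j$ --- best carried out explicitly with {\sc Magma} as in Section~\ref{ss:comp}, following Example~\ref{ex:e6t1} --- should give $\operatorname{tr}(\tilde\tau \mid V) = 25$, identifying $\tilde\tau$ as an element of class $E_6T_1$ and completing the proof that $H_0$ meets all three conjugacy classes of involutions of $T$.
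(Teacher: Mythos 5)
Your overall strategy (exhibit representatives of all three involution classes of Table \ref{table:oddpinvolutions} inside $H_0$) is the right one, and your identification of the involutions in $E$ as type $A_7$ via the isotypic decomposition $\mathcal{L}(\bar{G}) = \mathcal{L}(\bar{D}) \oplus L_1 \oplus L_2 \oplus L_3$ is correct and clean. But there are two genuine problems. First, your setup is wrong: $C_{\bar{G}}(E) = E \times D_4$ is a \emph{direct} product with the $D_4$ factor of adjoint type, so $\bar{D} = C_{\bar{G}}(E)^{\circ}$ is adjoint with $E \cap \bar{D} = 1$, not simply connected with $Z(\bar{D}) = E$. More seriously, your argument for the $A_1D_6$ class does not work as stated: $\bar{D}$ is \emph{not} a subsystem subgroup of $\bar{G}$ (it has rank $4$, and $C_{\bar{G}}(E)$ contains no maximal torus of $\bar{G}$), so a root $\alpha$ of $\bar{D}$ is not a root of $\bar{G}$, and $h_{\alpha}(-1)$ computed in $\bar{D}$ is not of the form $h_{\beta}(-1)$ for $\beta \in \Phi(\bar{G})$; the Weyl-conjugacy argument you invoke therefore does not apply. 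The conclusion is true, but it has to be proved by realizing $\bar{D}$ explicitly, e.g.\ as the image of ${\rm SO}_8(K) < {\rm SL}_8(K) \to C_{\bar{G}}(e)^{\circ} = A_7$ and computing the trace of $(-I_2,I_6)$ on $\mathcal{L}(\bar{G}) \downarrow A_7 = \mathcal{L}(A_7)/\Lambda^4(W)$, which gives $15 - 10 = 5$ and hence type $A_1D_6$.

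The main gap is the $E_6T_1$ class, which is exactly the step you leave as ``should give $25$''. Your route through the outer ${\rm Sym}_3$-coset is both unverified and fragile: the $\bar{G}$-class of a lift $\tilde\tau$ depends on which adjustment by $\bar{D} \cdot E$ you make (for instance $\tilde\tau$ and $\tilde\tau e_0$, with $e_0 \in E$ the involution fixed by the transposition, differ by a type-$A_7$ element and need not lie in the same class), the action of $\tilde\tau$ on $\bar{D}$ need not have fixed subalgebra of type $B_3$ (it could be an outer involution with centralizer $B_1B_2$, giving trace $-2$ rather than $14$ on $\mathcal{L}(\bar{D})$), and the trace on the fixed $35$-dimensional summand $L_j$ is not computed. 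The paper avoids the outer coset entirely: it produces the $E_6T_1$ representative \emph{inside} $C_T(E) = E \times {\rm Inndiag}({\rm P\O}_8^{+}(q))$ as the product $ex$, where $e \in E$ is the central involution of $C_{\bar{G}}(e)^{\circ} = A_7$ (acting as $I_{63}$ on $\mathcal{L}(A_7)$ and $-I_{70}$ on $\Lambda^4(W)$) and $x$ is the $A_1D_6$-involution above, giving trace $15+10 = 25$. You should either adopt that product trick or actually carry out and justify the coset computation; as written, the $E_6T_1$ step is not established.
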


\begin{proof}
We begin by recalling some basic properties of $E$, as described in the proof of \cite[Lemma 2.15]{CLSS}. First, let us record the fact that the three involutions in $E$ are all of type $A_7$. In addition, $C_{\bar{G}}(E) = E \times D_4$, where the $D_4$ factor is of adjoint type. Since $N_{\bar{G}_{\s}}(E)/C_{\bar{G}_{\s}}(E) \cong \operatorname{Sym}_3$ acts transitively on the set of involutions in $E$, it follows that $E \leqs T$ and thus $H_0 = N_T(E)$ contains involutions of type $A_7$. Since the lift of the $D_4$ factor to $\bar{G}_{{\rm sc}}$ is also of adjoint type, it follows that 
\[
C_T(E) = C_{\bar{G}_{\s}}(E) = E \times {\rm Inndiag}({\rm P\O}_{8}^{+}(q)),
\]
(also see \cite[Theorem 1.1]{Kor}). We claim that $C_T(E)$ also contains involutions of type $A_1D_6$ and $E_6T_1$, which implies that $T$ is $2$-elusive. 

To see this, first write $E = \langle e, f \rangle$. Here $e \in C_{\bar{G}}(e)^\circ$ and $f \in C_{\bar{G}}(e) \setminus C_{\bar{G}}(e)^\circ$, since lifts of $e$ and $f$ to the simply connected cover $\bar{G}_{{\rm sc}}$ do not commute, as observed in the proof of \cite[Lemma 2.15]{CLSS}. Recall that $C_{\bar{G}}(e)^\circ$ is of type $A_7$. More precisely, we can identify $C_{\bar{G}}(e)^\circ$ with ${\rm SL}_8(K) / \langle \zeta^2 I_8 \rangle$, where $\zeta \in K$ is a primitive $8$th root of unity. Then the central involution $e \in C_{\bar{G}}(e)^\circ$ corresponds to the image of the scalar matrix $\zeta I_8$. Furthermore, we can assume that $f$ acts as the inverse-transpose graph automorphism on $C_{\bar{G}}(e)^\circ$.

Let $t$ be an involution in $C_{\bar{G}}(e)^\circ$ corresponding to the diagonal matrix $(-I_2, I_6)$ in ${\rm SL}_8(K)$ and note that 
\[
\mathcal{L}(\bar{G}) \downarrow A_7 = \mathcal{L}(A_7)/\Lambda^4(W),
\]
where $W$ is the natural module for $A_7$ (see \cite[Table 12.4]{Thomas}).
Here $t$ has trace $15$ on the first summand and trace $-10$ on the second, so $t$ has trace $5$ on $\mathcal{L}(\bar{G})$ and it is therefore an involution of type $A_1D_6$. Now the central involution $e$ acts as $I_{63}$ on $\mathcal{L}(A_7)$ and as $-I_{70}$ on $\Lambda^4(V)$. Therefore, the involution $et$ has trace $15 + 10 = 25$ on $\mathcal{L}(\bar{G})$ and so it belongs to the class $E_6T_1$. 

Therefore, in order to prove that $T$ is $2$-elusive, it suffices to show that the subgroup ${\rm Inndiag}({\rm P\O}_8^{+}(q))$ of $C_T(E)$ contains an involution $x$ that is conjugate to $t$. But this is clear since the $D_4$ subgroup of $C_{\bar{G}}(E)$ is the image of the orthogonal subgroup ${\rm SO}_8(K) < {\rm SL}_8(K)$. The result now follows since $x$ is in the class $A_1D_6$ and the product $ex$ is an involution of type $E_6T_1$.
\end{proof}

\begin{lem}\label{l:e7_mc}
The conclusion to Proposition \ref{p:e7_ns} holds if $H = N_G(\bar{H}_{\s})$ and $\bar{H}$ is a positive-dimensional non-maximal rank subgroup of $\bar{G}$.
\end{lem}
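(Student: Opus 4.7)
The approach is to enumerate the $\s$-stable maximal positive-dimensional closed subgroups $\bar{H}$ of $\bar{G} = E_7$ that are reductive but not of maximal rank, using the classification of Liebeck--Seitz (as summarized in \cite[Table 4.1]{Craven2}). Up to conditions on $p$, the possibilities for $(\bar{H}^\circ)'$ include $A_1F_4$, $G_2C_3$, $A_1G_2$ (with $p \neq 2$), $A_1A_1$ (with $p \geqs 5$), $A_2$ (with $p \geqs 5$), together with a small number of conjugacy classes of irreducible $A_1$ subgroups (under restrictions such as $p \geqs 17$ or $p \geqs 19$). For each such $\bar{H}$, I would test $2$-elusivity by exhibiting, or failing to exhibit, representatives of each $T$-class of involutions in $H_0 = N_G(\bar{H}_\s) \cap T$, with the classes identified via the dimension of $C_V(x)$ on $V = \mathcal{L}(\bar{G})$ (and on $V_{\mathrm{min}}$ when $p = 2$) as in Tables \ref{table:evenpinvolutions} and \ref{table:oddpinvolutions}.

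The key technical input is the restriction $V \downarrow \bar{H}$, whose composition factors are tabulated in \cite[Chapter 12]{Thomas}. For an involution $x \in \bar{H}_\s$, the dimension of $C_V(x)$ can then be computed either directly (when $V \downarrow \bar{H}$ is completely reducible, which is automatic when $p$ is large relative to the ranks involved, by \cite[Lemma II.2.14]{Jantzen}) or via an explicit {\sc Magma} calculation of the kind described in Section \ref{ss:comp}, working over $\Q$ when $p$ is odd by Lemma \ref{l:rationalcompute}. This information then pins down the $\bar{G}$-class of $x$ uniquely.

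I expect a clean dichotomy. For the larger subgroups $A_1F_4$, $G_2C_3$ and $A_1G_2$, involutions supported in a single simple factor, together with suitable products of involutions from different factors, will realize each of the three (or five, in characteristic $2$) $\bar{G}$-classes and give $2$-elusivity; this argument is closely parallel to the treatment of $A_2G_2 < E_6$ in case (c) of Case~2 of the proof of Proposition \ref{p:e6_2}, and to the $A_1G_2 < F_4$ discussion in Proposition \ref{p:f4_2}. In contrast, for $\bar{H} = A_1A_1$ with $p \geqs 5$, for $\bar{H} = A_2.2$ with $p \geqs 5$, and for the irreducible $A_1$ subgroups, the restriction $V \downarrow \bar{H}$ is completely reducible and a short calculation from the tables of \cite{Thomas} (or from the summands $\mathcal{L}(A_2)$ and $V_{A_2}(4\varpi_1+\varpi_2) \oplus V_{A_2}(\varpi_1+4\varpi_2)$, as in case (e) of Case~2 of the proof of Proposition \ref{p:e6_2}) shows that every involution in $\bar{H}$ has the same trace on $V$, placing it in a single $\bar{G}$-class. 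This accounts for the exceptions $H_0 \in \{ {\rm L}_2(q) \times {\rm PGL}_2(q),\ {\rm PGL}_3^{\pm}(q).2,\ {\rm L}_2(q)\}$ appearing in Proposition \ref{p:e7_ns}. The remaining exception $H_0 = {}^3D_4(q).3$ with $p \geqs 3$ will be handled by identifying $\bar{H}$ as a $\s$-stable conjugate arising from a $D_4$-type subgroup on which $\s$ induces triality, whence Lemma \ref{l:3d4qclasses} gives that $H_0$ has only one class of involutions when $q$ is odd (and two when $q$ is even), ruling out $2$-elusivity.

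The main obstacle will be handling small characteristic correctly: for $\bar{H} = A_1G_2$ we need $p \neq 2$ and some extra care at $p = 3$, where certain composition factors of $V \downarrow \bar{H}$ can collapse; and for the case $p = 2$ we must distinguish between the $\bar{G}$-classes $(A_1^3)^{(1)}$ and $(A_1^3)^{(2)}$, which are separated only by their Jordan forms on $V_{\mathrm{min}}$. For a handful of cases where the composition factor calculation is either delicate or not immediately available in \cite{Thomas}, a direct {\sc Magma} verification following the template of Section \ref{ss:comp} should be the cleanest finish.
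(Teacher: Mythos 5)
Your overall strategy -- enumerating the reductive non-maximal-rank $\bar{H}$ from \cite[Table 4.1]{Craven2}, using the restrictions $V \downarrow \bar{H}$ from \cite[Chapter 12]{Thomas} together with {\sc Magma} computations over $\Q$ to locate involutions in the $\bar{G}$-classes of Tables \ref{table:evenpinvolutions} and \ref{table:oddpinvolutions} -- is exactly the approach the paper takes, and your predicted outcomes for $A_1F_4$, $G_2C_3$, $A_1G_2$, $A_1^2$, the irreducible $A_1$'s and $A_2.2$ all match (for $A_2.2$ the paper in fact gets away with the cruder observation that $H_0 = {\rm PGL}_3^{\pm}(q)$ or ${\rm PGL}_3^{\pm}(q).2$ has at most two involution classes against three in $T$, so your trace computation is not needed there).

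However, there is a genuine gap: your enumeration omits the class $\bar{H} = (2^2 \times D_4).{\rm Sym}_3$ with $p \geqs 3$, which is a positive-dimensional non-maximal-rank subgroup and therefore belongs to this lemma. This $\bar{H}$ gives rise to \emph{two} types of maximal subgroup $H = N_G(\bar{H}_\s)$. You correctly dispose of one of them, $H_0 = {}^3D_4(q).3$ (unique involution class for $q$ odd), but you say nothing about the other, the exotic $2$-local normalizer with $H_0 = (2^2 \times {\rm P\Omega}_8^{+}(q).2^2).{\rm Sym}_3$ (respectively $(2^2 \times {\rm P\Omega}_8^{+}(q).2^2).3$, depending on $q \bmod 8$). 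This case cannot be waved away by counting involution classes: here $T$ \emph{is} $2$-elusive, so one must positively exhibit representatives of all three classes $A_1D_6$, $E_6T_1$ and $A_7$ inside $H_0$. The paper does this (Lemma \ref{l:e7_exotic}) by noting that the three involutions of the $2^2$ are of type $A_7$, identifying $C_{\bar{G}}(e)^{\circ} \cong {\rm SL}_8(K)/\langle \zeta^2 I_8\rangle$ for $e \in E$, and computing traces on $\mathcal{L}(\bar{G}) \downarrow A_7 = \mathcal{L}(A_7)/\Lambda^4(W)$ to show that a suitable involution $x$ of the $D_4$ factor lies in $A_1D_6$ while $ex$ lies in $E_6T_1$. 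Without this case your proof of the lemma is incomplete, and one could not even be sure that Proposition \ref{p:e7_ns} does not need an extra entry in its list of exceptions. (A minor further quibble: the classes $(A_1^3)^{(1)}$ and $(A_1^3)^{(2)}$ in $E_7$ with $p=2$ are in fact already separated by their Jordan forms on $\mathcal{L}(\bar{G})$, namely $(2^{53},1^{27})$ versus $(2^{62},1^{9})$, so no appeal to $V_{\rm min}$ is needed; in any case the only even-characteristic subgroups here are $A_1F_4$ and $G_2C_3$, which are settled by \cite[Table 38]{Law09}.)
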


\begin{proof}
By inspecting \cite[Table 4.1]{Craven2}, we see that the possibilities for $\bar{H}$ are as follows: 
\begin{equation}\label{e:e7list2}
A_1F_4, \, G_2C_3, \, A_1G_2 \, (p \geqs 3), A_1^2 \, (p \geqs 5), A_2.2 \, (p \geqs 5), A_1 \, (\mbox{two classes; $p \geqs 17,19$}),
\end{equation}
together with $(2^2 \times D_4).{\rm Sym}_3$ for $p \geqs 3$.
	
First assume $q$ is even, so $\bar{H} = A_1F_4$ or $G_2C_3$. As explained in \cite[Section 5.12]{Law09}, the $\bar{G}$-class of each involution in $\bar{H}$ can be read off from \cite[Table 38]{Law09} and we quickly deduce that $T$ is $2$-elusive. For the remainder we may assume $q$ is odd. Let $V = \mathcal{L}(\bar{G})$ be the adjoint module. We will consider each possibility for $\bar{H}$ in turn (see \eqref{e:e7list2}).

Suppose $\bar{H} = A_1F_4$, in which case $H_0 = {\rm L}_2(q) \times F_4(q)$ and
\[
V \downarrow A_1F_4 = (V_{A_1}(2) \otimes V_{F_4}(\delta_4)) / (V_{A_1}(2) \otimes 0) / (0 \otimes V_{F_4}(\delta_1))
\]
by \cite[Table 12.4]{Thomas}, where $0$ is the trivial module and $\{\delta_1, \ldots, \delta_4\}$ is a set of fundamental dominant weights for the $F_4$ factor. Let $t \in H_0$ be an involution in the ${\rm L}_2(q)$ factor, and let $s,s'$ be representatives of the two classes of involutions in $F_4(q)$. A {\sc Magma} calculation (which can be done over $\Q$; see Lemma \ref{l:rationalcompute}) shows that $s$, $ts$ and $ts'$ have respective fixed point spaces of dimension $69$, $63$ and $79$ on $V$. It follows that $T$ is $2$-elusive.

Next assume $\bar{H} = G_2C_3$, so $H_0 = G_2(q) \times {\rm PSp}_6(q)$ and  
\[
V \downarrow G_2C_3 = (V_{G_2}(\omega_1) \otimes V_{C_3}(\delta_2)) / (V_{G_2}(\omega_2) \otimes 0) / (0 \otimes V_{C_3}(2\delta_1))
\]
as recorded in \cite[Table 12.4]{Thomas}, where $\{\omega_1,\omega_2\}$ and $\{\delta_1, \delta_2,\delta_3\}$ are fundamental dominant weights for the two factors of $\bar{H}^{\circ}$. Here the $G_2(q)$ factor has a unique class of involutions, which is represented by $t = h_{\alpha}'(-1)$ for some root $\alpha$ for $G_2$. Next let $s \in {\rm PSp}_6(q)$ be an involution which lifts to an element of order $4$ in ${\rm Sp}_6(q)$. Here a representative is given by $s = w_{\beta_1}'' w_{\beta_3}''$, where $\{\beta_1, \beta_2, \beta_3\}$ are the simple roots of $C_3$ and $w_{\beta}''$ is a standard Chevalley generator of $C_3$, as defined in Section \ref{ss:setup}. 

Working with {\sc Magma}, we calculate that $t$, $s$ and $ts$ have fixed point spaces of dimensions $69$, $79$ and $63$ on $V$. So once again we conclude that $T$ is $2$-elusive. The case $\bar{H} = A_1G_2$ is entirely similar and the same conclusion holds.

Now suppose $\bar{H} = A_1^2$. Here $p \geqs 5$, $H_0 = {\rm L}_2(q) \times {\rm PGL}_2(q)$ and \cite[Table 12.4]{Thomas} gives 
\begin{align*}
V \downarrow A_1^2 = & \, (V_{A_1}(6) \otimes V_{A_1}(4))/(V_{A_1}(4) \otimes V_{A_1}(6))/(V_{A_1}(4) \otimes V_{A_1}(2))/ \\ 
& \, (V_{A_1}(2) \otimes V_{A_1}(8))/(V_{A_1}(2) \otimes V_{A_1}(4))/(V_{A_1}(2) \otimes 0)/(0 \otimes V_{A_1}(2)).
\end{align*} 
Note that both $A_1$ factors in $\bar{H}$ are of adjoint type. We claim that $\bar{H}$ does not contain any involutions of type $E_6T_1$ and thus $T$ is not $2$-elusive. 

To see this, let $t \in \bar{H}$ be an involution in the first $A_1$ factor and let $s$ be an involution in the second. Then every involution in $\bar{H}$ is conjugate to $t$, $s$ or $ts$. Let $c \geqs 0$ be an even integer and consider the Weyl module $V_{A_1}(c)$ for an $A_1$ group of adjoint type. Then it is easy to check that the dimension of the fixed point space of an involution on $V_{A_1}(c)$ is given by the expression
\[
\frac{1}{2}\left(c+1+(-1)^{c/2}\right)
\]
and we deduce that $t$, $s$ and $ts$ have respective fixed point spaces of dimensions $63$, $69$ and $63$ on $V$. By inspecting Table \ref{table:oddpinvolutions}, it follows that $\bar{H}$ does not contain involutions of type $E_6T_1$, as claimed above.

For the two cases with $\bar{H} = A_1$ we note that $H_0 = {\rm L}_2(q)$ has a unique class of involutions and thus $T$ is not $2$-elusive. 

If $\bar{H} = A_2.2$, then Craven notes in \cite[Section 4]{Craven2} that $\bar{H}_{\s} = \PGL_3^{\varepsilon}(q).2$, and either $H_0 = \PGL_3^{\varepsilon}(q)$ or $H_0 = \PGL_3^{\varepsilon}(q).2$, without giving the precise structure in all cases. In any case, $H_0$ has at most two classes of involutions and thus $T$ is not $2$-elusive.

To complete the proof, we may assume $\bar{H} = (2^2 \times D_4).{\rm Sym}_3$, in which case there are two possibilities to consider (see \cite[Table 4.1]{Craven2} and the discussion on p.16 of \cite{Craven2} for further details). If $H_0 = {}^3D_4(q).3$ then $H_0$ has a unique class of involutions and thus $T$ is not $2$-elusive. The other possibility was handled in Lemma \ref{l:e7_exotic}.
\end{proof}

\vs

This completes the proof of Theorem \ref{t:main} for the groups with socle $T = E_7(q)$.

\section{The groups with socle $E_8(q)$}\label{s:e8}

Finally, we turn to the groups with socle $T = E_8(q)$. As usual, we write $\mathcal{M} = \mathcal{C} \cup \mathcal{S}$ for the set of core-free maximal subgroups of $G$ and we refer the reader to Remark \ref{r:CS}(g) for a brief description of the subgroups in the $\mathcal{C}$ and $\mathcal{S}$ collections. In particular, we recall that it remains an open problem to determine the subgroups in $\mathcal{S}$, even up to isomorphism. 
As noted in Section \ref{ss:invols}, $T$ has exactly $2+2\delta_{2,p}$ classes of involutions.

Our main result for the groups with $H \in \mathcal{C}$ is the following. This completes the proof of Theorem \ref{t:main}(i).

\begin{prop}\label{p:e8}
Suppose $T = E_8(q)$, $|\O|$ is even and $H \in \mathcal{C}$. Then $T$ is $2$-elusive unless one of the following holds:

\vspace{1mm}

\begin{itemize}\addtolength{\itemsep}{0.2\baselineskip}
\item[{\rm (i)}] $H_0 = {\rm SU}_5(q^2).4$, ${\rm PGU}_5(q^2).4$, ${\rm U}_3(q^2)^2.8$, or ${\rm U}_3(q^4).8$.
\item[{\rm (ii)}] $H_0 = (q^4 \pm q^3 + q^2 \pm q +1)^2.(5 \times {\rm SL}_2(5))$ or $(q^8 \pm q^7 \mp q^5 - q^4 \mp q^3 \pm q+1).30$.
\item[{\rm (iii)}] $p=2$ and $H_0$ is one of $\O_{8}^{+}(q^2).({\rm Sym}_3 \times 2)$, ${}^3D_4(q^2).6$, $(q^2 \pm q +1)^4.2.(3 \times {\rm U}_4(2))$, $(q^4-q^2+1)^2.(12 \circ {\rm GL}_2(3))$ or $(q^2+1)^4.(4 \circ 2^{1+4}).{\rm Alt}_6.2$.
\item[{\rm (iv)}] $H_0 = F_4(q)$ and $p =3$.
\item[{\rm (v)}] $H_0 = {\rm SO}_5(q)$ and $p \geqs 5$.
\item[{\rm (vi)}] $H_0 = {\rm PGL}_2(q)$ {\rm (three classes; $p \geqs 23,29,31$)}.
\item[{\rm (vii)}] $H_0 = {\rm ASL}_3(5)$ and $p \ne 2,5$.
\end{itemize}
\end{prop}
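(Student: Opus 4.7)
The plan is to reduce to the cases not already handled and then work through the list of possibilities for $H \in \mathcal{C}$ given in Remark \ref{r:CS}(g). Since $H$ is neither a parabolic nor subfield subgroup (Propositions \ref{p:parab} and \ref{p:subfield}) and $T = E_8(q)$ has no twisted version, it remains to treat: (a) the maximal rank subgroups $H = N_G(\bar H_\sigma)$ listed in \cite[Tables 5.1, 5.2]{LSS}, (b) the non-maximal rank positive-dimensional subgroups from \cite[Table 3]{LS03}, (c) the exotic local subgroups from \cite[Table 1]{CLSS}, and (d) the Borovik subgroup for $p > 5$. Throughout, recall from Tables \ref{table:evenpinvolutions} and \ref{table:oddpinvolutions} that $T$ has exactly $2$ classes of involutions when $p$ is odd (types $A_1E_7$, $D_8$) and exactly $4$ classes when $p=2$ (types $A_1^j$ for $j=1,\dots,4$), and that in each case the $T$-class of an involution $g$ is uniquely determined by $\dim C_{\mathcal{L}(\bar G)}(g)$.

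For the maximal rank subgroups, I would follow exactly the template used in the proofs of Lemmas \ref{l:e7_mr} and \ref{l:e7_exotic}. Writing $\bar H^\circ = \langle \bar T, U_\alpha : \alpha \in \Phi'\rangle$ for a suitable $\s$-stable subsystem $\Phi' \subseteq \Phi$, and passing via Lemma \ref{l:tildeH0} to $\widetilde{H_0} = \bar H_{n\s}$ with $n$ a representative of the appropriate class in $H^1(\s, \bar H/\bar H^\circ)$, the goal is to exhibit explicit products of root elements $h_\alpha(-1)$, $w_\alpha$ and $x_\alpha(1)$ inside $\widetilde{H_0}$ realizing each $\bar G$-class of involutions and then to compute $\dim C_{\mathcal{L}(\bar G)}(g)$ with \textsc{Magma} as in Section \ref{ss:comp}. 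For the subsystem subgroups $D_8$, $A_1 E_7$, $A_8$, $A_2 E_6$, $A_4^2$, $D_4^2$, $A_1^8$, and the torus normalizer $T_8.W(E_8)$ with $W$ of order $696729600$ sufficiently rich, this should always succeed when $p=2$ (using the untwisted or Tits-lifted Weyl elements in the class $A_1^j$ via Table \ref{table:evenpinvolutions}) and also when $p$ is odd (via $h_{\alpha_1}(-1)$ of type $A_1E_7$ and $h_{\alpha_1}(-1)h_{\alpha_2}(-1)$ of type $D_8$). The exceptions arising are precisely when either $H_0$ has too few involution classes by Lemma \ref{l:conj} (this identifies the tori in (ii) and the $(q^8\pm\cdots+1).30$ case), or when an exceptional isogeny/twisted construction forces every involution into the same $T$-class — this is how the ${\rm U}_5(q^2).4$, ${\rm U}_3(q^2)^2.8$, ${\rm U}_3(q^4).8$ cases in (i) arise, and, for $p=2$, the $\O_8^+(q^2).(\operatorname{Sym}_3\times 2)$, ${}^3D_4(q^2).6$ and the three torus cases in (iii). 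In each apparent exception, I would verify non-elusivity by tracing the action on $\mathcal{L}(\bar G) \downarrow \bar H^\circ$ read off from \cite[Chapter 12]{Thomas} and comparing with the four $p=2$ classes in Table \ref{table:evenpinvolutions}.

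For the non-maximal rank positive-dimensional subgroups, the socles listed in \cite[Table 3]{LS03} are mostly of very small rank (types $A_1$, $A_1^2$, $A_1 A_2$, $B_2$, $A_1 G_2 F_4$, $G_2 F_4$, $F_4$, $G_2^2$, etc.), and the composition factors of $\mathcal{L}(\bar G) \downarrow \bar H$ are tabulated in \cite[Chapter 12]{Thomas}. From these, computing $\dim C_V(g)$ for each involution in $\bar H$ is a short calculation: for $p$ odd and $A_1$-factors one uses the fixed-point formula $\tfrac12(c+1+(-1)^{c/2})$ from the proof of Lemma \ref{l:e7_mc}, while for $p=2$ one can quote \cite[Sections 4--5]{Law09}. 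This analysis identifies (iv) ($F_4(q)$, $p=3$: here $V\downarrow F_4 = \mathcal{L}(F_4) \oplus V_{F_4}(\varpi_4) \oplus 0$ only in characteristic $3$, so a trace computation shows both $F_4$-classes of involutions land in the $A_1E_7$ class), (v) (${\rm SO}_5(q)$ acting irreducibly, again landing in a single class), and (vi) (${\rm PGL}_2(q)$ with $\bar H = A_1$, where both $H_0$-classes fuse in $\bar G$, by an argument identical to the one used for case $\bar H = A_1$ in Lemma \ref{l:e7_mc}).

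Finally, the exotic local subgroups of \cite[Table 1]{CLSS} — namely the $2^{5+10}.{\rm GL}_5(2)$, $2^{3+6+12+18}.({\rm L}_3(2)\times{\rm L}_3(2))$, $5^3.{\rm SL}_3(5)$ (which is $\operatorname{ASL}_3(5)$) and $3^{3+3}.{\rm SL}_3(3)\times 2$ subgroups — can be analyzed directly. For $\operatorname{ASL}_3(5)$ with $p\ne 2,5$, I would use Lemma \ref{l:conj} applied to the normal $5$-subgroup $5^3$ to see that $H_0$ has the same involution classes as ${\rm SL}_3(5)$, which has a unique class, establishing (vii); the other exotic cases either contain no involutions of $T$-relevant types (the $2$-local ones arise only for $p$ odd) or contain a full set of representatives by direct computation. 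The Borovik subgroup $H_0 = (\operatorname{Alt}_5\times\operatorname{Alt}_6).2^2$ for $p>5$ has precisely two classes of involutions, and computing their traces on $V = \mathcal{L}(\bar G)$ from the known decomposition $V\downarrow (\operatorname{Alt}_5\times\operatorname{Alt}_6)$ (see \cite{Bor}) shows that the two $H_0$-classes fall into the two $T$-classes $A_1E_7$ and $D_8$, so $T$ is $2$-elusive here.

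The main obstacle will be the bookkeeping for the maximal-rank non-split cases of type (i) and (iii): one has to be careful about which twist $n \in H^1(\s,\bar H/\bar H^\circ)$ corresponds to the isogeny type recorded in \cite[Tables 5.1, 5.2]{LSS}, and then show that every involution of $\widetilde{H_0}$ is forced into a single $\bar G$-class. For the torus normalizers this follows from Lemma \ref{l:conj} once the odd-order normal subgroup is identified, but for $\O_8^+(q^2).(\operatorname{Sym}_3\times 2)$ and ${}^3D_4(q^2).6$ in $p=2$ it requires a genuine embedding computation via the $D_8$-overgroup and the Jordan-form tables in \cite[Section 4]{Law09}.
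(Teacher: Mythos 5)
Your overall architecture (split into maximal rank, non-maximal rank, exotic local and Borovik cases, then locate involution class representatives via $\dim C_{\mathcal{L}(\bar{G})}(g)$) matches the paper's, but two of your key mechanisms fail. First, for the $p=2$ torus normalizers in (iii) you appeal to Lemma \ref{l:conj}, but that lemma only transfers involution classes from $H_0 = S.F$ to $F$, and the complements here ($3 \times {\rm U}_4(2)$, $12 \circ {\rm GL}_2(3)$, $(4\circ 2^{1+4}).{\rm Alt}_6.2$) each have several classes of involutions -- as many as or more than the four $T$-classes -- so no contradiction is obtained by counting. The paper instead shows these subgroups contain no long root involutions (type $A_1$): since $p=2$ forces $N_{\bar{G}}(\bar{T})_{n\s} = S{:}F$ to split with $F \leqs \langle w_\alpha \rangle$, one applies \cite[Proposition 1.13(iii)]{LLS}, which says $H_0$ meets the class $A_1$ if and only if $F$ contains a reflection of $W$; among the $E_8$ tori only $(q-\e)^8.W$ has this property. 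Second, your treatment of case (iv) is broken: the claimed decomposition $V \downarrow F_4 = \mathcal{L}(F_4) \oplus V_{F_4}(\varpi_4)\oplus 0$ has dimension $52+26+1 = 79 \neq 248$, and this maximal $F_4 < E_8$ ($p=3$ only) is the subgroup of \cite{CST}, which postdates and is absent from \cite[Chapter 12]{Thomas}, so your plan of reading composition factors from Thomas cannot work here. The paper instead uses the explicit expressions from \cite[Section 3]{CST} for the $F_4$-generators as words in the $E_8$-root elements and computes directly that both involution classes have trace $-8$ on $V$ (type $D_8$, not $A_1E_7$ as you assert).

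There are further slips worth flagging. For case (i) with $q$ odd ($\bar{H} = A_4^2.4$ and $A_2^4.{\rm GL}_2(3)$, the latter missing from your list of subsystem types), the essential step -- which you defer as "the main obstacle" without resolving -- is that every involution in $\bar{H}\setminus\bar{H}^\circ$ lying over the central Weyl involution is $\bar{H}$-conjugate to $w$ itself (via \cite[Lemma 4.4.6]{GLS}, using that $w$ induces the inverse-transpose graph automorphism on each $A_n$ factor), and $w$ is of type $D_8$; without this the argument for $\mathrm{SU}_5(q^2).4$ etc.\ is incomplete. Your list of exotic locals includes groups ($2^{3+6+12+18}.({\rm L}_3(2)\times{\rm L}_3(2))$, $3^{3+3}.{\rm SL}_3(3)$) that do not occur in $E_8$; the Borovik subgroup is $({\rm Alt}_5\times{\rm Sym}_6).2$ and has many more than two involution classes (the real work is locating an $A_1E_7$-involution among the transpositions of ${\rm Sym}_6$ inside $C_{\bar{G}}(z) = A_8$ for $z$ of order $3$); and ${\rm SO}_5(q)$ does not act irreducibly on $\mathcal{L}(\bar{G})$. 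Finally, for the non-split odd-$p$ torus normalizers such as $(q^2+1)^4.F$ one cannot simply quote Weyl group elements; the paper has to import explicit generators of the extensions from \cite{GaltStaroletov} before computing fixed-point dimensions.
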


We divide the proof of Proposition \ref{p:e8} into several cases. In view of Propositions \ref{p:parab} and \ref{p:subfield}, we may assume that $H$ is not a parabolic nor a subfield subgroup. We start by dealing with the remaining subgroups arising in part (I) of Definition \ref{d:c}, so $H = N_G(\bar{H}_{\s})$ and one of the following holds:

\vspace{1mm}

\begin{itemize}\addtolength{\itemsep}{0.2\baselineskip}
\item[{\rm (a)}] $\bar{H}$ is a maximal torus;
\item[{\rm (b)}] $\bar{H}$ is reductive of maximal rank and $\bar{H}^{\circ}$ is not a maximal torus;
\item[{\rm (c)}] $\bar{H}$ is not a maximal rank subgroup.
\end{itemize}

\vspace{1mm}

Throughout, we will write $V = \mathcal{L}(\bar{G})$ for the adjoint module of $\bar{G}$. We begin by considering the cases where $\bar{H}$ is a maximal torus of $\bar{G}$, noting that the possibilities for $H_0$ are recorded in \cite[Table 5.2]{LSS}.

\begin{lem}\label{l:e8_2}
The conclusion to Proposition \ref{p:e8} holds if $H = N_G(\bar{H}_{\s})$ and $\bar{H}$ is a maximal torus of $\bar{G}$. 
\end{lem}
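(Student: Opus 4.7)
The plan is to run a case-by-case analysis over the maximal tori of $\bar{G}$ listed in \cite[Table 5.2]{LSS}, using Lemma \ref{l:tildeH0} to reduce the problem to finding involutions in $\widetilde{H_0} = N_{\bar{G}}(\bar{T})_{n\s}$. Since $\bar{G} = E_8$ admits no nontrivial graph automorphisms, $\s$ acts trivially on $W$, so by Lemma \ref{l:torus1} the $\bar{G}_{\s}$-classes of $\s$-stable maximal tori are parametrised by ordinary conjugacy classes in $W$, and Lemma \ref{l:torus2} gives $\widetilde{H_0}/\bar{T}_{n\s} \cong C_W(w)$, where $w \in W$ is the class representative and $n \in N_{\bar{G}}(\bar{T})$ is a lift of $w$. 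For each torus in \cite[Table 5.2]{LSS} we then determine whether $\widetilde{H_0}$ meets every $T$-class of involutions by inspecting Tables \ref{table:evenpinvolutions} and \ref{table:oddpinvolutions}.

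Suppose first that $q$ is odd. Then $T$ has two classes of involutions (types $A_1E_7$ and $D_8$), and every involution in $\widetilde{H_0}$ either lies in $\bar{T}_{n\s}$ (in which case it is a product of elements $h_{\alpha}(-1)$ whose fixed-point dimension on $V = \mathcal{L}(\bar{G})$ can be computed via Lemma \ref{l:rationalcompute}), or it projects nontrivially to an involution $w' \in C_W(w)$, in which case we lift $w'$ to an element $n' \in N_{\bar{G}}(\bar{T})$ using the Chevalley generators $w_{\alpha}$ and examine elements of the form $tn'$ with $t \in \bar{T}_{n\s}$ satisfying $(tn')^2 = 1$. Enumerating these involutions for each torus and computing their $\bar{G}$-classes with {\sc Magma} (as in Section \ref{ss:comp}), we will verify that both $T$-classes are met in every case except for the two families in (ii) of Proposition \ref{p:e8}, where only involutions of a single $\bar{G}$-class appear.

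Now suppose $q$ is even. Here $T$ has four classes of involutions (labelled $A_1, A_1^2, A_1^3, A_1^4$ in Table \ref{table:evenpinvolutions}), and Theorem \ref{t:titsweyl} gives $N_{\bar{G}}(\bar{T}) = \bar{T}{:}W_0$ with $W_0 \cong W$. Since $\bar{T}$ has no $2$-elements, every involution in $\widetilde{H_0}$ lies in the complement $W_0$, and under the isomorphism with $W$ corresponds to an involution in $C_W(w)$. For each Weyl-class representative $w$, we enumerate the involutions in $C_W(w)$, lift them to $W_0 = \langle w_{\alpha} \,:\, \alpha \in \Phi\rangle$, and compute their Jordan form on $V = \mathcal{L}(\bar{G})$ using {\sc Magma} (Section \ref{ss:comp}); Table \ref{table:evenpinvolutions} then determines the corresponding $\bar{G}$-class. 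The outcome is that all four classes appear in every case except for the tori in (ii) of Proposition \ref{p:e8} together with the three torus normalisers from (iii) whose structure begins with $(q^2 \pm q +1)^4$, $(q^4-q^2+1)^2$, and $(q^2+1)^4$; in those exceptional cases $C_W(w)$ does not contain involutions of all four types.

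The main obstacle is the sheer number of $W(E_8)$-conjugacy classes to traverse, but this reduces to the much shorter list of tori in \cite[Table 5.2]{LSS} giving rise to maximal subgroups, which is finite and manageable. A subtler issue arises when $q$ is odd: different lifts $n' \in N_{\bar{G}}(\bar{T})$ of the same involution $w' \in C_W(w)$ can differ by a $2$-element of $\bar{T}$ and thus give non-conjugate involutions in $\widetilde{H_0}$, so one must systematically check all possible lifts. In each exceptional case, a direct {\sc Magma} calculation shows that no choice of lift produces an involution in the missing $\bar{G}$-class.
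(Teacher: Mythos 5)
Your overall strategy --- parametrising the $\s$-stable maximal tori by classes in $W$ via Lemmas \ref{l:torus1} and \ref{l:torus2}, reducing to involutions in $\widetilde{H_0} = N_{\bar{G}}(\bar{T})_{n\s}$, and then working through \cite[Table 5.2]{LSS} by computing fixed point dimensions on $\mathcal{L}(\bar{G})$ --- is exactly the paper's, and your claimed lists of exceptional cases for $q$ odd and $q$ even are both correct. The even half of your argument is sound as written: $\bar{T}_{n\s}$ has odd order in characteristic $2$, so every involution is conjugate into the complement inside $W_0 = \la w_\a : \a \in \Phi\ra$, and checking the finitely many involutions of $C_W(w)$, written over the prime field, is a $q$-independent computation. (The paper gets the negative cases more cheaply from \cite[Proposition 1.13(iii)]{LLS}: $H_0$ meets the long root class $A_1$ if and only if $F$ contains a reflection, which happens only for $(q-\e)^8.W$.)

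The gap is in the odd-$q$ cases. For the tori $(q^2\pm q+1)^4$, $(q^2+1)^4$ and $(q^4-q^2+1)^2$ you must \emph{exhibit} an involution of type $A_1E_7$ in $\widetilde{H_0}$, and ``enumerating elements $tn'$ with $t \in \bar{T}_{n\s}$'' is not a finite computation uniform in $q$: the group $\bar{T}_{n\s}$ depends on $q$, the involutions of $\widetilde{H_0}$ lying above a given $w' \in C_W(w)$ need not have $t$ in the $2$-torsion of $\bar{T}$, and --- as you yourself flag --- one must first produce a lift of $w'$ that actually lies in $\widetilde{H_0}$, i.e.\ is centralised by $n\s$. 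This is precisely where the paper imports explicit complement generators (and, for the non-split extension arising from $(q^2+1)^4$, explicit centralising elements) from Galt--Staroletov, all written as words in $h_\a(-1)$ and $w_\a$ so that the class computation becomes $q$-independent. Two simplifications would also have streamlined your argument: the families in part (ii) of Proposition \ref{p:e8} have $|S|$ odd and $F \in \{5\times{\rm SL}_2(5), Z_{30}\}$ with a unique class of involutions, so Lemma \ref{l:conj} disposes of them for every $p$ without any enumeration; and for $q$ odd, \cite[Corollary 4.4]{BT} already supplies a $D_8$ involution in every torus normaliser, so only the $A_1E_7$ class needs to be located.
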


\begin{proof}
We can assume that $H = N_G(\left(\bar{T}^g\right)_{\s})$, where $\bar{T}$ is the $\s$-stable maximal torus from the setup of Section \ref{ss:setup}. Let $W = N_{\bar{G}}(\bar{T})/\bar{T} = 2.{\rm O}_{8}^{+}(2)$ be the Weyl group of $\bar{G}$.

By Lemma \ref{l:torus1}, the $\s$-stable torus $\bar{T}^g$ corresponds to a conjugacy class $x^W$ in $W$, and the latter determines $H$ up to $\bar{G}_{\s}$-conjugacy. Let $n \in N_{\bar{G}}(\bar{T})$ be a lift of $x$. By Lemmas \ref{l:tildeH0} and \ref{l:torus1}, for the purpose of determining the $2$-elusivity of $T$ it suffices to consider involutions in \begin{equation}\label{e:torus}
\widetilde{H_0} = N_{\bar{G}}(\bar{T})_{n\s} = S.F,
\end{equation}
where $S = \bar{T}_{n\s}$ and $F = C_W(x)$ (see Lemma \ref{l:torus2}). As noted above, the possibilities for $H_0$ are recorded in \cite[Table 5.2]{LSS}. 

First notice that if 
\[
S = (q^4 \pm q^3 + q^2 \pm q + 1)^2 \mbox{ or } q^8 \pm q^7 \mp q^5 - q^4 \mp q^3 \pm q + 1
\]
then $|S|$ is odd and $F \in \{5 \times {\rm SL}_2(5), Z_{30}\}$ has a unique class of involutions, so Lemma \ref{l:conj} implies that $T$ is not $2$-elusive.

We now consider the remaining cases. To begin with, we will assume that $q$ is odd. By \cite[Corollary 4.4]{BT}, $H_0$ contains involutions in the $D_8$-class, so it remains to determine whether or not $H_0$ also contains $A_1E_7$ involutions. 

Suppose $S = (q-\e)^8$, in which case we can assume that $n \in \{1,w\}$, where $w \in N_{\bar{G}}(\bar{T})$ corresponds to the central involution in the Weyl group. Since $w$ acts on $\Phi$ as $\alpha \mapsto -\alpha$, it follows that $w$ centralizes $h_{\alpha}(-1)$ for all $\alpha \in \Phi$. Thus $S = \bar{T}_{n\s}$ contains the representatives $h_{\alpha_1}(-1)$ and $h_{\alpha_1}(-1)h_{\alpha_2}(-1)$ from Table \ref{table:oddpinvolutions}. In particular, $T$ is $2$-elusive.

For $q$ odd, it remains to consider the following four possibilities for $S$:
\[
(q^2 \pm q + 1)^4, \, (q^2 + 1)^4, \, (q^4 - q^2 + 1)^2.
\]
In order to describe the element $n \in N_{\bar{G}}(\bar{T})$ in these cases (see \eqref{e:torus}), set $h_i = h_{\alpha_i}(-1)$ and $w_j = w_{\beta_j}$, where $1 \leqs i \leqs 8$, $1 \leqs j \leqs 240$ and $\beta_j$ is the $j$-th root in $\Phi$ with respect to the specific ordering of roots used by {\sc Magma} (see Remark \ref{r:ordering}).

First assume $S = (q^2+1)^4$. By inspecting \cite{GaltStaroletov}, we see that $H_0 = S.F$ is a non-split extension. Moreover, \cite[Table 8]{GaltStaroletov} gives $S = \bar{T}_{n \sigma}$ with $n = w_{2}w_{3}w_{4}w_{7}w_{120}w_{18}w_{8}w_{74}$ and a {\sc Magma} computation (which can be done over $\Q$, as explained in  Section \ref{ss:comp}) shows that $a = h_2 w_2 w_5$ and $b = w_4w_{17}$ centralize $n$. Therefore $a,b \in \widetilde{H_0}$. Moreover, we can use {\sc Magma} to show that $a^2$ and $(ab)^2$ are involutions, with fixed point spaces on $V$ with dimensions $120$ and $136$, respectively. Therefore, $T$ is $2$-elusive in this case.

%
%
%
%
%

%
%

In each of the remaining cases (with $q$ odd), the maximal torus $S$ has a complement $R$ in $\widetilde{H_0}$, with explicit generators for $R$ presented in \cite[Table 7]{GaltStaroletov}. 

For example, suppose $S = (q^2 + q + 1)^4$, which is labelled as torus 56 in \cite{GaltStaroletov}. By inspecting \cite[Table 7]{GaltStaroletov}, we can take $n = w_{1}w_{2}w_{3}w_{5}w_{6}w_{8}w_{120}w_{69}$ and we set $R = \la a,b,c\ra$, where
\[
a  = n^2w_0, \;\; b = h_{1}h_{4}w_{1}w_{4}w_{18}w_{44}, \;\; c = h_{1}h_{3}h_{5}h_{6}h_{7}h_{8}w_{1}w_{2}w_{64}w_{116}w_{26}w_{28}w_{32}w_{120}
\]
and
\[
w_0 = h_{2}h_{5}h_{7}w_{1}w_{2}w_{5}w_{7}w_{44}w_{71}w_{89}w_{120}.
\]
A computation with {\sc Magma} shows that $R$ centralizes $n$, so $R \leqs \widetilde{H_0}$. In addition, we can use {\sc Magma} to show that $a^3$ and $b$ have order two, with fixed point spaces on $V$ of dimensions $120$ and $136$ respectively, so once again $T$ is $2$-elusive. By appealing to \cite[Lemma 2.5]{GaltStaroletov}, we deduce that the same conclusion holds when $S = (q^2-q+1)^4$. 

For $q$ odd, the final case $S = (q^4 - q^2 + 1)^2$ is very similar (this is torus 67 in \cite{GaltStaroletov}). Here we take $n = w_{2}w_{32}w_{5}w_{7}w_{1}w_{4}w_{6}w_{65}$ and $R = \la a,b,c \ra$, where
\[
a = n,\;\; b = h_{3}h_{5}h_{6}w_{18}w_{45}w_{92}w_{112},\;\; c = h_{2}h_{3}h_{4}w_{2}w_{29}w_{4}w_{17}.
\]
Then $R$ centralizes $n$ and we have $\widetilde{H_0} = SR$. Moreover, a {\sc Magma} computation shows that $a^6$ and $c^2$ are involutions with fixed point spaces on $V$ of dimensions $120$ and $136$ respectively, so $T$ is $2$-elusive.

%
%
%
%
%

%
%
 
To complete the proof, we may assume $p=2$. Here Theorem \ref{t:titsweyl} implies that $H_0 = S{:}F$ is a split extension and we may identify $F$ with a subgroup of $\la w_{\alpha} \, : \, \alpha \in \Phi \ra$. Since the involutions in the class labelled $A_1$ are long root elements, \cite[Proposition 1.13(iii)]{LLS} implies that $H_0$ contains such an involution if and only if $F$ contains a reflection in $W$. By inspecting \cite[Table 5.2]{LSS}, we see that $F$ contains such an element if and only if $H_0 = (q-\e)^8{:}W$, so $T$ is not $2$-elusive in all of the remaining cases. Finally, using {\sc Magma} one can check that the subgroup $W < E_8(2)$ contains a representative of every $E_8(2)$-class of involutions and this allows us to conclude that $T$ is $2$-elusive when $S = (q-\e)^8$.
\end{proof}

\begin{lem}\label{l:e8_3}
The conclusion to Proposition \ref{p:e8} holds if $H = N_G(\bar{H}_{\s})$ and $\bar{H}$ is a maximal rank subgroup.
\end{lem}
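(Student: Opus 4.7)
The strategy mirrors that of Lemma \ref{l:e7_mr}. By \cite[Table 5.1]{LSS}, the possibilities for $\bar{H}$ are
\[
D_8,\ E_7A_1,\ A_8.2,\ A_2E_6.2,\ D_4^2.({\rm Sym}_3\times 2),\ A_4^2.4,\ A_2^4.{\rm GL}_2(3),\ A_1^8.{\rm AGL}_3(2).
\]
For each case, we may assume that $\bar{H}$ is the normalizer of a standard subsystem subgroup $\bar{H}^\circ = \la \bar T, U_\alpha \,:\, \alpha\in\Phi'\ra$ for an appropriate root subsystem $\Phi' \subseteq \Phi$. By Lemma \ref{l:sigmaclasses}, the $\bar G_\sigma$-classes of $\sigma$-stable conjugates of $\bar H$ are parameterised by $H^1(\sigma, \bar H/\bar H^\circ)$; for a representative $n \in \bar H$ of such a class, Lemma \ref{l:tildeH0} reduces us to checking which $\bar G$-classes of involutions meet $\widetilde{H_0} = \bar H_{n\sigma}$. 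The resulting structure of $\widetilde{H_0}$ in each case is read off from \cite[Tables 5.1]{LSS} (see also the list in \cite[Table 4.1]{Craven2} for the corresponding subgroups of $G$).

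Recall from Tables \ref{table:evenpinvolutions} and \ref{table:oddpinvolutions} that for $p$ odd the two $\bar G$-classes $A_1E_7$ and $D_8$ are distinguished by $\dim C_V(g) \in \{136,120\}$, where $V = \mathcal{L}(\bar G)$, and for $p=2$ the four classes are distinguished by the Jordan form on $V$. In each case my plan is:

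\vs

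\emph{(a) Untwisted and mildly twisted cases:} for $\bar H \in \{D_8, E_7 A_1, A_8.2, A_2 E_6.2\}$, and for standard twists of $D_4^2$, $A_4^2$, $A_2^4$, $A_1^8$ yielding direct products of groups of Lie type of the same rank (e.g.\ $D_8(q)$, $E_7(q) A_1(q)$, ${\rm SL}_9^\varepsilon(q)$, $A_2^\varepsilon(q) E_6^\varepsilon(q)$, $D_4(q)^2$, $A_4^\varepsilon(q)^2$, $A_2^\varepsilon(q)^4$, $A_1(q)^8$), I will exhibit explicit involutions of each required $\bar G$-type inside $\widetilde{H_0}$. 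For $p$ odd this is done by writing down diagonal elements $h_\alpha(-1)$ and Weyl-style elements $w_\alpha$ as in the proof of Lemma \ref{l:e7_mr}, checking containment in $\widetilde{H_0}$ via the centraliser computation analogous to \eqref{e:e7centralize}, and identifying the class with {\sc Magma} (as in Section \ref{ss:comp}, using the rational lift of Lemma \ref{l:rationalcompute}). For $p=2$, I use Lawther's fusion tables \cite[Chapter 4]{Law09} to read off the $\bar G$-class of each unipotent element of $\bar H^\circ$, and handle the component group with {\sc Magma}.

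\vs

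\emph{(b) Exceptional cases:} the exceptions listed in Proposition \ref{p:e8}(i) arise from $A_4^2.4$ and $A_2^4.{\rm GL}_2(3)$ with a twist that combines a graph action with a field automorphism, yielding $H_0$ of the form ${\rm SU}_5(q^2).4$, ${\rm PGU}_5(q^2).4$, ${\rm U}_3(q^2)^2.8$, or ${\rm U}_3(q^4).8$; here I will show by direct inspection that the torus part $\bar T_{n\sigma}$ is forced to have order coprime to certain divisors needed for both involution classes to be present, and compute with {\sc Magma} that every involution in $\widetilde{H_0}$ lands in a single $\bar G$-class. Similarly, the $p=2$ exceptions in (iii) arise from $D_4^2$ twisted by a field automorphism or by triality (giving $\O_8^+(q^2).({\rm Sym}_3\times 2)$ and ${}^3D_4(q^2).6$), from $A_2^4$ twisted so that only Coxeter tori $(q^2\pm q+1)$ or $(q^4-q^2+1)$ remain, and from $A_1^8$ twisted so that only $(q^2+1)$ tori remain; in each of these the action of the component group on the Weyl subgroup $\la w_\alpha \ra$ ensures that the only $\bar G$-involutions available are long-root elements (class $A_1$) together with products whose Jordan form on $V$ excludes at least one of the classes $A_1^2, A_1^3, A_1^4$. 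This will be verified using Lemma \ref{l:conj} where the component of $\widetilde{H_0}$ normalising the torus has a normal subgroup of odd order, and otherwise by computing Jordan forms on $V$ with {\sc Magma}.

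\vs

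The main obstacle will be organising the twisted forms systematically. In particular, for $\bar H = A_2^4.{\rm GL}_2(3)$ and $\bar H = A_1^8.{\rm AGL}_3(2)$ the set $H^1(\sigma, \bar H/\bar H^\circ)$ is large, and the identification of $\widetilde{H_0}$ with the finite groups appearing in \cite[Table 5.1]{LSS} requires some care: for each element of the component group we must track both the permutation action on the simple factors of $\bar H^\circ$ and the field/graph action on each factor, and then determine whether the resulting element $n \in N_{\bar G}(\bar T)$ centralises enough representative involutions in $\bar H$ to realise each $\bar G$-class. The $p=2$ analysis is the most delicate, since we must distinguish four unipotent classes; this will be handled by working in the simply connected cover and computing explicit Jordan forms on $V$ via {\sc Magma}, as in the proofs of Lemmas \ref{l:e7_mr} and \ref{l:e8_2}.
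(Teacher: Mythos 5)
Your overall route is the same as the paper's: reduce to standard subsystem subgroups, parameterise the $\sigma$-stable conjugates by $H^1(\sigma,\bar{H}/\bar{H}^{\circ})$, pass to $\widetilde{H_0}=\bar{H}_{n\sigma}$ via Lemma \ref{l:tildeH0}, and identify the classes of explicit elements $h_{\a}(-1)$, $w_{\a}$ (for $p$ odd) or use Lawther's fusion tables and Jordan forms on $\mathcal{L}(\bar{G})$ (for $p=2$). Two organisational points: by Theorem \ref{t:odd} the types $D_8(q)$, $D_4(q)^2$ and $A_1(q)^8$ give $|\O|$ odd, so they can be discarded at the outset rather than treated in your part (a); and the exceptions $(q^2\pm q+1)^4.2.(3\times {\rm U}_4(2))$, $(q^4-q^2+1)^2.(12\circ{\rm GL}_2(3))$ and $(q^2+1)^4.(4\circ 2^{1+4}).{\rm Alt}_6.2$ that you assign to twists of $A_2^4$ and $A_1^8$ are torus normalizers and belong to Lemma \ref{l:e8_2}, not here.

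The genuine problems are in your part (b). First, for ${\rm SU}_5(q^2).4$, ${\rm PGU}_5(q^2).4$, ${\rm U}_3(q^2)^2.8$ and ${\rm U}_3(q^4).8$ (so $q$ odd), the mechanism is not that the torus part has order coprime to some relevant divisors: these groups contain plenty of involutions, and the point is that every one of them lies in the single class $D_8$. Establishing this needs two separate arguments. For involutions in $\bar{H}^{\circ}$ one checks that the relevant diagonal products, e.g. $(t,t)\in A_4^2$, have $120$-dimensional fixed space on $\mathcal{L}(\bar{G})$. For involutions in the outer coset $\bar{H}^{\circ}w$ one cannot simply compute every involution of $\widetilde{H_0}$ with {\sc Magma}; one needs the structural fact (via \cite[Lemma 4.4.6]{GLS}, since $w$ induces the inverse-transpose graph automorphism on each simple factor) that $\bar{H}^{\circ}w$ contains a single $\bar{H}$-class of involutions, all conjugate to $w$, which is itself of type $D_8$. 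Second, your description of the $p=2$ exceptions $\O_8^{+}(q^2).({\rm Sym}_3\times 2)$ and ${}^3D_4(q^2).6$ is backwards: these fail to be $2$-elusive precisely because they contain \emph{no} long root involutions (class $A_1$), which follows from \cite[Proposition 1.13]{LLS}, whereas you assert that long root elements are the only involutions available. As written, that step of your argument would not go through.
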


\begin{proof}
In view of Lemma \ref{l:e8_2}, we may assume $\bar{H}^\circ$ is reductive and not a maximal torus. By Theorem \ref{t:odd}, we see that $|\O|$ is odd if $H_0$ is of type $D_8(q)$, $D_4(q)^2$ or $A_1(q)^8$, so we can exclude these cases for the remainder of the proof. Then by inspecting \cite[Table 5.1]{LSS}, the possibilities for $\bar{H}$ are as follows:
\begin{equation}\label{e:e8list1}
A_1E_7, \; A_8.2, \; A_2E_6.2, \; A_4^2.4, \; D_4^2.({\rm Sym}_3 \times 2), \; A_2^4.{\rm GL}_2(3).
\end{equation}
As before, we are free to assume that $H = N_G(\left( \bar{H}^g \right)_{\s})$, where $\bar{H}$ is the normalizer of a standard subsystem subgroup 
\[
\bar{H}^\circ = \langle \bar{T}, U_{\alpha} \,:\, \alpha \in \Phi' \rangle,
\]
and $\bar{H}^g$ is a $\s$-invariant conjugate of $\bar{H}$. Here $\bar{T}$ is a maximal torus of $\bar{G}$, as in the setup of Section \ref{ss:setup}, and $\Phi'$ is a root subsystem of $\Phi$. As usual, we will denote the longest root of $\Phi$ by $\alpha_0$. 

Recall that the Weyl group of $\bar{H}^\circ$ can be identified with the subgroup $\langle s_{\alpha} : \alpha \in \Phi' \rangle$ of $W$. Then $\bar{H}$ is generated by $\bar{H}^\circ$, together with the elements of $N_{\bar{G}}(\bar{T})$ that normalize the Weyl group of $\bar{H}^\circ$. In particular, $\bar{H}$ contains an element $w \in N_{\bar{G}}(\bar{T})$ which maps to the central involution of $W$. Explicitly, we can choose $$w = w_{\alpha_{1}} w_{\alpha_{2}} w_{\alpha_{5}} w_{\alpha_{7}} w_{\alpha_{44}} w_{\alpha_{71}} w_{\alpha_{89}} w_{\alpha_{120}} \in \bar{H},$$ where $\alpha_i$ denotes the $i$-th positive root in $\Phi$ with respect to the specific ordering of the roots of $\bar{G} = E_8$ used by {\sc Magma} (see Remark \ref{r:ordering}). Then $w$ acts as $\alpha \mapsto -\alpha$ on $\Phi$, and a computation shows that $w^2 = 1$, so $w$ is an involution in $\bar{G}$.

For later use, we note that when $q$ is odd, a computation with {\sc Magma} shows that $w$ has a fixed point space of dimension $120$ on $V$. Therefore $w$ is an involution of type $D_8$ (Table \ref{table:oddpinvolutions}) when $q$ is odd.

Let $n \in N_{\bar{G}}(\bar{T})$ be an element of $\bar{H}$ such that under the bijection of Lemma \ref{l:sigmaclasses}, the $\s$-invariant conjugate $\bar{H}^g$ corresponds to the image of $n$ in $H^1(\s, \bar{H}/\bar{H}^\circ)$. Then in view of Lemma \ref{l:tildeH0}, for determining $2$-elusivity it suffices to consider involutions in $\widetilde{H_0} = \bar{H}_{n\s}$. 

As in the proof of Lemma \ref{l:e7_mr}, we will partition the analysis according to the parity of $q$. As before, we refer to \cite[Table 5.1]{LSS} for the precise structure of $H$ in each case. To begin with, we will assume $q$ is odd, and we will consider each possibility for $\bar{H}$ in turn (see \eqref{e:e8list1}).

\vs

\noindent \emph{Case 1.1. $\bar{H} = A_1E_7$, $q$ odd.}

\vs

Suppose $\bar{H} = A_1E_7$, in which case $\Phi'$ has base $\{\alpha_1, \alpha_2, \ldots, \alpha_7\} \cup \{ -\alpha_0\}$ and $H_0 = \bar{H}_{\s}$. Thus $H_0$ contains the representatives $h_{\alpha_1}(-1)$ and $h_{\alpha_1}(-1)h_{\alpha_2}(-1)$ from Table \ref{table:oddpinvolutions}, and we conclude that $T$ is $2$-elusive in this case.

\vs

\noindent \emph{Case 1.2. $\bar{H} = A_8.2$ or $A_2E_6.2$, $q$ odd.}

\vs

Next suppose $\bar{H} = A_8.2$, in which case $\Phi'$ has base $\{\alpha_1, \alpha_3, \alpha_4, \ldots, \alpha_8, -\alpha_0\}$. Moreover $\bar{H} = \bar{H}^\circ{:} \langle w \rangle$, so we can take $n \in \{1,w\}$. Now $w$ centralizes $h_{\alpha}(-1)$ for all $\alpha \in \Phi$, so $h_{\alpha}(-1) \in \widetilde{H_0}$ for all $\alpha \in \Phi$. Thus $\widetilde{H_0}$ contains the representatives $h_{\alpha_1}(-1)$ and $h_{\alpha_1}(-1)h_{\alpha_2}(-1)$ from Table \ref{table:oddpinvolutions}, and so $T$ is $2$-elusive.

Now assume $\bar{H} = A_2E_6.2$, in which case $\Phi'$ has base $\{\alpha_1, \ldots, \alpha_6\} \cup \{\alpha_8, -\alpha_0\}$. Here we also have $\bar{H} = \bar{H}^\circ{:}\langle w \rangle$ and once again we conclude that $T$ is $2$-elusive.

\vs

\noindent \emph{Case 1.3. $\bar{H} = A_4^2.4$, $q$ odd.}

\vs

Next assume $\bar{H} = A_4^2.4$, in which case $\Phi'$ has base $\{\alpha_1, \alpha_3, \alpha_4, \alpha_2\} \cup \{\alpha_6, \alpha_7, \alpha_8, -\alpha_0\}$. We can choose $n \in \{1,w,x,x^{-1}\}$, where the image of $x$ in $\bar{H}/\bar{H}^\circ$ has order $4$.

If $n \in \{1,w\}$, then as in Case 1.2 it is easy to see that the representatives from Table \ref{table:oddpinvolutions} are contained in $\widetilde{H_0}$, whence $T$ is $2$-elusive. Now assume $n \in \{x,x^{-1}\}$, in which case $H_0 = J.4$ with $J = {\rm SU}_5(q^2)$ or ${\rm PGU}_5(q^2)$. We will prove that every involution in $H_0$ is of type $D_8$.

First we consider the types of involutions in $\bar{H}^\circ = A_4^2$. There are two conjugacy classes of involutions in each $A_4$ factor, with representatives given by $$t_1 = h_{\alpha_1}(-1),\ t_2 = h_{\alpha_1}(-1)h_{\alpha_4}(-1)$$ in the first factor, and $$t_1' = h_{\alpha_6}(-1),\ t_2' = h_{\alpha_6}(-1)h_{\alpha_8}(-1)$$ in the second factor. A computation with {\sc Magma} shows that for $i = 1,2$ the involution $t_it_i'$ has a fixed point space of dimension $120$ on $V$, and thus belongs to class $D_8$ (Table \ref{table:oddpinvolutions}). We conclude that the involutions in $\bar{H}^\circ$ of the form $(t,t)$ belong to the class $D_8$. Now an involution in $J < H_0$ embeds into $\bar{H}^\circ$ as $(t,t)$, so it follows that every involution in $J$ is of type $D_8$.

Next we will prove that every involution in $\bar{H} \setminus \bar{H}^\circ$ is of type $D_8$. To this end, note that $w \in \bar{H} \setminus \bar{H}^\circ$, so all involutions in $\bar{H} \setminus \bar{H}^\circ$ are contained in $\bar{H}^\circ w$. Since $w$ acts on $\Phi$ as $\alpha \mapsto -\alpha$, the action of $w$ on both $A_4$ factors of $\bar{H}^\circ = A_4^2$ is via the standard inverse-transpose graph automorphism. Hence it follows from \cite[Lemma 4.4.6]{GLS} that there is a unique $\bar{H}$-class of involutions in $\bar{H}^\circ w$. Since $w$ is an involution of type $D_8$, it follows that every involution in $\bar{H} \setminus \bar{H}^\circ$ is of type $D_8$. Consequently, the involutions in $H_0 \setminus J$ are of type $D_8$, so we conclude that every involution in $H_0$ is of type $D_8$, and $T$ is not $2$-elusive.

\vs

\noindent \emph{Case 1.4. $\bar{H} = D_4^2.({\rm Sym}_3 \times 2)$, $q$ odd.}

\vs

In this case $\Phi'$ has base $\{\alpha_3, \alpha_4, \alpha_2, \alpha_5\} \cup \{\alpha_7, \alpha_8, \beta, -\alpha_0\}$, where $-\beta$ is the longest root in the $D_8$ root subsystem with base $\{\alpha_2, \alpha_3, \ldots, \alpha_8, -\alpha_0\}$. Explicitly we have $\beta = \alpha_{97}$.

Here $H_0$ is of type $D_4(q)^2$, $D_4(q^2)$, ${}^3D_4(q)^2$ or ${}^3D_4(q^2)$. As noted in the proof of \cite[Lemma 2.5]{LSS}, the image of $n$ in $\bar{H}/\bar{H}^{\circ} = {\rm Sym}_3 \times 2$ is either the central involution, or it has order $1$, $3$ or $6$. In other words, the image of $n$ is contained in the unique cyclic subgroup of order $6$ in ${\rm Sym}_3 \times 2$. A computation shows that the image of $$g = w_{\alpha_{92}} w_{\alpha_{89}} w_{\alpha_{90}} w_{\alpha_{60}} w_{\alpha_{72}} w_{\alpha_{56}}$$ in the Weyl group of $E_8$ has order $6$ and it normalizes the Weyl group of $\bar{H}^\circ$. Another computation shows that $g$ is also an element of order $6$ in $\bar{G}$. Hence $g \in \bar{H}$ and $\bar{H}^\circ.6 = \bar{H}^\circ{:}\langle g \rangle$, so we can assume that $n \in \langle g \rangle$. In particular, $n$ centralizes $g$ and thus $g \in \widetilde{H_0} = \bar{H}_{n\s}$. 

A computation with {\sc Magma} shows that $g^3$ has a fixed point space of dimension $136$ on $V$, so $g^3$ is an involution of type $A_1E_7$. Moreover $g$ centralizes $t = h_{\alpha_2}(-1)h_{\alpha_7}(-1)$, so $t \in \widetilde{H_0}$. Yet another computation shows that $t$ is an involution of type $D_8$. So we conclude that $T$ is $2$-elusive in every case.

\vs

\noindent \emph{Case 1.5. $\bar{H} = A_2^4.{\rm GL}_2(3)$, $q$ odd.}

\vs

In this case $\Phi'$ has base $\{\alpha_1, \alpha_3\} \cup \{\alpha_5, \alpha_6\} \cup \{-\alpha_{69}, \alpha_2\} \cup \{\alpha_8,-\alpha_0\}$, where $\alpha_{69}$ is the longest root in a root subsystem of type $E_6$. By the proof of \cite[Lemma 2.5]{LSS}, the image of $n$ in $\bar{H}/\bar{H}^\circ \cong \GL_2(3)$ is contained in a cyclic subgroup of order $8$. Thus we can take $n \in \{1,w,x^2,x\}$, where the image of $x$ has order $8$ in $\GL_2(3)$. Then $\widetilde{H_0}$ is contained in $\bar{H}^\circ \langle x \rangle$.

If $n \in \{1,w\}$, then as in Case 1.2, the representatives from Table \ref{table:oddpinvolutions} are contained in $\widetilde{H_0}$, and so $T$ is $2$-elusive. Now assume $n \in \{x^2,x\}$, in which case $H_0 = J.8$ and $J = {\rm U}_3(q^2)^2$ or $J = {\rm U}_3(q^4)$. We will prove that every involution in $H_0$ is of type $D_8$.

To this end, let $t \in \widetilde{H_0}$ be an involution. If $t \in \bar{H}^\circ$, then up to conjugacy, $t$ embeds in $\bar{H}^{\circ}$ as the image of $(z,z,z,z)$, $(z,z,1,1)$ or $(1,1,z,z)$, where $z$ is an involution in $A_2$ (note that $(z,z,z,z)$ is the only possibility when $J = {\rm U}_3(q^4)$). In any case, representatives in $\bar{H}^{\circ}$ for these involutions are given by \begin{align*} t_1 &= h_{\alpha_1}(-1) h_{\alpha_5}(-1) h_{\alpha_2}(-1) h_{\alpha_8}(-1), \\ t_2 &= h_{\alpha_1}(-1) h_{\alpha_5}(-1), \\ t_3 &= h_{\alpha_2}(-1) h_{\alpha_8}(-1).\end{align*} A computation with {\sc Magma} shows that $t_1$, $t_2$, $t_3$ all have a fixed point space of dimension $120$ on $V$, and so they are all of type $D_8$.

Suppose then that $t \in \bar{H} \setminus \bar{H}^\circ$, in which case $t \in \bar{H}^\circ w$. As in Case 1.3, it follows from \cite[Lemma 4.4.6]{GLS} that $t$ must be $\bar{H}$-conjugate to $w$. Since $w$ is an involution of type $D_8$, we conclude that every involution in $H_0$ is of type $D_8$, and $T$ is not $2$-elusive.

\vs

To complete the proof, we may assume $q$ is even.

\vs

\noindent \emph{Case 2.1. $\bar{H} = A_1E_7$, $A_8.2$ or $A_2E_6.2$, $q$ even.}

\vs

In each of these cases, we can use \cite{Law09} to show that $\bar{H}^{\circ}$ contains a representative of each $\bar{G}$-class of involutions, whence $T$ is $2$-elusive.

\vs

\noindent \emph{Case 2.2. $\bar{H} = A_4^2.4$ or $D_4^2.({\rm Sym}_3 \times 2)$, $q$ even.}

\vs

First assume $\bar{H} = A_4^2.4$. As above, by inspecting \cite{Law09} we see that $\bar{H}^{\circ}$ contains a representative of each $\bar{G}$-class of involutions and this immediately implies that $T$ is $2$-elusive when $H_0$ is of type ${\rm L}_5^{\e}(q)^2$. On the other hand, if $H_0 = {\rm SU}_5(q^2).4$ or ${\rm PGU}_5(q^2).4$, then $H_0$ only has three classes of involutions, so $T$ is not $2$-elusive.

Now suppose $\bar{H} = D_4^2.({\rm Sym}_3 \times 2)$. If $H_0 = \O_8^{+}(q^2).({\rm Sym}_3 \times 2)$ or ${}^3D_4(q^2).6$, then \cite[Proposition 1.13]{LLS} implies that $H_0$ does not contain any $A_1$-type involutions, so $T$ is not $2$-elusive. Now assume $H_0 = \O_8^{+}(q)^2.({\rm Sym}_3 \times 2)$ or ${}^3D_4(q)^2.6$. There are two classes of involutions in ${}^3D_4(q)$, which embed in the algebraic group $D_4$ as involutions of type $a_2$ or $c_4$ (in terms of the notation in \cite{AS}). So in both cases, by considering the natural embedding of $H_0$ in $D_4^2$, we deduce that $H_0$ contains involutions of the form 
\[
(a_2,1), \; (a_2,a_2), \; (c_4,1), \; (c_4,c_4),
\] 
which embed in $D_8<\bar{G}$ as involutions of type $a_2$, $a_4$, $c_4$ and $c_8$. With respect to Lawther's notation for involution classes in $D_8$ (see \cite[Table 8]{Law09}), these elements are contained in the respective $D_8$-classes $A_1$, $2A_1$, $A_1+D_2$ and $3A_1+D_2$. Then by inspecting \cite[Section 4.13]{Law09} we deduce that the corresponding $\bar{G}$-classes are $A_1$, $A_1^2$, $A_1^3$ and $A_1^4$, respectively, and thus $T$ is $2$-elusive.

\vs

\noindent \emph{Case 2.3. $\bar{H} = A_2^4.{\rm GL}_2(3)$, $q$ even.}

\vs

First assume $H_0 = {\rm U}_3(q^2)^2.8$ or ${\rm U}_3(q^4).8$. In view of \cite[Proposition 1.13]{LLS} and the embedding of $H_0$ in $\bar{H}$, it is easy to see that $H_0$ does not contain any $A_1$-type involutions and thus $T$ is not $2$-elusive. Now assume $H_0$ is of type ${\rm L}_3^{\e}(q)^4$. We may embed 
\[
\bar{H}^{\circ} = A_2(A_2^3) < A_2E_6 < \bar{G}
\]
and we note that the maximal rank subgroup $A_2^3 < E_6$ contains a representative of all three involution classes in $E_6$. Then by considering the embedding of $A_2E_6$ in $\bar{G}$ (see \cite[Section 4.15]{Law09}) we deduce that $\bar{H}^{\circ}$ contains a representative of all four $\bar{G}$-classes of involutions. This implies that $T$ is $2$-elusive and the proof of the lemma is complete.
\end{proof}

\begin{lem}\label{l:e8_4}
The conclusion to Proposition \ref{p:e8} holds if $H = N_G(\bar{H}_{\s})$ and $\bar{H}$ is a positive-dimensional non-maximal rank subgroup of $\bar{G}$.
\end{lem}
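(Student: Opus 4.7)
The plan is to work through the list of candidate subgroups $\bar{H}$ provided by \cite[Table 3]{LS03}, which describes all positive-dimensional maximal closed subgroups of $\bar{G} = E_8$ that are not of maximal rank. The possibilities for the type of $\bar{H}$ (equivalently, for $\operatorname{soc}(\bar{H}_{\s})$ or its close analogue) are $F_4$, $G_2F_4$, $B_2$ (with $p \geqs 5$), $A_1G_2G_2$, $A_1G_2$ (with $p \geqs 3$), $A_1A_2$ (with $p \geqs 5$), and several classes of $A_1$ subgroups (corresponding to the three classes arising in (vi), with $p \geqs 23, 29, 31$ respectively). In each case $\bar{H}$ is connected, so by Lemma \ref{l:sigmaclasses} the subgroup $H$ is determined up to $\bar{G}_\sigma$-conjugacy, and it suffices to test involutions in $\bar{H}_\sigma$.

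The uniform strategy is to appeal to \cite[Chapter 12]{Thomas}, which records the composition factors of $V = \mathcal{L}(\bar{G}) \downarrow \bar{H}^\circ$ for every candidate $\bar{H}$. For $p$ odd, each involution $g \in H_0$ is semisimple, so $\dim C_V(g)$ can be obtained directly from the traces of $g$ on the composition factors; by Table \ref{table:oddpinvolutions}, the value $\dim C_V(g) = 136$ identifies the class $A_1E_7$, while $\dim C_V(g) = 120$ identifies $D_8$, and these are the only two $T$-classes to be hit. For $p$ even, involutions are unipotent and for the cases $\bar{H}$ of type $F_4$, $G_2F_4$, $A_1G_2G_2$ the $\bar{G}$-class of each involution in $\bar{H}^\circ$ can be read off from \cite{Law09}, which immediately shows that $H_0$ meets every $\bar{G}$-class of involutions; in the subcase $(\bar{H}, p) = (F_4, 3)$ the restriction forces all involutions in $H_0 = F_4(q)$ to lie in a single $\bar{G}$-class, giving the exception in (iv). When $\bar{H}$ is of type $B_2$ (with $p \geqs 5$), a direct computation using the irreducible decomposition $V \downarrow B_2$ shows that every involution in $H_0 = \mathrm{SO}_5(q)$ has the same fixed-point-space dimension on $V$, establishing the exception in (v). The cases $\bar{H} = A_1G_2$, $A_1A_2$ are handled identically, with the outcome that both $\bar{G}$-classes of involutions are realised (thus $T$ is $2$-elusive).

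The main obstacle is the analysis of the three $A_1$-type subgroups from (vi), where $p \geqs 23, 29, 31$ respectively. Here $H_0 = {\rm PGL}_2(q)$ has two conjugacy classes of involutions, and the $\bar{G}$-class of an involution $g$ is controlled by its trace on each composition factor $V_{A_1}(c)$ appearing in $V \downarrow \bar{H}$. Using the fact that an involution of $A_1$ (of adjoint type) acts on $V_{A_1}(c)$ with fixed-point-space dimension $\tfrac{1}{2}(c+1+(-1)^{c/2})$ for even $c$, one sums over the composition factors listed in \cite[Chapter 12]{Thomas} to conclude that every involution in $H_0$ has $\dim C_V(g) = 136$; thus every involution is of type $A_1E_7$, and $T$ is not $2$-elusive, yielding exception (vi). Finally, the Borovik subgroup (type (IV) in Definition \ref{d:c}) with $p > 5$ must also be considered: since its socle $\operatorname{Alt}_5 \times \operatorname{Alt}_6$ has three classes of involutions and acts irreducibly on $V$ with known composition factors (see \cite{Litt}), one verifies using feasible characters and property (\textbf{P}) that representatives of both $T$-classes occur, so $T$ is $2$-elusive; this case technically falls outside Lemma \ref{l:e8_4} but is worth flagging as part of completing Proposition \ref{p:e8}.
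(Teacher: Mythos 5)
Your overall strategy (run through the list of non-maximal-rank positive-dimensional maximal subgroups and identify the $\bar{G}$-class of each involution via its action on $V=\mathcal{L}(\bar{G})$, using \cite[Chapter 12]{Thomas} for $p$ odd and \cite{Law09} for $p=2$) is the same as the paper's. However, there is a genuine gap in your case list: you omit $\bar{H} = A_1\times{\rm Sym}_5$ with $p\geqs 7$, which \emph{is} one of the maximal positive-dimensional non-maximal-rank subgroups of $E_8$ (see \cite[Theorem 1]{LS04}), while at the same time you include an ``$A_1G_2$'' subgroup that belongs to the $E_7$ list, not the $E_8$ list. The omitted case is not routine: here $H_0 = {\rm PGL}_2(q)\times{\rm Sym}_5$ and one must exhibit involutions of both types; the paper does this by finding an $A_1E_7$-involution inside the ${\rm Sym}_5$ factor and then showing that the involution $z\in{\rm PGL}_2(q)$ is of type $D_8$ by tracking it through a maximal rank subgroup $A_4^2$ and computing its trace on the four $50$-dimensional summands of $V\downarrow A_4^2$. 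Without this case your proof of the lemma is incomplete.

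Two further points. First, your assertion that every $\bar{H}$ on the list is connected is false ($A_1G_2^2.2$, $A_1A_2.2$ and $A_1\times{\rm Sym}_5$ are not), so the appeal to Lemma \ref{l:sigmaclasses} to get a unique $\bar{G}_{\s}$-class is not justified as stated; in particular for $A_1G_2^2.2$ there are two forms of $H_0$ (with socle ${\rm L}_2(q)\times G_2(q)^2$ or ${\rm L}_2(q)\times G_2(q^2)$), although the involutions needed all lie in $\bar{H}^{\circ}$ so the conclusion survives. Second, for the three classes of maximal $A_1$ subgroups your computation that every involution has $\dim C_V(g)=136$ (type $A_1E_7$) is incorrect --- summing $\tfrac12(c+1+(-1)^{c/2})$ over the composition factors gives $120$, i.e.\ type $D_8$ --- and in any case the computation is unnecessary: since $\bar{H}=A_1$ has a unique class of involutions, the two ${\rm PGL}_2(q)$-classes fuse in $\bar{G}$ and $T$ is not $2$-elusive, which is the one-line argument the paper uses. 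Your closing remark on the Borovik subgroup is correctly flagged as outside this lemma, though the paper handles it by a structural argument (via the $A_8$ centralizer of an order-$3$ element) rather than by feasible characters.
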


\begin{proof}
According to \cite[Theorem 1]{LS04}, the possibilities for $\bar{H}$ are as follows:
\begin{equation}\label{e:e8list2}
\renewcommand*{\arraystretch}{1.3}
\begin{array}{c}
G_2F_4, \; A_1G_2^2.2 \, (p \geqs 3), \; F_4 \, (p=3), \; A_1A_2.2 \, (p \geqs 5), \\
B_2 \, (p \geqs 5), \, A_1 \, (\mbox{$3$ classes; $p \geqs 23,29,31$}), \, A_1 \times {\rm Sym}_5 \, (p \geqs 7).
\end{array}
\end{equation}
Note that we have included the special case $(\bar{H},p) = (F_4,3)$ described in \cite{CST}, which was incorrectly omitted in \cite{LS04}.

We now consider each possibility for $\bar{H}$ in \eqref{e:e8list2} and we will begin by assuming $\bar{H}^{\circ} = A_1$. If $\bar{H} = A_1$ then $H_0 = {\rm PGL}_2(q)$ and $T$ is not $2$-elusive since $\bar{H}$ has a unique class of involutions. So we may assume $\bar{H} = A_1 \times {\rm Sym}_5$, in which case $H_0 = {\rm PGL}_2(q) \times {\rm Sym}_5$ and $p \geqs 7$. We claim that $T$ is $2$-elusive. 

To see this, first note that the ${\rm Sym}_5$ factor contains involutions of type $A_1E_7$ (see \cite[Lemma 5.6]{BT}). Let $z \in {\rm PGL}_2(q) < H_0$ be the image of $(-I_1,I_1) \in {\rm GL}_2(q)$. Then as explained in the proof of \cite[Lemma 5.6]{BT}, we can embed $z$ in a maximal rank subgroup $J = A_4^2$ of $\bar{G}$, where $z = z_1z_2$ and each $z_i \in A_4$ acts as $(-I_2,I_3)$ on the natural module for $A_4$. Now 
\[
V \downarrow J = \mathcal{L}(A_4^2) / (U_1 \otimes \Lambda^2(U_2)) / (\Lambda^2(U_1) \otimes U_2^*) / (\Lambda^2(U_1)^* \otimes U_2) / (U_1^* \otimes \Lambda^2(U_2)^*)
\]
where $U_1$ and $U_2$ are the natural modules for the two $A_4$ factors of $J$ (see \cite[Table 12.5]{Thomas}). We calculate that $z$ has a $24$-dimensional $1$-eigenspace on each summand, so $\dim C_V(z) = 120$ and therefore $z$ is a $D_8$-type involution. This justifies the claim.

Now suppose $\bar{H} = B_2$, so $H_0 = {\rm SO}_5(q)$, $p \geqs 5$ and \cite[Table 12.5]{Thomas} gives 
\[
V \downarrow B_2 = V_{B_2}(3\omega_1+2\omega_2) / V_{B_2}(6\omega_2) / V_{B_2}(2\omega_2).
\]
The involutions in $H_0$ are of type $(-I_4,I_1)$ and $(-I_2,I_3)$, which we can write as $h_{\beta_1}'(-1)$ and $w_{\beta_2}'$ with respect to a set of simple roots $\{\beta_1, \beta_2\}$ for $B_2$. A calculation with {\sc Magma} (see Section \ref{ss:comp}) shows that both involutions have trace $-8$ on $V$. Therefore, every involution in $H_0$ is of type $D_8$ and thus $T$ is not $2$-elusive.

Next assume $\bar{H} = A_1A_2.2$, so $p \geqs 5$. By inspecting \cite[Table 18]{BT}, we see that $H_0$ contains involutions of type $A_1E_7$. Let $t = (z,1) \in {\rm L}_2(q) \times {\rm L}_3^{\e}(q) < H_0$ be an involution. Now $z$ has trace $(-1)^{\ell}$ on the Weyl module $V_{A_1}(2\ell)$ and so by considering the restriction of $V$ to $\bar{H}^{\circ}$ (see \cite[Table 12.5]{Thomas}) we calculate that $\dim C_V(t)=120$ and thus $t$ is an involution of type $D_8$. 

The case $\bar{H} = A_1G_2^2.2$ is very similar. Here $p \geqs 3$ and the socle of $H_0$ is either ${\rm L}_2(q) \times G_2(q)^2$ or ${\rm L}_2(q) \times G_2(q^2)$. By \cite[Table 18]{BT}, $H_0$ contains involutions of type $A_1E_7$. Let $t$ be an involution in the ${\rm L}_2(q)$ factor of ${\rm soc}(H_0)$. Then by considering the restriction of $V$ to $\bar{H}^{\circ}$ we calculate that $\dim C_V(t)=120$ and thus $T$ is $2$-elusive.

Next suppose that $\bar{H} = G_2F_4$, so $H_0 = G_2(q) \times F_4(q)$. If $p=2$ then the information in \cite[Table 38]{Law09} immediately implies that $T$ is $2$-elusive, so let us assume $p \geqs 3$. By inspecting \cite[Table 18]{BT}, we see that $H_0$ contains involutions of type $A_1E_7$. Let $t$ be a $B_4$-type involution in the $F_4(q)$ factor and note that $t$ has trace $-6$ and $20$ on the respective Weyl modules $V_{F_4}(\delta_4)$ and $V_{F_4}(\delta_1)$. From \cite[Table 12.5]{Thomas} we see that 
\[
V \downarrow G_2F_4 = \mathcal{L}(G_2F_4)/(V_{G_2}(\omega_1) \otimes V_{F_4}(\delta_4))/(V_{G_2}(\omega_2) \otimes 0)/(0 \otimes V_{F_4}(\delta_1))
\]
and this allows us to deduce that $\dim C_V(t) = 120$. Therefore, $t$ is an involution of type $D_8$ and we conclude that $T$ is $2$-elusive.

To complete the proof, it remains to consider the special case arising in \cite{CST}, where $\bar{H} = F_4$ and $p=3$. Let $t_1,t_2 \in H_0 = F_4(q)$ be representatives of the two $H_0$-classes of involutions as in Table \ref{table:oddpinvolutions}, say $t_1 = h_{\b_1}'(-1)$ and $t_2 = h_{\b_4}'(-1)$, where $C_{\bar{H}}(t_1) = A_1C_3$ and $C_{\bar{H}}(t_2) = B_4$.  In \cite[Section 3]{CST}, expressions for the generators $x_{\beta_i}'(c), h_{\beta_i}'(c)$ of $\bar{H}$ are presented in terms of the generators for $\bar{G}$, namely 
\begin{align*}
t_1 &= h_{\b_1}'(-1) = h_{\alpha_4}(-1)h_{\alpha_6}(-1), \\
t_2 &= h_{\b_4}'(-1) = h_{\alpha_1}(-1)h_{\alpha_3}(-1)h_{\alpha_4}(-1)h_{\alpha_5}(-1)h_{\alpha_7}(-1).
\end{align*} 
With the aid of {\sc Magma} (see Section \ref{ss:comp}), we can use these expressions to show that both $t_1$ and $t_2$ have trace $-8$ on $V$. Therefore, every involution in $H_0$ is of type $D_8$ and thus $T$ is not $2$-elusive.
\end{proof}

In order to complete the proof of Proposition \ref{p:e8}, it just remains to consider the groups where $H$ is either an exotic local subgroup or the Borovik subgroup.

\begin{lem}\label{l:e8_5}
The conclusion to Proposition \ref{p:e8} holds if $H$ is an exotic local subgroup.
\end{lem}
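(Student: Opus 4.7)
The plan is to enumerate the exotic local subgroups arising for $T = E_8(q)$ using \cite[Table 1]{CLSS} and to handle each case in turn. According to that table, up to $G$-conjugacy there are two candidates: the exotic $2$-local (Dempwolff-type) subgroup $H_0 = 2^{5+10}.\operatorname{GL}_5(2)$, arising when $p$ is odd, and the exotic $5$-local subgroup $H_0 = \operatorname{ASL}_3(5) = 5^3{:}\operatorname{SL}_3(5)$, arising when $p \neq 2, 5$ under an appropriate congruence on $q$.

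The case $H_0 = \operatorname{ASL}_3(5)$ is the shorter one. The normal subgroup $5^3$ has odd order, so Lemma \ref{l:conj} gives that $H_0$ and $H_0/5^3 \cong \operatorname{PSL}_3(5)$ have the same number of conjugacy classes of involutions, namely one. Since $p$ is odd, Table \ref{table:oddpinvolutions} gives that $T = E_8(q)$ has exactly two classes of involutions, so $T$ is not $2$-elusive. This recovers precisely case (vii) of Proposition \ref{p:e8}.

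For $H_0 = 2^{5+10}.\operatorname{GL}_5(2)$ with $p$ odd, the claim is that $T$ is $2$-elusive. Let $E = Z(O_2(H_0)) \cong 2^5$, on which the top quotient $\operatorname{GL}_5(2)$ acts faithfully and transitively on non-identity elements. Hence the $31$ involutions of $E$ form a single $H_0$-class and lie in a common $\bar{G}$-class. I would first identify this class by appealing to the explicit embedding of $H_0$ in $\bar{G}$ from \cite[Section 3]{CLSS}: each involution $x \in E$ can be written as a product of elements of the form $h_\alpha(-1)$, and the $\bar{G}$-class of $x$ is then determined by computing $\dim C_V(x)$ on the adjoint module $V = \mathcal{L}(\bar{G})$ and comparing with Table \ref{table:oddpinvolutions}; by Lemma \ref{l:rationalcompute} this calculation can be carried out over $\mathbb{Q}$ with {\sc Magma} (Section \ref{ss:comp}). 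To exhibit an involution of the complementary $\bar{G}$-class, I would consider an involution of $H_0$ either in $O_2(H_0) \setminus E$ or lifting from a conjugacy class of involutions in the quotient $\operatorname{GL}_5(2)$ (which has two such classes, determined by the Jordan blocks on the natural module). Each such element can again be written in Chevalley generators, and its $\bar{G}$-class read off from Table \ref{table:oddpinvolutions} by the same fixed-point-space computation.

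The main obstacle is producing the explicit realization of the Dempwolff subgroup inside $\bar{G}$: once explicit matrix representatives (or words in Chevalley generators) for the relevant involutions are in hand, identification of the $\bar{G}$-classes is a finite and routine computation. An alternative approach, avoiding explicit coordinates, is to use the structural constraint that $C_{\bar{G}}(x)$ for $x \in E$ must contain the $2$-group $O_2(H_0)/\langle x\rangle$ of order $2^{14}$ together with an action of a large subgroup of $\operatorname{GL}_5(2)$; combined with the fact that the only two options for $C_{\bar{G}}(x)^\circ$ are $A_1E_7$ and $D_8$ (Table \ref{table:oddpinvolutions}), this forces one specific type for the involutions in $E$, after which involutions of the complementary type can be located in a complementary part of $H_0$.
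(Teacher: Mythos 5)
Your enumeration of the two exotic local cases matches \cite[Theorem 1]{CLSS}, and your treatment of $H_0 = 5^3{:}{\rm SL}_3(5) \cong {\rm ASL}_3(5)$ is correct and essentially the paper's argument: the odd-order normal subgroup $5^3$ together with Lemma \ref{l:conj} (or just the structure of ${\rm ASL}_3(5)$) gives a unique class of involutions, so $T$ is not $2$-elusive, which is case (vii) of Proposition \ref{p:e8}.

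For $H_0 = 2^{5+10}.{\rm L}_5(2)$, however, what you have is a plan rather than a proof, and the decisive step is missing. To conclude that $T$ is $2$-elusive you must actually exhibit involutions of $H_0$ in \emph{both} classes $D_8$ and $A_1E_7$ of Table \ref{table:oddpinvolutions}; you acknowledge that producing the explicit realization of $H_0$ in $\bar{G}$ is "the main obstacle" and then do not overcome it. Your fallback structural argument does not close the gap: the constraint that $C_{\bar{G}}(x)$ contains a $2$-group of order $2^{14}$ with a large ${\rm GL}_5(2)$-action fails to distinguish $A_1E_7$ from $D_8$ (both centralizers have enormous Sylow $2$-subgroups), and even if it pinned down the class of the involutions in $E$, there is no a priori reason why an involution of the \emph{complementary} class must occur elsewhere in $H_0$ — if all involutions of $H_0$ were of type $D_8$, the conclusion would be the opposite one. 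The way to finish without redoing the embedding computation is to quote the proof of \cite[Lemma 2.17]{CLSS} directly: Part A of that proof shows that every involution in $E = 2^5$ is of type $D_8$, and Part B exhibits an element $e_1 \in C_T(E) \leqslant H_0$ (lying in $O_2(H_0) \setminus E$) of type $A_1E_7$. With those two facts the $2$-elusivity of $T$ follows at once, which is how the paper argues.
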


\begin{proof}
The exotic local subgroups were classified in \cite{CLSS}, and from \cite[Theorem 1]{CLSS} we see that there are two possibilities:

\vspace{1mm}

\begin{itemize}\addtolength{\itemsep}{0.2\baselineskip}
\item[(a)] $H_0 = 5^3.{\rm SL}_3(5)$, $p \ne 2,5$ and $q \in \{p,p^2\}$;
\item[(b)] $H_0 = 2^{5+10}.{\rm L}_5(2)$ and $q=p \geqs 3$.
\end{itemize}

\vspace{1mm}

If $H_0 = 5^3.{\rm SL}_3(5)$, then $p \ne 2,5$ and it follows from \cite[Lemma 5.2]{CLSS} that $H_0$ is isomorphic to the affine group ${\rm ASL}_3(5)$. This implies that $H_0$ has a unique class of involutions and thus $T$ is not $2$-elusive. 

Now assume $H_0 = 2^{5+10}.{\rm L}_5(2)$. Here $H_0 = N_T(E)$, where $E = 2^5$ is elementary abelian, and we inspect the proof of \cite[Lemma 2.17]{CLSS}. As explained in ``Part A" of the proof, every involution in $E$ is of type $D_8$. And in ``Part B" we see that $E$ is centralized by an element $e_1 \in T$ (in the notation of \cite{CLSS}), which is of type $A_1E_7$. Therefore, $T$ is $2$-elusive.
\end{proof}

\begin{lem}\label{l:e8_bor}
Suppose $T = E_8(q)$, $p \geqs 7$ and $H_0 = ({\rm Alt}_5 \times {\rm Sym}_6).2$ is the Borovik subgroup. Then $T$ is $2$-elusive.
\end{lem}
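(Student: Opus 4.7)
Since $p \geqs 7$ and $|H_0| = 86400 = 2^7 \cdot 3^3 \cdot 5^2$, the order of $H_0$ is coprime to $p$, so the restriction $V \downarrow H_0$ of the adjoint module $V = \mathcal{L}(\bar{G})$ is semisimple, and its Brauer character agrees with an ordinary character $\chi$ of $H_0$. Because $p$ is odd, each involution $x \in T$ is semisimple, so by Table~\ref{table:oddpinvolutions} its $T$-class is determined by $\dim C_V(x)$: it equals $136$ in class $A_1E_7$ and $120$ in class $D_8$. Equivalently, using $\dim C_V(x) = \tfrac{1}{2}(248 + \chi(x))$, it suffices to exhibit involutions $x_1, x_2 \in H_0$ with $\chi(x_1) = 24$ and $\chi(x_2) = -8$.

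The first main step is to identify $\chi$. I would do this by restricting further to the socle $S = \operatorname{Alt}_5 \times \operatorname{Alt}_6$. The decomposition of $V \downarrow S$ was determined explicitly in Borovik's original construction \cite{Bor} and also arises among the feasible characters computed in \cite[Chapter 6]{Litt}. Alternatively, one can recover $\chi$ via the feasible character methodology of Section~\ref{ss:feasible}: the constraints from maximality of $H$, the invariance under the outer actions extending $\operatorname{Alt}_6$ to $\operatorname{Sym}_6$ and then to the top $2$, together with property~(\textbf{P}), should pin down the composition factors uniquely up to a short list. In all cases, $\chi$ is an integer-valued character expressible as a sum $\sum m_i (\psi_i \boxtimes \varphi_i)$, where $\psi_i$ and $\varphi_i$ are ordinary irreducible characters of $\operatorname{Alt}_5$ (of degrees in $\{1,3,3,4,5\}$) and $\operatorname{Sym}_6$ (of degrees in $\{1,1,5,5,5,5,9,9,10,10,16\}$), respectively.

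The second main step is to evaluate $\chi$ on the conjugacy classes of involutions in $H_0$. The involution classes of $H_0$ arise from pairs $(a,b)$ with $a \in \operatorname{Alt}_5$ an involution (or $1$) and $b \in \operatorname{Sym}_6$ an involution (or $1$), together with the outer coset; on elements of $S$ we have $\chi(a,b) = \sum m_i \psi_i(a)\varphi_i(b)$, which is computable directly from the well-known character tables of $\operatorname{Alt}_5$ and $\operatorname{Sym}_6$. I would then verify that both values $24$ and $-8$ are attained. Concretely, one expects a ``small'' involution (e.g.\ $(a,1)$ with $a$ a double transposition in $\operatorname{Alt}_5$, or $(1,b)$ with $b$ a transposition in $\operatorname{Sym}_6$) to land in one class, and a ``full'' involution (such as $(a,b)$ with both $a,b$ nontrivial, or one coming from the outer $2$) to land in the other, yielding representatives in both $A_1 E_7$ and $D_8$.

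The main obstacle is the bookkeeping in the first step: fixing the exact multiplicities $m_i$ in $\chi$. For this I would rely on Borovik's explicit embedding in \cite{Bor}, cross-referenced with the feasible character tables in \cite{Litt} and checked by a short {\sc Magma} computation of $H^1$'s as in Remark~\ref{r:cohomology} to rule out spurious feasible characters. Once $\chi$ is nailed down, the verification that both required values are attained is a routine character-table calculation, which completes the proof that $T$ is $2$-elusive.
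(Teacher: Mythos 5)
Your reduction of the problem is correct: since $p \geqs 7$ is coprime to $|H_0| = 2^7\cdot 3^3\cdot 5^2$, the restriction $V \downarrow H_0$ is determined by an ordinary character $\chi$, and it suffices to exhibit involutions with $\chi$-values $24$ and $-8$. But the crux of your argument --- actually pinning down $\chi$ --- is deferred, and the sources you propose to lean on are shakier than you suggest. Litterick's feasible-character tables are computed for \emph{simple} groups not in ${\rm Lie}(p)$, whereas the Borovik subgroup has non-simple socle ${\rm Alt}_5\times{\rm Alt}_6$ (which is exactly why it sits in $\mathcal{C}$ rather than $\mathcal{S}$ in this paper); so there is no off-the-shelf table to consult. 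Moreover, even where the feasible-character machinery does apply, the paper's experience elsewhere (e.g.\ the ${\rm Alt}_6$ cases in Proposition \ref{p:e8_as_1}) is that several feasible characters with property (\textbf{P}) can survive, some placing all involutions in one $T$-class and some not, so that route can be genuinely inconclusive without independent knowledge of the actual embedding. Finally, your heuristic about which involutions land where is not reliable as stated: in fact \emph{every} involution of ${\rm Alt}_5$ and of ${\rm Alt}_6$ is of type $D_8$, and it is the transpositions of ${\rm Sym}_6$ that give the $A_1E_7$ class, so the ``small versus full'' intuition would mislead you if you tried to shortcut the computation.

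The paper's proof avoids computing $\chi$ altogether. For the $D_8$ class it simply quotes Borovik's Lemma 6.8, which says every involution in the ${\rm Alt}_5$ and ${\rm Alt}_6$ factors is of type $D_8$. For the $A_1E_7$ class it uses a localisation trick: taking $z \in {\rm Alt}_5$ of order $3$, one has $C_{\bar{G}}(z) = A_8$ with ${\rm Sym}_6$ acting irreducibly on the $9$-dimensional natural module $U$, a transposition acting as $(-I_6,I_3)$; since $V \downarrow A_8 = \mathcal{L}(A_8)/\Lambda^3(U)/\Lambda^6(U)$, the transposition has trace $8+8+8=24$ on $V$ and hence is of type $A_1E_7$. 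This requires only the character of ${\rm Sym}_6$ on $U$ and the restriction of $V$ to a subsystem $A_8$, rather than the full $248$-dimensional decomposition. If you want to salvage your approach, you would need to extract the explicit decomposition of $V \downarrow ({\rm Alt}_5\times{\rm Sym}_6)$ from Borovik's construction and then carry out the trace evaluation; as written, that step is a genuine gap.
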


\begin{proof}
We will refer to Borovik's original paper \cite{Bor} for various properties of this subgroup. Write $H_0 = (L_1 \times L_2.2).2$, where $L_1 = {\rm Alt}_5$ and $L_2 = {\rm Alt}_6$. By \cite[Lemma 6.8]{Bor}, every involution in $L_1$ and $L_2$ is of type $D_8$. 

In order to establish the existence of involutions in the $A_1E_7$ class, let $z \in L_1$ be an element of order $3$ and set $J = C_{\bar{G}}(z)$. Following \cite[p.177]{Bor}, we have $J = A_8$ and $L_2$ acts irreducibly on the natural module $U$ for $J = A_8$. Therefore, $L_2.2 = {\rm Sym}_6$ also acts irreducibly on $U$ and from the character table of ${\rm Sym}_6$ we deduce that a transposition $x \in L_2.2$ acts as $(-I_6,I_3)$ on $U$. Finally, we observe that
\[
V \downarrow J = \mathcal{L}(J)/\Lambda^3(U)/\Lambda^6(U)
\]
(see \cite[Table 12.5]{Thomas}, noting that $\l_5$ should be $\l_6$ in the case labelled 62) and we calculate that $x$ has trace $8$ on each summand. Therefore, $x$ has trace $24$ on $V$ and we conclude that $x$ is an involution of type $A_1E_7$.
\end{proof}

To complete the proof of Theorem \ref{t:main}, we may assume $T = E_8(q)$ and $H \in \mathcal{S}$, in which case $H$ is almost simple with socle $S$. Recall that ${\rm Lie}(p)$ denotes the set of simple groups of Lie type defined over a field of characteristic $p$. 

We begin by considering the groups with $S \not\in {\rm Lie}(p)$. Recall from Remark \ref{r:CS}(g) that $S$ is one of the following (it remains an open problem to determine the precise list of possibilities for $S$):

\vspace{1mm}

\begin{itemize}\addtolength{\itemsep}{0.2\baselineskip}
\item[{\rm (a)}] $S = {\rm Alt}_6$ $(p \ne 5)$, ${\rm Alt}_7$ $(p \ne 2)$, ${\rm M}_{11}$ $(p =3,11)$, ${\rm J}_3$ $(p=2)$ or ${\rm Th}$ $(p=3)$;

\item[{\rm (b)}] $S = {\rm L}_2(q')$ with $q' \in \{7,8,11,13,16,17,19,25,29,31,32,41,49,61\}$; or

\item[{\rm (c)}] $S = {\rm L}_3(3)$, ${\rm L}_3(5)$, ${\rm L}_4(5)$, ${\rm U}_3(3)$, ${\rm U}_4(2)$, ${\rm PSp}_4(5)$, ${}^2B_2(8)$, ${}^2B_2(32)$, ${}^3D_4(2)$ or ${}^2F_4(2)'$.
\end{itemize}

\begin{prop}\label{p:e8_as_1}
Suppose $T = E_8(q)$, $|\O|$ is even and $H \in \mathcal{S}$ with socle $S \not\in {\rm Lie}(p)$. Then $T$ is $2$-elusive only if one of the following holds:

\vspace{1mm}

\begin{itemize}\addtolength{\itemsep}{0.2\baselineskip}
\item[{\rm (i)}] $H_0 = {\rm Sym}_6$, ${\rm Alt}_6.2 \cong {\rm PGL}_2(9)$ or ${\rm Alt}_6.2^2$, with $p \ne 2,5$.  
\item[{\rm (ii)}] $H_0 = {\rm PGL}_2(r)$ with $(r,p) = (7,3)$, $(11,5)$ or $(13,7)$.
\item[{\rm (iii)}] $H_0 = {\rm L}_3(3).2$ and $p=13$.
\end{itemize}
\end{prop}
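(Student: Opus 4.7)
The plan is to work through the possibilities for $S = \operatorname{soc}(H)$ listed in Remark \ref{r:CS}(g) with $S \not\in \operatorname{Lie}(p)$, and for each candidate almost simple $H_0$ with socle $S$, to determine whether $H_0$ can meet every $T$-class of involutions. Recall from Section \ref{ss:invols} that $T = E_8(q)$ has exactly $2+2\delta_{2,p}$ conjugacy classes of involutions, so a necessary condition for $2$-elusivity is that $H_0$ contain at least this many involution classes.

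A first pass uses the Atlas \cite{Atlas} (together with \cite{ModularAtlas} when needed) to read off the number of involution classes in each almost simple candidate $H_0 \leqs \operatorname{Aut}(S)$. When $p = 2$, $T$ has four involution classes, and most candidates are eliminated at this step; only a short list of groups with at least four involution classes (for example $\operatorname{U}_4(2).2$, $\operatorname{PSp}_4(5).2$ and the extensions of $\operatorname{L}_4(5)$) survives and must be considered further. When $p$ is odd, $T$ has the two involution classes $A_1E_7$ and $D_8$, and every $H_0$ with only a single class of involutions is immediately ruled out; in particular, this disposes of $H_0 = S$ for $S \in \{\operatorname{Alt}_7, \operatorname{M}_{11}, \operatorname{Th}, \operatorname{L}_3(3), \operatorname{L}_3(5), \operatorname{U}_3(3), {}^2B_2(8), {}^2B_2(32), \operatorname{L}_2(q')\}$, and likewise disposes of the various socle-only extensions.

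For each surviving candidate $(H_0,p)$, I would apply the feasible character method of Section \ref{ss:feasible}, taking Example \ref{ex:e8cfc} as a template. Concretely, I would run Litterick's {\sc Magma} code \cite{LittGithub} to enumerate all feasible decompositions of $H_0$ on the irreducible adjoint module $V = \mathcal{L}(\bar{G})$, discard those that fail property (\textbf{P}) from Definition \ref{d:P} (by Proposition \ref{p:condP}, any such decomposition corresponds to a subgroup fixing a nonzero vector in $V$, forcing $H \not\in \mathcal{S}$), and for each surviving feasible character compute the $T$-class of every involution $x \in H_0$ via $\dim C_V(x) = \tfrac{1}{2}(\dim V + \chi(x))$ when $p$ is odd, or via a direct Jordan-form computation in $V \downarrow H_0$ when $p = 2$. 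By Tables \ref{table:evenpinvolutions} and \ref{table:oddpinvolutions}, this determines the $T$-class of $x$; if no feasible character with property (\textbf{P}) places involutions of $H_0$ into both $T$-classes of involutions in $T$, then $T$ is not $2$-elusive on $G/H$.

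The main obstacle is the bookkeeping required to carry out the feasible character enumeration for each surviving candidate, together with the extra complications that arise when $H_0$ contains outer automorphisms of $S$, where one must sometimes construct $V \downarrow H_0$ via induction from $V \downarrow S$ as in Example \ref{e:psl4-5}. For the cases listed in (i)--(iii) of the statement, the feasible character analysis produces at least one character with property (\textbf{P}) for which $H_0$ meets both $A_1E_7$ and $D_8$ -- as exemplified by the $(\star)$-marked rows of Tables \ref{table:PGL2_7_char3}--\ref{table:PGL2_13_char7} in Example \ref{ex:e8cfc} -- so $2$-elusivity cannot be excluded by this approach; for every other surviving candidate, no such feasible character exists and the proposition follows.
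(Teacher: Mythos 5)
Your proposal is correct and follows essentially the same route as the paper: first eliminate candidates by counting involution classes against the $2+2\delta_{2,p}$ classes in $T$, then run the feasible character machinery of Section \ref{ss:feasible} with property (\textbf{P}) on the survivors and identify $T$-classes of involutions via traces (for $p$ odd) or fixed-point-space computations on the composition factors (for $p=2$). The only caveat is that for $p=2$ cases such as ${\rm PSp}_4(5).2$ and ${\rm L}_4(5).2$ one cannot always compute exact Jordan forms from a feasible character alone; the paper instead constructs the relevant modules explicitly or bounds $\dim C_V(x)$ by summing over composition factors, which is a refinement of, not a departure from, your outline.
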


\begin{proof}
We will divide the proof into a number of separate cases, according to the socle $S$. As usual, let $V = \mathcal{L}(\bar{G})$ be the adjoint module.

\vs

\noindent \emph{Case 1. $S = {\rm Alt}_n$, $n \geqs 5$.}

\vs

First assume $S = {\rm Alt}_n$ is an alternating group. It is not known whether or not $T$ has a maximal subgroup with socle $S$, but the main theorem of \cite{Craven_alt} tells us that this can only happen if $S = {\rm Alt}_6$ and $p \ne 5$, or if $S = {\rm Alt}_7$ and $p \geqs 3$. Then by considering the number of conjugacy classes of involutions in $H_0$, it follows that $T$ is $2$-elusive only if

\vspace{1mm}

\begin{itemize}\addtolength{\itemsep}{0.2\baselineskip}
\item[(a)] $H_0 \in \{{\rm Sym}_6, {\rm PGL}_2(9), {\rm Alt}_6.2^2\}$ and $p \ne 2,5$; or
\item[(b)] $H_0 = {\rm Sym}_7$ and $p \geqs 3$.
\end{itemize}

The possibilities in (a) are recorded in part (i) of the proposition. As a comment, we observe that computations with {\sc Magma} show that for all the groups listed in (a), there are several feasible characters with property (\textbf{P}) such that $H_0$ intersects every $T$-class of involutions, and also several such that every involution $H_0$ belongs to the same $T$-class. So this case remains inconclusive.

Now assume $H_0 = {\rm Sym}_7$, so $G = T$ and $H = {\rm Sym}_7$. Then by inspecting \cite[Theorem 4]{Craven_alt}, we see that the composition factors of $V \downarrow S$ are as follows:
\[
\begin{array}{ll}
15^4, 13^6, 10^5, (10^*)^5, 1^{10} & p = 3 \\
35^4, 15^4, 10, 10^*, 8^2, 6^2 & p = 5 \\
\hspace{-4.8mm} \left\{\begin{array}{l}
\mbox{$35^3, 21, 14_a, 14_b^2, 10^7, 5^2$ or} \\
35^4, 14_a^2, 10^6, 5^4 \end{array}\right. & p = 7 \\
35^4, 15^4, 14_a^2, 10, 10^* & p \geqs 11.
\end{array}
\]

Using the computational approach described in Section \ref{ss:feasible}, we can work with Litterick's {\sc Magma} code in \cite{LittGithub} to find all the feasible characters of ${\rm Sym}_7$ on $V$. As a consequence, we find that if $p \not\in \{3,7\}$, then ${\rm Sym}_7$ does not have a feasible character that is consistent with the composition factors of $V\downarrow S$ given above. 

Finally, for $p \in \{3,7\}$ we claim that every involution in $H_0$ is of type $D_8$ and thus $T$ is not $2$-elusive. Suppose $p=3$. Using {\sc Magma}, we find that the only feasible character of $H_0 = {\rm Sym}_7$, which is consistent with $V \downarrow S$, has composition factors 
\[
20^5, 15_a^3, 15_b, 13_a, 13_b^5, 1_a^6, 1_b^4.
\]
Here $V \downarrow S$ has property (\textbf{P}) and the composition factors of $V \downarrow S$ are described in the row labelled 1) in \cite[Table 6.250]{Litt}. In any case, with these composition factors every involution in $H_0$ has trace $-8$ on $V$ and the claim follows. Similarly, if $p = 7$ then there are only three feasible characters of $H_0 = {\rm Sym}_7$ that are consistent with $V \downarrow S$, namely 
\[
\left\{\begin{array}{l}
35_a^4, 14_a^2, 10_a^6, 5_b^4    \\
35_a^2,35_b, 21_b, 14_b,14_c,14_d, 10_a^4,10_b^3, 5_a,5_b \\
35_a^2,35_b^2, 14_b^2, 10_a^4,10_b^2, 5_a^2,5_b^2.
\end{array}\right.
\]
Here the respective composition factors of $V \downarrow S$ are as in entries 3), 2), 3) of \cite[Table 6.248]{Litt}. Once again, for each possibility we calculate that every involution in $H_0$ is of type $D_8$ and the proof of the claim is complete.

\vs

\noindent \emph{Case 2. $S$ is a sporadic group.}

\vs

Here the possibilities for $S$ can be read off from \cite[Table 10.2]{LS99} and by applying \cite[Theorem 8]{Litt} we deduce that $S = {\rm M}_{11}$ (with $p=3$ or $11$), ${\rm Th}$ ($p=3$) or ${\rm J}_3$ ($p=2$). In the first two cases, $H= H_0 = S$ has a unique class of involutions and thus $T$ is not $2$-elusive. The same conclusion holds when $S = {\rm J}_3$ and $p=2$ since $H_0$ has at most two classes of involutions, whereas $T$ has four.

\vs

\noindent \emph{Case 3. $S$ is a group of Lie type.}

\vs

By inspecting \cite[Tables 10.3, 10.4]{LS99} and \cite[Theorem 8]{Litt} we see that $S$ is one of the following:
\begin{itemize}\addtolength{\itemsep}{0.2\baselineskip}
\item[(a)] $S = {\rm L}_2(q')$ with $q' \in \{7,8,11,13,16,17,19,25,29,31,32,41,49,61\}$;
\item[(b)] $S = {\rm L}_3(3)$, ${\rm L}_3(5)$, ${\rm U}_3(3)$ $(p=2,7)$, ${\rm U}_4(2)$, ${}^3D_4(2)$, ${}^2F_4(2)'$ $(p=3)$, ${}^2B_2(8)$, ${\rm L}_4(5)$ $(p=2)$, ${\rm PSp}_4(5)$ $(p=2)$ or ${}^2B_2(32)$ $(p=5)$.
\end{itemize}
Note that in (a) we have excluded the case $q'=9$, since ${\rm L}_2(9) \cong {\rm Alt}_6$.

\vs

\noindent \emph{Case 3(a). $S = {\rm L}_2(q')$.}

\vs

We begin by considering the groups in (a). In each case, any almost simple group with socle $S$ has at most $3$ classes of involutions, so we can assume that $p$ is odd. In addition, we may assume $H_0 \ne S$ since $S$ has a unique class of involutions. In particular, if $q'$ is a prime, then $H_0 = {\rm PGL}_2(q')$ is the only possibility. Note that if $q' \in \{8,32\}$ then $H_0$ has a unique class of involutions, so $T$ is not $2$-elusive in these cases. For the remainder, we may assume $q' \ne 8,32$. 

To handle the remaining cases, we proceed as in Case 1, using Litterick  \cite{LittGithub} to analyze feasible characters with the aid of {\sc Magma}, as discussed in Section \ref{ss:feasible}. Recall the definition of property (\textbf{P}) (see Definition \ref{d:P}) and recall that $H_0 \in \mathcal{S}$ only if there exists a feasible character of $H_0$ on $V \downarrow H_0$ with property (\textbf{P}). For the remainder of the proof, we will refer to such a character as a \emph{compatible feasible character}.

First assume $q'$ is a prime, in which case we may assume $H_0 = {\rm PGL}_2(q')$. If we take $q' \in \{17,29,41,61\}$, then one can check that $H_0$ has no compatible feasible characters. Now assume $q' \in \{7,11,13,19,31\}$. In these cases, we have used {\sc Magma} to determine all the compatible feasible characters (see Example \ref{ex:e8cfc}) and by computing traces we deduce that $T$ is $2$-elusive only if $(H_0,p)$ is one of the following:
\[
({\rm PGL}_2(7),3), \; ({\rm PGL}_2(11),5), \; ({\rm PGL}_2(13),7),
\]
which correspond to the cases listed in part (ii) of the proposition. Therefore, to complete our analysis of the groups with $S = {\rm L}_2(q')$, we may assume $q' \in \{16,25,49\}$. In each case, we claim that $T$ is not $2$-elusive.

Suppose $q' = 16$, in which case $H_0 = {\rm L}_2(16).2$ or $H_0 = {\rm Aut}(S) = {\rm P\Gamma L}_2(16) = {\rm L}_2(16).4$. Using {\sc Magma} to calculate all the feasible characters of ${\rm L}_2(16).2$ on $V$, we deduce that every involution in ${\rm L}_2(16).2$ has trace $-8$ on $V$. Since every involution in ${\rm Aut}(S)$ is contained in ${\rm L}_2(16).2$, we conclude that $T$ is not $2$-elusive when $q' = 16$.

Next assume $q' = 25$. Once again we claim that every involution in $H_0$ is of type $D_8$. Since every involution in ${\rm Aut}(S)$ is contained in ${\rm PGL}_2(25)$ or ${\rm P\Sigma L}_2(25)$, we may assume that $H_0 = S.2$ is one of these two groups. For $H_0 = {\rm P\Sigma L}_2(25)$ we find that $H_0$ has a compatible feasible character if and only if $p=13$, in which case every involution in $H_0$ has trace $-8$ on $V$ and the claim follows. And a very similar argument  gives the same conclusion when $H_0 = {\rm PGL}_2(25)$ (here $H_0$ has compatible feasible characters for all $p \geqs 3$ with $p \ne 5$).

Finally, suppose $q' = 49$. As in the previous case, we may assume $H_0 = {\rm PGL}_2(49)$ or ${\rm P\Sigma L}_2(49)$, and with the aid of {\sc Magma} we can show that $H_0$ does not admit a compatible feasible character.

\vs

\noindent \emph{Case 3(b). The remaining Lie type groups.}

\vs

To complete the proof of the proposition, we may assume $S$ is one of the groups in (b). Just by considering the number of classes of involutions in $H_0$ we see that $T$ is $2$-elusive only if $S = {\rm U}_4(2)$ $(p \geqs 5)$, ${}^3D_4(2)$ or ${}^2F_4(2)'$ $(p=3)$, or if $H_0$ is one of the following:
\[
{\rm L}_{3}(3).2 \; (p \geqs 5), \; {\rm L}_3(5).2 \; (p \geqs 3), \; {\rm U}_3(3).2 \; (p=7),
\]
\[
{\rm L}_4(5).2^2 \; (p=2), \; {\rm L}_4(5).{\rm D}_8 \; (p=2), \; {\rm PSp}_4(5).2 \; (p=2).
\]
We now need to consider each of these possibilities in turn. 

First assume $H_0 = {\rm L}_3(3).2$ and $p \geqs 5$, noting that $H_0$ has two classes of involutions. If $p \ne 13$, then $p$ does not divide $|H_0|$. Computing the feasible characters of $H_0$ with {\sc Magma} shows that every feasible character of $H_0$ on $V$ has a trivial composition factor, and so there are no compatible feasible characters in this case (see Lemma \ref{l:liefixed}). However, the same approach is inconclusive for $p = 13$. Indeed, there are four feasible characters and we find that $T$ is $2$-elusive with respect to exactly three of them.

Now suppose $H_0 = {\rm U}_3(3).2$ and $p = 7$. Here we use {\sc Magma} to determine the list of compatible feasible characters and in each case we find that every involution in $H_0$ is of type $D_8$. Hence, $T$ is not $2$-elusive. We can also eliminate the case $H_0 = {\rm L}_3(5).2$ with $p \geqs 3$ since $H_0$ does not admit a compatible feasible character.

Next assume $S  = {\rm U}_4(2)$ and $p \geqs 5$. If $H_0 = S$, then by examining the feasible characters labelled (\textbf{P}) in \cite[Tables 6.334, 6.335]{Litt}, and by inspecting the Brauer character table of $H_0$ (see \cite[pp.60--62]{ModularAtlas}), we conclude that every involution in $H_0$ has trace $-8$ on $V$. Therefore $T$ is not $2$-elusive when $H_0 = S$. Now assume $H_0 = {\rm U}_4(2).2$. Here we determine that $H_0$ has a unique compatible feasible character and by calculating traces we conclude once again that $T$ is not $2$-elusive.

For $S = {}^3D_4(2)$ we first note that ${\rm Aut}(S) = S.3$ and so every involution in $H_0$ is contained in $S$. Now $S$ has two classes of involutions, and by inspecting the compatible feasible characters in \cite[Tables 6.343, 6.344]{Litt}, we deduce that every involution in $H_0$ is of type $D_8$. Therefore, $T$ is not $2$-elusive in this case.

Next suppose $S = {}^2F_4(2)'$ with $p=3$, noting that every involution in $H_0$ is contained in $S$. For $H_0 = {\rm Aut}(S) = S.2$ we use {\sc Magma} to show that $H_0$ has no compatible feasible characters, so we must have $H_0 = S$. By inspecting \cite[Table 6.347]{Litt} we deduce that every compatible feasible character of $H_0$ corresponds to an embedding with the property that $V \downarrow H_0$ has composition factors $124_a$ and $124_b$ (note that the label (\textbf{P}) has been incorrectly omitted in the first row of \cite[Table 6.347]{Litt}). Then both classes of involutions in $H_0$ have trace $-8$ on $V$ and thus $T$ is not $2$-elusive.

Next assume $S = {\rm L}_4(5)$ and $p=2$. First observe that $H_0$ has at most $3$ classes of involutions, unless $H_0 = S.2^2$ or $H_0 = {\rm Aut}(S) = S.{\rm D}_8$. In particular, we may assume $H_0$ contains $J = S.2$, which is the extension isomorphic to the unique subgroup of index $2$ in ${\rm PGL}_4(5)$. In addition, $S$ acts irreducibly on $V$ (see \cite[Table 6.329]{Litt}), so the same must be true for $H_0$ and $J = S.2$. As discussed in Example \ref{e:psl4-5}, every $248$-dimensional irreducible $K[J]$-module $W$ can be constructed with {\sc Magma}. In this way, we find that there is an involution $x \in J$ with $\dim C_W(x) = 124$. But since there is no such involution in $T$, it follows that $J$ does not embed into $T$. We conclude that $T$ is not $2$-elusive if $S = {\rm L}_4(5)$ and $p = 2$.

Finally, let us assume $S = {\rm PSp}_4(5)$ and $p=2$. Since $S$ has only two classes of involutions, we may assume $H_0 = {\rm Aut}(S) = S.2$, in which case $H_0$ has four such classes, with representatives $t_1,t_2,t_3$ and $t_4$ such that $|C_{H_0}(t_i)| = 31200, 28800, 1440$ and $960$ for $i = 1,2,3$ and $4$, respectively. We calculate that $H_0$ has $6$ absolutely irreducible modules of dimension at most $248$ in characteristic $p = 2$. In Table \ref{tab:dims}, we record $\dim C_W(t_i)$ for each such module $W$ and each involution $t_i$. 

{\scriptsize
\begin{table}
\[
\begin{array}{ccccc} \hline
W & \dim C_W(t_1) & \dim C_W(t_2) & \dim C_W(t_3) & \dim C_W(t_4) \\ \hline
1 & 1 & 1 & 1 & 1 \\
24 & 12 & 16 & 12 & 12 \\
40 & 26 & 24 & 22 & 20 \\
64 & 38 & 36 & 32 & 34 \\
104_a & 65 & 60 & 55 & 54 \\
104_b & 52 & 56 & 56 & 52 \\ \hline
\end{array}
\]
\caption{The case $T = E_8(q)$, $p=2$, ${\rm soc}(H) = {\rm PSp}_4(5)$} 
\label{tab:dims}
\end{table}
}

Using {\sc Magma} to compute the feasible characters of $H_0$, we see that there are two possibilities for the composition factors of $V \downarrow H_0$, namely
\[
104_a^2, 40 \; \mbox{ and } \; 64, 40^2, 24^4, 1^8.
\]
(As noted in Remark (iii) on p.316 of \cite{PS}, if $q=2$ then $V \downarrow H_0$ has composition factors $104_a^2, 40$.) If we write $U_1, \ldots, U_k$ for the composition factors in each case, then we can read off $\dim C_{U_j}(t_i)$ from Table \ref{tab:dims} and the trivial bound
\[
\dim C_V(t_i) \leqs \sum_{j=1}^k \dim C_{U_j}(t_i) 
\]
implies that $\dim C_V(t_i) \leqs 156$ for all $i$. This means that $H_0$ does not contain any $A_1$-type involutions (since $\dim C_V(y) = 190$ for each involution  $y$ in the class $A_1$) and thus $T$ is not $2$-elusive.
\end{proof}

Finally, in order to complete the proof of Theorem \ref{t:main}, we may assume $T = E_8(q)$ and $H \in \mathcal{S}$ is an almost simple subgroup with socle $S \in {\rm Lie}(p)$.

\begin{prop}\label{p:e8_as_2}
Suppose $T = E_8(q)$, $|\O|$ is even, and $H \in \mathcal{S}$ with socle $S \in {\rm Lie}(p)$. Then $T$ is $2$-elusive only if $S = {\rm L}_2(q_0)$, where $p$ is odd and $q_0$ is a power of $p$ in the range $7 \leqs q_0 \leqs 2621$.
\end{prop}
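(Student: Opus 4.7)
The strategy is to enumerate the possible socles $S \in {\rm Lie}(p)$ via Remark \ref{r:CS}(g) and eliminate every case outside the asserted family. Recall that $S$ is either of the form ${\rm L}_2(q_0)$ with $7 \leqs q_0 \leqs (2,q-1)\cdot 1312$, or $S$ lies in the finite set
\[
\mathcal{F} = \{\, {\rm L}_3^{\pm}(3),\ {\rm L}_3^{\pm}(4),\ {\rm U}_3(8),\ {\rm U}_4(2),\ {}^2B_2(8)\,\},
\]
with $p$ determined by $S$ in each case. In each case I will bound from above the number of conjugacy classes of involutions in $H_0$ and compare with the number of classes in $T$, which equals $2 + 2\delta_{2,p}$ by Tables \ref{table:evenpinvolutions} and \ref{table:oddpinvolutions}; when counting alone fails, I invoke the feasible-character or Jordan-form framework of Sections \ref{ss:comp} and \ref{ss:feasible}.

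First, for $S = {\rm L}_2(q_0)$ with $p = 2$, write $q_0 = 2^e$. Then ${\rm Out}(S) = Z_e$ is generated by a field automorphism $\phi$, the involutions of $S$ (unipotent root elements) form a single class, and outer involutions exist in any $H_0$ with ${\rm soc}(H_0) = S$ only when the index $[H_0:S]$ is even, in which case the unique coset of $S$ in $H_0$ of elements of even order mod $S$ is $S\phi^{e/2}$ and a standard Lang-Steinberg argument (with centraliser ${\rm L}_2(2^{e/2})$) shows all its involutions form a single $H_0$-class. Hence $H_0$ has at most two involution classes, while $T$ has four, so $T$ is not $2$-elusive. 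An entirely analogous count disposes of $S \in \{{\rm U}_3(4), {\rm U}_3(8), {}^2B_2(8)\}$, where $S$ has a unique involution class (Lemma \ref{l:b2g2classes}, Remark \ref{r:inv_class}) and inspection of ${\rm Out}(S)$ via the Atlas shows at most one further class is contributed. For $S \in \{{\rm L}_3(4), {\rm U}_4(2)\}$ with $p=2$ the counting bound is not tight, and here I would enumerate the possible composition factors of $V \downarrow H_0$ (for $V = \mathcal{L}(\bar{G})$) from Craven's tables in \cite{CravenE8}, compute the Jordan form on $V$ of each involution in $H_0$ as in Section \ref{ss:comp}, and match against Table \ref{table:evenpinvolutions} to verify that no admissible configuration meets all four $T$-classes. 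For $S \in \{{\rm L}_3(3), {\rm U}_3(3)\}$ we have $p = 3$, so $T$ has only two involution classes and the counting bound is not decisive; the involutions of $H_0$ are now semisimple, so I apply the feasible-character machinery of Section \ref{ss:feasible} directly, using $\dim C_V(x) = (\dim V + \chi(x))/2$ together with Table \ref{table:oddpinvolutions} to test, for each feasible character of $H_0$ on $V$ satisfying property (\textbf{P}), whether $H_0$ can meet both $T$-classes of involutions.

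Finally, assume $S = {\rm L}_2(q_0)$ with $p$ odd. Then $(2,q-1)=2$ and Remark \ref{r:CS}(g) gives $q_0 \leqs 2624$. Since $q_0 = p^e$ is a power of an odd prime, the elementary factorisations $2622 = 2\cdot 3\cdot 19\cdot 23$, $2623 = 43\cdot 61$ and $2624 = 2^6\cdot 41$ rule out $q_0 \in \{2622, 2623, 2624\}$, so $q_0 \leqs 2621$, as required (and $2621$ is prime, so the bound is sharp). The principal obstacle will be the Jordan-form and feasible-character computations for the borderline subcases ${\rm L}_3(4)$, ${\rm U}_4(2)$, ${\rm L}_3(3)$ and ${\rm U}_3(3)$ in $\mathcal{F}$, where the crude count of involution classes in $H_0$ does not by itself exceed the count in $T$, and one must instead enumerate modular composition factors and their trace (or Jordan) data on $V$ by machine to verify in every instance that no admissible configuration realises all $T$-classes of involutions simultaneously.
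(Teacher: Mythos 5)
Your overall route is the paper's: you enumerate the candidates for $S$ from Remark \ref{r:CS}(g), dispose of most of them by comparing the number of involution classes in $H_0$ with the $2+2\delta_{2,p}$ classes in $T$, and obtain $q_0 \leqs 2621$ from $q_0 \leqs 2\cdot 1312$ by noting that $q_0$ is a power of an odd prime (your factorisations of $2622$, $2623$, $2624$ are correct, and the paper leaves this step implicit). The counting eliminations for ${\rm L}_2(2^e)$, ${\rm U}_3(4)$, ${\rm U}_3(8)$ and ${}^2B_2(8)$ all go through, although for ${\rm U}_3(8)$ your assertion that the outer involutions contribute ``at most one further class'' should be backed by \cite{AS}; any bound below three suffices there.

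The genuine gap is in the borderline cases $H_0 = {\rm L}_3(3).2$, ${\rm U}_3(3).2$ (with $p=3$) and $H_0 = {\rm U}_4(2).2$, ${\rm L}_3(4).2^2$, ${\rm L}_3(4).{\rm D}_{12}$ (with $p=2$): you describe the computation that would be needed but neither carry it out nor assert its outcome, and for these socles that verification \emph{is} the proposition. Moreover, for the $p=2$ cases the step as you state it would fail: one cannot ``compute the Jordan form on $V$'' of an involution from the composition factors of $V \downarrow H_0$, since the Brauer character does not see unipotent elements and the extensions between composition factors are unknown. The paper's fix is to take the composition factors $U_1,\ldots,U_k$ from \cite[Propositions 7.2 and 8.1]{Craven0} (not \cite{CravenE8}) and use the bound $\dim C_V(x) \leqs \sum_j \dim C_{U_j}(x)$, which yields $\dim C_V(x) \leqs 158$ for ${\rm U}_4(2).2$ and $\leqs 144$ for the ${\rm L}_3(4)$ cases, so $H_0$ contains no involution of type $A_1$ (which requires $\dim C_V(x) = 190$ by Table \ref{table:evenpinvolutions}). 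For $p=3$ the paper again reads off the composition factors from \cite[Propositions 8.1(2), 8.2(1)]{Craven0} rather than generating all feasible characters, and finds that every involution in $S$ has trace $-8$ on $V$ while every involution in $H_0 \setminus S$ has trace equal to neither $-8$ nor $24$; the latter is incompatible with membership of $T$ at all, so $H_0 = S$ is forced and has a single class of involutions. Until these finite checks are performed and their conclusions recorded, the proof is incomplete.
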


\begin{proof}
As discussed in Remark \ref{r:CS}(g), it remains an open problem to determine the maximal subgroups of this form (even up to isomorphism). However, there has been significant progress towards this goal and at the time of writing, the possibilities for $S$ are as follows:

\vspace{1mm}
 
\begin{itemize}\addtolength{\itemsep}{0.2\baselineskip}
\item[(a)] $S = {\rm L}_2(q_0)$ with $7 \leqs q_0 \leqs (2,q-1) \cdot 1312$; or
\item[(b)] $S = {\rm L}_3^{\e}(3)$, ${\rm L}_3^{\e}(4)$, ${\rm U}_3(8)$, ${\rm U}_4(2)$ or ${}^2B_2(8)$.
\end{itemize}

\vspace{1mm}

\noindent Let us also note that there is not a single known example of a maximal subgroup $H \in \mathcal{S}$ of this form.

First assume $S = {\rm L}_2(q_0)$, where $q_0 = p^e \leqs (2,q-1)\cdot 1312$. If $p=2$ then $q_0 \leqs 2^{10}$ and it is easy to check that $H_0$ has at most three conjugacy classes of involutions, hence $T$ is not $2$-elusive. Now assume $p$ is odd, so we have $7 \leqs q_0 \leqs 2621$ as in the statement of the proposition (note that there is no maximal subgroup $H \in \mathcal{S}$ with socle ${\rm L}_2(5) \cong {\rm Alt}_5$ by \cite[Theorem 2]{Craven_alt}). Of course, if $H_0 = S$ then $H_0$ has a unique class of involutions and $T$ is not $2$-elusive. So we may assume $H_0 \ne S$ has at least two classes of involutions (for example, we could have $H_0 = {\rm PGL}_2(q_0)$) but we have not been able to rule out any of these possibilities. One of the main obstacles here is the very large number of possibilities for the composition factors of $V \downarrow H_0$, which typically leads to a situation where there exists a compatible feasible character for which there are involutions $x,y \in H_0$ with respective traces $-8$ and $24$ on $V$. So to resolve this situation, we would essentially have to rule out the existence of such a maximal subgroup, which seems to be a very difficult problem. There is important ongoing work of Craven \cite{CravenE8} in this direction, but there is still a lot more to do.

To complete the proof, let us turn to the cases in (b). If $S = {\rm L}_3^{\e}(3)$ then $T$ is $2$-elusive only if $H_0 = S.2$, in which case both $T$ and $H_0$ have two classes of involutions. Similarly, if $p=2$ then we may assume $H_0 = {\rm L}_3(4).2^2$, ${\rm L}_3(4).{\rm D}_{12}$ or ${\rm U}_4(2).2$.

Suppose $H_0 = {\rm L}_3(3).2$ and note that the $3$-modular Brauer character table of $H_0$ is available in \textsf{GAP} \cite{GAP}. By inspecting \cite[Proposition 8.1(2)]{Craven0} we deduce that the set of composition factors of $V \downarrow H_0$ is one of the following:
\[
(30^5, 12, 7^8, 6^3, 1^{12}),\; (30^3, 12^3, 7^{10}, 6^7, 1^{10}), \; (30^4, 27, 12, 7^8, 6^4, 1^{9})
\]
noting that $H_0$ has two irreducible modules over $\bar{\mathbb{F}}_{3}$ of dimension $7$, and also two of dimension $27$. For each possibility, by inspecting the Brauer character table, we deduce that the involutions in $S$ have trace $-8$ on $V$, so they are of type $D_8$. However, if $x \in H_0 \setminus S$ is an involution, then the trace of $x$ on $V$ is neither $-8$ nor $24$, no matter which modules of dimension $7$ or $27$ we choose. This means that $H_0 = S$ is the only possibility and so $T$ is not $2$-elusive in this case.

A very similar argument applies when $H_0 = {\rm U}_3(3).2$. Here \cite[Proposition 8.2(1)]{Craven0} implies that the set of composition factors of $V \downarrow H_0$ is one of the following:
\[
(30^4, 12^3, 7^8, 6^5, 1^6),\; (30^4, 27, 12^2, 7^{6}, 6^5, 1^5).
\]
Then by inspecting the $3$-modular Brauer character table of $H_0$ we deduce that the involutions in $S$ are of type $D_8$, but the trace of an involution in $H_0 \setminus S$ is incompatible with containment in $T$. So as above, this case can be discarded.

Finally, let us assume $p=2$ and $H_0 = {\rm L}_3(4).2^2$, ${\rm L}_3(4).{\rm D}_{12}$ or ${\rm U}_4(2).2$. Suppose $H_0 = {\rm U}_4(2).2$, in which case $H_0$ has four conjugacy classes of involutions. The $2$-modular Brauer character table of $H_0$ is available in \textsf{GAP} and by applying \cite[Proposition 7.2]{Craven0} we deduce that $V \downarrow H_0$ has composition factors 
\[
40^2, 14^4, 8^7, 6^8, 1^8.
\]
Let $x \in H_0$ be an involution. We can use {\sc Magma} to compute $\dim C_{V_i}(x)$ for each composition factor $V_i$ of $V \downarrow H_0$ and by summing these dimensions we deduce that $\dim C_V(x) \leqs 158$ for every involution $x \in H_0$. So this implies that $H_0$ does not contain any involutions in the $T$-class $A_1$ and thus $T$ is not $2$-elusive. 

An entirely similar argument, applying \cite[Proposition 8.1]{Craven0}, shows that the same conclusion holds when $H_0 = {\rm L}_3(4).2^2$ or ${\rm L}_3(4).{\rm D}_{12}$ (in fact, we find that $\dim C_V(x) \leqs 144$ for every involution in $x \in H_0$, so there are no $A_1$ or $A_1^2$ involutions in $H_0$).
\end{proof}

\vs

This completes the proof of Theorem \ref{t:main}. 

\section{The tables}\label{s:tab}

In this final section we present Tables \ref{tab:main}, \ref{tab:main2} and \ref{tab:main3} from Theorem \ref{t:main}. Note that in Table \ref{tab:main} with $T = F_4(q)$ and $p=2$, we write ``graphs" to indicate that $H$ is maximal only if $G$ contains a graph (or graph-field) automorphism of $T$.

\clearpage 

{\scriptsize
\begin{table}
\renewcommand\thetable{A}
\[
\begin{array}{lll} \hline
T & H_0 & \mbox{Conditions}  \\ \hline
{}^2F_4(q)' & 
3^{1+2}{:}{\rm D}_8,\; 13{:}6 & \mbox{$q=2$, $G = T.2$} \\
& {\rm PGU}_3(q).2, \, {\rm SU}_3(q).2,\, (q+1)^2{:}{\rm GL}_2(3) & q \geqs 8 \\
& (q^2 \pm \sqrt{2q^3}+q \pm \sqrt{2q}+1){:}12 & q \geqs 8 \\
& &  \\
{}^3D_4(q) & (q^4-q^2+1).4,\, (q^2 \pm q +1)^2.{\rm SL}_2(3) & p=2  \\
& {\rm PGL}_3^{\e}(q) & \mbox{$p=2$, $q \geqs 4$, $q \equiv \e \imod{3}$} \\
& &  \\
F_4(q) & {}^3D_4(q).3 & p \geqs 3 \\
& {\rm PGL}_2(q) & p \geqs 13 \\
& G_2(q) & p = 7 \\
& {\rm ASL}_3(3) & q = p \geqs 5 \\
& {}^2F_4(q_0) & \mbox{$q = q_0^2$, $q_0 = 2^a$, $a \geqs 1$ odd} \\
& ({\rm SL}_3^{\e}(q) \circ {\rm SL}_3^{\e}(q)).e.2 & p = 2,\, e = (3,q-\e) \\
& {\rm Sp}_4(q^2).2 & \mbox{$p=2$, graphs} \\
& (q^2 +\e q +1)^2.(3 \times {\rm SL}_2(3)) & \mbox{$p = 2$, graphs, $q\geqs 4$ if $\e=-$} \\
& (q^4-q^2+1).12, \, (q^2+1)^2.({\rm SL}_2(3){:}4)  & \mbox{$p = 2$, graphs, $q\geqs 4$} \\
& &  \\
E_6^{\e}(q) & {\rm L}_3^{\e}(q^3).3, \, G_2(q),\, ({}^3D_4(q) \times (q^2+\e q+1)/e).3  & \\
& (q^2+\e q+1)^3/e.(3^{1+2}.{\rm SL}_2(3)) & \\
& {\rm PGL}_3^{\pm}(q).2 & p \geqs 5, \, q \equiv \e \imod{4} \\
& 3^{3+3}{:}{\rm SL}_3(3) & q = p \geqs 5, \, q \equiv \e \imod{3} \\
& &  \\
E_7(q) & P_2, \, P_5, \, P_7 & q \equiv 3 \imod{4} \\
& ({\rm L}_2(q^3) \times {}^3D_4(q)).3, \, {\rm L}_2(q^7).7 &  \\
& {\rm L}_2(q) \times {\rm PGL}_2(q), \, {\rm PGL}_3^{\pm}(q).2 & p \geqs 5 \\
& {}^3D_4(q).3 & p \geqs 3 \\
& {\rm L}_2(q) & \mbox{2 classes; $p \geqs 17,19$} \\
& &  \\
E_8(q) & {\rm SU}_5(q^2).4, \, {\rm PGU}_5(q^2).4, \, {\rm U}_3(q^2)^2.8, \, {\rm U}_3(q^4).8 &  \\
& (q^4 \pm q^3 + q^2 \pm q +1)^2.(5 \times {\rm SL}_2(5)) & \\
&  (q^8 \pm q^7 \mp q^5 - q^4 \mp q^3 \pm q+1).30 & \\
& \O_{8}^{+}(q^2).({\rm Sym}_3 \times 2), \, {}^3D_4(q^2).6, \, (q^2 + q +1)^4.2.(3 \times {\rm U}_4(2)) & p = 2 \\
& (q^4-q^2+1)^2.(12 \circ {\rm GL}_2(3)), \, (q^2+1)^4.(4 \circ 2^{1+4}).{\rm Alt}_6.2 & p=2 \\
& (q^2 - q +1)^4.2.(3 \times {\rm U}_4(2)) & \mbox{$p = 2$, $q \geqs 4$} \\
& F_4(q) & p = 3 \\
& {\rm SO}_5(q) & p \geqs 5 \\
& {\rm PGL}_2(q) & \mbox{3 classes; $p \geqs 23,29,31$} \\
& {\rm ASL}_3(5) & p \ne 2,5 \\
\hline
\end{array}
\]
\caption{The pairs $(T,H_0)$ in Theorem \ref{t:main}(i): $H \in \mathcal{C}$, $T$ is not $2$-elusive}
\label{tab:main}
\end{table}
}

{\scriptsize
\begin{table}
\renewcommand\thetable{B}
\[
\begin{array}{lll} \hline
T & H_0 & \mbox{Conditions}  \\ \hline
G_2(q)' & {\rm J}_2 & q = 4 \\ 
& {\rm J}_1 & q = 11 \\
& {\rm U}_3(3).2 & q = p \geqs 5 \\
& {\rm L}_2(13) & \mbox{$q=p \equiv \pm 1, \pm 3, \pm 4 \imod{13}$, or} \\
& & \mbox{$q=p^2$, $p \ne 2$ and $p \equiv \pm 2, \pm 5, \pm 6 \imod{13}$} \\ 
& {\rm L}_2(8) & \mbox{$q=p \equiv \pm 1 \imod{9}$, or} \\
& & \mbox{$q=p^3$, $p \ne 2$ and $p \equiv \pm 2, \pm 4 \imod{9}$} \\ 
& & \\
{}^2F_4(q)' & {\rm Alt}_6.2^2 & \mbox{$q = 2$, $G=T$} \\
& & \\
F_4(q) & {\rm L}_4(3).2_2 & q = 2 \\
& {}^3D_4(2).3 & q = p \geqs 3 \\
& & \\
E_6^{\e}(q) & {}^2F_4(2) & \mbox{$q = p \equiv \e \imod{4}$, $G=T$} \\
& \O_7(3) & \mbox{$(\e,q) = (-,2)$, $G = T.2$} \\ 
& {\rm Fi}_{22} & (\e,q) = (-,2) \\ \hline 
\end{array}
\]
\caption{The pairs $(T,H_0)$ in Theorem \ref{t:main}(ii): $H \in \mathcal{S}$, $T$ is $2$-elusive}
\label{tab:main2}
\end{table}
}

{\scriptsize
\begin{table}
\renewcommand\thetable{C}
\[
\begin{array}{ll} \hline
H_0 & \mbox{Conditions}  \\ \hline
{\rm Sym}_6, \, {\rm Alt}_6.2 \cong {\rm PGL}_2(9),\, {\rm Alt}_6.2^2 & p \ne 2,5 \\
{\rm PGL}_2(r) & (r,p) = (7,3), \, (11,5),\, (13,7) \\
{\rm L}_3(3).2 & p=13 \\ \hline
\end{array}
\]
\caption{The subgroups $H_0$ in Theorem \ref{t:main}(iii): $T = E_8(q)$, $H \in \mathcal{S}$}
\label{tab:main3}
\end{table}
}


\begin{thebibliography}{999}

\bibitem{AnDietrich}
J. An and H. Dietrich, \emph{Maximal $2$-local subgroups of $E_7(q)$}, J. Algebra \textbf{445} (2016), 503--536. 

\bibitem{AS}
M. Aschbacher and G.M. Seitz, \emph{Involutions in Chevalley groups over fields of even order}, Nagoya Math. J. \textbf{63} (1976), 1--91.

\bibitem{Borel}
A. Borel, \emph{Linear algebraic groups}, Second edition, Graduate Texts in Mathematics, vol. 126, Springer-Verlag, New York, 1991.

\bibitem{Bor}
A.V. Borovik, \emph{The structure of finite subgroups of simple algebraic groups}, Algebra and Logic \textbf{28} (1990), 163--182.

\bibitem{BorovoiTimashev}
M. Borovoi and D.A. Timashev, \emph{Galois cohomology and component group of a real reductive group}, Israel J. Math \textbf{259} (2024), 937--1000.

\bibitem{magma} 
W. Bosma, J. Cannon and C. Playoust, \emph{The {\textsc{Magma}} algebra system I: The user language}, J. Symb. Comput. \textbf{24} (1997), 235--265.
 
\bibitem{BHR}
J.N. Bray, D.F. Holt and C.M. Roney-Dougal, \emph{The maximal subgroups of the low-dimensional finite classical groups}, London Math. Soc. Lecture Note Series, vol. 407, Cambridge University Press, 2013.

\bibitem{BG2}
T.C. Burness and M. Giudici, \emph{Locally elusive classical groups}, Israel J. Math. \textbf{225} (2018), 343--402.

\bibitem{BG}  
T.C. Burness and M. Giudici, \emph{Classical groups, derangements and primes}, Aust. Math. Soc. Lecture Series, vol. 25, Cambridge University Press, Cambridge, 2016.

\bibitem{BGW} 
T.C. Burness, M. Giudici and R.A. Wilson, \emph{Prime order derangements in primitive permutation groups}, J. Algebra \textbf{341} (2011), 158--178.

\bibitem{BT} 
T.C. Burness and A.R. Thomas, \emph{On the involution fixity of exceptional groups of Lie type}, Internat. J. Algebra Comput. \textbf{28} (2018), 411--466.

\bibitem{Carter}
R.W. Carter, \emph{Finite groups of Lie type: Conjugacy classes and complex characters}, John Wiley \& Sons, Inc., New York, 1985.

\bibitem{CLSS}
A.M. Cohen, M.W. Liebeck, J. Saxl and G.M. Seitz, \emph{The local maximal subgroups of exceptional groups of Lie type, finite and algebraic}, Proc. London Math. Soc. \textbf{64} (1992), 21--48.

\bibitem{Atlas}
J.H. Conway, R.T. Curtis, S.P. Norton, R.A. Parker and R.A. Wilson, \emph{$\mathbb{ATLAS}$ of finite groups}, Oxford University Press, Eynsham, 1985.

\bibitem{Coop}
B.N. Cooperstein, \emph{Maximal subgroups of $G_2(2^n)$}, J. Algebra \textbf{70} (1981), 23--36.

\bibitem{CravenE8}
D.A. Craven, \emph{On the maximal subgroups of $E_8(q)$ and related almost simple groups}, in preparation.

\bibitem{Craven0}
D.A. Craven, \emph{On medium-rank Lie primitive and maximal subgroups of exceptional groups of Lie type}, Mem. Amer. Math. Soc. \textbf{288} (2023), no. 1434.

\bibitem{Craven}
D.A. Craven, \emph{The maximal subgroups of the exceptional groups $F_4(q)$, $E_6(q)$ and ${}^2E_6(q)$ and related almost simple groups}, Invent. Math. \textbf{234} (2023), 637--719.

\bibitem{CravenPSL2}
D.A. Craven, \emph{Maximal {$\rm PSL_2$} subgroups of exceptional groups of {L}ie type}, Mem. Amer. Math. Soc. \textbf{276} (2022), no. 1355.

\bibitem{Craven2}
D.A. Craven, \emph{On the maximal subgroups of $E_7(q)$ and related almost simple groups}, preprint (arXiv:2201.07081), 2022.

\bibitem{Craven_alt}
D.A. Craven, \emph{Alternating subgroups of exceptional groups of Lie type},  Proc. Lond. Math. Soc. \textbf{115} (2017), 449--501.

\bibitem{CST}
D.A. Craven, D.I. Stewart and A.R. Thomas, \emph{A new maximal subgroup of $E_8$ in characteristic $3$}, Proc. Amer. Math. Soc. \textbf{150} (2022), 1435--1448.

\bibitem{CurtisReiner}
C.W. Curtis and I. Reiner, \emph{Representation theory of finite groups and associative algebras}, Pure and Applied Mathematics, vol. 11, Interscience Publishers, New York-London, 1962.

\bibitem{DM}
D.I. Deriziotis and G.O. Michler, \emph{Character table and blocks of finite simple triality groups ${}^3D_4(q)$}, Trans. Amer. Math. Soc. \textbf{303} (1987), 39--70.

\bibitem{FKS}
B. Fein, W.M. Kantor and M. Schacher, \emph{Relative Brauer groups II}, J. Reine Angew. Math. \textbf{328} (1981), 39--57.

\bibitem{GaltStaroletov}
A. Galt and A.M. Staroletov, \emph{On splitting of the normalizers of maximal tori in $E_7(q)$ and $E_8(q)$}, Siberian Adv. Math. \textbf{31} (2021), 244--282.

\bibitem{GAP} 
The GAP Group, \emph{GAP -- Groups, Algorithms, and Programming}, Version 4.11.1, 2021, \url{http://www.gap-system.org}

\bibitem{Giu}
M. Giudici, \emph{Quasiprimitive groups with no fixed point free elements of prime order}, J. London Math. Soc. \textbf{67} (2003), 73--84.

\bibitem{GorensteinHarada}
D. Gorenstein and K. Harada, \emph{Finite simple groups of low $2$-rank and the families $G_{2}(q)$, $D_{4}^{2}(q)$, $q$ odd}, Bull. Amer. Math. Soc. \textbf{77} (1971), 829--862. 

\bibitem{GLS}
D. Gorenstein, R. Lyons and R. Solomon, \emph{The classification of the finite simple groups, Number 3}, Mathematical Surveys and Monographs, Amer. Math. Soc., Providence, RI, 1998.

\bibitem{HissMalle}
G. Hiss and G. Malle, \emph{Low-dimensional representations of quasi-simple groups}, LMS J. Comput. Math. \textbf{4} (2001), 22--63. 

\bibitem{HissMalleCorr}
G. Hiss and G. Malle, \emph{Corrigenda: ``Low-dimensional representations of quasi-simple groups''}, LMS J. Comput. Math. \textbf{5} (2002), 95--126. 

\bibitem{Humphreys}
J.E. Humphreys, \emph{Introduction to {L}ie algebras and representation theory}, Graduate Texts in Mathematics, vol. 9, Springer-Verlag, New York-Berlin, 1972.

\bibitem{ModularAtlas}
C. Jansen, K. Lux, R. Parker and R. Wilson, \emph{An atlas of Brauer characters}, Oxford University Press, 1995.

\bibitem{Jantzen}
J.C. Jantzen, \emph{Representations of algebraic groups}, Mathematical Surveys and Monographs, vol. 107, Amer. Math. Soc., Providence, RI, 2003.

\bibitem{Jordan1872}
C. Jordan, \emph{Recherches sur les substitutions}, Journal de Math{\'e}matiques Pures et Appliqu{\'e}es, \textbf{17} (1872), 351--367.
	
\bibitem{K0}
P.B. Kleidman, \emph{The maximal subgroups of the Chevalley groups $G_2(q)$  with $q$ odd, of the Ree groups ${}^2G_2(q)$, and of their automorphism groups}, J. Algebra \textbf{117} (1988), 30--71.

\bibitem{K}
P.B. Kleidman, \emph{The maximal subgroups of the Steinberg triality groups ${}^3D_4(q)$ and of their automorphism groups}, J. Algebra \textbf{115} (1988), 182--199.

\bibitem{KL}
P. Kleidman and M.W. Liebeck, \emph{The subgroup structure of the finite classical groups}, London Math. Soc. Lecture Note Series, vol. 129,  Cambridge University Press, Cambridge, 1990.

\bibitem{Kor}
M. Korhonen, \emph{Structure of an exotic $2$-local subgroup in $E_7(q)$}, J. Group Theory, to appear.

\bibitem{Law09}
R. Lawther, \emph{Unipotent classes in maximal subgroups of exceptional algebraic groups}, J. Algebra \textbf{322} (2009), 270--293. 

\bibitem{Law}
R. Lawther, \emph{Jordan block sizes of unipotent elements in exceptional algebraic groups}, Comm. Algebra \textbf{23} (1995), 4125--4156.
 
\bibitem{LawtherB4F4}
R. Lawther, \emph{The action of $F_4(q)$ on cosets of $B_4(q)$}, J. Algebra \textbf{212} (1999), 79--118.

\bibitem{LLS}
R. Lawther, M.W. Liebeck and G.M. Seitz, \emph{Fixed point spaces in actions of exceptional algebraic groups}, Pacific J. Math. \textbf{205} (2002), 339--391.

\bibitem{LLS2}
R. Lawther, M.W. Liebeck and G.M. Seitz, \emph{Fixed point ratios in actions of finite exceptional groups of Lie type}, Pacific J. Math. \textbf{205} (2002), 393--464.
 
\bibitem{LS} 
M.W. Liebeck and J. Saxl, \emph{The primitive permutation groups of odd degree}, J. London Math. Soc. \textbf{31} (1985), 250--264.

\bibitem{LSS}
M.W. Liebeck, J. Saxl and G.M. Seitz, \emph{Subgroups of maximal rank in finite exceptional groups of Lie type}, Proc. London Math. Soc. \textbf{65} (1992), 297--325.

\bibitem{LS_book}
M.W. Liebeck and G.M. Seitz, \emph{Unipotent and nilpotent classes in simple algebraic groups and Lie algebras}, Mathematical Surveys and Monographs, vol. 180, Amer. Math. Soc., Providence, RI, 2012.

\bibitem{LS04}
M.W. Liebeck and G.M. Seitz, \emph{The maximal subgroups of positive dimension in exceptional algebraic groups}, Mem. Amer. Math. Soc. \textbf{169} (2004), no. 802.

\bibitem{LS03} 
M.W. Liebeck and G.M. Seitz, \emph{A survey of of maximal subgroups of exceptional groups of Lie type}, in Groups, combinatorics \& geometry (Durham, 2001), 139--146, World Sci. Publ., River Edge, NJ, 2003.

\bibitem{LS99}
M.W. Liebeck and G.M. Seitz, \emph{On finite subgroups of exceptional algebraic groups}, J. Reine Angew. Math. \textbf{515} (1999), 25--72.

\bibitem{LS98} 
M.W. Liebeck and G.M. Seitz, \emph{On the subgroup structure of exceptional groups of Lie type}, Trans. Amer. Math. Soc. \textbf{350} (1998), 3409--3482.
 
\bibitem{LS90} 
M.W. Liebeck and G.M. Seitz, \emph{Maximal subgroups of exceptional groups of Lie type, finite and algebraic}, Geom. Dedicata \textbf{35} (1990), 353--387. 

\bibitem{LSh}
M.W. Liebeck and A. Shalev, \emph{On fixed points of elements in primitive permutation groups}, J. Algebra \textbf{421} (2015), 438--459. 

\bibitem{Litt}
A.J. Litterick, \emph{On non-generic finite subgroups of exceptional algebraic groups}, Mem. Amer. Math. Soc. \textbf{253} (2018), no. 1207.

\bibitem{LittGithub}
A.J. Litterick, \emph{FeasChar}, GitHub repository (2023), \url{https://github.com/ajlitterick/FeasChar}

\bibitem{Lubeck}
F. L\"ubeck, \emph{Small degree representations of finite Chevalley groups in defining characteristic}, LMS J. Comput. Math. 4 (2001), 135--169. 

\bibitem{Malle}
G. Malle, \emph{The maximal subgroups of ${}^2F_4(q^2)$}, J. Algebra \textbf{139} (1991), 52--69.

\bibitem{NW}
S.P. Norton and R.A. Wilson, \emph{The maximal subgroups of $F_4(2)$ and its automorphism group}, Comm. Algebra \textbf{17} (1989), 2809--2824.

\bibitem{PS}
C. Parker and J. Saxl, \emph{Two maximal subgroups of $E_8(2)$}, Israel J. Math. \textbf{153} (2006), 307--318.

\bibitem{SeitzTesterman}
G.M. Seitz and D.M. Testerman, \emph{Extending morphisms from finite to algebraic groups}, J. Algebra \textbf{131} (1990), 559--574. 

\bibitem{Serre}
J.-P. Serre, \emph{On a theorem of Jordan}, Bull. Amer. Math. Soc. \textbf{40} (2003), 429--440.

\bibitem{Spaltenstein}
N. Spaltenstein, \emph{Caract\'eres unipotents de ${}^3D_4(q)$}, Comment. Math. Helv. \textbf{57} (1982), 676--691. 

\bibitem{SpringerSteinberg}
T.A. Springer and R. Steinberg, \emph{Conjugacy classes}, in Seminar on Algebraic Groups and Related Finite Groups, 167--266, Lecture Notes in Math., vol. 131, Springer, Berlin-New York, 1970. 

\bibitem{SteinbergNotes}
R. Steinberg, \emph{Lectures on Chevalley groups}, University Lecture Series, vol. 66, Amer. Math. Soc., Providence, RI, 2016.

\bibitem{Suzuki}
M. Suzuki, \emph{On a class of doubly transitive groups}, Ann. of Math. \textbf{75} (1962), 105--145. 

\bibitem{Testerman1989}
D.M. Testerman, \emph{A construction of certain maximal subgroups of the algebraic groups $E_6$ and $F_4$}, J. Algebra \textbf{122} (1989), 299--322. 

\bibitem{Thomas}
A.R. Thomas, \emph{The irreducible subgroups of exceptional algebraic groups}, Mem. Amer. Math. Soc. \textbf{268} (2020), no. 1307.

\bibitem{Tits}
J. Tits, \emph{Normalisateurs de tores, I. Groupes de Coxeter \'etendus}, J. Algebra \textbf{4} (1966), 96--116. 

\bibitem{Ward}
H.N. Ward, \emph{On Ree's series of simple groups}, Trans. Amer. Math. Soc. \textbf{121} (1966), 62--89. 

\bibitem{Wil84}
R.A. Wilson, \emph{The geometry and maximal subgroups of the simple groups of A. Rudvalis and J. Tits}, Proc. London Math. Soc. \textbf{48} (1984), 533--563.
 
\bibitem{WebAt}
R.A. Wilson et al., \emph{A {W}orld-{W}ide-{W}eb {A}tlas of finite group representations}, \url{http://brauer.maths.qmul.ac.uk/Atlas/v3/}
\end{thebibliography}
\end{document}